\documentclass[12pt,a4paper]{amsart}
\usepackage{tikz}
\tikzset{
	op/.style = {shape=rectangle, rounded corners, draw=black, align=center,
		fill=blue!30},base align tikzpicture/.style={execute at end picture={
			\path (current bounding box.north) -- (current bounding box.south)
			node[midway](Xphantom){\phantom{X}};},baseline={(Xphantom.base)}}
}
\usepackage{tikz-cd}

\usepackage{graphicx} 
\usepackage{hyperref}
\usepackage[notref,notcite]{}
\hypersetup{
	colorlinks=true,
	linkcolor=blue,
	filecolor=magenta,     
	urlcolor=cyan,
	pdftitle={Codimbraid},
}
\usepackage{amssymb,amsmath,amsthm,amsfonts,mathrsfs,bbm}
\usepackage{array,amssymb,amsxtra} 
\usepackage{geometry}
\usepackage[shortlabels]{enumitem} 
\usepackage[normalem,normalbf]{ulem}
\usepackage[all]{xy} 
\usepackage{fancybox}
\usepackage{diagbox}
\usepackage{thmtools}
\usepackage{cite}
\usepackage{scalerel}
\usepackage{stackengine,wasysym}
\usepackage{stmaryrd}
\usepackage{MnSymbol}

\newcommand{\Z}{\mathbb{Z}} 

\newcommand{\N}{\mathbb{N}}

\newcommand{\B}{\mathcal{B}}

\newcommand{\KK}{\mathcal{K}}
\newcommand{\VV}{\mathcal{V}}
\newcommand{\YD}{\mathcal{YD}}

\newcommand{\Dpr}{D^{\bar{p}, \bar{r}}}
\newcommand{\Dp}{D^{\bar{p}}}
\newcommand{\Opr}{\mathcal{O}_{q}^{\bar{p}, \bar{r}}({\rm GL}_n(k))}
\newcommand{\MM}{\mathcal{M}}

\newcommand{\rr}{\mathbf{r}}

\newcommand{\Oo}{\mathcal{O}_{q}(\normalfont\text{SL}_{2}(k))}

\DeclareMathOperator{\id}{id}

\usepackage{thmtools}

\DeclareMathOperator{\eq}{eq}
\DeclareMathOperator{\ob}{ob}
\DeclareMathOperator{\Cc}{\textbf{\textup{C}}}

\DeclareMathOperator{\CC}{\textbf{\textup{C}}}
\DeclareMathOperator{\GL}{GL}
\newcommand{\Oq}{\mathcal{O}_{q}(\mathrm{M}_{n}(k))}

\newcommand{\OQ}{\mathcal{O}_{q}(\mathrm{SL}_{n}(k))}
\newcommand{\Opgl}{\mathcal{O}^{\bar{p}}_{q}(\mathrm{GL}_{n}(k))}
\newcommand{\Ogl}{\mathcal{O}_{q}(\mathrm{GL}_{n}(k))}

\newtheorem{theorem}{Theorem }[section]
\newcommand{\bt}{\begin{theorem}}
	\newcommand{\et}{\end{theorem}}

\definecolor{navajowhite}         {rgb}{1.00,0.87,0.68}
\newtheorem{prop}[theorem]{Proposition}
\newcommand{\bp}{\begin{prop}}
	\newcommand{\ep}{\end{prop}}

\newtheorem{dep}[theorem]{Proposition-Definition}
\newcommand{\bdep}{\begin{dep}}
	\newcommand{\edep}{\end{dep}}

\newtheorem{tp}[theorem]{Theorem-Definition}
\newcommand{\btp}{\begin{tp}}
	\newcommand{\etp}{\end{tp}}

\newtheorem{cor}[theorem]{Corollary}
\newcommand{\bc}{\begin{cor}}
	\newcommand{\ec}{\end{cor}}
\definecolor{babyblueeyes}{rgb}    {0.63, 0.79, 0.95}
\newtheorem{lemma}[theorem]{Lemma}
\newcommand{\bl}{\begin{lemma}}
	\newcommand{\el}{\end{lemma}}
\theoremstyle{definition}

\newtheorem*{lemma*}{Lemma}
\newtheorem*{proposition*}{Proposition}

\newtheorem{dfn}[theorem]{Definition}
\newcommand{\bd}{\begin{dfn}}
	\newcommand{\ed}{\end{dfn}}

\theoremstyle{remark}

\newtheorem{rmq}[theorem]{Remark}
\newcommand{\brq}{\begin{rmq}}
	\newcommand{\erq}{\end{rmq}}

\theoremstyle{definition}

\newtheorem{ex}[theorem]{Example}
\newcommand{\bex}{\begin{ex}}
	\newcommand{\eex}{\end{ex}}

\newcommand{\bpf}{\begin{proof}}
	\newcommand{\epf}{\end{proof}}

\newcommand{\smallbraid}[2]{\draw (#1,#2) ..controls +(0,-0.5) and
+(0,0.5).. (#1-1,#2-1);
\draw[white][line width = 5pt] (#1-1,#2) ..controls +(0,-0.5) and
+(0,0.5).. (#1,#2-1);
\draw (#1-1,#2) ..controls +(0,-0.5) and +(0,0.5).. (#1,#2-1);}

\newcommand{\produit}[3]{\draw (#1,#2) arc (180:360:#3); \draw(#1 + #3, #2 -#3)--(#1 + #3, #2 -#3 -0.5);}
\newcommand{\coproduit}[3]{\draw (#1,#2) arc (0:180:#3); \draw(#1 - #3, #2 +#3 + 0.5)--(#1 - #3, #2 + #3);}

\newcommand{\gauche}[2]{\draw (#1,#2)..controls+(0,-1)and+(1,0)..(#1-1,#2-1);

\draw (#1-2,#2)..controls+(0,-1)and+(-1,0)..(#1-1,#2-1); \draw(#1-1.5, #2 - 1)--(#1-1.5, #2 - 1.5);}

\newcommand{\coright}[2]{\draw (#1,#2)..controls+(0, 1)and+(1,0)..(#1- 1,#2+1);

\draw (#1-2,#2)..controls+(0,1)and+(-1, 0)..(#1-1,#2+1); \draw(#1-1.5, #2 + 1.5)--(#1-1.5, #2 + 1);}

\newcommand{\coleft}[2]{\draw (#1,#2)..controls+(0, 1)and+(1,0)..(#1- 1,#2+1);

\draw (#1-2,#2)..controls+(0,1)and+(-1, 0)..(#1-1,#2+1); \draw(#1-.5, #2 + 1.5)--(#1-.5, #2 + 1);}

\title{Braided cogroupoids}
\author{Thi Hoa Emilie Nguyen}
\address{Universit\'e Clermont Auvergne, CNRS, LMBP, F-63000 CLERMONT-FERRAND, FRANCE}
\email{thi\_hoa.nguyen@uca.fr}
\begin{document}
\begin{abstract}
We introduce and develop the theory of braided cogroupoids, a class of algebraic structures generalizing cogroupoids in a braided setting. We show that braided cogroupoids induce monoidal equivalences between the associated comodule categories, and we generalize Majid’s transmutation and bosonization of braided Hopf algebras to the cogroupoid setting. Several examples are studied in detail, including the braided SL$_{n}$ cogroupoid and the braided bilinear cogroupoid.
\end{abstract}

\maketitle
\tableofcontents
\section{Introduction}
Takeuchi \cite{MR472967} introduced an analogue of Morita contexts for coalgebras and comodules, providing criteria for when two coalgebras have equivalent comodule categories; this is a dual version of classical Morita theory for algebras. In this foundational work, Takeuchi showed that, under suitable conditions, a bicomodule between two coalgebras gives rise to an equivalence of comodule categories when it is part of what is now called a Morita–Takeuchi context. Building on the work of Ulbrich \cite{Ulbrich1989}, Schauenburg \cite{Schauenburg1996} later placed these ideas into a monoidal framework in his study of Hopf–Galois and bi-Galois extensions. He proved that bi-Galois objects classify monoidal equivalences between comodule categories of Hopf algebras, thereby providing a monoidal version of Morita–Takeuchi equivalence. Developing this perspective further, Bichon introduced the notion of cogroupoid to encode collections of bi-Galois objects and to organize monoidal equivalences across multiple Hopf algebras. In particular, he showed that a connected cogroupoid induces bi-Galois objects and hence monoidal equivalences between the comodule categories of the associated Hopf algebras \cite{MR3285340}.

A systematic theory of Hopf algebras in braided monoidal categories was developed by Majid, who introduced the bosonization (or biproduct) construction, relating braided Hopf algebras to ordinary Hopf algebras \cite{Maj94,Rad85}. This construction has become a fundamental tool in the theory of quantum groups and Nichols algebras \cite{MR1913436, heckenberger2020hopf}, and it highlights the importance of braidings in transferring algebraic structures between different categorical contexts.

The aim of the present paper is to introduce and develop the theory of \emph{braided cogroupoids}, extending the notion of cogroupoid to a braided monoidal setting. Roughly speaking, a braided cogroupoid consists of a family of algebras equipped with cogroupoid structure maps that are compatible with a given braiding. This framework simultaneously generalizes ordinary cogroupoids and braided Hopf algebras, and provides a natural setting in which to study families of quantum symmetries linked by tensor equivalences. In a similar spirit, Schauenburg  \cite{MR2294763} has developed a theory of braided bi-Galois objects.

A central result in this work is Theorem~\ref{Thm: monoid func}, which shows that braided cogroupoids induce monoidal equivalences between their associated categories of comodules. Other highlights include the generalization, at the cogroupoid level, of some important constructions from braided Hopf algebra theory due to Majid:
\begin{itemize}
\item we generalize bosonization to cogroupoids by associating an ordinary cogroupoid with a braided cogroupoid in a category of Yetter--Drinfeld modules, and conversely we associate a braided cogroupoid with an ordinary cogroupoid equipped with a kind of projection \cite{Rad85};
\item we extend the transmutation procedure \cite{Maj93} to braided cogroupoids, producing braided cogroupoids in categories of comodules over a coquasitriangular Hopf algebra.
\end{itemize}

We illustrate the theory with explicit examples, including the braided
\( \mathrm{SL}_n \) cogroupoid, constructed via multiparametric deformations of the quantum algebra
\( \mathcal{O}_q(\mathrm{GL}_n) \), and we also consider the transmutation of the bilinear cogroupoid \( \mathcal{B}\) \cite{ MR1998031}.

The paper is organized as follows. In Section~2, we recall the necessary background on flat regular monoidal categories, braidings, and (co)algebraic structures in monoidal categories, with particular emphasis on equalizers and cotensor products. Section~3 deals with Takeuchi categorical coalgebras and cocategories, and introduces the notion of braided cogroupoids together with their main properties. In Section~4, we study coinvariants and bosonizations of braided cogroupoids, illustrating the theory with the example of the braided \( \mathrm{SL}_n \) cogroupoid. Finally, Section~5 is devoted to the transmutation construction for cogroupoids, along with explicit examples.

\vspace*{0.5cm}
\paragraph{\textbf{Notations and conventions}}
Throughout this paper we work over a fixed base field denoted by \( k \). We assume that the reader is familiar with Hopf algebras and monoidal categories.
\section{Preliminaries}\label{sec:prelim}
\subsection{Equalizers}
Let $\VV$ be a category. Let $X, Y$ be objects and $f, g : X \to Y$ be morphisms of $\VV$. A morphism $eq : E \to X$ in $\VV$ is said to be an \textit{equalizer} for the pair $(f, g)$ if $f \circ eq = g \circ eq$ and if $(E, eq)$ satisfies the following universal property: for every morphism $j : O \to X $ in $\VV$ such that $ f \circ j = g \circ j$, there exists a unique morphism $u : O \to E$ such that $eq \circ u = j$.
\begin{center}
\begin{tikzcd}[column sep= 2cm,row sep= 1cm]
	E \ar[r, "eq"] &X \ar[r, shift left = 1, "f" ] \ar[r, shift right = 1, "g" ']&Y\\
	O \ar[u, dashrightarrow, "u"] \ar[ru, "j"]&&
\end{tikzcd}
\end{center}
In this case, the object $E$ is unique up to isomorphism; we sometimes write $E = \textbf{Eq}(f, g)$ and refer to $eq$, which is always a monomorphism.
\subsection{Flat regular monoidal categories and braided categories}
A \textit{monoidal category} is a category $\VV$ equipped with a tensor product bifunctor $\otimes \colon \VV \times \VV \to \VV$, a \textit{unit object} $I \in \ob(\VV)$, and natural isomorphisms $\alpha: (X \otimes Y) \otimes Z \simeq X \otimes (Y \otimes Z)$, $r: X \otimes I \simeq X$ and $l: I \otimes X \simeq X$, for any $X, Y, Z \in \ob(\VV),$ satisfying coherence conditions (Mac Lane’s pentagon and triangle axioms) ensuring all associativity and unit diagrams commute. It is well known, due to the coherence theorem of Mac Lane's (see e.g \cite[Section 1.5]{bulacu2019quasi}), that every monoidal category is monoidally equivalent to a strict monoidal category, (a monoidal category such that all associativity and unit isomorphisms are identities). Thus, without loss of generality, from now on, we will only consider strict monoidal categories.

An object $A$ in $\VV$ is said to be \textit{flat} if tensoring with $A$ preserves equalizers. That is, if
\[\begin{tikzcd}[column sep= 1cm,row sep= 1cm]
E \ar[r, "eq"] &X \ar[r, shift left = 1, "f" ] \ar[r, shift right = 1, "g" ']&Y
\end{tikzcd}\]
is an equalizer in $\VV$, then so are
\[\begin{tikzcd}[column sep= 2cm,row sep= 1cm]
A \otimes E \ar[r, "1_{A} \otimes eq"] & A \otimes X \ar[r, shift left = 1, " 1_{A} \otimes f" ] \ar[r, shift right = 1, "1_{A} \otimes g" ']& A \otimes Y
\end{tikzcd}\] and 
\[\begin{tikzcd}[column sep= 2cm,row sep= 1cm]
 E \otimes A \ar[r, "eq \otimes 1_{A}"] & X \otimes A \ar[r, shift left = 1, "f \otimes 1_{A}" ] \ar[r, shift right = 1, "g \otimes 1_{A}" ']& Y \otimes A.
\end{tikzcd}\]
A flat object $A$ is said to be \textit{faithfully flat} if if tensoring with $A$ reflects isomorphisms.

A monoidal category \(\mathcal{V}\) is called \textit{flat regular} \cite[Definition 2.1.1]{aguiar1997internal} if it has equalizers and every object in $\VV$ is flat.
	\bex
	The category $_{k}\MM$ of $k$-vector spaces is flat regular because all vector spaces are flat.
	\eex
	\bex \label{Ex : regular}
	An \textit{abelian $k$-linear monoidal category} is a $k$-linear abelian category, that is, an abelian category in which each $\operatorname{Hom}$ set is a $k$-vector space and composition is $k$-bilinear, equipped with a monoidal structure such that the tensor product functor $-\otimes -$ is $k$-bilinear and exact in each variable. It follows immediately that an abelian $k$-linear monoidal category is a flat regular monoidal category.
	\eex

	A \textit{braided category} is a monoidal category $\VV$ endowed with a braiding, that is a natural isomorphism
	\[c_{X, Y} : X \otimes Y \longrightarrow Y \otimes X,\]
	satisfying, for all objects $X, Y, Z$ in $\mathcal V$,
	\[c_{X, Y\otimes Z} = (\id_{Y} \otimes c_{X,Z}) \circ (c_{X,Y} \otimes \id_{Z}), \quad
	c_{X\otimes Y, Z} = (c_{X, Z} \otimes  \id_{Y}) \circ (\id_{X} \otimes  c_{Y, Z}), \quad  
	c_{X,I}=\id_X = c_{I,X}.
	\] 
	More details on monoidal categories and braided categories can be found, for example, in \cite{bulacu2019quasi}. We use the familiar graphical calculus and denote by
	\[\begin{tikzpicture}[scale=0.5, base align tikzpicture]
		\draw[above](0.5,0) node{$X$};
		\draw(0.5,0) -- (0.5,-1.2);
		\draw (0,0) -- ( 1,0);
		\draw(0,-1.2) -- (1,-1.2);
		\draw[below](0.5,-1.5) node{$X$};
	\end{tikzpicture}
	\quad \textrm{and} \quad \begin{tikzpicture}[scale=0.6, base align tikzpicture]
		\draw[above](0.5,0) node{$X$};
		\draw(0.5,0) -- (0.5,-1.2);
		\draw (0,0) -- ( 1,0);
		\draw(0,-1.2) -- (1,-1.2);
		\draw[below](0.5,-1.5) node{$Y$};
		\draw [fill=white] (0.5,-0.5) circle (0.4);
		\draw (0.5,-0.5) node {\footnotesize{$f$}};
	\end{tikzpicture}\]
	the identity morphism $\id_{X}: X \to X$ and a morphism $f : X \to Y$ in $\VV$. The braiding $c_{X, Y}$ and its inverse $c^{-1}_{X, Y}$ are denoted by
	\[ \begin{tikzpicture}[scale=0.5, base align tikzpicture]
		\draw[above](1,0) node{$X$};
		\draw[above](2,0) node{$Y$};
		\draw (0.5,0) -- (2.5,0);
		\draw (1,0) -- (1, -0.25);
		\draw (2,0) -- (2, -0.25);
		\smallbraid{2}{-0.25}
		\draw (1,-1.25) -- (1, -1.5);
		\draw (2,-1.25) -- (2, -1.5);
		\draw(0.5,-1.5) -- (2.5,-1.5);
		\draw[below](1,-1.5) node{$Y$};
		\draw[below](2,-1.5) node{$X$};
	\end{tikzpicture} \quad \text{and}\quad
	\begin{tikzpicture}[scale=0.5, base align tikzpicture]
		\draw[above](1,0) node{$Y$};
		\draw[above](2,0) node{$X$};
		\draw (0.5,0) -- (2.5,0);
		\draw (1,0) -- (1, -0.25);
		\draw (2,0) -- (2, -0.25);
		\draw (1,-0.25) ..controls +(0,-0.5) and +(0,0.5).. (2,-1.25);
		\draw[white][line width = 5pt] (1.45, -0.5) ..controls +(0,-0.5) and
		+(0,0.5).. (1.55,-1);
		\draw (2, -0.25) ..controls +(0,-0.5) and +(0,0.5).. (1,-1.25);
		\draw (1,-1.25) -- (1, -1.5);
		\draw (2,-1.25) -- (2, -1.5);
		\draw(0.5,-1.5) -- (2.5,-1.5);
		\draw(0.5,-1.5) -- (2.5,-1.5);
		\draw[below](1,-1.5) node{$X$};
		\draw[below](2,-1.5) node{$Y$};
	\end{tikzpicture}.\] 
	Then the braiding axioms become \begin{align}
		\begin{tikzpicture}[scale=0.5, base align tikzpicture]
			\draw[above](1,0) node{$X$};
			\draw[above](2,0) node{$Y$};
			\draw[above](3,0) node{$Z$};
			\smallbraid{2}{0};
			\smallbraid{3}{-1};
			\draw (0.5,0) -- ( 3.5,0);
			\draw(3, 0) -- (3,-1);
			\draw(1, -1) -- (1, -2);
			\draw(0.5,-2) -- (3.5,-2);
			\draw[below](1,-2.5) node{$Y$};
			\draw[below](2,-2.5) node{$Z$};
			\draw[below](3,-2.5) node{$X$};
			\draw(-1.5,-1.5) node{$c_{X, Y \otimes Z} =$};
		\end{tikzpicture} \quad
		\text{and} \quad
		\begin{tikzpicture}[scale=0.5, base align tikzpicture]
			\draw[above](1,0) node{$X$};
			\draw[above](2,0) node{$Y$};
			\draw[above](3,0) node{$Z$};
			\smallbraid{3}{0};
			\smallbraid{2}{-1};
			\draw (0.5,0) -- ( 3.5,0);
			\draw(3, -1) -- (3,-2);
			\draw(1, 0) -- (1, -1);
			\draw(0.5,-2) -- (3.5,-2);
			\draw[below](1,-2.5) node{$Z$};
			\draw[below](2,-2.5) node{$X$};
			\draw[below](3,-2.5) node{$Y$};
			\draw(-1.5,-1.5) node{$c_{X \otimes Y, Z} =$};
		\end{tikzpicture}. \label{(braid)}
	\end{align}
	\subsubsection{Coquasitriangular Hopf algebras} \label{Ex: MH}
	The basic example of a braided category is the category $_{k}\MM$ of $k$-vector spaces over our base field $k$, with the braiding given by the flip (or symmetry) map. More generally, the category of comodules over a commutative bialgebra forms a braided category.
	
	Recall that a coquasitriagular Hopf algebra is a Hopf algebra $H$ equipped with a convolution-invertible linear form \textbf{r} $\colon H \otimes H \to k$ (called a universal $r$-form) such that, for any $x, y, z \in H$,
	\begin{equation}
		yx = \textbf{r}(x_{(1)},y_{(1)}) x_{(2)}y_{(2)} \textbf{r}^{-1}(x_{(3)},y_{(3)}) \label{r1}
	\end{equation}
	\begin{equation}
		\textbf{r}(xy,z)= \textbf{r}(x,z_{(1)}) \textbf{r}(y,z_{(2)}), \quad  \textbf{r}(x,yz)= \textbf{r}(x_{(1)}, z) \textbf{r}(x_{(2)},y) \label{r2}.
	\end{equation}
	Then the category of $H$-comodules, $\MM^{H}$, is a braided category with the braiding given, for any $H$-comodules $V$ and $W$, by
	\begin{align}
		\textbf{r}_{V, W}(v \otimes w) = \textbf{r}(v_{(1)}, w_{(1)})w_{(0)} \otimes v_{(0)} \label{rVW}.
	\end{align}
	Sometimes we denote this category by $\MM^{H, \textbf{r}}$ to remember the braided structure.
	\bex
	Let $\Gamma$ be an abelian group. Then the universal $r$-forms on the group algebra $k\Gamma$ correspond to the bicharacters $\Gamma\times \Gamma \to k^{*}$, i.e the maps $\psi$ such that
	\[ \psi(xy, z) = \psi(x, z)\psi(y,z); \,\psi(x, yz) = \psi(x, y)\psi(x, z) \quad \text{for} \quad x, y, z \in \Gamma.\]
	Recall that $\mathcal M^{k\Gamma}$ identifies with the category of $\Gamma$-graded vector spaces as follows: if $V = (V, \alpha)$ is a right $k\Gamma$-comodule, put, for $g \in \Gamma$, $V_{g} = \{v \in V \mid \alpha(v) = v \otimes g\}$. Then $V = \bigoplus_{g \in \Gamma} V_{g}$ defines a $\Gamma$-grading on $V$. Conversely, if $V = \bigoplus_{g \in \Gamma} V_{g}$ is $\Gamma$-graded, putting $\alpha(v) = v\otimes g$ for $v \in V_{g}$,  defines a structure of $k\Gamma$-comodule on $V$.
	
	Given a bicharacter $\psi$, the category $\mathcal M^{k\Gamma}$ is braided with braiding: 
	\begin{align*}
		c_{V, W} \colon V \otimes W &\longrightarrow W \otimes V\\
		v \otimes w \in V_{g} \otimes W_{h} &\mapsto \psi(g, h)w \otimes v
	\end{align*}
	When $\Gamma = \mathbb Z  =\langle z \rangle$ is the infinite cyclic group with a fixed generator $z$, a bicharacter is uniquely determined by $\xi= \psi(z,z)$. We denote by $\mathcal M^{k\mathbb Z,\xi}$ the resulting braided category. 
	\eex
	
	In the case of a coquasitriangular Hopf algebra, we recall the following result that will be needed later, in particular in Section~\ref{Sec : transmutation}.
	
	\bp{\cite[Propositions 10.2, 10.3]{KSBook97}} \label{Prop : r3}
	Let $H$ be a coquasitriangular Hopf algebra with universal $r$-form $\mathbf{r}$. Define linear maps 
	$f, \bar{f} \colon H \longrightarrow k$ by 
	\[f(x) = \mathbf{r}[x_{(1)}, S_{H}(x_{(2)})]\quad \text{and} \quad \overline{f}(x) = \mathbf{r}^{-1}[S_{H}(x_{(1)}), x_{(2)}], \quad x \in H.\]
	Then we have
	\begin{enumerate}
		\item for any $x, y \in H$,
		\[\mathbf{r}\big(S_{H}(x), y\big) = \mathbf{r}^{-1}(x, y), \quad\mathbf{r}(x, y) = \mathbf{r}\big(S_{H}(x), S_{H}(y)\big),\]
		\item for $x \in H$,
		\[S^{2}_{H}(x_{(2)})f(x_{(1)}) = x_{(1)}f(x_{(2)})\quad \text{and} \quad \overline{f}(x_{(2)})S^{2}_{H}(x_{(1)}) = \overline{f}(x_{(1)})x_{(2)}.\]
	\end{enumerate}
	\ep
	Note that the map $\overline{f}$ is the inverse of $f$ with respect to the convolution product.
	\subsubsection{Category of Yetter–Drinfeld modules}\label{sec : YD}
	Let $H$ be a fixed Hopf algebra. Recall that a Yetter-Drinfeld module over $H$ is a right $H$-comodule and a right $H$-module $V$ satisfying the condition that, for $v \in V$ and $x \in H$,
	\begin{align}
		(v \leftarrow x)_{(0)} \otimes (v \leftarrow x)_{(1)} = v_{(0)} \leftarrow x_{(2)} \otimes S_{H}(x_{(1)})v_{(1)}x_{(3)}.\label{compatibiliy}
	\end{align}
	The category of Yetter-Drinfeld modules over $H$ is denoted by $\YD^{H}_{H}$: the morphisms are the $H$-linear and $H$-colinear maps.
	
	When $S_{H}$ is bijective, the category $\YD^{H}_{H}$ of Yetter-Drinfeld modules over $H$ is a braided category with the following monoidal and braided structure. Let $V, W \in \YD^{H}_{H}$, the tensor product of vector spaces $V \otimes W$ is also a Yetter-Drinfeld module with the usual diagonal action
	\begin{align*}
		\mu_{V \otimes W} \colon V \otimes W \otimes H &\longrightarrow V \otimes W\\
		v \otimes w \otimes x &\longmapsto (v \leftarrow x_{(1)} )\otimes (w \leftarrow x_{(2)})
	\end{align*}
	and coaction
	\begin{align*}
		\beta_{V \otimes W} \colon V \otimes W &\longrightarrow V \otimes W \otimes H\\
		v \otimes w&\longmapsto v_{(0)}w_{(0)}\otimes v_{(1)}w_{(1)}.
	\end{align*}
	The unit object is the field $k$ with the trivial $H$-comodule and $H$-module structure, where $\beta(1) = 1 \otimes 1$, and $1 \leftarrow x = \varepsilon(x)$ for all $x \in H$. 
	The braiding is given by, for $V, W \in \YD^{H}_{H}$,
	\begin{align} \label{braid YD}
		c_{V, W} \colon V \otimes W &\longrightarrow W \otimes V\\
		v \otimes w &\longmapsto w_{(0)} \otimes (v\leftarrow w_{(1)}). \nonumber
	\end{align}
	and its inverse
	\begin{align*}
		c^{-1}_{V, W} \colon W \otimes V &\longrightarrow V \otimes W\\
		w \otimes v &\longmapsto \big(v\leftarrow S_{H}^{-1}(w_{(1)})\big)\otimes w_{(0)}.
	\end{align*}
	\brq[\textit{Connection to Coquasitriangular Structure}] \label{rk:YD-coquasi}
	Let $H$ be a coquasitriangular Hopf algebra, with the universal $r$-form  $\mathbf{r} \colon H \otimes H \to k$. Given a right $H$-comodule $V$, we defines a right $H$-action on $V$ using the coquasitriangular structure $\mathbf{r}$ as follows:
	\[v \leftarrow x = \mathbf{r}(v_{(1)}, x)v_{(0)}, \quad v \in V, x \in H\]
	This expression defines a bilinear map $V  \otimes H \to V$ which, using the properties of  $\mathbf{r}$ satisfies the axioms of an $H$-module. Moreover, this action is compatible with the comodule structure of $V$ given by \eqref{compatibiliy}, thus endowing $V$ with the structure of a Yetter–Drinfeld module.
	
	This construction yields a braided monoidal functor:
	\[\MM^{H} \hookrightarrow \YD^{H}_{H}.\]
	In particuliar, this functor embeds the braided monoidal category $\MM^{H}$ into the category of Yetter-Drinfeld modules over $H$, providing an explicit correspondence between the braiding induced by the coquasitriangular form  $\mathbf{r}$ and that arising from the weak center construction,  which is known to be equivalent to the category of Yetter–Drinfeld modules $\YD_{H}^{H}$ \cite{schauenburg2004hopf}. Thus, the Yetter–Drinfeld braiding given by \eqref{braid YD} generalizes and recovers the coquasitriangular braiding.
	\erq
	
	\subsection{Algebras, Modules, Coalgebras, Comodules in monoidal categories} The familiar notions of algebras, modules,  coalgebras and comodules  in vector space categories have direct generalizations in monoidal categories. Let $\VV$ be a monoidal category.
	\subsubsection{} An \textit{algebra in $\VV$} is a triple $(A, m_{A}, \eta_{A})$, where $A \in \text{ob}(\VV)$, and $m_{A}: A \otimes A \to A$ and $\eta_A : I \to A$ are morphisms such that
	\[m_{A} \circ (m_{A}\otimes \id_{A}) =  m_{A} \circ (\id_A \otimes m_{A}), \ m_{A} \circ (\eta_{A}\otimes \id_A)= \id_A = m_{A} \circ (\id_A \otimes \eta_{A}).\]
	Denoting the multiplication and the unit by
	\[m_A=\begin{tikzpicture}[scale=0.4, base align tikzpicture]
		\draw(0.5,0)--(3.5,0);
		\draw(0.5,-1.5)--(3.5,-1.5);
		\draw[above](1,0) node{$A$};
		\draw[above](3,0) node{$A$};
		\produit{1}{0}{1}
		\draw[below](2,-1.5) node{$A$};
	\end{tikzpicture} \quad \textrm{and} \quad 
	\eta_A = \begin{tikzpicture}[scale=0.4, base align tikzpicture]
		\draw(8,0)--(9,0);
		\draw(8,-1.5)--(9,-1.5);
		\draw[above](8.5,0) node{$I$};
		\draw(8.5,-0.5) node{$\bullet$};
		\draw(8.5,-0.5) -- (8.5,-1.5);
		\draw[below](8.5,-1.5) node{$A$};
	\end{tikzpicture}\]
	the associativity and unit axioms above read
	\begin{equation}
		\begin{tikzpicture}[scale=0.35, base align tikzpicture]\label{associatif}
			\draw(0,0)--(5,0);
			\draw(0,-4.5)--(5,-4.5);
			\draw(0.5,0)--(0.5,-3);
			\draw[above](4.5, 0) node{$A$};
			\draw[above](0.5,0) node{$A$};
			\draw[above](2.5,0) node{$A$};
			\produit{2.5}{0}{1}
			\produit{0.5}{-3}{1}
			\draw[below](1.5,-4.5) node{$A$};
			\draw(3.5,-1.5) ..controls +(0,-0.7) and +(0,0.7).. (2.5,-3);
			\draw(6.5,-2) node{$=$};
			\draw(8,0)--(13,0);
			\draw(8,-4.5)--(13,-4.5);
			\draw(12.5,0)--(12.5,-3);
			\draw[above](12.5, 0) node{$A$};
			\draw[above](8.5,-0) node{$A$};
			\draw[above](10.5,-0) node{$A$};
			\produit{10.5}{-3}{1}
			\produit{8.5}{0}{1}
			\draw[below](11.5, -4.5) node{$A$};
			\draw(9.5,-1.5) ..controls +(0,-0.7) and +(0,0.7).. (10.5,-3);
			\draw(15,-2) node{and};
			\draw(17,-0.5)--(20,-0.5);
			\draw(17,-3)--(20,-3);
			\draw(19.5,-0.5)--(19.5,-1.5);
			\draw(17.5,-1.25)--(17.5,-1.5);
			\draw[above](19.5, -0.5) node{$A$};
			\draw(17.5,-1.25) node{$\bullet$};
			\produit{17.5}{-1.5}{1}
			\draw[below](18.5, -3) node{$A$};
			\draw(21,-2) node{$=$};
			\draw(22,-1)--(23,-1);
			\draw(22,-2.5)--(23,-2.5);
			\draw[above](22.5, -1) node{$A$};
			\draw[below](22.5, -2.5) node{$A$};
			\draw(22.5,-1)--(22.5,-2.5);
			\draw(24,-2) node{$=$};
			\draw(25,-0.5)--(28,-0.5);
			\draw(25,-3)--(28,-3);
			\draw(25.5,-0.5)--(25.5,-1.5);
			\draw(25.5,-1.25)--(25.5,-1.5);
			\draw[above](27.5, -0.5) node{$A$};
			\draw(27.5,-1.25) node{$\bullet$};
			\draw(27.5,-1.25)--(27.5,-1.5);
			\produit{25.5}{-1.5}{1}
			\draw[below](26.5, -3) node{$A$};
			\draw(29,-2) node{.};
		\end{tikzpicture}
	\end{equation}
	If  $A$, $B$ are  algebras in the monoidal category $\VV$, an algebra morphism $f : A \to B$ is a morphism in $\VV$ such that
	\[f \circ m_A = m_B\circ (f\otimes f) \quad \textrm{and} \quad f\circ\eta_A = \eta_B\]
	Graphically, this means 
	\begin{equation*}
		\begin{tikzpicture}[scale=0.55]
			\draw(0, 0)--(2,0);
			\draw(0, -3)--(2,-3);
			\produit{0.5}{0}{0.5}
			\draw(0.5, -2)--(0.5,-3);
			\draw(1,-1) ..controls +(0,-0.7) and +(0,0.7).. (0.5,-2);
			\draw [fill=white] (0.5,-2.25) circle (0.4);
			\draw (0.5,-2.25) node {\footnotesize{$f$}};
			\draw[above](0.5,0) node{$A$};
			\draw[above](1.5,0) node{$A$};
			\draw[below](0.5,-3) node{$B$};
			\draw (2.5,-1.5) node{$=$};
			\draw[above](4.5,-.5) node{$A$};
			\draw[above](5.5,-.5) node{$A$};
			\draw (4,-.5) -- ( 6,-0.5);
			\draw (5.5,-0.5) -- (5.5,-1.2);
			\draw (4.5,-.5) -- (4.5,-1.2);
			\draw (5.5,-1.1) -- (5.5,-2.1);
			\draw (4.5,-1.1) -- (4.5,-2.1);
			\draw [fill=white] (4.5,-1.73) circle (0.4);
			\draw (4.5,-1.73) node {\footnotesize{$f$}};
			\draw [fill=white] (5.5,-1.73) circle (0.4);
			\draw (5.5,-1.73) node {\footnotesize{$f$}};
			\draw(4, -3.1) -- (6,-3.1);
			\produit{4.5}{-2.1}{0.5}
			\draw[below](5,-3.1) node{$B$};
			\draw (7,-1.5) node{and};
			\draw(8, -0.5)--(9.5,-0.5);
			\draw(8, -2.5)--(9.5,-2.5);
			\draw(8.75, -1) node {$\bullet$};
			\draw(8.75, -1)--(8.75,-2.5);
			\draw [fill=white] (8.75,-1.75) circle (0.4);
			\draw (8.75,-1.75) node {\footnotesize{$f$}};
			\draw[above](8.75,-0.5) node{$I$};
			\draw[below](8.75,-2.5) node{$A$};
			\draw (10.5,-1.5) node{$=$};
			\draw(11.5, -0.75)--(13,-0.75);
			\draw(11.5, -2.25)--(13,-2.25);
			\draw(12.25, -1.35) node {$\bullet$};
			\draw(12.25, -1.35)--(12.25,-2.25);
			\draw[above](12.25,-0.75) node{$I$};
			\draw[below](12.25,-2.25) node{$A$};
			\draw (15,-1.5) node{$.$};
		\end{tikzpicture}\label{morphism}
	\end{equation*}
	
	Let $A$ be an algebra in $\VV$. 
	A \textsl{left $A$-module} $M$ (in $\VV$) is an object $M$ in $\VV$ together with a morphism $\mu_{M}^{l}: A \otimes M \to M$, denoted by 
	\[\begin{tikzpicture}[scale=0.35, base align tikzpicture]
		\draw(0.5,0)--(3.5,0);
		\draw(0.5,-1.5)--(3.5,-1.5);
		\draw[above](3, 0) node{$M$};
		\draw[above](1,-0) node{$A$};
		\gauche{3}{0}{1}
		\draw[below](1.5,-1.5) node{$M$};
	\end{tikzpicture},\]
	such that
	\begin{align}
		\begin{tikzpicture}[scale=0.35, base align tikzpicture]
			\draw(0,0)--(5,0);
			\draw(0,-4)--(5,-4);
			\draw(0.5,0)--(0.5,-2.5);
			\draw[above](4.5, 0) node{$M$};
			\draw[above](0.5,0) node{$A$};
			\draw[above](2.5,0) node{$A$};
			\gauche{4.5}{0}{1}
			\gauche{2.5}{-2.5}{1}
			\draw[below](1,-4) node{$M$};
			\draw(3,-1.5) ..controls +(0,-0.7) and +(0,0.7).. (2.5,-2.5);
			\draw(6.5,-2) node{$=$};
			\draw(8,0)--(13,0);
			\draw(8,-4)--(13,-4);
			\draw(12.5,0)--(12.5,-2.5);
			\draw[above](12.5, 0) node{$M$};
			\draw[above](8.5,-0) node{$A$};
			\draw[above](10.5,-0) node{$A$};
			\gauche{12.5}{-2.5}{1}
			\produit{8.5}{0}{1}
			\draw[below](11, -4) node{$M$};
			\draw(9.5,-1.5) ..controls +(0,-0.7) and +(0,0.7).. (10.5,-2.5);
			\draw(15,-2) node{and};
			\draw(17,-0.5)--(20,-0.5);
			\draw(17,-3)--(20,-3);
			\draw(19.5,-0.5)--(19.5,-1.5);
			\draw(17.5,-1.25)--(17.5,-1.5);
			\draw[above](19.5, -0.5) node{$M$};
			\draw(17.5,-1.25) node{$\bullet$};
			\gauche{19.5}{-1.5}{1}
			\draw[below](18, -3) node{$M$};
			\draw(21,-2) node{$=$};
			\draw(22,-1)--(23,-1);
			\draw(22,-2.5)--(23,-2.5);
			\draw[above](22.5, -1) node{$M$};
			\draw[below](22.5, -2.5) node{$M$};
			\draw(24,-2) node{.};
			\draw(22.5,-1)--(22.5,-2.5);
		\end{tikzpicture}\label{l}
	\end{align}
	The category of left $A$-modules (in $\VV$) is denoted by $_{A}\VV$, and its morphisms are the left $A$-linear maps, defined just as in the classical case. 
	The  category  $\VV_{A}$  of right $A$-modules is defined similarly.
	
	\subsubsection{}As in the ordinary case of vector spaces, the definition of a coalgebra in a monoidal category is dual to that of an algebra. More precisely, a \textsl{coalgebra in} $\VV$ is a triple $(C, \Delta_{C}, \varepsilon_{C})$, where $\Delta_{C}: C \to C \otimes C$ and  $\varepsilon_{C}: C \to I$ are morphisms, denoted by
	\begin{equation*}
		\begin{tikzpicture}[scale=0.35, base align tikzpicture]
			\draw(0.5,0)--(3.5,0);
			\draw(0.5,-1.5)--(3.5,-1.5);
			\draw[below](3,-1.5) node{$C$};
			\draw[below](1,-1.5) node{$C$};
			\coproduit{3}{-1.5}{1}
			\draw[above](2,-0) node{$C$};
		\end{tikzpicture}
		\quad  \textrm{and} \quad  \begin{tikzpicture}[scale=0.35, base align tikzpicture] 
			\draw(8,0)--(9,0);
			\draw(8,-1.5)--(9,-1.5);
			\draw[above](8.5,0) node{$C$};
			\draw(8.5,-1) node{$\bullet$};
			\draw(8.5,0) -- (8.5,-1);
			\draw[below](8.5,-1.5) node{$I$};
		\end{tikzpicture}, 
	\end{equation*}
	satisfying the coassociativity and counit conditions:
	\begin{equation} \label{cogebra} 
		\begin{tikzpicture}[scale=0.35, base align tikzpicture]
			\draw(0,0)--(5,0);
			\draw(0,-4.5)--(5,-4.5);
			\draw(4.5,-1.5)--(4.5,-4.5);
			\draw[below](4.5, -4.5) node{$C$};
			\draw[below](0.5,-4.5) node{$C$};
			\draw[below](2.5,-4.5) node{$C$};
			\coproduit{4.5}{-1.5}{1}
			\coproduit{2.5}{-4.5}{1}
			\draw[above](3.5,0) node{$C$};
			\draw(2.5,-1.5) ..controls +(0,-0.7) and +(0,0.7).. (1.5,-3);
			\draw(6.5,-2) node{$=$};
			\draw(8,0)--(13,0);
			\draw(8,-4.5)--(13,-4.5);
			\draw(8.5,-1.5)--(8.5,-4.5);
			\draw[below](12.5, -4.5) node{$C$};
			\draw[below](8.5,-4.5) node{$C$};
			\draw[below](10.5,-4.5) node{$C$};
			\coproduit{10.5}{-1.5}{1}
			\coproduit{12.5}{-4.5}{1}
			\draw[above](9.5,0) node{$C$};
			\draw(10.5,-1.5) ..controls +(0,-0.7) and +(0,0.7).. (11.5,-3);
			\draw(15,-2) node{and};
			\draw(17,-0.5)--(20,-0.5);
			\draw(17,-3)--(20,-3);
			\draw(19.5,-2)--(19.5,-3);
			\draw(17.5,-2)--(17.5,-2.5);
			\draw[below](19.5, -3) node{$C$};
			\draw(17.5,-2.5) node{$\bullet$};
			\coproduit{19.5}{-2}{1}
			\draw[above](18.5, -0.5) node{$C$};
			\draw(21,-2) node{$=$};
			\draw(22,-1)--(23,-1);
			\draw(22,-2.5)--(23,-2.5);
			\draw[above](22.5, -1) node{$C$};
			\draw[below](22.5, -2.5) node{$C$};
			\draw(22.5,-1)--(22.5,-2.5);
			\draw(24,-2) node{$=$};
			\draw(25,-0.5)--(28,-0.5);
			\draw(25,-3)--(28,-3);
			\draw(27.5,-2)--(27.5,-3);
			\draw(25.5,-2)--(25.5,-2.5);
			\draw[below](25.5, -3) node{$C$};
			\draw(25.5,-2.5) node{$\bullet$};
			\coproduit{27.5}{-2}{1}
			\draw[above](26.5, -0.5) node{$C$};
			\draw(29,-2) node{.};
		\end{tikzpicture}
	\end{equation}
	The definition of a coalgebra morphism is a straightforward adaptation of the ordinary one.
	
	Likewise, by duality, we obtain the notion of a comodule over a coalgebra: let $C$ be a coalgebra in $\VV$. A right $C$-\textit{comodule} $M$ in $\VV$ is an object $M$ in $\VV$ together with a morphism $\beta_{M} : M \to C \otimes M$, denoted by
	\[\begin{tikzpicture}[scale=0.44, base align tikzpicture]
		\tiny
		\draw(0,-0.5) -- (3,-0.5);
		\draw(0,-2) -- (3,-2);
		\draw[above](1,-0.5) node{$M$};
		\coright{2.5}{-2}
		\draw[below](0.5,-2) node{$M$};
		\draw[below](2.5,-2) node{$C$};
	\end{tikzpicture},\]
	such that
	\[\begin{tikzpicture}[scale=0.44, base align tikzpicture]
		\tiny
		\draw(-0.5,0) -- (4,-0);
		\draw(-0.5,-2.8) -- (4,-2.8);
		\draw[above](1,0) node{$M$};
		\coright{2.5}{-1.5}
		\coright{2}{-2.8}
		\draw(2.5,-1.5) ..controls +(0,-0.7) and +(0,0.7).. (3.2,-2.5);
		\draw(3.2,-2.5) -- (3.2,-2.8);
		\draw[below](0,-2.8) node{$M$};
		\draw[below](2,-2.8) node{$C$};
		\draw[below](3.2,-2.8) node{$C$};
		\draw(6.25,-2.5) node{$=$};
		\draw(8,0) -- (13,-0);
		\draw(8,-2.8) -- (13,-2.8);
		\draw[above](9.5,0) node{$M$};
		\coright{11}{-1.5}
		\coproduit{11.8}{-2.8}{0.8}
		\draw(9,-1.5) -- (9,-2.8);
		\draw[below](9,-2.8) node{$M$};
		\draw[below](10.2,-2.8) node{$C$};
		\draw[below](11.8,-2.8) node{$C$};
		\draw(14,-2.5) node{.};
	\end{tikzpicture}\]
	The category of right $C$-comodules in $\VV$ is denoted by $\VV^{C}$. In a similar manner we can define $^{C}\VV$, the category of left $C$-comodules and left $C$-colinear morphisms in $\VV$.
	
	A $C$-\textit{bicomodule} in $\VV$ is an object $M$ in $\VV$ which is a left $C$-comodule and right $C$-comodule and such that 
	\[\begin{tikzpicture}[scale=0.44, base align tikzpicture]
		\tiny
		\draw(-1,0) -- (5.75,-0);
		\draw(-1,-4) -- (5.75,-4);
		\draw[above](2,0) node{$M$};
		\coleft{2.5}{-1.5}
		\coright{5}{-4}
		\draw(2.5,-1.5) ..controls +(0,-0.7) and +(0,0.7).. (3.5,-2.5);
		\draw(0.5,-1.5) -- (0.5,-4);
		\draw[below](0.5,-4) node{$C$};
		\draw[below](5,-4) node{$C$};
		\draw[below](3,-4) node{$M$};
		\draw(6.75,-3) node{$=$};
		\draw(8,0) -- (14,-0);
		\draw(8,-4) -- (14,-4);
		\draw[above](12,0) node{$M$};
		\coright{13.5}{-1.5}
		\coleft{11}{-4}{1.5}
		\draw(11.5,-1.5) ..controls +(0,-0.7) and +(0,0.7).. (10.5,-2.5);
		\draw(13.5,-1.5) -- (13.5,-4);
		\draw[below](9,-4) node{$C$};
		\draw[below](11,-4) node{$M$};
		\draw[below](13.5,-4) node{$C$};
		\draw(15,-3) node{.};
	\end{tikzpicture}\]
	The category of $C$-bicomodules in $\VV$ is denoted by $\,^{C}\VV^{C}$.
	\subsubsection{}\label{Hopf}Beyond algebras and coalgebras in a monoidal category, we can consider \textit{braided Hopf algebras}, namely Hopf algebras in appropriate braided categories.
	Let $\VV=(\mathcal V, c)$ be a braided monoidal category, and let $A$, $B$ be algebras in $\VV$. 
	The braiding $c$ of $\VV$ gives rise to an algebra structure on the object $A \otimes B$ with multiplication given by
	\begin{align}
		\begin{tikzpicture}[scale=0.5, base align tikzpicture]
			\draw(3.5,0)--(3.5,-1.5);
			\draw[above](1.5, 0) node{$B$};
			\draw[above](0.5, 0) node{$A$};
			\draw[above](2.5, 0) node{$A$};
			\draw[above](3.5, 0) node{$B$};
			\smallbraid{2.5}{0}
			\draw(0,0)--(4,0);
			\draw(0,-2.5)--(3.5,-2.5);
			\produit{.5}{-1.5}{0.5}
			\produit{2.5}{-1.5}{0.5}
			\draw(0.5,0)--(0.5,-1.5);
			\draw(1.5,-1)--(1.5,-1.5);
			\draw(2.5,-1)--(2.5,-1.5);
			\draw(-1.5,-1.5) node{$m_{A \otimes_{c} B} = $};
			\draw[below](1, -2.5) node{$A$};
			\draw[below](3, -2.5) node{$B$};
		\end{tikzpicture}\label{cross}
	\end{align}
	and unit $\eta_A\otimes \eta_B$.
	The resulting algebra in $\VV$ is denoted by $A \otimes_{c} B$ and is called the \textsl{braided tensor product algebra of $A$ and $B$}. This allows us to define a \textit{bialgebra} in $\VV$ and then a Hopf algebra in $\VV$, as in the classical case: a \textit{Hopf algebra} $H = (H, m_H, \eta_{H}, \Delta_{H}, \varepsilon_{H}, S_{H})$ in $\VV$ is an algebra ($H, m_H, \eta_H$) and a coalgebra $(H, \Delta_{H}, \varepsilon_{H})$ in $\VV$, such that 
	$$\Delta_{H} : H \to H \otimes_{c} H; \quad \varepsilon_{H} : H \to I$$
	are algebra morphisms, and $S_{H} : H \to H$ is a morphism in $\VV$ satisfying $S_{H} * 1_{H} = \eta_{H} \circ \varepsilon_{H} = 1_{H} * S_{H}$, where $*$ is the convolution product.
	
	We omit the details and refer the reader to \cite[Chapter $2$]{bulacu2019quasi} for more information.

	\subsection{Cotensor product} Let $\VV$ be a flat regular monoidal category. Let $C$ be some coalgebra in $\VV$. The cotensor product of a right $C$-comodule $(V, \beta_{V})$ and a left $C$-comodule $(W, \alpha_{W})$ is the following object of $\VV$: 
	\begin{center}
		\begin{tikzcd}
			V \Box_{C} W = \textbf{Eq} \big( V \otimes W \arrow[r, shift left, "\beta_{V} \otimes 1"] \ar[r, shift right, "1 \otimes \alpha_{W}" ']& V \otimes C \otimes W\big).
		\end{tikzcd}
	\end{center}
	
	From now on, we make the convention that
	$\alpha$  and $\beta$ denote the left and right comodule structures in $\VV$, respectively. We begin with a result that can be found, for example, in \cite{aguiar1997internal}: 
	\bp \label{Prop : cotensor morphism}
	Let $C$ be a coalgebra in $\VV$. Let $V, V' \in \VV^{C}$ and let $f : V \to V'$ be a morphism in $\VV^{C}$. Let $W, W' \in \,^{C}\VV$ and let $g : W \to W'$ be a morphism in $\,^{C}\VV$. Then there exists a unique morphism 
	$f \Box_{C} g : V \Box_{C} W \longrightarrow V' \Box_{C} W'$ fitting into the following diagram:
	\begin{equation}
		\begin{tikzcd}[column sep= 1.5cm,row sep= 1cm]
			V \Box_{C} W \ar[d, "f \Box_{C} g" '] \ar[r, "eq"] &V \otimes W \ar[d,"f \otimes g"]\\
			V' \Box_{C} W' \ar[r, "eq"] &V' \otimes W'.
		\end{tikzcd}
	\end{equation}
	\ep
	\bpf
	This is an easy verification using the equalizer property.
	\epf
	\bp \label{Prop : forget right}
	Let $C$ be a coalgebra in $\VV$ and $X$ be a left $C$-comodule in $\VV$. Let $V, W \in \VV^{C}$ and let $f : V \longrightarrow W$ be a morphism of $\VV^{C}$. Then there exists a unique morphism $$f \Box_{C} 1_{X} : V \Box_{C} X \longrightarrow W \Box_{C} X$$ such that the following diagram commutes: 
	\begin{equation} \label{morphism left}
		\begin{tikzcd}[column sep= 2cm,row sep= 1.5cm]
			V \Box_{C} X \ar[d, dashed, "f \Box_{C} 1_{X}"]  \ar[r, "eq"] &V \otimes X \ar[d, "f \otimes 1_{X}"]  \\
			W \Box_{C} X \ar[r, "eq" '] &W \otimes X
		\end{tikzcd}
	\end{equation}
	and this defines a functor: 
	\begin{align*}
		\Upsilon_{r}^{X} : \VV^{C} &\longrightarrow \VV\\
		V &\longmapsto V \Box_{C} X
	\end{align*}
	\ep
	\bpf
	This result follows directly from Proposition \ref{Prop : cotensor morphism}, by replacing the identity morphism $1_{X}$ with $g$. Functoriality follows from the uniqueness of $f \Box_C 1_{X}$.
	\epf
	We also have the right version of Proposition $\ref{Prop : forget right}$ as follows:
	\bp\label{Prop: forget left}
	Let $C$ be a coalgebra in $\VV$ and $X$ be a right $C$-comodule in $\VV$. Let $V, W \in \,^{C}\VV$ and let $g : V \longrightarrow W$ be a morphism of $\,^{C}\VV$. Then there exists a unique morphism $$1_{X} \Box_{C} \, g : X \Box_{C} V \longrightarrow X \Box_{C} W$$ such that the following diagram commutes: 
	\begin{equation} \label{morphism right}
		\begin{tikzcd}[column sep= 2cm,row sep= 1.5cm]
			X \Box_{C} V \ar[d, dashed, "1_{X}\Box_{C} \,g"]\ar[r, "eq"] &X \otimes V \ar[d, "1_{X} \otimes g"] \\
			X \Box_{C} W  \ar[r, "eq" '] &X \otimes W
		\end{tikzcd}
	\end{equation}
	and this defines a functor: 
	\begin{align*}
		\Upsilon_{l}^{X} : \, ^{C}\VV &\longrightarrow \VV\\
		V &\longmapsto X \Box_{C} V.
	\end{align*}
	\ep
	\bp \label{Prop:funct}
	Let $C, D, E$ be some coalgebras in $\VV$. Any $C$-$D$-bicomodule $X$ over $\VV$ defines functors:
	\begin{align*}
		F^{X} \colon \VV^{C} &\longrightarrow \VV^{D} \qquad \qquad \quad \text{and}
		&
		\overline{F}^{X} \colon {}^{E}\VV^{C} &\longrightarrow {}^{E}\VV^{D}
		\\
		V &\longmapsto V \Box_{C} X
		&
		W &\longmapsto W \Box_{C} X .
	\end{align*}
	\ep
	\bpf
	First, for a right $C$-comodule $V$ in $\VV$, we see that $1_{V} \otimes \beta^{D}_{X}$ endows $V \otimes X$ with a structure of right $D$-comodule; coassociativity and counitality follow from those of $(X, \beta^{D}_{X})$. We now show that $V \Box_{C} X$ is a $D$-subcomodule of $V \otimes X$. By the flatness of $\VV$, we have $$(V \Box_{C} X) \otimes D = \textbf{Eq}\big(\beta^{C}_{V} \otimes 1_{W} \otimes 1_{D}, 1_{V} \otimes \alpha^{C}_{X} \otimes 1_{D}\big).$$ We also have
	\begin{align*}
		\big(\beta^{C}_{V} \otimes 1_{W} \otimes 1_{D}\big)(1_{V} \otimes \beta^{D}_{X})eq &= \big(1_{V} \otimes 1_{C} \otimes \beta^{D}_{X}\big)(\beta^{C}_{V}\otimes 1_{X})eq\\
		&= \big(1_{V} \otimes 1_{C} \otimes \beta^{D}_{X}\big)(1_{V} \otimes \alpha^{C}_{X})eq \quad \text{($V \Box_{C} X$ is an equalizer)}\\
		&= \big(1_{V} \otimes \alpha^{C}_{X}\otimes 1_{D}\big)(1_{V} \otimes \beta^{D}_{X})eq \quad \text{($X$ is a $C$-$D$-bicomodule)}.
	\end{align*}
	Hence, there exists a unique morphism $\beta^{D}_{V \Box_{C} X} \colon V \Box_{C} X \to (V \Box_{C} X) \otimes D$ such that $(1_{V} \otimes \beta^{D}_{X})eq = (eq \otimes 1_{D})\beta^{D}_{V \Box_{C} X}$, i.e. the following diagram commutes
	\begin{equation} \label{coaction}
		\begin{tikzcd}[column sep= 1.5cm,row sep= 1cm]
			V \Box_{C} X \ar[d, dashrightarrow, "\beta^{D}_{V \Box_{C} X}" ']\ar[r, "eq"] &V \otimes X \ar[d, "1_{V} \otimes \beta^{D}_{X}" ']\arrow[r, shift left, "\beta^{C}_{V} \otimes 1_{X}"] \ar[r, shift right, "1_{V} \otimes \alpha^{C}_{X}" ']& V \otimes C \otimes X\\
			(V \Box_{C} X)  \otimes D \ar[r, "eq \otimes 1_{D}" '] &V \otimes X \otimes D \arrow[r, shift left, "\beta^{C}_{V} \otimes 1_{X} \otimes 1_{D}"] \ar[r, shift right, "1_{V} \otimes \alpha^{C}_{X} \otimes 1_{D}" ']& V \otimes C \otimes X \otimes D.
		\end{tikzcd}
	\end{equation}
	Since
	\begin{align*}
		(eq \otimes 1_{D} \otimes 1_{D})&\big((1_{V} \Box_{C} 1_{X}) \otimes \Delta_{D}\big)\beta^{D}_{V \Box_{C} X} \\
		&= (1_{V} \otimes 1_{X} \otimes \Delta_{D})(eq \otimes 1_{D})\beta^{D}_{V \Box_{C} X}\\
		&= (1_{V} \otimes 1_{X} \otimes \Delta_{D})(1_{V} \otimes \beta^{D}_{X})eq \quad (\text{by \ref{coaction}})\\
		&= (1_{V} \otimes \beta^{D}_{X} \otimes 1_{D})(1 \otimes \beta^{D}_{X})eq \quad (\text{$X$ is a right $D$-comodule})\\
		&= (1_{V} \otimes \beta^{D}_{X} \otimes 1_{D})(eq \otimes 1_{D})\beta^{D}_{V \Box_{C} X} \quad (\text{by \ref{coaction}})\\
		&= (eq \otimes 1_{D} \otimes 1_{D})\big(\beta^{D}_{V \Box_{C} X} \otimes 1_{D}) \beta^{D}_{V \Box_{C} X}
	\end{align*}
	and $eq \otimes 1_{D} \otimes 1_{D}$ is a monomorphism (because it is an equalizer), it follows that $\big((1_{V} \Box_{C} 1_{X}) \otimes \Delta_{D}\big)\beta^{D}_{V \Box_{C} X} = \big(\beta^{D}_{V \Box_{C} X} \otimes 1_{D}) \beta^{D}_{V \Box_{C} W}.$ Hence, $V \Box_{C} X$ is a $D$-subcomodule of $V \otimes X$ via $eq$, i.e. the following diagram commutes
	\[\begin{tikzcd}[column sep= 2cm,row sep= 1cm]
		V \Box_{C} X \ar[d, "eq" ']\ar[r, "\beta^{D}_{V \Box_{C} X}"] &(V \Box_{C} X) \otimes D \ar[d, "eq \otimes 1_{D}" ']\arrow[r, shift left, "(1_{V} \Box_{C} 1_{X}) \otimes \Delta_{D} "] \ar[r, shift right, "\beta^{D}_{V \Box_{C} X} \otimes 1_{D}" ']& (V \Box_{C} X) \otimes D \otimes D \ar[d, "eq \otimes 1_{D} \otimes 1_{D}"]\\
		V \otimes X \ar[r, "1 \otimes \beta^{D}_{X}" '] &V \otimes X \otimes D \arrow[r, shift left, "1_{V} \otimes 1_{X} \otimes \Delta_{D}"] \ar[r, shift right, "1_{V} \otimes \beta^{D}_{X} \otimes 1_{D}" ']& V \otimes X \otimes D \otimes D .
	\end{tikzcd}\]
	Let $f \colon V \to W$ be a morphism of $C$-comodules. By Proposition \ref{Prop : forget right}, there exists a unique morphism $f \Box_{C} 1_{X} \colon V \Box_{C} X \longrightarrow W \Box_{C} X$ in $\VV$.
	
	
Finally we must check that $f \Box_{C} 1_{X}$ is $D$-colinear, i.e. that the following diagram commutes:
	\begin{center}
		\begin{tikzcd}[column sep= 1.5cm,row sep= 1cm]
			V \Box_{C} X \arrow[rrr, "f \Box_{C} 1_{X}"] \arrow[ddd, "\beta^{D}_{V \Box_{C} X}" '] \arrow[rd, "eq_{V}"] &                       &             & W \Box_{C} X \arrow[ddd, "\beta^{D}_{W \Box_{C} X}"] \arrow[ld, "eq_{W}" '] \\
			& V \otimes X \arrow[r, "f \otimes 1_{X}"] \arrow[d, "1_V \otimes \beta^{D}_{X}" '] & W \otimes X \arrow[d, "1_{W} \otimes \beta^{D}_{X}"] &                          \\
			& V \otimes X \otimes D \arrow[r, " f \otimes 1_{X} \otimes 1_{D}" ']           & W \otimes X \otimes D       &                          \\
			(V \Box_{C} X) \otimes D \arrow[rrr, "(f \Box_{D} 1_{X}) \otimes 1_{D}" '] \arrow[ru, "eq_{V} \otimes 1_{D}"]             &                       &             & (W \Box_{C} X) \otimes D \arrow[lu, "eq_{W} \otimes 1_{D}" ']            
		\end{tikzcd}
	\end{center}
	Using the diagram \eqref{morphism left} and the fact that $-\otimes D$ preserves equalizers, we obtain:
	\begin{align*}
		(eq_{W} \otimes 1_D)(f \Box_{D} 1_{X} \otimes 1_{D})\beta^{D}_{V \Box_{C} X} &= (f \otimes 1_{X} \otimes 1_{D})(eq_{V} \otimes 1_D)\beta^{D}_{V \Box_{C} X} \\
		&= (f \otimes 1_{X} \otimes 1_{D})(1_V \otimes \beta^{D}_{X})eq_{V} \hspace*{1.25cm} \text{by diagram \eqref{coaction}}\\
		&= (1_W \otimes \beta^{D}_{X}) (f \otimes 1_X) eq_{V} \\
		&= (1_W \otimes \beta^{D}_{X}) eq_{W} (f \Box_{C} 1_{X}) \hspace*{1.75cm} \text{by diagram \eqref{morphism}}\\
		&= (eq_{W} \otimes 1_D)\beta^{D}_{W \Box_{C} X}(f \Box_{C} 1_{X}) \quad \text{by diagram \eqref{coaction}}.
	\end{align*}
	Since $eq_{W} \otimes 1_{D}$ is a monomorphism, we have \[(f \Box_{D} 1_{X} \otimes 1_{D})\beta^{D}_{V \Box_{C} X} = \beta^{D}_{W \Box_{C} X}(f \Box_{C} 1_{X}).\]
	Hence, this yields the functor $F^{X}$.
	
	Now, let $W$ be an $E$-$C$-bicomodule. By the same argument as above, the object $W \otimes X$ naturally carries the structure of an $E$-$D$-bicomodule, and one checks that $W \Box_{C} X$ inherits an $E$-$D$-bicomodule structure. Thus, we obtain the functor $\overline{F}^{X},$ as desired.
	\epf
	\brq \label{remark: commute tensor}
	Let $C$ be a coalgebra in $\VV$, let $(X, \beta_{X}) \in \VV^{C}$ and $(Y, \alpha_{Y}) \in \,^{C}\VV$. Let $V \in \ob(\VV)$. By the flatness of $V$, we have
	$V \otimes (X \Box_{C} Y) = \textbf{Eq}\big(1_{V} \otimes \beta_{X} \otimes 1_{Y}, 1_{V} \otimes 1_{X} \otimes \alpha_{Y}\big).$
	Moreover, we know that $1_{V} \otimes \beta_{X}$ endows $V \otimes X$ with the structure of a right $C$-comodule; thus, we also have
	\[(V \otimes X) \Box_{C} Y = \textbf{Eq}\big(1_{V} \otimes \beta_{X} \otimes 1_{Y}, 1_{V} \otimes 1_{X} \otimes \alpha_{Y}\big).\]
	Hence $V \otimes (X \Box_{C} Y) \simeq (V \otimes X) \Box_{C} Y $.
	
	\erq
	The following result is an immediate consequence of Propositions \ref{Prop : forget right} and \ref{Prop:funct}.
	\bp
	Let $C, D$ be some coalgebras in $\VV$, and let $X \in \,^{C}\VV^{D}$. Then the functor $\Upsilon^{X}_{r}$ in Proposition \ref{Prop : forget right} can be factorized through the functor $F^{X}$ of Propostion \ref{Prop:funct} as follows: 
	\begin{center}
		\begin{tikzcd}[column sep= 1cm,row sep= 1cm]
			\VV^{C} \ar[rd, "F^{X}" '] \ar[rr , "\Upsilon^{X}_{r}"] &&\VV\\
			&\VV^{D} \ar[ru, "\Omega^{D}" '] &
		\end{tikzcd}
	\end{center}
	where $\Omega^{D} : \VV^{D} \longrightarrow \VV$ is the forgetful functor.
	\ep
	The next two propositions can also be found in \cite[Propositions 2.2.3 and 2.2.2]{aguiar1997internal}:
	\bp\label{Prop : isotrivial}
	Let $C$ be a coalgebra in  $\VV$. Let $(V, \beta) \in \VV^{C}$. Then we have an isomorphism $\bar{\beta} : V \to V \Box_{C} C$ of right $C$-comodules in $\VV$, fitting in the following commutative diagram
	\begin{center}
		\begin{tikzcd}[column sep= 2cm,row sep= 1cm]
			V \ar[r, "\beta"] \ar[rd, dashed,"\bar{\beta}" ', "\sim"]&V \otimes C \\
			&V \Box_{C} C \ar[u, "eq"]
		\end{tikzcd}
	\end{center}
	with inverse $1_{V} \Box_{C}\varepsilon_{C} : V \Box_{C} C \longrightarrow V$.
	\ep
	\bp \label{Prop:associativity}
	Let $C, D, E$ be coalgebras in $\VV$. Let $X \in \VV^{C}, Y \in \,^{C}\VV^{D}$, and $Z \in \,^{D}\VV^{E}$. Then there is a canonical isomorphism of $E$-comodules:
	\[\begin{tikzcd}
		\theta: (X \Box_{C} Y) \Box_{D} Z \ar[r, "\sim"] &X \Box_{C} (Y \Box_{D} Z),
	\end{tikzcd}\]
	fitting in the following commutative diagram
	\[	\begin{tikzcd}
		X \Box_{C} (Y \Box_{D} Z )\ar[r, "eq_{C}"] & X \otimes (Y \Box_{D} Z) \ar[r, "1 \otimes eq_{D}"] & X \otimes Y \otimes Z\\
		( X \Box_{C} Y) \Box_{D} Z \ar[u, "\theta"]\ar[r, "eq_{D}"] & (X \Box_{C} Y) \otimes Z \ar[ru, "eq_{C} \otimes 1" '] &.
	\end{tikzcd}\]
	\ep
	\brq Building on this result, we can define the object $X \Box_C Y \Box_{D} Z$ as the equalizer of the following diagram: \begin{center}
		\begin{tikzcd}[column sep= 2cm,row sep= 1cm]
			X \otimes Y \otimes Z \ar[r, shift left, "\beta^C_X \otimes \beta^D_Y \otimes 1_Z"] \ar[r, shift right, "1_X \otimes \alpha^C_Y \otimes \alpha^D_Z" '] &X \otimes C \otimes Y \otimes D \otimes Z
		\end{tikzcd}
	\end{center}
	We can then express the associativity property as:
	\[(X \Box_{C} Y) \Box_{D} Z \simeq X \Box_{C} Y \Box_{D} Z \simeq X \Box_{C} (Y \Box_{D} Z).\]
	\erq
	The following proposition establishes the associativity of the morphisms:
	\bp \label {Prop: Morphism asso}
	Let $C, D, E$ be coalgebras in $\VV$. Let $X, X' \in \VV^{C}; Y, Y' \in \,^{C}\VV^{D}$, and $Z, Z' \in \,^{D}\VV^{E}$. Let $f : X \to X', g : Y \to Y'$, and $h : Z \to Z'$ be morphisms in $\VV^{C}, \,^{C}\VV^{D}$ and in $\,^{D}\VV^{E}$, respectively. Then the following diagram commutes:
	\begin{center}
		\begin{tikzcd}[column sep= 2cm,row sep= 1cm]
			(X \Box_{C} Y) \Box_{D} Z\ar[r, "\sim", "\theta" ']  \ar[d, "(f \Box_{C} g) \Box_{D} h" ']&X \Box_{C} (Y \Box_{D} Z)  \ar[d, "f \Box_{C} (g \Box_{D} h)"]\\
			(X' \Box_{C} Y') \Box_{D} Z'\ar[r, "\sim", "\theta' " '] &X' \Box_{C} (Y' \Box_{D} Z') 
		\end{tikzcd}
	\end{center}
	where $\theta$ and $\theta'$ come from Proposition \ref{Prop:associativity}.
	\ep
	\bpf
	The following diagram commutes:
	\begin{center}
		\begin{tikzcd}[column sep= 2cm,row sep= 1.5cm]
			&X \otimes Y \otimes Z \ar[ddd, "f \otimes g \otimes h", ""{name=D}]\\
			(X \Box_{C} Y) \Box_{D} Z \arrow[ru, "(eq_{C} \otimes 1)eq_{D}", ""{name = U}]\arrow[d, "(f \Box_{C} g) \Box_{D} h" '] \ar[rr, dashed, "\theta", pos = 0.6]&& X \Box_{C} (Y \Box_{D} Z) \ar[d, "f \Box_{C} (g \Box_{D} h)", ""{name = K}] \ar[lu, "(1 \otimes eq_{D})eq_{C}" '] \\
			(X' \Box_{C} Y') \Box_{D} Z' \arrow[rd, "(eq_{C} \otimes 1)eq_{D}"] \ar[rr, dashed, "\theta' ", pos = 0.6]&& X' \Box_{C} (Y' \Box_{D} Z') \ar[ld, "(1 \otimes eq_{D})eq_{C}"] \\
			&X' \otimes Y' \otimes Z'
			\arrow[to path={(D) node[left, scale=1]{$\underset{\tiny{\text{\hspace*{-1.5cm}(by Proposition \ref{Prop : cotensor morphism})}}}{\circlearrowleft \hspace*{2cm}}$} (U)}]{}
			\arrow[to path={(K) node[left, scale=1]{$\underset{\tiny{\text{\hspace*{-1.5cm}(by Proposition \ref{Prop : cotensor morphism})}}}{\circlearrowleft \hspace*{2cm}}$} (U)}]{}
		\end{tikzcd}
	\end{center}
	Then
	\begin{align*}
		(1 \otimes eq_{D})eq_{C} \theta' \big[(f \Box_{C} g) \Box_{D} h\big]&= (eq_{C} \otimes 1)eq_{D} \big[(f \Box_{C} g) \Box_{D} h\big]\quad (\text{by Proposition \ref{Prop:associativity}})\\
		&=( f \otimes g \otimes h )(eq_{C} \otimes 1)eq_{D} \quad (\text{by Proposition \ref{Prop : cotensor morphism}}) \\
		&= ( f \otimes g \otimes h )(1 \otimes eq_{D})eq_{C} \theta \quad (\text{by Proposition \ref{Prop:associativity}})\\
		&= (1 \otimes eq_{D})eq_{C}(f \Box_{C} (g \Box_{D} h)) \theta.
	\end{align*}
	Since $(1 \otimes eq_D)eq_C$ is a monomorphism, the result holds.
	\epf 
	\section{Takeuchi Cocategories and braided cogroupoids}
	Let $\VV = (\VV, \otimes, I$) be a monoidal category. In this section, we first present the notion of a categorical $\VV$-coalgebra and, when $\VV$ is braided, the notions of $\VV$-cocategory and $\VV$-cogroupoid. We then show that these structures induce appropriate equivalences between the associated comodule categories. This generalizes the results of \cite{MR3285340}.
	\subsection{Categorical \texorpdfstring{$\VV$}{V}-coalgebras} 
	\bd \label{Def: Cocat}
	A \textbf{categorical} $\VV$-\textbf{coalgebra} \textbf{C} consists of: 
	\begin{itemize}
		\item a set of objects ob(\textbf{C});
		\item for any $X, Y \in$ ob(\textbf{C}), an object $\Cc(X, Y)$ of $\VV$;
		\item for any $X, Y, Z \in \ob(\Cc)$, morphisms of $\VV$
		\[\Delta^{Z}_{X, Y}: \Cc(X, Y) \to \Cc(X, Z) \otimes \Cc(Z, Y) \quad \text{and} \quad \varepsilon_{X}: \Cc(X, X) \to I,\]
		such that, for any $X, Y, Z, T \in$ ob(\textbf{C}), the following diagrams commute:
		\begin{equation} \label{asso}
			\begin{tikzcd}
				\Cc(X, Y) \ar[d, "\Delta^{T}_{X, Y}" '] \ar[r, "\Delta^{Z}_{X, Y}"] &\Cc(X, Z) \otimes \Cc(Z, Y) \ar[d, "\Delta^{T}_{X, Z} \otimes 1"]\\
				\Cc(X, T) \otimes \Cc(T, Y) \ar[r, "1 \otimes \Delta^{Z}_{T, Y}" '] &\Cc(X, T) \otimes \Cc(T, Z) \otimes \Cc(Z, Y)
			\end{tikzcd}
		\end{equation}
		\begin{equation}\label{counit}
			\begin{tikzcd}
				\Cc(X, Y)\ar[rd, equals, shift left = 1ex] \ar[d, "\Delta^{Y}_{X, Y}" '] &&\Cc(X, Y) \ar[d, "\Delta^{X}_{X, Y}"] \ar[rd, equals, shift left = 1ex]&\\
				\Cc(X, Y) \otimes \Cc(Y, Y) \ar[r, "1 \otimes \varepsilon_{Y}" '] &\Cc(X, Y) &\Cc(X, X) \otimes \Cc(X, Y) \ar[r, "\varepsilon_{X} \otimes 1" '] &\Cc(X, Y).
			\end{tikzcd}
		\end{equation}  
	\end{itemize}
	\ed

	We denote the morphisms $\Delta^{Z}_{X, Y}$ and $\varepsilon_{X}$ by
	\[\begin{tikzpicture}[scale=0.44, base align tikzpicture]
		\tiny
		\draw[above](1,0) node{$\Cc(X, Y)$};
		\draw(1,0) -- (1,-1);
		\coproduit{2.5}{-2.5}{1.5}
		\draw (-1.5,0) -- (4,0);
		\draw(-1.5,-3) -- (4,-3);
		\draw [fill=white] (1,-1.25) circle (0.8);
		\draw (1,-1.25) node {\tiny $\Delta^{Z}_{X, Y}$};
		\draw[below](-0.5,-3) node{$\Cc(X,Z)$};
		\draw[below](3,-3) node{$\Cc(Z,Y)$};
		\draw (-0.5,-2.5) -- (-0.5,-3);
		\draw (2.5,-2.5) -- (2.5,-3);
	\end{tikzpicture}
	\quad \textrm{and} \quad \begin{tikzpicture}[scale=0.44, base align tikzpicture]
		\tiny
		\draw[above](1,0) node{$\Cc(X, X)$};
		\draw(1,0) -- (1,-2);
		\draw (0,0) -- ( 2,0);
		\draw(0,-2.5) -- (2,-2.5);
		\draw[below](1,-2.5) node{$I$};
		\draw [fill=white] (1,-1) circle (0.6);
		\draw (1,-1) node {\footnotesize{$\varepsilon_{X}$}};
		\draw (1,-2) node {$\bullet$};
		\draw(3,-2) node{$,$};
	\end{tikzpicture}\]
	and the axioms \eqref{asso} and \eqref{counit} read 
	\begin{equation*}
		\begin{tikzpicture}[scale=0.44, base align tikzpicture]
			\tiny
			\draw(-1.5,0)--(6,0);
			\draw(-1.5,-5.5)--(6,-5.5);
			\draw(4.5,-2.5)--(4.5,-5.5);
			\draw(3,0)--(3,-1);
			\draw[below](5.5, -5.5) node{$\Cc(Z, Y)$};
			\draw[below](-0.5,-5.5) node{$\Cc(X, T)$};
			\draw[below](2.5,-5.5) node{$\Cc(T, Z)$};
			\coproduit{4.5}{-2.5}{1.5}
			\coproduit{2.5}{-5.5}{1.5}
			\draw [fill=white] (3,-1.25) circle (0.8);
			\draw (3,-1.25) node {\tiny $\Delta^{Z}_{X, Y}$};
			\draw[above](3.5,0) node{$\Cc(X, Y)$};
			\draw(1.5,-2.5) ..controls +(0,-0.7) and +(0,0.7).. (1,-3.5);
			\draw [fill=white] (1,-3.75) circle (0.8);
			\draw (1,-3.75) node {\tiny $\Delta^{T}_{X, Z}$};
			\draw(7,-4) node{$=$};
			\draw(8,0)--(16,0);
			\draw(8,-5.5)--(16,-5.5);
			\draw(9,-2.5)--(9,-5.5);
			\draw(10.5,0)--(10.5,-1);
			\draw[below](14.75, -5.5) node{$\Cc(Z, Y)$};
			\draw[below](9,-5.5) node{$\Cc(X, T)$};
			\draw[below](11.75,-5.5) node{$\Cc(T, Z)$};
			\coproduit{12}{-2.5}{1.5}
			\coproduit{14}{-5.5}{1.5}
			\draw[above](10.5,0) node{$\Cc(X, Y)$};
			\draw(12,-2.5) ..controls +(0,-0.7) and +(0,0.7).. (12.5,-3.5);
			\draw [fill=white] (10.5,-1.25) circle (0.8);
			\draw (10.5,-1.25) node {\tiny $\Delta^{T}_{X, Y}$};
			\draw [fill=white] (12.5,-3.75) circle (0.8);
			\draw (12.5,-3.75) node {\tiny $\Delta^{Z}_{T, Y}$};
			\draw(17,-4) node{;};
			\draw(19,-0.5)--(23,-0.5);
			\draw(19,-5)--(23,-5);
			\draw(20.5,-0.5)--(20.5,-1);
			\draw(22,-3)--(22,-5);
			\draw(19,-3)--(19,-4.25);
			\draw[below](22, -5) node{$\Cc(X, Y)$};
			\draw(19,-4.25) node{$\bullet$};
			\coproduit{22}{-3}{1.5}
			\draw [fill=white] (20.5,-1.75) circle (0.8);
			\draw (20.5,-1.75) node {\tiny $\Delta^{X}_{X, Y}$};
			\draw [fill=white] (19,-3.25) circle (0.6);
			\draw (19,-3.25) node {\footnotesize{$\varepsilon_{X}$}};
			\draw[above](20.5, -0.5) node{$\Cc(X, Y)$};
			\draw(24,-4) node{$=$};
			\draw(26,-2)--(26,-4.5);
			\draw(25,-4.5)--(27,-4.5);
			\draw[above](26, -2) node{$\Cc(X, Y)$};
			\draw[below](26, -4.5) node{$\Cc(X, Y)$};
			\draw(25,-2)--(27,-2);
			\draw(28.5,-4) node{$=$};
			\draw(29.5,-0.5)--(33.5,-0.5);
			\draw(29.5,-5)--(33.5,-5);
			\draw(31.5,-0.5)--(31.5,-1);
			\coproduit{33}{-3}{1.5}
			\draw[above](31.5, -0.5) node{$\Cc(X, Y)$};
			\draw [fill=white] (31.5,-1.75) circle (0.8);
			\draw (31.5,-1.75) node {\tiny $\Delta^{Y}_{X, Y}$};
			\draw(33,-3.5)--(33,-4.25);
			\draw [fill=white] (33,-3.25) circle (0.6);
			\draw (33,-3.25) node {\footnotesize{$\varepsilon_{Y}$}};
			\draw(33,-4.25) node{$\bullet$};
			\draw(30,-3)--(30,-5);
			\draw[below](30.5, -5) node{$\Cc(X, Y)$};
			\draw(34.5,-4) node{.};
		\end{tikzpicture}
	\end{equation*}

	If $\Cc$ is a categorical $\VV$-coalgebra and $X, Y \in \ob(\Cc)$, then it follows immediately that $\Cc(X, X)$ is a coalgebra in $\VV$, and that $\Cc(X, Y)$ is a $\Cc(X, X)$-$\Cc(Y, Y)$ bicomodule in $\VV$ via $\Delta^X_{X, Y}$ and $\Delta^{Y}_{X, Y}$ respectively. 

	\bdep \label{Prop : Deltabar}
	Let $\VV$ be a flat regular monoidal category. Let $\Cc$ be a categorical $\VV$-coalgebra, and let $X, Y, Z \in \ob(\Cc)$. Then $\Delta^{Z}_{X, Y}$ induces a morphism in $\,^{\Cc(X, X)}\VV^{\Cc(Y, Y)}$:
	\[\bar{\Delta}^{Z}_{X, Y} : \Cc(X, Y) \longrightarrow \Cc(X, Z) \Box_{\Cc(Z, Z)} \Cc(Z, Y).\]
	We say that $\Cc$ is a \textbf{Takeuchi categorical $\VV$-coalgebra} if, for every $X, Y, Z \in \ob(\Cc)$, the morphism $\bar{\Delta}^{Z}_{X, Y}$ is an isomorphism.
	\edep
	\bpf
	We consider the following equalizer
	\begin{center}
		\begin{tikzcd}
			\Cc(X, Z) \Box_{\Cc(Z, Z)} \Cc(Z, Y) \arrow[r, "eq"] & \Cc(X, Z) \otimes \Cc(Z, Y)  \arrow[dd, shift right, "\Delta^{Z}_{X, Z} \otimes 1" '] \arrow[dd, shift left, "1 \otimes \Delta^{Z}_{Z, Y}"]\\
			& \\
			\Cc(X, Y) \arrow[uu, dashed, "\bar{\Delta}^{Z}_{X, Y}"] \arrow[ruu, "\Delta^{Z}_{X, Y}"]    & \Cc(X, Z) \otimes \Cc(Z, Z) \otimes \Cc(Z, Y) 
		\end{tikzcd}
	\end{center}
	Since $(\Delta^{Z}_{X, Z} \otimes 1)\Delta^{Z}_{X, Y} = (1 \otimes \Delta^{Z}_{Z, Y})\Delta^{Z}_{X, Y}$, there exists a unique morphism $\bar{\Delta}^{Z}_{X, Y}$ fitting in the above diagram.
	
	As in Proposition \ref{Prop:funct}, we see that $\Cc(X, Z) \Box_{\Cc(Z, Z)} \Cc(Z, Y)$ is a $\Cc(X, X)$-$\Cc(Y, Y)$-bicomodule via ($\Delta^{X}_{X, Z} \Box_{\Cc(Z, Z)} 1$) and ($1 \Box_{\Cc(Z, Z)} \Delta^{Y}_{Z, Y}$). It is clear that $\Delta^{Z}_{X, Y}$ is $\Cc(X, X)$-$\Cc(Y, Y)$-bicolinear. Since the equalizer map $eq$ is a monomorphism, it follows that $\overline{\Delta}^{Z}_{X, Y}$ is also $\Cc(X, X)$-$\Cc(Y, Y)$-bicolinear.
	\epf

	When $\VV = \,_{k}\MM$, this corresponds to the notion of a Morita-Takeuchi context introduced in \cite{MR472967}. There, it is shown that the analogues of Morita equivalence hold for categories of comodules as well. In the following theorem, we will show that if $X, Y$ are objects in a Takeuchi categorical $\VV$-coalgebra, where $\VV$ is an arbitrary flat regular monoidal category, then the categories of comodules over $\Cc(X, X)$ and $\Cc(Y, Y)$ are equivalent.
	
	\bt \label{Thm : equi}
	Let $\VV$ be a flat regular monoidal category. Let $\Cc$ be a Takeuchi categorical $\VV$-coalgebra. Then for any $X, Y \in \ob(\Cc)$, we have equivalences of categories that are inverse to each other
	\begin{align*}
		\VV^{\Cc(X, X)} \cong & \, \, \VV^{\Cc(Y, Y)} \quad \quad &\VV^{\Cc(Y, Y)} \cong & \, \, \VV^{\Cc(X, X)}\\
		V \longmapsto &V \Box_{\Cc(X, X)} \Cc(X, Y) &V \longmapsto &  V \Box_{\Cc(Y, Y)} \Cc(Y, X).
	\end{align*}
	\et
	\bpf
	For any $X, Y \in \ob(\CC)$, $\CC(X, Y)$ is a $\CC(X, X)$-$\CC(Y, Y)$-bicomodule via $\Delta^{X}_{X, Y}$ and $\Delta^{Y}_{X, Y}$. Then by Proposition \ref{Prop:funct}, we have the following functors:
	\begin{align*}
		F \colon \, \VV^{\Cc(X, X)} &\longrightarrow \VV^{\Cc(Y, Y)}\\
		V &\longmapsto V \Box_{\Cc(X, X)} \Cc(X, Y)
	\end{align*}
	and
	\begin{align*}
		G \colon \, \VV^{\Cc(Y, Y)} &\longrightarrow \VV^{\Cc(X, X)}\\
		V &\longmapsto V \Box_{\Cc(Y, Y)} \Cc(Y, X).
	\end{align*}
	Let $V \in \VV^{\Cc(X, X)}$. We have an isomorphism of $\Cc(X, X)$-comodules $\theta_V \colon V \simeq GF(V)$ given by the composition
	\begin{align*}
		V &\simeq V \Box_{\Cc(X, X)} \Cc(X, X) \hspace*{4cm} (\text{by Proposition \ref{Prop : isotrivial}})\\
		&\simeq V \Box_{\Cc(X, X)} \big(\Cc(X, Y) \Box_{\Cc(Y, Y)} \Cc(Y, X)\big) \quad \big(\text{$\bar{\Delta}^{Y}_{X, X}$ is an isomorphism in $\,^{\Cc(X, X)}\VV^{\Cc(Y, Y)}$}\big)\\
		&\simeq \big(V \Box_{\Cc(X, X)} \Cc(X, Y)\big) \Box_{\Cc(Y, Y)} \Cc(Y, X)  \quad (\text{by Proposition \ref{Prop:associativity}})\\
		&= G \circ F(V).
	\end{align*}
To conclude, we must verify the naturality of $\theta$. To do so, let $f : V \longrightarrow W$ be a morphism in $\VV^{\Cc(X, X)}$. We observe the following diagram:
	\begin{center}
		\tiny
		\begin{tikzcd}[column sep= 2.1cm,row sep= 1.5cm]
			V \otimes \Cc(X, X) \ar[rrr, bend left = 12, "f \otimes 1_{\Cc(X, X)}"]& V \arrow[d, "\bar{\beta}_{V}", ""{name = D}] \arrow[r, "f"] \arrow[l, "\beta_V" ']            & W\arrow[d, "\bar{\beta}_{W}"]  \arrow[r, "\beta_W"]            & W \otimes \Cc(X, X) \\
			& V \Box_{\Cc(X, X)} \Cc(X, X)\arrow[r, "f \Box_{\Cc(X, X)} 1_{\Cc(X, X)}"] \arrow[lu, "eq"] & W \Box_{\Cc(X, X)} \Cc(X, X) \arrow[ru, "eq", ""{name = U}] &  
			\arrow[to path={(D) node[left, scale=1]{$\underset{\tiny{}}{\circlearrowleft \hspace*{-5cm}}$} (U)}]{}
		\end{tikzcd}
	\end{center}
	We have
	\begin{align*}
		eq (f \Box_{\Cc(X, X)} 1_{\Cc(X, X)}) \bar{\beta}_{V} &= (f \otimes 1_{\Cc(X, X)})eq \bar{\beta}_{V} &(\text{by Proposition \ref{Prop : forget right}})\\
		&= (f \otimes 1_{\Cc(X, X)})\beta_{V} &(\text{by Proposition \ref{Prop : isotrivial}})\\
		&= \beta_{W} f &(\text{$f$ is $\Cc(X, X)$-colinear})\\
		&= eq \bar{\beta}_{W} f  &(\text{by Proposition \ref{Prop : isotrivial}}).
	\end{align*}
	Since $eq$ is a monomorphism, we deduce that 
	\begin{equation}
		(f \Box_{\Cc(X, X)} 1_{\Cc(X, X)}) \bar{\beta}_{V} = \bar{\beta}_{W} f \label{(3.2.1)}. 
	\end{equation}
	Next, we observe the following diagram (with identity morphisms abbreviated as $1$ for simplicity):
	\begin{center}
		\tiny
		\begin{tikzcd}[column sep= 2.2cm,row sep= 1.5cm]
			V \otimes \Cc(X, X) \ar[ddd, bend right = 80, "f \otimes 1" ' ]\ar[r, "1_{V} \otimes \bar{\Delta}^{Y}_{X,X}"] &V \otimes \big(\Cc(X, Y) \Box_{\Cc(Y, Y)} \Cc(Y,X) \big)\ar[ddd, bend left = 80, "f \otimes (1 \Box_{\Cc(Y, Y)} 1)"]\\
			V \Box_{\Cc(X, X)} \Cc(X, X) \ar[d, "f \Box_{\Cc(X, X)} 1", ""name = D]\ar[u, "eq"] \ar[r, "1_{V} \Box_{\Cc(X, X)} \bar{\Delta}^{Y}_{X, X}"] &V \Box_{\Cc(X, X)} \big(\Cc(X, Y) \Box_{\Cc(Y, Y)} \Cc(Y,X)\big) \ar[u, "eq"] \ar[d, "f \Box_{\Cc(X, X)} (1 \Box_{\Cc(Y, Y)} 1)" ']\\
			W \Box_{\Cc(X, X)} \Cc(X, X) \ar[d, "eq"] \ar[r, "1_{W} \Box_{\Cc(X, X)} \bar{\Delta}^{Y}_{X, X}"] &W \Box_{\Cc(X, X)} \big(\Cc(X, Y) \Box_{\Cc(Y, Y)} \Cc(Y,X)\big)\ar[d, "eq", ""name = U]\\
			W \otimes \Cc(X, X) \ar[r, "1_{W} \otimes \bar{\Delta}^{Y}_{X, X}"]& W \otimes \big(\Cc(X, Y) \Box_{\Cc(Y, Y)} \Cc(Y,X)\big)
			\arrow[to path={(D) node[left, scale=1]{$\underset{\tiny{}}{\circlearrowleft \hspace*{-5cm}}$} (U)}]{}
		\end{tikzcd}
	\end{center}
	We have, by Proposition \ref{Prop: forget left},
	\begin{align*}
		&eq \big(1_{W} \Box_{\Cc(X, X)} \bar{\Delta}^{Y}_{X, X}\big) \big(f \Box_{\Cc(X, X)} 1_{\Cc(X, X)}\big) = \big(1_{W} \otimes \bar{\Delta}^{Y}_{X, X} \big) eq \big(f \Box_{\Cc(X, X)} 1_{\Cc(X, X)}\big)\\
		&= \big(1_{W} \otimes \bar{\Delta}^{Y}_{X, X} \big)(f \otimes 1_{\Cc(X, X)})eq \hspace*{4cm} (\text{by Proposition \ref{Prop : forget right}})\\
		&= \big(f \otimes (1_{\Cc(X, Y)} \Box_{\Cc(Y, Y)} 1_{\Cc(Y, X)})\big)\big(1_{V} \otimes \bar{\Delta}^{Y}_{X, X}\big)eq\\
		&= \big[f \otimes \big(1_{\Cc(X, Y)} \Box_{\Cc(Y, Y)} 1_{\Cc(Y, X)}\big)\big] eq \big(1_{V} \Box_{\Cc(X, X)} \bar{\Delta}^{Y}_{X, X}\big)  \quad (\text{by Proposition \ref{Prop: forget left}})\\
		&= eq \big[f \Box_{\Cc(X, X)} \big(1_{\Cc(X, Y)} \Box_{\Cc(Y, Y)} 1_{\Cc(Y, X)}\big)\big]\big(1_{V} \Box_{\Cc(X, X)} \bar{\Delta}^{Y}_{X, X}\big) \quad (\text{by Proposition \ref{Prop : forget right}}).
	\end{align*}
	Once more, the monomorphism property of $eq$ implies that
	\begin{equation}
		\scriptsize
		\big(1_{W} \Box_{\Cc(X, X)} \bar{\Delta}^{Y}_{X, X}\big) \big(f \Box_{\Cc(X, X)} 1_{\Cc(X, X)}\big) = \big[f \Box_{\Cc(X, X)} \big(1_{\Cc(X, Y)} \Box_{\Cc(Y, Y)} 1_{\Cc(Y, X)}\big)\big]\big(1_{V} \Box_{\Cc(X, X)} \bar{\Delta}^{Y}_{X, X}\big), \label{(3.2.2)}
	\end{equation}
	which means that the square in the center commutes.\\
	By Proposition \ref{Prop: Morphism asso}, we also have that the following diagram commutes:
	\begin{equation}
		\scriptsize
		\begin{tikzcd}[column sep= 2.2cm,row sep= 1.5cm]
			V \Box_{\Cc(X, X)} \big(\Cc(X, Y) \Box_{\Cc(Y, Y)} \Cc(Y,X)\big) \ar[d, "f \Box_{\Cc(X, X)} (1 \Box_{\Cc(X, X)} 1)", ""name = D]\ar[r, "\sim"] &\big(V \Box_{\Cc(X, X)} \Cc(X, Y)\big) \Box_{\Cc(Y, Y)} \Cc(Y,X) \ar[d, "(f \Box_{\Cc(X, X)} 1) \Box_{\Cc(X, X)} 1", ""name = U]\\
			W \Box_{\Cc(X, X)} \big(\Cc(X, Y) \Box_{\Cc(Y, Y)} \Cc(Y,X)\big) \ar[r, "\sim"] &\big(W \Box_{\Cc(X, X)} \Cc(X, Y)\big) \Box_{\Cc(Y, Y)} \Cc(Y,X) 
			\arrow[to path={(D) node[left, scale=1]{$\underset{\tiny{}}{\circlearrowleft \hspace*{-8cm}}$} (U)}]{}
		\end{tikzcd} \label{(3.2.3)}
	\end{equation}
	By combining \eqref{(3.2.1)}, \eqref{(3.2.2)} and \eqref{(3.2.3)}, the following diagram commutes
	\begin{equation}
		\scriptsize
		\begin{tikzcd}[column sep= 2.2cm,row sep= 1.5cm]
			V  \ar[d, "f" ']\ar[r, "\theta_V"] &GF(V) \ar[d, "f"]\\
			W  \ar[r, "\theta_{W}"] &GF(W)
		\end{tikzcd}
	\end{equation}
	and we conclude that $\theta$ is natural. Similarly, we also have $\id \simeq FG$, and thus $F$ and $G$ are inverse equivalences. 
	\epf
	\subsection{\texorpdfstring{$\VV$}{V}-cocategories} 
	Our goal here is to generalize the concept of $k$-cocategory in \cite{MR3285340} to the braided setting, taking into account the algebra structure on the tensor product.
	
	We now let $\VV$ be a flat regular braided category endowed with a braiding $c$.
	\bd
	A $\VV$-\textbf{cocategory} \textbf{C} consists of: 
	\begin{itemize}
		\item a set of objects ob(\textbf{C});
		\item for any $X, Y \in$ ob(\textbf{C}), an \textit{algebra} $\Cc(X, Y)$ in $\VV$ with product and unit denoted respectively by
		\[ m = \begin{tikzpicture}[scale=0.43, base align tikzpicture]
			\tiny
			\draw(-1,0)--(4,0);
			\draw(-1,-3)--(4,-3);
			\draw(1.5,-2)--(1.5,-3);
			\draw[above](0,0) node{$\Cc(X, Y)$};
			\draw[above](3.25,0) node{$\Cc(X, Y)$};
			\produit{0}{0}{1.5}
			\draw[below](1.5,-3) node{$\Cc(X, Y)$};
		\end{tikzpicture} 
		\qquad \text{and} \qquad u_{X, Y} = \begin{tikzpicture}[scale=0.43, base align tikzpicture]
			\tiny
			\draw(0,1)--(3,1);
			\draw(0,-3)--(3,-3);
			\draw(1.5, 0.25)--(1.5,-3);
			\draw(1.5,0.25) node{$\bullet$};
			\draw [fill=white] (1.5,-1.5) circle (0.85);
			\draw (1.5,-1.5) node {\footnotesize{$u_{X, Y}$}};
			\draw[above](1.5,1) node{$I$};
			\draw[below](1.5,-3) node{$\Cc(X, Y)$};
		\end{tikzpicture};\]
		\item for any $X, Y, Z \in \ob(\Cc)$, \textit{algebra morphisms} in $\VV$
		\[\Delta^{Z}_{X, Y}: \Cc(X, Y) \to \Cc(X, Z) \otimes_{c} \Cc(Z, Y) \quad \text{and} \quad \varepsilon_{X}: \Cc(X, X) \to I\] that satisfy the categorical coalgebra axioms \eqref{asso} and \eqref{counit}.
	\end{itemize}
	%

	%
	A $\VV$-cocategory $\CC$ is called a \textbf{Takeuchi $\VV$-cocategory} if, for every objects $X, Y, Z$ in $\Cc$, the morphism
	\begin{align*}
		\bar{\Delta}^{Z}_{X, Y} \colon \Cc(X, Y) &\longrightarrow \Cc(X, Z) \Box_{\Cc(Z, Z)} \Cc(Z, Y)
	\end{align*}
	is a $\Cc(X, X)-\Cc(Y,Y)$-bicolinear isomorphism.
	\ed
	It follows from the definition that a Takeuchi $\VV$-cocategory is, in particular, a Takeuchi categorical $\VV$-coalgebra and therefore induces an equivalence of categories. Our aim is to check that these equivalences can be made into monoidal equivalences.
	The morphism appearing in the following result can be found, for example, in \cite[Lemma 2.3]{MR2294763}; however, for the sake of completeness, we include a proof of its construction here.
	
	\begin{lemma}\label{Lemma: Tensor Compa}
		Let $\CC$ be a Takeuchi $\VV$-cocategory, and let \( V, W \in \VV^{\Cc(X, X)}\) for any $X \in \ob(\CC)$. There exists a natural isomorphism
		\[
		\widetilde{F}_{V, W} : \big(V \Box_{\Cc(X, X)} \Cc(X, Y)\big) \otimes \big(W \Box_{\Cc(X, X)} \Cc(X, Y)\big) \longrightarrow (V \otimes W) \Box_{\Cc(X, X)} \Cc(X, Y).
		\]
	\end{lemma}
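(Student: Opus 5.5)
The plan is to write $C = \Cc(X, X)$ and $D = \Cc(X, Y)$, define $\widetilde{F}_{V,W}$ explicitly through the equalizers, and check colinearity and naturality. Invertibility is then reduced, via the equivalence of Theorem \ref{Thm : equi}, to the Takeuchi isomorphism $\bar{\Delta}^{Y}_{X,X}$.

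\emph{Preliminaries.} First I fix the monoidal structure on $\VV^{C}$ implicit in the statement. The coaction on $V \otimes W$ is
\[\beta_{V \otimes W} = (1_{V} \otimes 1_{W} \otimes m_{C})(1_{V} \otimes c_{C, W} \otimes 1_{C})(\beta_{V} \otimes \beta_{W}).\]
It is coassociative because $\Delta^{X}_{X,X} : C \to C \otimes_{c} C$ is an algebra morphism and $c$ is natural. In the same way I define the candidate map
\[\phi = (1_{V} \otimes 1_{W} \otimes m_{D})(1_{V} \otimes c_{D, W} \otimes 1_{D})(eq \otimes eq) : (V \Box_{C} D) \otimes (W \Box_{C} D) \longrightarrow V \otimes W \otimes D.\]

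\emph{Factorization through the equalizer.} I show that $\phi$ equalizes $\beta_{V \otimes W} \otimes 1_{D}$ and $1_{V \otimes W} \otimes \Delta^{X}_{X,Y}$. The key input is that $\Delta^{X}_{X,Y}$ is an algebra morphism $D \to C \otimes_{c} D$. Hence $(1 \otimes \Delta^{X}_{X,Y})\phi$ can be rewritten with $\Delta^{X}_{X,Y}$ applied separately on the two $D$-factors, multiplied using the braided product (\ref{cross}). On each factor I then replace $\Delta^{X}_{X,Y}\,eq$ by $(\beta \otimes 1)\,eq$, using the equalizer relation. Finally the braidings are moved past each other using naturality of $c$ and the hexagon axioms (\ref{(braid)}). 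This graphical computation is routine.

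By flatness, $eq \otimes eq$ is again the appropriate equalizer (Remark \ref{remark: commute tensor}), so the universal property gives a unique $\widetilde{F}_{V,W}$ with $eq \circ \widetilde{F}_{V,W} = \phi$. Right $\Cc(Y,Y)$-colinearity follows in the same way: $\Delta^{Y}_{X,Y}$ is also an algebra morphism, and the coaction on the source is the diagonal (braided) one. Naturality in $V$ and $W$ follows, exactly as in the proof of Theorem \ref{Thm : equi}, from Proposition \ref{Prop : cotensor morphism} and the fact that $eq$ is a monomorphism.

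\emph{Invertibility (main obstacle).} Without an antipode I cannot write an inverse by a formula. My first attempt is to use the equivalence $G = -\Box_{\Cc(Y,Y)} \Cc(Y,X)$ of Theorem \ref{Thm : equi}: since $G$ is an equivalence, it reflects isomorphisms, so it suffices to show that $G(\widetilde{F}_{V,W})$ is an isomorphism. Using $\bar{\Delta}^{Y}_{X,X}$, Proposition \ref{Prop:associativity} and Remark \ref{remark: commute tensor}, I would identify both sides with $V \otimes W$, up to the isomorphisms $\theta_{V}$, $\theta_{W}$, $\theta_{V \otimes W}$. Concretely, I would compare the two composites after composing with the equalizer monomorphisms into $V \otimes W \otimes D \otimes \Cc(Y,X)$. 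The crux is an identity stating that the map
\[\Cc(X,X) \otimes \Cc(X,X) \xrightarrow{\;\bar{\Delta}^{Y} \otimes \bar{\Delta}^{Y}\;} \cdots \xrightarrow{\;\text{braided product}\;} D \Box \Cc(Y,X)\]
agrees with $\bar{\Delta}^{Y}_{X,X} \circ m_{C}$. This holds because $\Delta^{Y}_{X,X}$ is multiplicative into $D \otimes_{c} \Cc(Y,X)$.

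If this bookkeeping becomes unmanageable, the fallback is to construct the inverse directly. One would send $(V \otimes W) \Box_{C} D$ into $(V \Box_{C} D) \otimes (W \Box_{C} D)$ through
\[(V \otimes W) \Box_{C} D \;\cong\; (V \otimes W) \Box_{C} D \Box_{\Cc(Y,Y)} \Cc(Y,X) \Box_{C} D,\]
which uses $\bar{\Delta}^{Y}_{X,X}$ and $\bar{\Delta}^{X}_{X,Y}$, splitting the middle factors appropriately. One would then verify the two composites are identities using the counit axioms (\ref{counit}).
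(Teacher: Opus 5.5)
\emph{Step 1 is fine; the invertibility argument has a gap.} Your construction of $\widetilde{F}_{V,W}$ matches the paper's Step 1: the morphism $\phi$, its factorization using multiplicativity of $\Delta^{X}_{X,Y}$, colinearity and naturality. Your ``crux identity'' is essentially the paper's Step 2. The problem is how you pass from that identity to invertibility.

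The source of $G(\widetilde{F}_{V,W})$ is
\[G\big(F(V)\otimes F(W)\big)=\big((V\Box_{\Cc(X,X)}\Cc(X,Y))\otimes(W\Box_{\Cc(X,X)}\Cc(X,Y))\big)\Box_{\Cc(Y,Y)}\Cc(Y,X),\]
where $F(V)\otimes F(W)$ carries the braided \emph{diagonal} $\Cc(Y,Y)$-coaction. Remark \ref{remark: commute tensor} only applies when the coaction sits on a single tensor factor, so it gives no identification of this object with $V\otimes W$. The only comparison map you have is $\widetilde{G}_{F(V),F(W)}\circ(\theta_V\otimes\theta_W)$, where $\widetilde{G}$ is the analogue of $\widetilde{F}$ for $-\Box_{\Cc(Y,Y)}\Cc(Y,X)$. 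Invertibility of $\widetilde{G}$ is a statement of exactly the same kind as the lemma itself. What your computation actually proves is
\[\theta_{V\otimes W}=G(\widetilde{F}_{V,W})\circ\widetilde{G}_{F(V),F(W)}\circ(\theta_V\otimes\theta_W).\]
This only shows that $G(\widetilde{F}_{V,W})$ is a split epimorphism and that $\widetilde{G}_{F(V),F(W)}$ is a split monomorphism. It does not give an isomorphism.

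\emph{The missing step.} Run the same identity a second time, with the roles of $X$ and $Y$ exchanged, applied to $F(V),F(W)\in\VV^{\Cc(Y,Y)}$. This shows that $F(\widetilde{G}_{F(V),F(W)})\circ\widetilde{F}_{GF(V),GF(W)}$ is an isomorphism, so $F(\widetilde{G}_{F(V),F(W)})$ is split epi. Since $F$ is full and faithful, $\widetilde{G}_{F(V),F(W)}$ is then split epi as well. Being both split mono and split epi, $\widetilde{G}_{F(V),F(W)}$ is invertible. Hence $G(\widetilde{F}_{V,W})$ is invertible, and since $G$ reflects isomorphisms, so is $\widetilde{F}_{V,W}$. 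This two-sided argument is the paper's Step 3.

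\emph{The fallback does not close the gap.} After passing to $(V\otimes W)\Box_{\Cc(X,X)}\Cc(X,Y)\Box_{\Cc(Y,Y)}\Cc(Y,X)\Box_{\Cc(X,X)}\Cc(X,Y)$, you never say how to ``split the middle factors'' so as to separate the $V$-part from the $W$-part. Without an antipode or a Galois-type inverse, there is no evident morphism that does this.

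\emph{Minor point.} The factorization of $\phi$ uses the universal property of the target equalizer $(V\otimes W)\Box_{\Cc(X,X)}\Cc(X,Y)$. Flatness of $eq\otimes eq$ plays no role there.
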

	\bpf 
	\textbf{Step 1:} We first construct $	\widetilde{F}_{V, W}$. We consider the following equalizer
	\begin{center}
		\footnotesize
		\begin{tikzcd}[column sep= 2cm,row sep= 1cm]
			(V \otimes W) \Box_{\Cc(X, X)} \Cc(X, Y) \ar[r, "eq"] &V \otimes W \otimes \Cc(X, Y) \ar[r, shift left, "\beta_{V \otimes W} \otimes 1_{\Cc(X, Y)}"] \ar[r, shift right, "1_{V \otimes W} \otimes \Delta^{X}_{X, Y}" '] & V \otimes W \otimes \Cc(X, X) \otimes \Cc(X, Y).
		\end{tikzcd}
	\end{center}
	Then we define $f : (V \Box_{\Cc(X, X)} \Cc(X, Y)) \otimes (W \Box_{\Cc(X, X)} \Cc(X, Y)) \longrightarrow V \otimes W \otimes \Cc(X, Y)$ as follows: 
	\begin{center}
		\begin{tikzcd}
			(V \Box_{\Cc(X, X)} \Cc(X, Y)) \otimes (W \Box_{\Cc(X, X)} \Cc(X, Y) ) \arrow[d, "f" ']\arrow[r, "eq \otimes eq"] & V \otimes \Cc(X, Y)\otimes W \otimes \Cc(X, Y) \arrow[d, "1 \otimes c \otimes 1"] \\
			V \otimes W \otimes \Cc(X, Y)  & V \otimes W \otimes \Cc(X, Y) \otimes \Cc(X, Y)\arrow[l, "1 \otimes 1 \otimes m"]
		\end{tikzcd}
	\end{center}
	and we want to show that $(\beta_{V \otimes W} \otimes 1_{\Cc(X, Y)}) f = (1_{V \otimes W} \otimes \Delta^{X}_{X, Y}) f$. To do this, we have the following computations: 
	\[\begin{tikzpicture}[scale=0.45]
		\tiny
		\draw(-1.5,0)--(6,0);
		\draw(-1.5,-7.5)--(6,-7.5);
		\draw(1.5,0)--(1.5,-1);
		\draw(4.5,0)--(4.5,-1);
		\draw[above](0.5,0) node{$V \Box \Cc(X, Y)$};
		\draw[above](5,0) node{$W \Box \Cc(X, Y)$};
		\coproduit{2.5}{-2}{1}
		\coproduit{5.5}{-2}{1}
		\draw [fill=white] (1.5,-1) circle (0.5);
		\draw (1.5,-1) node {$\eq$};
		\draw [fill=white] (4.5,-1) circle (0.5);
		\draw (4.5,-1) node {$\eq$};
		\produit{4.5}{-3.5}{0.5}
		\draw(5.5,-2)--(5.5,-3.5);
		\draw(2.5,-3)--(2.5,-4);
		\coright{4}{-5.5}{0.5}
		\coright{1}{-5.5}{0.5}
		\smallbraid{3.5}{-2}
		\smallbraid{2}{-5.5}
		\produit{2}{-6.5}{0.5}
		\draw(4,-5.5) ..controls +(0,-0.7) and +(0,0.7).. (3,-6.5);
		\draw(3.5,-3) ..controls +(0,-0.3) and +(0,0.3).. (4.5,-3.5);
		\draw(0.5,-2) ..controls +(0,-1) and +(0,1).. (-0.5,-4);
		\draw(-1,-5.5)--(-1,-7.5);
		\draw(1,-6.5) ..controls +(0,-0.7) and +(0,0.7).. (0.5,-7.5);
		\draw(5,-4.5)--(5,-7.5);
		\draw[below](-1,-7.5) node{$V$};
		\draw[below](0.5,-7.5) node{$W$};
		\draw[below](2.5,-7.5) node{$\Cc(X, X)$};
		\draw[below](5.5,-7.5) node{$\Cc(X, Y)$};
		\draw(7, -3.5) node{$=$};
		\draw(8,0)--(15.5,0);
		\draw(8, -6.5)--(15.5,-6.5);
		\draw(10,0)--(10,-1);
		\draw(14,0)--(14,-1);
		\draw[above](9.5,0) node{$V \Box \Cc(X, Y)$};
		\draw[above](14.5,0) node{$W \Box \Cc(X, Y)$};
		\coproduit{11}{-2}{1}
		\coproduit{15}{-2}{1}
		\draw [fill=white] (10,-1) circle (0.5);
		\draw (10,-1) node { $\eq$};
		\draw [fill=white] (14,-1) circle (0.5);
		\draw (14,-1) node {$\eq$};
		\coright{14.5}{-3.5}{0.5}
		\coright{10.5}{-3.5}{0.5}
		\smallbraid{12.5}{-3.5}
		\draw(11,-2) ..controls +(0,-0.7) and +(0,0.7).. (11.5,-3.5);
		\smallbraid{13.5}{-4.5}
		\draw(14.5,-3.5) ..controls +(0,-0.7) and +(0,0.7).. (13.5,-4.5);
		\draw(15,-2)--(15,-4.5);
		\produit{13.5}{-5.5}{0.5}
		\draw(15,-4.5) ..controls +(0,-0.7) and +(0,0.7).. (14.5,-5.5);
		\smallbraid{11.5}{-4.5}
		\produit{11.5}{-5.5}{0.5}
		\draw(10.5,-3.5)--(10.5,-4.5);
		\draw(8.5,-3.5)--(8.5,-6.5);
		\draw(10.5,-5.5) ..controls +(0,-0.7) and +(0,0.7).. (9.5,-6.5);
		\draw[below](8.5,-6.5) node{$V$};
		\draw[below](9.5,-6.5) node{$W$};
		\draw[below](11.5,-6.5) node{$\Cc(X, X)$};
		\draw[below](14.5,-6.5) node{$\Cc(X, Y)$};
		\draw(16.5, -3.5) node{$=$};
		\draw[above](16.5, -3.5) node{$(*)$};
		\draw(18,0)--(27,0);
		\draw(18, -7.5)--(27,-7.5);
		\draw[above](20,0) node{$V \Box \Cc(X, Y)$};
		\draw[above](25,0) node{$W \Box \Cc(X, Y)$};
		\coproduit{21}{-2}{1}
		\coproduit{25.5}{-2}{1}
		\draw(20,0)--(20,-1);
		\draw(24.5,0)--(24.5,-1);
		\draw [fill=white] (20,-1) circle (0.5);
		\draw (20,-1) node { $\eq$};
		\draw [fill=white] (24.5,-1) circle (0.5);
		\draw (24.5,-1) node {$\eq$};
		\coproduit{22.5}{-4.5}{1.5}
		\coproduit{27}{-4.5}{1.5}
		\smallbraid{24.5}{-5.5}
		\draw [fill=white] (21,-2.75) circle (0.8);
		\draw (21,-2.75) node { $\Delta^{X}_{X,Y}$};
		\draw [fill=white] (25.5,-2.75) circle (0.8);
		\draw (25.5,-2.75) node { $\Delta^{X}_{X,Y}$};
		\smallbraid{23.5}{-4.5}
		\draw(23.5,-2)--(23.5,-4.5);
		\produit{24.5}{-6.5}{0.5}
		\draw(24,-4.5) ..controls +(0,-0.7) and +(0,0.7).. (24.5,-5.5);
		\smallbraid{22.5}{-5.5}
		\draw(19.5,-4.5) ..controls +(0,-0.7) and +(0,0.7).. (21.5,-5.5);
		\produit{22.5}{-6.5}{0.5}
		\draw(27,-4.5) ..controls +(0,-1) and +(0,1).. (25.5,-6.5);
		\draw(19,-2) ..controls +(0,-1) and +(0,1).. (18.5,-4);
		\draw(18.5,-4)--(18.5,-7.5);
		\draw(21.5,-6.5) ..controls +(0,-0.7) and +(0,0.7).. (20,-7.5);
		\draw[below](18.5,-7.5) node{$V$};
		\draw[below](20,-7.5) node{$W$};
		\draw[below](22.5,-7.5) node{$\Cc(X, X)$};
		\draw[below](25.5,-7.5) node{$\Cc(X, Y)$};
	\end{tikzpicture}\]
	\[\begin{tikzpicture}[scale=0.45]
		\tiny
		\draw(6, -5) node{$=$};
		\draw(7,0)--(17,0);
		\draw(7, -9)--(17,-9);
		\draw(9,0)--(9,-1);
		\draw(14,0)--(14,-1);
		\draw[above](10,0) node{$V \Box \Cc(X, Y)$};
		\draw[above](14.5,0) node{$W \Box \Cc(X, Y)$};
		\coproduit{10}{-2}{1}
		\coproduit{15}{-2}{1}
		\draw [fill=white] (9,-1) circle (0.5);
		\draw (9,-1) node {$\eq$};
		\draw [fill=white] (14,-1) circle (0.5);
		\draw (14,-1) node {$\eq$};
		\smallbraid{11}{-3}
		\draw(13,-2) ..controls +(0,-0.7) and +(0,0.7).. (11,-3);
		\draw(10,-2)--(10,-3);
		\coproduit{16.5}{-6.5}{1.5}
		\coproduit{12.5}{-6.5}{1.5}
		\draw [fill=white] (11,-4.75) circle (0.8);
		\draw (11,-4.75) node { $\Delta^{X}_{X,Y}$};
		\draw [fill=white] (15,-4.75) circle (0.8);
		\draw (15,-4.75) node { $\Delta^{X}_{X,Y}$};
		\draw(15,-2)--(15,-4);
		\smallbraid{13.5}{-6.5}
		\produit{13.5}{-7.5}{1}
		\produit{10.5}{-7.5}{1}
		\draw(16.5,-6.5) ..controls +(0,-0.7) and +(0,0.7).. (15.5,-7.5);
		\draw(9.5,-6.5) ..controls +(0,-0.7) and +(0,0.7).. (10.5,-7.5);
		\draw(10,-4) ..controls +(0,-0.5) and +(0,0.5).. (9,-5);
		\draw(8,-2) ..controls +(0,-1) and +(0,1).. (7.5,-4);
		\draw(7.5,-4)--(7.5,-9);
		\draw(9,-5)--(9,-9);
		\draw[below](7.5,-9) node{$V$};
		\draw[below](9,-9) node{$W$};
		\draw[below](11.5,-9) node{$\Cc(X, X)$};
		\draw[below](15,-9) node{$\Cc(X, Y)$};
		\draw(18, -5) node{$=$};
		\draw[above](18, -5) node{$(**)$};
		\draw(20,0)--(28,0);
		\draw(20, -7)--(28,-7);
		\draw(22,0)--(22,-1);
		\draw(25,0)--(25,-1);
		\draw[above](21.5,0) node{$V \Box \Cc(X, Y)$};
		\draw[above](26,0) node{$W \Box \Cc(X, Y)$};
		\coproduit{23}{-2}{1}
		\coproduit{26}{-2}{1}
		\draw [fill=white] (22,-1) circle (0.5);
		\draw (22,-1) node { $\eq$};
		\draw [fill=white] (25,-1) circle (0.5);
		\draw (25,-1) node {$\eq$};
		\smallbraid{24}{-2}
		\produit{24}{-3}{1}
		\coproduit{26.5}{-7}{1.5}
		\draw [fill=white] (25,-5.25) circle (0.8);
		\draw (25,-5.25) node {$\Delta^{X}_{X, Y}$};
		\draw(23,-3) ..controls +(0,-0.7) and +(0,0.7).. (22,-4);
		\draw(22,-4)--(22,-7);
		\draw(21,-2)--(21,-7);
		\draw(26,-2)--(26,-3);
		\draw[below](21,-7) node{$V$};
		\draw[below](22,-7) node{$W$};
		\draw[below](24,-7) node{$\Cc(X, X)$};
		\draw[below](27,-7) node{$\Cc(X, Y)$};
		\draw(28.5, -5) node{$,$};
	\end{tikzpicture}\]
	where the equality $(*)$ follows from the definition of $V \Box_{C} \Cc(X, Y)$ and $W \Box_{C} \Cc(X, Y)$ as equalizers, i.e., $(\beta_{V} \otimes 1_{\Cc(X, Y)}) eq = (1_{V} \otimes \Delta^{X}_{X, Y}) eq$ and $(\beta_{W} \otimes 1_{\Cc(X, Y)}) eq = (1_{W} \otimes \Delta^{X}_{X, Y}) eq$; the equality $(**)$ uses the fact that $\Delta^{X}_{X, Y}$ is a morphism of algebras. Thus, we obtain \[(\beta_{V \otimes W} \otimes 1_{\Cc(X, Y)}) f = (1_{V \otimes W} \otimes \Delta^{X}_{X, Y}) f,\] and there exists a unique morphism \[\footnotesize \tilde{F}_{V, W} : (V \Box_{\Cc(X, X)} \Cc(X, Y)) \otimes (W \Box_{\Cc(X, X)} \Cc(X, Y)) \longrightarrow (V \otimes W) \Box_{\Cc(X, X)} \Cc(X, Y)\] such that the following diagram commutes:
	\begin{center}
		\scriptsize
		\begin{tikzcd}[column sep= 0.4cm,row sep= 1cm]
			(V \otimes W) \Box_{\Cc(X, X)} \Cc(X, Y) \ar[r, "eq"] &V \otimes W \otimes \Cc(X, Y) \ar[r, shift left, "\beta_{V \otimes W} \otimes 1"] \ar[r,shift right, "1_{V \otimes W} \otimes \Delta^{X}_{X, Y}" '] & V \otimes W \otimes \Cc(X, X) \otimes \Cc(X, Y)\\
			& (V \Box_{C} \Cc(X, Y)) \otimes (W \Box_{\Cc(X, X)} \Cc(X, Y)) \ar[u, "f"] \ar[lu, dashed, "\tilde{F}_{V, W}"]
		\end{tikzcd}
	\end{center}
	
	\textbf{Step 2:} Our objective is to verify that, for all $V, W \in \VV^{\Cc(X, X)}$, the morphism $\widetilde{F}_{V, W}$ is indeed an isomorphism. The key point of this step is to verify that the following diagram commutes:
	\begin{equation} \label{iso5}
		\begin{tikzcd}[column sep=2cm,row sep= 1cm]
			V \otimes W \ar[d, "\theta_V \otimes \theta_{W}" '] \ar[r, "\theta_{V \otimes W}" ] &GF(V \otimes W)\\
			GF(V) \otimes CF(W) \ar[r, "\tilde{G}_{F(V), F(W)}"] &G\big(F(V) \otimes F(W)\big) \ar[u, "G(\tilde{F}_{V, W})" ']
		\end{tikzcd}
	\end{equation}
	where $F$, $G$ and $\theta$ are the functors and the natural isomorphism introduced in the proof of Theorem \ref{Thm : equi}, and $\widetilde{G}$ is defined in a similar way by
	\[\footnotesize \tilde{G}_{V', W'} : (V' \Box_{\Cc(Y, Y)} \Cc(Y, X)) \otimes (W' \Box_{\Cc(Y, Y)} \Cc(Y, X)) \longrightarrow (V' \otimes W') \Box_{\Cc(Y, Y)} \Cc(Y, X).\]
	We observe the following commutative diagram:
	\begin{center}
		\tiny
		\begin{tikzcd}[column sep= 2cm,row sep= 1.5cm]
			\big(V \Box \Cc(X, Y)\big) \Box\Cc(Y, X) \otimes \big(W \Box \Cc(X, Y)\big) \Box\Cc(Y, X) \ar[d, "eq \otimes eq" '] \ar[r, "\widetilde{G}_{F(V), F(W)}"] &\big[\big(V \Box \Cc(X, Y)\big) \otimes \big(W \Box \Cc(X, Y)\big)\big] \Box\Cc(Y, X) \ar[lddd,bend left, "eq" ']\ar[ddd, "G(\tilde{F}_{V, W})", ""{name=D}]\\
			\big(V \Box \Cc(X, Y)\big) \otimes \Cc(Y, X) \otimes \big(W \Box \Cc(X, Y)\big) \otimes\Cc(Y, X) \ar[d, "1_{V \Box \Cc(X, Y)} \otimes \, c\,  \otimes 1_{\Cc(Y, X)}" ', ""{name=U}] &\\
			\big(V \Box \Cc(X, Y)\big) \otimes \big(W \Box \Cc(X, Y)\big) \otimes \Cc(Y, X)  \otimes\Cc(Y, X) \ar[d, "1_{V \Box \Cc(X, Y)} \otimes 1_{W \Box \Cc(X, Y)}\otimes m" '] &\arrow[to path={(D) node[left, scale=1]{$\overset{(\text{Definition of $\widetilde{G}$})\hspace*{3cm}}{\circlearrowleft \hspace*{3cm}}$} (U)}]{}\\
			\big[\big(V \Box \Cc(X, Y)\big) \otimes \big(W \Box \Cc(X, Y)\big)\big] \otimes\Cc(Y, X)\ar[d, "eq \otimes eq \otimes 1_{\Cc(Y, X)}" ']&\big[(V \otimes W) \Box \Cc(X, Y)\big] \Box \Cc(Y, X) \ar[d, "eq", ""{name=K}]\\
			V \otimes \Cc(X, Y) \otimes W \otimes \Cc(X, Y)\otimes\Cc(Y, X) \ar[d, " 1_{V} \otimes c \, \otimes 1_{\Cc(X, Y)} \otimes 1_{\Cc(Y, X)}" ', ""{name=L}] &\big[(V \otimes W) \Box \Cc(X, Y)\big] \otimes \Cc(Y, X) \ar[d, "eq \otimes 1_{\Cc(Y, X)}"]
			\arrow[to path={(K) node[left, scale=1]{$\overset{(\text{Definition of $\widetilde{F}$})\hspace*{3cm}}{\circlearrowleft \hspace*{3cm}}$} (L)}]{}\\
			V \otimes W \otimes \Cc(X, Y) \otimes \Cc(X, Y) \otimes C(Y, X) \ar[r, "1_{V \otimes W} \otimes m \otimes 1 _{\Cc(Y, X)}"] &V \otimes W \otimes \Cc(X, Y) \otimes C(Y, X)
		\end{tikzcd}
	\end{center}
	This helps us to follow the computations outlined below:
	\[\begin{tikzpicture}[scale=0.45]
		\tiny
		\draw(0,0)--(8.5,0);
		\draw(0,-10.5)--(8.5,-10.5);
		\draw(1.5,0)--(1.5,-1);
		\draw(6.5,0)--(6.5,-1);
		\draw[above](1.5,0) node{$V \Box \Cc(X, X)$};
		\draw[above](7,0) node{$W \Box \Cc(X, X)$};
		\draw [fill=white] (0,-1) rectangle (3, -2);
		\draw [fill=white] (5,-1) rectangle (8, -2);
		\draw (1.5,-1.5) node {$1 \Box \bar{\Delta}^{Y}_{X, X}$};
		\draw (6.5,-1.5) node {$1 \Box \bar{\Delta}^{Y}_{X, X}$};
		\draw [fill=white] (2.1,-3) rectangle (5.9, -4);
		\draw (4.1,-3.5) node {$\widetilde{G}_{F(V), F(W)}$};
		\draw [fill=white] (2.5,-5) rectangle (5.5, -6);
		\draw (4,-5.5) node {$G(\widetilde{F}_{V, W})$};
		\draw(6.5,-2) ..controls +(0,-0.7) and +(0,0.7).. (5.9,-3.1);
		\draw(1.5,-2) ..controls +(0,-0.7) and +(0,0.7).. (2.1,-3.1);
		\draw(4,-4)--(4,-5);
		\draw(4,-6)--(4,-6.5);
		\draw [fill=white] (4,-7) circle (0.5);
		\draw (4,-7) node {$\eq$};
		\draw(4,-7.5)--(4,-8);
		\draw [fill=white] (3,-8) rectangle (5, -9);
		\draw (4,-8.5) node {$\eq \otimes 1$};
		\draw(3,-9) ..controls +(0,-0.7) and +(0,0.7).. (0.5,-10.5);
		\draw(4,-9) ..controls +(0,-0.7) and +(0,0.7).. (2.5,-10.5);
		\draw(4,-9) ..controls +(0,-0.7) and +(0,0.7).. (5,-10.5);
		\draw(5,-9) ..controls +(0,-0.7) and +(0,0.7).. (7.5,-10.5);
		\draw[below](0.5,-10.5) node{$V$};
		\draw[below](2.5,-10.5) node{$W$};
		\draw[below](4.75,-10.5) node{$\Cc(X, Y)$};
		\draw[below](7.75,-10.5) node{$\Cc(Y, X)$};
		\draw(10.5, -5) node{$=$};
		\draw(12.5,0)--(21,0);
		\draw(12.5,-10)--(21,-10);
		\draw(15,0)--(15,-1);
		\draw(19,0)--(19,-1);
		\draw[above](15,0) node{$V \Box \Cc(X, X)$};
		\draw[above](19.5,0) node{$W \Box \Cc(X, X)$};
		\draw [fill=white] (13.5,-1) rectangle (16.5, -2);
		\draw [fill=white] (17.5,-1) rectangle (20.5, -2);
		\draw (15,-1.5) node {$1 \Box \bar{\Delta}^{Y}_{X, X}$};
		\draw (19,-1.5) node {$1 \Box \bar{\Delta}^{Y}_{X, X}$};
		\draw(15,-2)--(15,-2.5);
		\draw(19,-2)--(19,-2.5);
		\coproduit{16}{-4}{1}
		\coproduit{20}{-4}{1}
		\draw [fill=white] (15,-3) circle (0.5);
		\draw (15,-3) node {$\eq$};
		\draw [fill=white] (19,-3) circle (0.5);
		\draw (19,-3) node {$\eq$};
		\draw(18,-4)--(18,-5);
		\smallbraid{18}{-5}
		\draw(16,-4) ..controls +(0,-0.7) and +(0,0.7).. (17,-5);
		\draw(17,-6)--(17,-6.5);
		\coproduit{18}{-8}{1}
		\draw [fill=white] (17,-7) circle (0.5);
		\draw (17,-7) node {$\eq$};
		\coproduit{15}{-8}{1}
		\draw [fill=white] (14,-7) circle (0.5);
		\draw (14,-7) node {$\eq$};
		\smallbraid{16}{-8}
		\draw(14,-4)--(14,-6.5);
		\draw(20,-4)--(20,-7);
		\produit{16}{-9}{0.5}
		\produit{19}{-7}{0.5}
		\draw(18,-8) ..controls +(0,-0.7) and +(0,0.7).. (17,-9);
		\draw(18,-6) ..controls +(0,-0.7) and +(0,0.7).. (19,-7);
		\draw(13,-8)--(13,-10);
		\draw(15,-9)--(15,-10);
		\draw(19.5,-8)--(19.5,-10);
		\draw[below](13,-10) node{$V$};
		\draw[below](15,-10) node{$W$};
		\draw[below](17.25,-10) node{$\Cc(X, Y)$};
		\draw[below](20.25,-10) node{$\Cc(Y, X)$};
		\draw(23, -5) node{$=$};
		\draw(25,0)--(33.5,0);
		\draw(25,-9.5)--(33.5,-9.5);
		\draw(27.5,0)--(27.5,-1);
		\draw(31.5,0)--(31.5,-1);
		\draw[above](27.5,0) node{$V \Box \Cc(X, X)$};
		\draw[above](32,0) node{$W \Box \Cc(X, X)$};
		\draw [fill=white] (26,-1) rectangle (29, -2);
		\draw [fill=white] (30,-1) rectangle (33, -2);
		\draw (27.5,-1.5) node {$1 \Box \bar{\Delta}^{Y}_{X, X}$};
		\draw (31.5,-1.5) node {$1 \Box \bar{\Delta}^{Y}_{X, X}$};
		\draw(27.5,-2)--(27.5,-2.5);
		\draw(31.5,-2)--(31.5,-2.5);
		\coproduit{28.5}{-4}{1}
		\coproduit{32.5}{-4}{1}
		\draw [fill=white] (27.5,-3) circle (0.5);
		\draw (27.5,-3) node {$\eq$};
		\draw [fill=white] (31.5,-3) circle (0.5);
		\draw (31.5,-3) node {$\eq$};
		\coproduit{31.5}{-5.5}{1}
		\draw [fill=white] (30.5,-4.5) circle (0.5);
		\draw (30.5,-4.5) node {$\eq$};
		\smallbraid{29.5}{-5.5}
		\smallbraid{30.5}{-6.5}
		\draw(28.5,-4)--(28.5,-5.5);
		\coproduit{27.5}{-5.5}{1}
		\smallbraid{28.5}{-7.5}
		\draw [fill=white] (26.5,-4.5) circle (0.5);
		\draw (26.5,-4.5) node {$\eq$};
		\draw(27.5,-5.5)--(27.5,-7.5);
		\draw(28.5,-6.5)--(28.5,-7.5);
		\draw(29.5,-7.5)--(29.5,-8.5);
		\draw(31.5,-5.5) ..controls +(0,-0.7) and +(0,0.7).. (30.5,-6.5);
		\draw(30.5,-7.5) ..controls +(0,-0.7) and +(0,0.7).. (31.5,-8.5);
		\produit{28.5}{-8.5}{0.5}
		\produit{31.5}{-8.5}{0.5}
		\draw(25.5,-5.5)--(25.5,-9.5);
		\draw(27.5,-8.5)--(27.5,-9.5);
		\draw(32.5,-4)--(32.5,-8.5);
		\draw[below](25.5,-9.5) node{$V$};
		\draw[below](27.5,-9.5) node{$W$};
		\draw[below](29.5,-9.5) node{$\Cc(X, Y)$};
		\draw[below](32.25,-9.5) node{$\Cc(Y, X)$};
	\end{tikzpicture}\]
	\[\begin{tikzpicture}[scale=0.45]
		\tiny
		\draw(-1, -7) node{$=$};
		\draw(0,0)--(8.5,0);
		\draw(0,-9.5)--(8.5,-9.5);
		\draw(1.5,0)--(1.5,-1);
		\draw(6.5,0)--(6.5,-1);
		\draw[above](1.5,0) node{$V \Box \Cc(X, X)$};
		\draw[above](7,0) node{$W \Box \Cc(X, X)$};
		\draw [fill=white] (0,-1) rectangle (3, -2);
		\draw [fill=white] (5,-1) rectangle (8, -2);
		\draw (1.5,-1.5) node {$1 \Box \bar{\Delta}^{Y}_{X, X}$};
		\draw (6.5,-1.5) node {$1 \Box \bar{\Delta}^{Y}_{X, X}$};
		\draw(1.5,-2)--(1.5,-2.5);
		\draw(6.5,-2)--(6.5,-2.5);
		\coproduit{2.5}{-4}{1}
		\coproduit{7.5}{-4}{1}
		\draw [fill=white] (1.5,-3) circle (0.5);
		\draw (1.5,-3) node {$\eq$};
		\draw [fill=white] (6.5,-3) circle (0.5);
		\draw (6.5,-3) node {$\eq$};
		\coproduit{3.5}{-5.5}{1}
		\coproduit{8.5}{-5.5}{1}
		\draw [fill=white] (2.5,-4.5) circle (0.5);
		\draw (2.5,-4.5) node {$\eq$};
		\draw [fill=white] (7.5,-4.5) circle (0.5);
		\draw (7.5,-4.5) node {$\eq$};
		\smallbraid{4.5}{-5.5}
		\smallbraid{5.5}{-6.5}
		\draw(5.5,-4) ..controls +(0,-0.7) and +(0,0.7).. (4.5,-5.5);
		\draw(6.5,-5.5) ..controls +(0,-0.7) and +(0,0.7).. (5.5,-6.5);
		\smallbraid{3.5}{-6.5}
		\produit{3.5}{-7.5}{0.5}
		\draw(1.5,-5.5) ..controls +(0,-0.7) and +(0,0.7).. (2.5,-6.5);
		\produit{6.5}{-8.5}{0.5}
		\draw(8.5,-5.5)--(8.5,-7);
		\draw(8.5,-7) ..controls +(0,-0.7) and +(0,0.7).. (7.5,-8.5);
		\draw(5.5,-7.5) ..controls +(0,-0.7) and +(0,0.7).. (6.5,-8.5);
		\draw(4,-8.5)--(4,-9.5);
		\draw(0.5,-4)--(0.5,-9.5);
		\draw(2.5,-7.5)--(2.5,-9.5);
		\draw[below](0.5,-9.5) node{$V$};
		\draw[below](2.5,-9.5) node{$W$};
		\draw[below](4.75,-9.5) node{$\Cc(X, Y)$};
		\draw[below](7.75,-9.5) node{$\Cc(Y, X)$};
		\draw(10.5, -7) node{$=$};
		\draw(12.5,0)--(21,0);
		\draw(12.5,-9.5)--(21,-9.5);
		\draw(14,0)--(14,-1);
		\draw(19,0)--(19,-1);
		\draw[above](14.5,0) node{$V \Box \Cc(X, X)$};
		\draw[above](19.5,0) node{$W \Box \Cc(X, X)$};
		\coproduit{20}{-2.5}{1}
		\coproduit{15}{-2.5}{1}
		\draw [fill=white] (19,-1.5) circle (0.5);
		\draw (19,-1.5) node {$\eq$};
		\draw [fill=white] (19,-1.5) circle (0.5);
		\draw (19,-1.5) node {$\eq$};
		\draw [fill=white] (14,-1.5) circle (0.5);
		\draw (14,-1.5) node {$\eq$};
		\coproduit{16.5}{-5}{1.5}
		\coproduit{21.5}{-5}{1.5}
		\draw [fill=white] (15,-3.25) circle (0.8);
		\draw (15,-3.25) node {$\Delta^{Y}_{X,X}$};
		\draw [fill=white] (20,-3.25) circle (0.8);
		\draw (20,-3.25) node {$\Delta^{Y}_{X,X}$};
		\smallbraid{17.5}{-5}
		\smallbraid{18.5}{-6}
		\smallbraid{16.5}{-7.5}
		\produit{16.5}{-8.5}{0.5}
		\produit{19.5}{-8.5}{0.5}
		\draw(16.5,-6)--(16.5,-7.5);
		\draw(17.5,-7)--(17.5,-8.5);
		\draw(18.5,-5)--(18.5,-6);
		\draw(21.5,-5)--(21.5,-7);
		\draw(13.5,-5) ..controls +(0,-1) and +(0,1).. (15.5,-7.5);
		\draw(18,-2.5) ..controls +(0,-0.7) and +(0,0.7).. (17.5,-5);
		\draw(21.5,-7) ..controls +(0,-0.7) and +(0,0.7).. (20.5,-8.5);
		\draw(18.5,-7) ..controls +(0,-0.7) and +(0,0.7).. (19.5,-8.5);
		\draw(15.5,-8.5) ..controls +(0,-1) and +(0,1).. (14.5,-9.5);
		\draw(13,-2.5)--(13,-9.5);
		\draw[below](13,-9.5) node{$V$};
		\draw[below](15,-9.5) node{$W$};
		\draw[below](17.25,-9.5) node{$\Cc(X, Y)$};
		\draw[below](20.25,-9.5) node{$\Cc(Y, X)$};
		\draw(23, -7) node{$,$};
	\end{tikzpicture}\]
	where the third equality is a consequence of the associativity property in Proposition \ref{Prop:associativity} and the last equality follows from Proposition \ref{Prop: forget left} and the definition of $\overline{\Delta}$ in Proposition \ref{Prop : Deltabar}.
	Then, by Proposition \ref{Prop : isotrivial},
	$(eq \otimes 1_{\Cc(Y, X)}) \circ eq \circ G(\widetilde{F}_{V,W}) \circ \widetilde{G}_{F(V), F(W)} \circ (\theta_V \otimes \theta_{W})$ is equal to
	\[\begin{tikzpicture}[scale=0.45]
		\tiny
		\draw(-1,0)--(7,0);
		\draw(-1,-8.5)--(7,-8.5);
		\draw[above](0,0) node{$V$};
		\draw[above](4.5,0) node{$W$};
		\coright{6}{-1.5}
		\coright{1.5}{-1.5}
		\coproduit{3}{-4}{1.5}{1}
		\coproduit{7.5}{-4}{1.5}{1}
		\draw [fill=white] (1.5,-2.25) circle (0.8);
		\draw (1.5,-2.25) node {$\Delta^{Y}_{X,X}$};
		\draw [fill=white] (6,-2.25) circle (0.8);
		\draw (6,-2.25) node {$\Delta^{Y}_{X,X}$};
		\smallbraid{4}{-4}
		\smallbraid{5}{-5}
		\smallbraid{2}{-6}
		\draw(4,-1.5)--(4,-4);
		\draw(4.5,-4) ..controls +(0,-0.7) and +(0,0.7).. (5,-5);
		\produit{5.5}{-7}{1}
		\produit{2}{-7}{1}
		\draw(3,-5) ..controls +(0,-0.7) and +(0,0.7).. (2,-6);
		\draw(0,-4) ..controls +(0,-0.7) and +(0,0.7).. (1,-6);
		\draw(5,-6) ..controls +(0,-0.7) and +(0,0.7).. (5.5,-7);
		\draw(-0.5,-1.5)--(-0.5,-8.5);
		\draw(7.5,-4)--(7.5,-7);
		\draw(4,-6)--(4,-7);
		\draw(1,-7)--(1,-8.5);
		\draw[below](-0.5,-8.5) node{$V$};
		\draw[below](1,-8.5) node{$W$};
		\draw[below](3, -8.5) node{\tiny$\Cc(X, Y)$};
		\draw[below](6.5, -8.5) node{\tiny$\Cc(Y, X)$};
		\draw(8.5, -5) node{$=$};
		\draw(10,0)--(18,0);
		\draw(10,-8.5)--(18,-8.5);
		\draw[above](11,0) node{$V$};
		\draw[above](15.5,0) node{$W$};
		\coright{17}{-1.5}
		\coright{12.5}{-1.5}
		\smallbraid{13.5}{-2.5}
		\coproduit{18.5}{-4}{1.5}
		\coproduit{15}{-6}{1.5}
		\draw [fill=white] (13.5,-4.25) circle (0.8);
		\draw (13.5,-4.25) node {$\Delta^{Y}_{X,X}$};
		\draw [fill=white] (17,-2.25) circle (0.8);
		\draw (17,-2.25) node {$\Delta^{Y}_{X,X}$};
		\smallbraid{16}{-6}
		\produit{16}{-7}{1}
		\produit{13}{-7}{1}
		\draw(18.5,-4)--(18.5,-6);
		\draw(18.5,-6) ..controls +(0,-0.7) and +(0,0.7).. (18,-7);
		\draw(12.5,-1.5)--(12.5,-2.5);
		\draw(15,-1.5) ..controls +(0,-0.7) and +(0,0.7).. (13.5,-2.5);
		\draw(10.5,-1.5)--(10.5,-8.5);
		\draw(12,-6) ..controls +(0,-0.7) and +(0,0.7).. (13,-7);
		\draw(15.5,-4) ..controls +(0,-0.7) and +(0,0.7).. (16,-6);
		\draw(12.5,-3.5) ..controls +(0,-0.7) and +(0,0.7).. (11.5,-5);
		\draw(11.5,-5)--(11.5,-8.5);
		\draw[below](10.5,-8.5) node{$V$};
		\draw[below](11.5,-8.5) node{$W$};
		\draw[below](14, -8.5) node{\tiny$\Cc(X, Y)$};
		\draw[below](17.5, -8.5) node{\tiny$\Cc(Y, X)$};
		\draw(20, -5) node{$=$};
		\draw(22,0)--(28.5, 0);
		\draw(22,-6.5)--(28.5,-6.5);
		\draw[above](23,0) node{$V$};
		\draw[above](26,0) node{$W$};
		\coright{27.5}{-1.5}{0.5}
		\coright{24.5}{-1.5}{0.5}
		\smallbraid{25.5}{-1.5}
		\coproduit{28}{-6.5}{1.5}
		\produit{25.5}{-2.5}{1}
		\draw(22.5,-1.5)--(22.5,-6.5);
		\draw(23.5,-3.5)--(23.5,-6.5);
		\draw(27.5,-1.5)--(27.5,-2.5);
		\draw(24.5,-2.5) ..controls +(0,-0.7) and +(0,0.7).. (23.5,-3.5);
		\draw [fill=white] (26.5,-4.75) circle (0.8);
		\draw (26.5,-4.75) node {$\Delta^{Y}_{X,X}$};
		\draw[below](22.5,-6.5) node{$V$};
		\draw[below](23.5,-6.5) node{$W$};
		\draw[below](25.5, -6.5) node{\tiny$\Cc(X, Y)$};
		\draw[below](28.5, -6.5) node{\tiny$\Cc(Y, X)$};
		\draw(29, -5) node{$.$};
	\end{tikzpicture}\]
	This means that the following diagram commutes:
	\begin{equation*}
		\tiny
		\begin{tikzcd}[column sep=0.15cm,row sep= 1cm]
			V \otimes W \ar[rd, "\cong"]\ar[d, "\cong" ', ]\ar[rr, "\beta_{V \otimes W}"] &&V \otimes W \otimes \Cc(X, X) \ar[dddddd, "(1_{V} \otimes 1_{W}) \otimes \Delta^{Y}_{X, X}" , ""{name=D}] \\
			V \Box \Cc(X, X) \otimes W \Box \Cc(X, X) \ar[dd, "1_{V} \Box \bar{\Delta}^{Y}_{X, X} \otimes 1_{W} \Box \bar{\Delta}^{Y}_{X, X} " ' , ""{name=K}]&(V \otimes W) \Box C(X, X) \ar[ru, "eq"] \ar[d, "(1_{V} \otimes 1_{W}) \Box \bar{\Delta}^{Y}_{X, X}"]&\\
			&(V \otimes W) \Box \big[\Cc(X, Y) \Box \Cc(Y, X)\big]\ar[lddddd, near start, "\simeq" ', bend left= 35,  ""{name=U}] & \arrow[to path={(D) node[left, scale=1]{$\underset{{(**)\hspace*{2cm}}}{\circlearrowleft \hspace*{2cm}}$} (U)}]{}\\
			V \Box \big(\Cc(X, Y) \Box\Cc(Y, X)\big) \otimes W \Box \big(\Cc(X, Y) \Box\Cc(Y, X)\big) \ar[dd, "\cong" '] &&\\
			&&\\
			\big(V \Box \Cc(X, Y)\big) \Box\Cc(Y, X) \otimes \big(W \Box \Cc(X, Y)\big) \Box\Cc(Y, X) \ar[d, "\tilde{G}_{F(V), F(W)}" ']&&\\
			\big[\big(V \Box \Cc(X, Y)\big) \otimes \big(W \Box \Cc(X, Y)\big)\big] \Box\Cc(Y, X) \ar[d, "G(\tilde{F}_{V,W})" ',  ""{name=D}] &&V \otimes W \otimes \Cc(X, Y) \otimes C(Y, X)\\
			\big[(V \otimes W) \Box \Cc(X, Y)\big] \Box \Cc(Y, X) \ar[r, "eq" ']&\big[(V \otimes W) \Box \Cc(X, Y)\big] \otimes \Cc(Y, X) \ar[ru, "eq \otimes 1_{\Cc(Y, X)}" '] & \arrow[to path={(U) node[left, scale=1]{$\underset{{(*)\hspace*{2cm}}}{\circlearrowleft \hspace*{2cm}}$} (K)}]{}
		\end{tikzcd}
	\end{equation*}
Inside this diagram, we can see that the diagram $(**)$ commutes by Proposition \ref{Prop: forget left}, and by the associativity property of Proposition \ref{Prop:associativity}, as follows: 
	\begin{equation} \label{iso4}
		\scriptsize
		\begin{tikzcd}[column sep=0.8cm,row sep= 1.5cm]
			(V \otimes W) \Box \Cc(X, X) \arrow[r, "eq"] \arrow[d, "1_{V} \otimes 1_{W} \otimes \bar{\Delta}^{Y}_{X, X}", ""{name = D} '] &V \otimes W \otimes \Cc(X, X)   \arrow[r, "1_{V} \otimes 1_{W} \otimes \Delta^{Y}_{X, X}"] \arrow[d, "1_{V} \otimes 1_{W} \otimes \bar{\Delta}^{Y}_{X, X}" ', ""{name = K}] & V \otimes W \otimes \Cc(X, Y) \otimes \Cc(Y, X)\\
			(V \otimes W) \Box \big(\Cc(X, Y) \Box \Cc(Y,X)\big) \arrow[rd, "\simeq", ""{name = T}] \arrow[r, "eq"]                             & (V \otimes W) \otimes \big(\Cc(X, Y) \Box \Cc(Y,X)\big)  \arrow[ru, "1_{V} \otimes 1_{W} \otimes eq" ', ""{name = U}]          & 	\arrow[to path={(D) node[left, scale=1]{$\underset{\tiny{}}{\circlearrowleft \hspace*{-6cm}}$} (U)}]{} 
			\arrow[to path={(K) node[left, scale=1]{$\underset{\tiny{}}{\circlearrowleft \hspace*{-4cm}}$} (U)}]{}\\
			& \big((V \otimes W) \Box \Cc(X, Y)\big) \Box \Cc(Y, X) \arrow[r, "eq"]          & \big((V \otimes W) \Box \Cc(X, Y)\big) \otimes \Cc(Y, X) \arrow[uu, "eq \otimes 1_{\Cc(Y, X)}" ', ""{name = L}]
			\arrow[to path={(T) node[left, scale=1]{$\underset{{(\text{Proposition \ref{Prop:associativity}})\hspace*{-7cm}}}{\circlearrowleft \hspace*{-7cm}}$} (L)}]{}
		\end{tikzcd}
	\end{equation}
	Hence the diagram $(*)$ also commutes. This shows that 
	\[\theta_{V \otimes W} = G(\widetilde{F}_{V, W}) \circ \widetilde{G}_{F(V), F(W)} \circ (\theta_V \otimes \theta_{W}).\]
	\textbf{Step 3: }Since $\theta_V, \theta_{W}$, and $\theta_{V \otimes W}$ are isomorphisms, it follows that $G(\widetilde{F}_{V, W}) \circ \widetilde{G}_{F(V), F(W)}$ is also an isomorphism, therefore there exists a morphism $\psi : GF(V \otimes W) \longrightarrow GF(V) \otimes GF(W)$ such that 
	\[\psi \circ G(\widetilde{F}_{V, W}) \circ \widetilde{G}_{F(V), F(W)} = \id\]
	hence
	\[F(\psi) \circ FG(\widetilde{F}_{V, W}) \circ F(\widetilde{G}_{F(V), F(W)} )= \id\]
	Similarly, we can also show that $F(\widetilde{G}_{F(V), F(W)}) \circ \widetilde{F}_{GF(V), GF(W)}$ is an isomorphism, Therefore, there exists a morphism $\phi : FG(F(V) \otimes F(W)) \longrightarrow FGF(V) \otimes FGF(W)$ such that \[F(\widetilde{G}_{F(V), F(W)}) \circ \widetilde{F}_{GF(V), GF(W)} \circ \phi = \id.\]
	By composing both sides with $F(\psi) \circ FG(\widetilde{F}_{V, W})$, we obtain
	\[\widetilde{F}_{GF(V), GF(W)} \circ \phi = F(\psi) \circ FG(\widetilde{F}_{V, W})\]
	and \[F(\widetilde{G}_{F(V), F(W)}) \circ F(\psi) \circ FG(\widetilde{F}_{V, W})= \id\]
	which proves that $F(\widetilde{G}_{F(V), F(W)})$ is an isomorphism. Since $F$ and $G$ are equivalences of categories, it follows that $\widetilde{G}_{F(V), F(W)}$ is also an isomorphism, and therefore, $\widetilde{F}_{V, W}$ is an isomorphism.
	\epf
	\begin{lemma} \label{Lem : monoid_compa}
		Let $\CC$ be a Takeuchi $\VV$-cocategory.
		Let $U, V, W \in \VV^{\Cc(X, X)}$ for any $X \in \ob(\Cc)$. The following diagram is commutative
		\[\tiny \begin{tikzcd}[column sep= 0.75cm,row sep= 1.5cm]
			U \Box_{\Cc(X, X)} \Cc(X, Y) \otimes V \Box_{\Cc(X, X)} \Cc(X, Y) \otimes W \Box_{\Cc(X, X)} \Cc(X, Y)\ar[r, "\widetilde{F}_{U, V} \otimes 1"] \ar[d, "1 \otimes \widetilde{F}_{V, W}" ']& (U \otimes V) \Box_{\Cc(X, X)} \Cc(X, Y) \otimes W \Box_{\Cc(X, X)} \Cc(X, Y)\ar[d, "\widetilde{F}_{U \otimes V, W}"]\\
			U \Box_{\Cc(X, X)} \Cc(X, Y) \otimes (V \otimes W) \Box_{\Cc(X, X)} \Cc(X, Y) \ar[r, "\widetilde{F}_{U, V \otimes W}"] & (U \otimes V \otimes W) \Box_{\Cc(X, X)} \Cc(X, Y).
		\end{tikzcd}
		\]
	\end{lemma}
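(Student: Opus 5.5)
The plan is to check the hexagon-free associativity square by post-composing both paths with the equalizer monomorphism
\[
eq \colon (U \otimes V \otimes W) \Box_{\Cc(X, X)} \Cc(X, Y) \longrightarrow U \otimes V \otimes W \otimes \Cc(X, Y),
\]
and to show that the two resulting morphisms into $U \otimes V \otimes W \otimes \Cc(X, Y)$ coincide. Since $eq$ is a monomorphism, this gives the commutativity of the square. The only input is the defining property of $\widetilde{F}$ from Step~1 of the proof of Lemma~\ref{Lemma: Tensor Compa}:
\[
eq \circ \widetilde{F}_{V, W} = (1 \otimes 1 \otimes m)(1 \otimes c \otimes 1)(eq \otimes eq).
\]

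First I will compute $eq \circ \widetilde{F}_{U \otimes V, W} \circ (\widetilde{F}_{U, V} \otimes 1)$. Applying the defining property for $\widetilde{F}_{U\otimes V, W}$ produces $(eq \circ \widetilde{F}_{U,V}) \otimes eq$ on the inputs. Substituting the defining property for $\widetilde{F}_{U,V}$ then turns this into a string diagram. In it, the three equalizers $eq_U, eq_V, eq_W$ are applied first. The first $\Cc(X, Y)$-strand is braided past $V$, multiplied with the second $\Cc(X, Y)$-strand, and the product is braided past $W$ and multiplied with the third strand.

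Next I will do the same for $eq \circ \widetilde{F}_{U, V \otimes W} \circ (1 \otimes \widetilde{F}_{V, W})$. Here $eq_U \otimes (eq\circ\widetilde F_{V,W})$ appears. In the resulting diagram, the $V$- and $W$-strands' algebra legs are multiplied first (after braiding the $V$-leg past $W$). The first $\Cc(X,Y)$-strand is then braided past $V \otimes W$ and multiplied with that product.

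Comparing the two diagrams is purely a braided-category computation. I will use naturality of the braiding with respect to the multiplication $m$: braiding a product $m(a\otimes b)$ past $W$ equals braiding $a$ and $b$ separately past $W$ and then multiplying. I will also use the hexagon axiom \eqref{(braid)}, giving $c_{\Cc(X,Y), V\otimes W} = (1\otimes c)(c\otimes 1)$, and the associativity \eqref{associatif} of $m$. Both expressions reduce to the normal form
\[
(1_U\otimes 1_V\otimes 1_W\otimes m(m\otimes 1))\circ(\text{braid the first leg past } V\otimes W, \text{ the second past } W)\circ(eq_U\otimes eq_V\otimes eq_W).
\]

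The main (mild) obstacle is bookkeeping. I must make sure that the inner morphism $\widetilde F_{U,V}\otimes 1_{W\Box\Cc(X,Y)}$ composed with $eq\otimes eq$ really equals $(eq\circ\widetilde F_{U,V})\otimes eq$, which is just functoriality of $\otimes$. Also, the braiding in the first path is $c_{\Cc(X,Y),W}$ applied to the product, which needs naturality of $c$ in its first variable with respect to $m$. With those two observations the graphical comparison is routine, and the conclusion follows from $eq$ being monic.
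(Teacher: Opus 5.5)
Your proposal is correct and follows essentially the same route as the paper. Both post-compose with the monomorphism $eq$ and unfold each side through the defining identity of $\widetilde{F}$ from Step~1 of Lemma~\ref{Lemma: Tensor Compa}. Both then match the two resulting string diagrams using naturality of the braiding with respect to $m$, the braiding axioms, and associativity of $m$.
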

	\bpf
	Since $eq$ is a monomorphism, it is enough to show that 
	\[eq \circ \widetilde{F}_{U \otimes V, W} \circ (\widetilde{F}_{U, V} \otimes 1) = eq \circ \widetilde{F}_{U, V \otimes W} \circ (1 \otimes \widetilde{F}_{V, W}).\]
	In fact, we have
	\[\begin{tikzpicture}[scale=0.45]
		\tiny
		\draw(0,0)--(12,0);
		\draw(2,-8)--(12,-8);
		\draw(1.5,0) ..controls +(0,-0.7) and +(0,0.7).. (2.5,-1.5);
		\draw(5.5,0) ..controls +(0,-0.7) and +(0,0.7).. (4.5,-1.5);
		\draw(10,0)--(10,-1.5);
		\draw(7.5,-4)--(7.5,-6);
		\draw[above](1.5,0) node{$U \Box \Cc(X, Y)$};
		\draw[above](6,0) node{$V \Box \Cc(X, Y)$};
		\draw[above](10.5,0) node{$W \Box \Cc(X, Y)$};
		\draw [fill=white] (2.5,-1.5) rectangle (4.5, -2.5);
		\draw (3.5,-2) node {$\widetilde{F}_{U, V}$};
		\draw [fill=white] (6,-3.5) rectangle (9, -4.5);
		\draw (7.5,-4) node {$\widetilde{F}_{U \otimes V, W}$};
		\draw(3.5,-2.5) ..controls +(0,-0.7) and +(0,0.7).. (6,-3.5);
		\draw(10,-1.5) ..controls +(0,-0.7) and +(0,0.7).. (9,-3.5);
		\draw(7.5,-6) ..controls +(0,-0.7) and +(0,0.7).. (8,-8);
		\draw(7.5,-6) ..controls +(0,-0.7) and +(0,0.7).. (4.5,-8);
		\draw(7.5,-6) ..controls +(0,-0.7) and +(0,0.7).. (6.5,-8);
		\draw(7.5,-6) ..controls +(0,-0.7) and +(0,0.7).. (10.5,-8);
		\draw [fill=white] (7.5,-6) circle (0.5);
		\draw (7.5,-6) node {$\eq$};
		\draw[below](4.5,-8) node{$U$};
		\draw[below](6.5,-8) node{$V$};
		\draw[below](8,-8) node{$W$};
		\draw[below](10.5,-8) node{$\Cc(X, Y)$};
		\draw(13, -5) node{$=$};
		\draw(14,0)--(26,0);
		\draw(14,-8)--(26,-8);
		\draw(15.5,0) ..controls +(0,-0.7) and +(0,0.7).. (16.5,-1);
		\draw(19.5,0) ..controls +(0,-0.7) and +(0,0.7).. (18.5,-1);
		\draw(24,0)--(24,-1.5);
		\draw(17.5,-2)--(17.5,-3);
		\draw[above](15.5,0) node{$U \Box \Cc(X, Y)$};
		\draw[above](20,0) node{$V \Box \Cc(X, Y)$};
		\draw[above](24.5,0) node{$W \Box \Cc(X, Y)$};
		\draw [fill=white] (16.5,-1) rectangle (18.5, -2);
		\draw (17.5,-1.5) node {$\widetilde{F}_{U, V}$};
		\draw(24,-1.5) ..controls +(0,-1) and +(0,1).. (22.5,-3);
		\coproduit{24}{-5}{1.5}
		\draw(17.5,-3) ..controls +(0,-0.7) and +(0,0.7).. (15,-5);
		\draw(17.5,-3) ..controls +(0,-0.7) and +(0,0.7).. (20,-5);
		\draw [fill=white] (17.5,-3) circle (0.5);
		\draw (17.5,-3) node {$\eq$};
		\draw [fill=white] (22.5,-3.25) circle (0.5);
		\draw (22.5,-3.25) node {$\eq$};
		\smallbraid{21}{-5}
		\produit{21}{-6}{1.5}
		\draw(24,-5)--(24,-6);
		\draw(20,-6) ..controls +(0,-1) and +(0,1).. (19,-8);
		\draw(17.5,-3.5)--(17.5,-8);
		\draw(15,-5)--(15,-8);
		\draw[below](15,-8) node{$U$};
		\draw[below](17.5,-8) node{$V$};
		\draw[below](19,-8) node{$W$};
		\draw[below](22.5,-8) node{$\Cc(X, Y)$};
	\end{tikzpicture}\]
	\[\begin{tikzpicture}[scale=0.45]
		\tiny
		\draw(0, -5) node{$=$};
		\draw(0,0)--(12,0);
		\draw(0,-8.5)--(12,-8.5);
		\draw(10,0)--(10,-1.5);
		\draw[above](1.5,0) node{$U \Box \Cc(X, Y)$};
		\draw[above](6,0) node{$V \Box \Cc(X, Y)$};
		\draw[above](10.5,0) node{$W \Box \Cc(X, Y)$};
		\draw(2,0)--(2,-1);
		\draw(6,0)--(6,-1);
		\coproduit{3}{-2.5}{1}
		\coproduit{7}{-2.5}{1}
		\coproduit{10}{-4.5}{1}
		\draw(3,-2.5) ..controls +(0,-0.7) and +(0,0.7).. (3.5,-3.5);
		\draw(5,-2.5) ..controls +(0,-0.7) and +(0,0.7).. (4.5,-3.5);
		\smallbraid{4.5}{-3.5}
		\produit{4.5}{-4.5}{1}
		\draw(7,-2.5) ..controls +(0,-1) and +(0,1).. (6.5,-4.5);
		\smallbraid{6.5}{-6}
		\produit{6.5}{-7}{1}
		\draw(1,-2.5)--(1,-8.5);
		\draw(3.5,-4.5)--(3.5,-8.5);
		\draw(5.5,-7)--(5.5,-8.5);
		\draw(10,-1.5) ..controls +(0,-1) and +(0,1).. (9,-3);
		\draw(8,-4.5) ..controls +(0,-0.7) and +(0,0.7).. (6.5,-6);
		\draw(10,-4.5) ..controls +(0,-1) and +(0,1).. (8.5,-7);
		\draw [fill=white] (2,-1.5) circle (0.5);
		\draw (2,-1.5) node {$\eq$};
		\draw [fill=white] (6,-1.5) circle (0.5);
		\draw (6,-1.5) node {$\eq$};
		\draw [fill=white] (9,-3.5) circle (0.5);
		\draw (9,-3.5) node {$\eq$};
		\draw[below](1,-8.5) node{$U$};
		\draw[below](3.5,-8.5) node{$V$};
		\draw[below](5.5,-8.5) node{$W$};
		\draw[below](8,-8.5) node{$\Cc(X, Y)$};
		\draw(13, -5) node{$=$};
		\draw(14,0)--(26,0);
		\draw(14,-8)--(26,-8);
		\draw(23.5,0)--(23.5,-1.5);
		\draw[above](15.5,0) node{$U \Box \Cc(X, Y)$};
		\draw[above](20,0) node{$V \Box \Cc(X, Y)$};
		\draw[above](24.5,0) node{$W \Box \Cc(X, Y)$};
		\draw(16,0)--(16,-1);
		\draw(19,0)--(19,-1);
		\coproduit{17}{-2.5}{1}
		\coproduit{20}{-2.5}{1}
		\coproduit{24.5}{-2.5}{1}
		\smallbraid{18}{-2.5}
		\smallbraid{22}{-3.5}
		\smallbraid{21}{-4.5}
		\produit{22}{-4.5}{1}
		\produit{21}{-6}{1}
		\draw(15,-2.5)--(15,-8);
		\draw(17,-3.5)--(17,-8);
		\draw(21,-5.5)--(21,-6);
		\draw(22,-7.5)--(22,-8);
		\draw(20,-5.5)--(20,-8);
		\draw(18,-3.5) ..controls +(0,-0.7) and +(0,0.7).. (20,-4.5);
		\draw(20,-2.5) ..controls +(0,-0.7) and +(0,0.7).. (21,-3.5);
		\draw(22.5,-2.5) ..controls +(0,-0.7) and +(0,0.7).. (22,-3.5);
		\draw(24.5,-2.5) ..controls +(0,-1) and +(0,1).. (24,-4.5);
		\draw [fill=white] (16,-1.5) circle (0.5);
		\draw (16,-1.5) node {$\eq$};
		\draw [fill=white] (19,-1.5) circle (0.5);
		\draw (19,-1.5) node {$\eq$};
		\draw [fill=white] (23.5,-1.5) circle (0.5);
		\draw (23.5,-1.5) node {$\eq$};
		\draw[below](15,-8) node{$U$};
		\draw[below](17,-8) node{$V$};
		\draw[below](20,-8) node{$W$};
		\draw[below](22.5,-8) node{$\Cc(X, Y)$};
	\end{tikzpicture}\]
	\[\begin{tikzpicture}[scale=0.45]
		\tiny
		\draw(-1.5, -15) node{$=$};
		\draw(0,-12)--(12,-12);
		\draw(0,-20)--(10,-20);
		\draw(1.5,-12)--(1.5,-13.5);
		\draw(10.5,-12) ..controls +(0,-0.7) and +(0,0.7).. (9,-13);
		\draw(5.5,-12) ..controls +(0,-0.7) and +(0,0.7).. (7,-13);
		\draw[above](1.5,-12) node{$U \Box \Cc(X, Y)$};
		\draw[above](6,-12) node{$V \Box \Cc(X, Y)$};
		\draw[above](10.5,-12) node{$W \Box \Cc(X, Y)$};
		\draw [fill=white] (7,-13) rectangle (9, -14);
		\draw (8,-13.5) node {$\widetilde{F}_{V, W}$};
		\draw [fill=white] (3,-16.5) rectangle (6, -15.5);
		\draw (4.5,-16) node {$\widetilde{F}_{U \otimes V, W}$};
		\draw(8,-14) ..controls +(0,-0.7) and +(0,0.7).. (6,-15.5);
		\draw(1.5,-13.5) ..controls +(0,-0.7) and +(0,0.7).. (3,-15.5);
		\draw(4.5,-17.5) ..controls +(0,-0.7) and +(0,0.7).. (2,-20);
		\draw(4.5,-17.5) ..controls +(0,-0.7) and +(0,0.7).. (4,-20);
		\draw(4.5,-17.5) ..controls +(0,-0.7) and +(0,0.7).. (5.5,-20);
		\draw(4.5,-17.5) ..controls +(0,-0.7) and +(0,0.7).. (8,-20);
		\draw [fill=white] (4.5,-17.5) circle (0.5);
		\draw (4.5,-17.5) node {$\eq$};
		\draw(4.5,-16.5)--(4.5,-17);
		\draw[below](2,-20) node{$U$};
		\draw[below](4,-20) node{$V$};
		\draw[below](5.5,-20) node{$W$};
		\draw[below](8,-20) node{$\Cc(X, Y)$};
		\draw(13, -18) node{$.$}; 
	\end{tikzpicture}\]
	This gives us the desired equality.
	\epf 

	By \cite[Definition~2.4.1]{EGNO}, a monoidal functor consists of a functor
	$F:\mathcal{C} \to \mathcal{D}$ together with natural isomorphisms
	\[
	\widetilde{F}_{X,Y}: F(X)\otimes F(Y)\xrightarrow{\sim} F(X\otimes Y),
	\]
	and an isomorphism $F(1_{\mathcal{C}}) \simeq 1_{\mathcal{D}}$,
	satisfying the usual coherence conditions. Hence, if $F$ is already an equivalence of categories with quasi-inverse $G:\mathcal{D}\to\mathcal{C}$, and if we are given the natural isomorphisms $\widetilde{F}$, then $F$ is automatically a monoidal equivalence. Indeed, since we are working with strict monoidal categories, we have
	\begin{align*}
		F(1_{\mathcal{C}})
		&\simeq F(1_{\mathcal{C}})\otimes 1_{\mathcal{D}} \\
		&\simeq F(1_{\mathcal{C}})\otimes FG(1_{\mathcal{D}})
		\qquad (\text{since } F \text{ is an equivalence})\\
		&\simeq F\bigl(1_{\mathcal{C}}\otimes G(1_{\mathcal{D}})\bigr)
		\qquad (\widetilde{F}_{1_{\mathcal{C}},\,G(1_{\mathcal{D}})} \text{ is an isomorphism})\\
		&\simeq FG(1_{\mathcal{D}}) \simeq 1_{\mathcal{D}}.
	\end{align*}
	
	In light of this observation, let $\Cc$ be a Takeuchi $\VV$-cocategory. Since we already have an equivalence of categories $F$ (from Theorem \ref{Thm : equi}), and Lemma~\ref{Lemma: Tensor Compa} provides the required monoidal structure $\widetilde{F}$, we obtain the following theorem:
	\bt \label{Cor : monoi equi}
	Let $\VV$ be a flat regular braided category. Let $\Cc$ be a Takeuchi $\VV$-cocategory. Then for any $X, Y \in \ob(\Cc)$, we have an equivalence of monoidal categories
	\begin{align*}
		\VV^{\Cc(X, X)} &\cong^{\otimes} \VV^{\Cc(Y, Y)}\\
		V &\mapsto V \Box_{\Cc(X, X)} \Cc(X, Y).
	\end{align*}
	\et

	\subsection{Braided cogroupoids}
	Let $\VV$ be a braided category endowed with a braiding $c$.
	\bd
	A $\VV$-\textbf{cogroupoid} \textbf{C} consists of a $\VV$-cocategory $\CC$ together with, for any $X, Y \in \ob(\Cc),$ morphisms in $\VV$
	\[S_{X, Y} : \Cc(X, Y) \longrightarrow \Cc(Y, X)\]
	such that the following diagrams commute
	\begin{equation} \label{antipode}
		\begin{tikzcd}
			\Cc(X, X) \ar[d, "\Delta^{Y}_{X, X}"]\ar[r, "\varepsilon_{X}"] &I \ar[r, "u_{Y, X}"] &\Cc(Y, X)\\
			\Cc(X, Y) \otimes \Cc(Y, X) \ar[rr, "S_{X, Y} \otimes 1"] &&\Cc(Y, X) \otimes \Cc(Y, X) \ar[u, "m"]
		\end{tikzcd}
	\end{equation}
	\[\begin{tikzcd}
		\Cc(X, X) \ar[d, "\Delta^{Y}_{X, X}"]\ar[r, "\varepsilon_{X}"] &I \ar[r, "u_{X, Y}"] &\Cc(X, Y)\\
		\Cc(X, Y) \otimes \Cc(Y, X) \ar[rr, "1 \otimes S_{Y, X}"] &&\Cc(X, Y) \otimes \Cc(X, Y) \ar[u, "m"]
	\end{tikzcd},\]
	which, in diagrammatic notation, means that
	\begin{equation}
		\begin{tikzpicture}[scale=0.43, base align tikzpicture]
			\tiny
			\draw(-1,0)--(4,0);
			\draw[above](1.5,0) node{$\Cc(X, X)$};
			\draw(1.5,0)--(1.5,-1);
			\coproduit{3}{-3}{1.5}
			\produit{0}{-4}{1.5}
			\draw [fill=white] (1.5,-1.5) circle (0.8);
			\draw (1.5,-1.5) node {\tiny $\Delta^{Y}_{X, X}$};
			\draw [fill=white] (0,-3.5) circle (0.8);
			\draw (0,-3.5) node {$S_{X, Y}$};
			\draw(3,-3)--(3,-4);
			\draw(-1,-6)--(4,-6);
			\draw[below](1.5,-6) node{$\Cc(Y, X)$};
			\draw(5.5, -3.5) node{$=$};
			\draw(7,0)--(10,0);
			\draw[above](8.5,0) node{$\Cc(X, X)$};
			\draw(8.5,0)--(8.5,-2.5);
			\draw [fill=white] (1.5,-1.5) circle (0.8);
			\draw (1.5,-1.5) node {\tiny $\Delta^{Y}_{X, X}$};
			\draw [fill=white] (8.5,-1.5) circle (0.6);
			\draw (8.5,-1.5) node {$\varepsilon_{X}$};
			\draw(8.5,-3.25)--(8.5,-6);
			\draw [fill=white] (8.5,-4.75) circle (0.85);
			\draw (8.5,-4.75) node {\footnotesize{$u_{Y, X}$}};
			\draw(7,-6)--(10,-6);
			\draw(8.5,-2.5) node{$\bullet$};
			\draw(8.5,-3.25) node{$\bullet$};
			\draw[below](8.5,-6) node{$\Cc(Y, X)$};
		\end{tikzpicture} \qquad \text{and} \qquad \begin{tikzpicture}[scale=0.43, base align tikzpicture]
			\tiny
			\draw(-1,0)--(4,0);
			\draw[above](1.5,0) node{$\Cc(X, X)$};
			\draw(1.5,0)--(1.5,-1);
			\coproduit{3}{-3}{1.5}
			\produit{0}{-4}{1.5}
			\draw [fill=white] (1.5,-1.5) circle (0.8);
			\draw (1.5,-1.5) node {\tiny $\Delta^{Y}_{X, X}$};
			\draw [fill=white] (3,-3.5) circle (0.8);
			\draw (3,-3.5) node {$S_{Y, X}$};
			\draw(0,-3)--(0,-4);
			\draw(-1,-6)--(4,-6);
			\draw[below](1.5,-6) node{$\Cc(X, Y)$};
			\draw(5.5, -3.5) node{$=$};
			\draw(7,0)--(10,0);
			\draw[above](8.5,0) node{$\Cc(X, X)$};
			\draw(8.5,0)--(8.5,-2.5);
			\draw [fill=white] (1.5,-1.5) circle (0.8);
			\draw (1.5,-1.5) node {\tiny $\Delta^{Y}_{X, X}$};
			\draw [fill=white] (8.5,-1.5) circle (0.6);
			\draw (8.5,-1.5) node {$\varepsilon_{X}$};
			\draw(8.5,-3.25)--(8.5,-6);
			\draw [fill=white] (8.5,-4.75) circle (0.85);
			\draw (8.5,-4.75) node {\footnotesize{$u_{X, Y}$}};
			\draw(7,-6)--(10,-6);
			\draw(8.5,-2.5) node{$\bullet$};
			\draw(8.5,-3.25) node{$\bullet$};
			\draw[below](8.5,-6) node{$\Cc(X, Y)$};
			\draw(11, -4.5) node{$.$};
		\end{tikzpicture} \label{antipode1}
	\end{equation}
	A $\VV$-cogroupoid $\Cc$ is said to be \textit{faithfully flat} if, for all $X, Y \in \ob(\Cc)$, the object $\Cc(X, Y)$ is faithfully flat.
	\ed
	It follows from the definition that if $\Cc$ is a $\VV$-cogroupoid and $X \in \ob(\Cc)$, then $\Cc(X, X)$ is a Hopf algebra in $\VV$ (a braided Hopf algebra).
	\brq
	When $\VV = \MM_{k}$, $\VV = \MM^{H}$ or $\VV = \YD^{H}_{H}$, where $H$ is a coquasitriangular Hopf algebra, a faithfully flat $\VV$-cogroupoid coincides with a connected cogroupoid in the sense of \cite[Definition 2.3]{MR3285340}; that is, $\Cc(X, Y)$ is a nonzero algebra for all $X, Y \in \ob(\VV)$.
	\erq

	Next, we show some basic properties of braided cogroupoids. The following result generalizes  \cite[Proposition 2.13]{MR3285340}, which states that the ``antipodes" in a braided cogroupoid are anti-morphisms of algebras and coalgebras in the associated braided category.
	
	\bp\label{Proprosition: S_X,Y}
	Let $\Cc$ be a $\VV$-cogroupoid and let $X, Y \in \ob(\Cc)$. Then
	\begin{itemize}
		\item [$(1)$]$S_{X, Y}: \Cc(X, Y) \longrightarrow \Cc(Y, X)^{\operatorname{op}}$ is an algebra morphism of $\VV$; i.e.
		\vspace*{-0.5cm}
		\begin{equation*}
			\begin{tikzpicture}[scale=0.43]
				\tiny
				\draw(0.5,0)--(6,0);
				\draw(0.5, -5)--(6,-5);
				\draw[above](2,0) node{$\Cc(X, Y)$};
				\draw[above](5.25,0) node{$\Cc(X, Y)$};
				\produit{2}{0}{1.5}
				\draw(3.5,-1.5)--(3.5,-5);
				\draw [fill=white] (3.5,-3) circle (0.8);
				\draw (3.5,-3) node {$S_{X, Y}$};
				\draw[below](3.5,-5) node{$\Cc(Y, X)$};
				\draw(7, -4) node{$=$};
				\draw(8,0)--(15,-0);
				\draw(8,-7.5)--(15,-7.5);
				\draw[above](9.5,0) node{$\Cc(X, Y)$};
				\draw[above](13,0) node{$\Cc(X, Y)$};
				\draw(12.5,-0) ..controls +(0,-0.7) and +(0,0.7).. (11.5,-1);
				\draw(9.5,-0) ..controls +(0,-0.7) and +(0,0.7).. (10.5,-1);
				\smallbraid{11.5}{-1}
				\draw(11.5,-2) ..controls +(0,-0.7) and +(0,0.7).. (12.5,-3.25);
				\draw(10.5,-2) ..controls +(0,-0.7) and +(0,0.7).. (9.5,-3.25);
				\produit{10}{-6}{1}
				\draw(9.5,-4.75) ..controls +(0,-0.7) and +(0,0.7).. (10,-6);
				\draw(12.5,-4.75) ..controls +(0,-0.7) and +(0,0.7).. (12,-6);
				\draw [fill=white] (9.5,-4) circle (0.8);
				\draw (9.5,-4) node {$S_{X, Y}$};
				\draw [fill=white] (12.5,-4) circle (0.8);
				\draw (12.5,-4) node {$S_{X, Y}$};
				\draw[below](11,-7.5) node{$\Cc(Y, X)$};
				\draw(15, -4.5) node{$.$};
			\end{tikzpicture}
		\end{equation*}

		\item [$(2)$]For any $Z \in \ob(\Cc)$, we have
		\[\begin{tikzpicture}[scale=0.43]
			\tiny
			\draw(-1,0)--(4,0);
			\draw[above](1.5,0) node{$\Cc(X, Y)$};
			\draw(1.5,0)--(1.5,-4);
			\draw [fill=white] (1.5,-2) circle (0.8);
			\draw (1.5,-2) node {$S_{X, Y}$};
			\coproduit{3}{-6}{1.5}
			\draw [fill=white] (1.5,-4.5) circle (0.8);
			\draw (1.5,-4.5) node {\tiny $\Delta^{Z}_{Y, X}$};
			\draw(-1,-6)--(4,-6);
			\draw[below](0,-6) node{$\Cc(Y, Z)$};
			\draw[below](3,-6) node{$\Cc(Z, X)$};
			\draw(5.5, -5.5) node{$=$};
			\draw(6.5, 0)--(12,0);
			\draw(6.5,-8)--(12,-8);
			\draw(9,0)--(9,-1.5);
			\draw[above](9,0) node{$\Cc(X, Y)$};
			\coproduit{10.5}{-3}{1.5}
			\draw [fill=white] (9,-1.5) circle (0.8);
			\draw (9,-1.5) node {\tiny $\Delta^{Z}_{X, Y}$};
			\draw(7.5,-3) ..controls +(0,-0.7) and +(0,0.7).. (8.5,-4);
			\draw(10.5,-3) ..controls +(0,-0.7) and +(0,0.7).. (9.5,-4);
			\smallbraid{9.5}{-4}
			\draw(8.5,-5) ..controls +(0,-0.7) and +(0,0.7).. (7.5,-6);
			\draw(9.5,-5) ..controls +(0,-0.7) and +(0,0.7).. (10.5,-6);
			\draw(7.5,-7.5)--(7.5,-8);
			\draw(10.5,-7.5)--(10.5,-8);
			\draw [fill=white] (7.5,-6.75) circle (0.8);
			\draw (7.5,-6.75) node {$S_{Z, Y}$};
			\draw [fill=white] (10.5,-6.75) circle (0.8);
			\draw (10.5,-6.75) node {$S_{X, Z}$};
			\draw[below](7.5,-8) node{$\Cc(Y, Z)$};
			\draw[below](10.5,-8) node{$\Cc(Z, X)$};
			\draw(13, -6.5) node{$.$};
		\end{tikzpicture}\]
	\end{itemize}
	\ep
	\bpf
	First, we have 
	\begin{align*}
		S_{X, Y} u_{X, Y} = m  \big(S_{X, Y} u_{X, Y} \otimes u_{X, Y}\big)
		&= m \big(S_{X, Y} \otimes 1\big)(u_{X, Y} \otimes u_{X, Y})\\
		&= m \big(S_{X, Y} \otimes 1\big)\Delta^{Z}_{X,Y}u_{X, X}\\
		&= u_{Y, X} \varepsilon_{X} u_{X, X}\\
		&= u_{Y, X}.
	\end{align*}
	We also have
	\[\begin{tikzpicture}[scale=0.45]
		\tiny
		\draw(0,0)--(6,0);
		\draw(0,-7.5)--(6,-7.5);
		\draw[above](1,0) node{$\Cc(X, Y)$};
		\draw[above](4.5,0) node{$\Cc(X, Y)$};
		\draw(4.5,-0) ..controls +(0,-0.7) and +(0,0.7).. (3.5,-1);
		\draw(1.5,-0) ..controls +(0,-0.7) and +(0,0.7).. (2.5,-1);
		\smallbraid{3.5}{-1}
		\draw(3.5,-2) ..controls +(0,-0.7) and +(0,0.7).. (4.5,-3.25);
		\draw(2.5,-2) ..controls +(0,-0.7) and +(0,0.7).. (1.5,-3.25);
		\produit{2}{-6}{1}
		\draw(1.5,-4.75) ..controls +(0,-0.7) and +(0,0.7).. (2,-6);
		\draw(4.5,-4.75) ..controls +(0,-0.7) and +(0,0.7).. (4,-6);
		\draw [fill=white] (1.5,-4) circle (0.8);
		\draw (1.5,-4) node {$S_{X, Y}$};
		\draw [fill=white] (4.5,-4) circle (0.8);
		\draw (4.5,-4) node {$S_{X, Y}$};
		\draw[below](3,-7.5) node{$\Cc(Y, X)$};
		\draw(7, -4.5) node{$=$};
		\draw(9,0)--(17,0);
		\draw(9,-10)--(17,-10);
		\draw(10.5,0)--(10.5,-1);
		\draw(14.5,0)--(14.5,-1);
		\draw[above](10.5,0) node{$\Cc(X, Y)$};
		\draw[above](14.5,0) node{$\Cc(X, Y)$};
		\coproduit{16}{-3}{1.5}
		\coproduit{12}{-3}{1.5}
		\smallbraid{13}{-3}
		\draw(9,-3)--(9, -6);
		\draw [fill=white] (14.5,-1.5) circle (0.8);
		\draw (14.5,-1.5) node {\tiny $\Delta^{X}_{X, Y}$};
		\draw [fill=white] (10.5,-1.5) circle (0.8);
		\draw (10.5,-1.5) node {\tiny $\Delta^{X}_{X, Y}$};
		\smallbraid{15}{-4.5}
		\draw(16,-3) ..controls +(0,-0.7) and +(0,0.7).. (15,-4.5);
		\draw(13,-4) ..controls +(0,-0.3) and +(0,0.3).. (14,-4.5);
		\draw(14,-5.5) ..controls +(0,-0.7) and +(0,0.7).. (13,-6.5);
		\draw(15,-5.5) ..controls +(0,-0.7) and +(0,0.7).. (16,-6.5);
		\draw(13,-7) ..controls +(0,-0.7) and +(0,0.7).. (13.5,-8.5);
		\draw(16,-7) ..controls +(0,-0.7) and +(0,0.7).. (15.5,-8.5);
		\draw [fill=white] (13.25,-7.25) circle (0.8);
		\draw (13.25,-7.25) node {$S_{X, Y}$};
		\draw [fill=white] (15.75,-7.25) circle (0.8);
		\draw (15.75,-7.25) node {$S_{X, Y}$};
		\produit{13.5}{-8.5}{1}
		\draw(12,-4) ..controls +(0,-0.7) and +(0,0.7).. (11,-5);
		\draw [fill=white] (9,-5) circle (0.6);
		\draw (9,-5) node {$\varepsilon_{X}$};
		\draw(11.5,-5.5)--(11.5, -6);
		\draw [fill=white] (11.5,-5) circle (0.6);
		\draw (11.5,-5) node {$\varepsilon_{X}$};
		\draw(9,-6) node{$\bullet$};
		\draw(11.5,-6) node{$\bullet$};
		\draw[below](15,-10) node{$\Cc(Y, X)$};
		\draw(18, -7) node{$=$};
		\draw(20,0)--(28,0);
		\draw(20, -12.5)--(28,-12.5);
		\draw(21.5,0)--(21.5,-1);
		\draw(25.5,0)--(25.5,-1);
		\draw[above](21.5,0) node{$\Cc(X, Y)$};
		\draw[above](25.5,0) node{$\Cc(X, Y)$};
		\coproduit{27}{-3}{1.5}
		\coproduit{23}{-3}{1.5}
		\smallbraid{24}{-3}
		\produit{21}{-4}{1}
		\draw(20,-3) ..controls +(0,-0.7) and +(0,0.7).. (21,-4);
		\draw [fill=white] (25.5,-1.5) circle (0.8);
		\draw (25.5,-1.5) node {\tiny $\Delta^{X}_{X, Y}$};
		\draw [fill=white] (21.5,-1.5) circle (0.8);
		\draw (21.5,-1.5) node {\tiny $\Delta^{X}_{X, Y}$};
		\smallbraid{26}{-4.5}
		\draw(22, -5.5)--(22,-7);
		\draw [fill=white] (22,-6) circle (0.6);
		\draw (22,-6) node {$\varepsilon_{X}$};
		\draw(22,-7) node{$\bullet$};
		\draw(22,-7.5)--(22, -10);
		\draw [fill=white] (22,-8.75) circle (0.75);
		\draw (22,-8.75) node {$u_{Y, X}$};
		\draw(22,-7.5) node{$\bullet$};
		\draw(27,-3) ..controls +(0,-0.7) and +(0,0.7).. (26,-4.5);
		\draw(24,-4) ..controls +(0,-0.3) and +(0,0.3).. (25,-4.5);
		\draw(25,-5.5) ..controls +(0,-0.7) and +(0,0.7).. (24,-6.5);
		\draw(26,-5.5) ..controls +(0,-0.7) and +(0,0.7).. (27,-6.5);
		\draw(24,-7) ..controls +(0,-0.7) and +(0,0.7).. (24.5,-8.5);
		\draw(27,-7) ..controls +(0,-0.7) and +(0,0.7).. (26.5,-8.5);
		\draw [fill=white] (24.25,-7.25) circle (0.8);
		\draw (24.25,-7.25) node {$S_{X, Y}$};
		\draw [fill=white] (26.75,-7.25) circle (0.8);
		\draw (26.75,-7.25) node {$S_{X, Y}$};
		\produit{24.5}{-8.5}{1}
		\produit{23}{-11}{1}
		\draw(22,-10) ..controls +(0,-0.7) and +(0,0.7).. (23,-11);
		\draw(25.5,-10) ..controls +(0,-0.7) and +(0,0.7).. (25,-11);
		\draw[below](24.5,-12.5) node{$\Cc(Y, X)$};
	\end{tikzpicture}\]
	\[\begin{tikzpicture}[scale=0.425]
		\tiny
		\draw(-1.5, -7) node{$=$};
		\draw(0,0)--(8,0);
		\draw(0, -13)--(8,-13);
		\draw(1.5,0)--(1.5,-1);
		\draw(5.5,0)--(5.5,-1);
		\draw[above](1.5,0) node{$\Cc(X, Y)$};
		\draw[above](5.5,0) node{$\Cc(X, Y)$};
		\coproduit{7}{-3}{1.5}
		\coproduit{3}{-3}{1.5}
		\smallbraid{4}{-3}
		\produit{0}{-4.5}{1}
		\draw(3,-4) ..controls +(0,-0.7) and +(0,0.3).. (2,-4.5);
		\draw [fill=white] (5.5,-1.5) circle (0.8);
		\draw (5.5,-1.5) node {\tiny $\Delta^{X}_{X, Y}$};
		\draw [fill=white] (1.5,-1.5) circle (0.8);
		\draw (1.5,-1.5) node {\tiny $\Delta^{X}_{X, Y}$};
		\smallbraid{6}{-4.5}
		\coproduit{2.5}{-8}{1.5}
		\draw(0, -3)--(0,-4.5);
		\draw [fill=white] (1,-6.5) circle (0.8);
		\draw (1,-6.5) node {\tiny $\Delta^{Y}_{X, X}$};
		\produit{0}{-9}{1}
		\draw(-0.5,-8) ..controls +(0,-0.7) and +(0,0.3).. (0,-9);
		\draw [fill=white] (-0.25,-8.5) circle (0.8);
		\draw (-0.25,-8.5) node {$S_{X, Y}$};
		\draw(2.5,-8) ..controls +(0,-0.7) and +(0,0.3).. (2,-9);
		\draw(7,-3) ..controls +(0,-0.7) and +(0,0.7).. (6,-4.5);
		\draw(4,-4) ..controls +(0,-0.3) and +(0,0.3).. (5,-4.5);
		\draw(5,-5.5) ..controls +(0,-0.7) and +(0,0.7).. (4,-6.5);
		\draw(6,-5.5) ..controls +(0,-0.7) and +(0,0.7).. (7,-6.5);
		\draw(4,-7) ..controls +(0,-0.7) and +(0,0.7).. (4.5,-8.5);
		\draw(7,-7) ..controls +(0,-0.7) and +(0,0.7).. (6.5,-8.5);
		\draw [fill=white] (4.25,-7.25) circle (0.8);
		\draw (4.25,-7.25) node {$S_{X, Y}$};
		\draw [fill=white] (6.75,-7.25) circle (0.8);
		\draw (6.75,-7.25) node {$S_{X, Y}$};
		\produit{4.5}{-8.5}{1}
		\produit{2.5}{-11.5}{1}
		\draw(1,-10.5) ..controls +(0,-0.7) and +(0,0.7).. (2.5,-11.5);
		\draw(5.5,-10) ..controls +(0,-0.7) and +(0,0.7).. ( 4.5,-11.5);
		\draw[below](4,-13) node{$\Cc(Y, X)$};
		\draw(8.75, -7) node{$=$};
		\draw(10,0)--(18,0);
		\draw(10, -13)--(18,-13);
		\draw(11.5,0)--(11.5,-1);
		\draw(15.5,0)--(15.5,-1);
		\draw[above](11.5,0) node{$\Cc(X, Y)$};
		\draw[above](15.5,0) node{$\Cc(X, Y)$};
		\coproduit{17}{-3}{1.5}
		\coproduit{13}{-3}{1.5}
		\smallbraid{14}{-3}
		\produit{10}{-4.5}{1}
		\draw(13,-4) ..controls +(0,-0.7) and +(0,0.3).. (12,-4.5);
		\draw [fill=white] (15.5,-1.5) circle (0.8);
		\draw (15.5,-1.5) node {\tiny $\Delta^{X}_{X, Y}$};
		\draw [fill=white] (11.5,-1.5) circle (0.8);
		\draw (11.5,-1.5) node {\tiny $\Delta^{X}_{X, Y}$};
		\smallbraid{16}{-4.5}
		\coproduit{12.5}{-8}{1.5}
		\draw(10, -3)--(10,-4.5);
		\draw [fill=white] (11,-6.5) circle (0.8);
		\draw (11,-6.5) node {\tiny $\Delta^{Y}_{X, X}$};
		\produit{13.5}{-10}{1}
		\draw(12.5,-8) ..controls +(0,-0.7) and +(0,0.3).. (13.5,-10);
		\draw(9.5,-8) ..controls +(0,-0.7) and +(0,0.3).. (10,-9);
		\draw(9.75, -9)--(9.75,-10);
		\draw [fill=white] (9.75,-8.5) circle (0.8);
		\draw (9.75,-8.5) node {$S_{X, Y}$};
		\draw(17,-3) ..controls +(0,-0.7) and +(0,0.7).. (16,-4.5);
		\draw(14,-4) ..controls +(0,-0.3) and +(0,0.3).. (15,-4.5);
		\draw(15,-5.5) ..controls +(0,-0.7) and +(0,0.7).. (14,-6.5);
		\draw(16,-5.5) ..controls +(0,-0.7) and +(0,0.7).. (17,-6.5);
		\draw(14,-7) ..controls +(0,-0.7) and +(0,0.7).. (14.5,-8.5);
		\draw(17,-7) ..controls +(0,-0.7) and +(0,0.7).. (16.5,-8.5);
		\draw [fill=white] (14.25,-7.25) circle (0.8);
		\draw (14.25,-7.25) node {$S_{X, Y}$};
		\draw [fill=white] (16.75,-7.25) circle (0.8);
		\draw (16.75,-7.25) node {$S_{X, Y}$};
		\produit{14.5}{-8.5}{1}
		\produit{12.5}{-11.5}{1}
		\draw(9.75,-10) ..controls +(0,-0.7) and +(0,0.7).. (12.5,-11.5);
		\draw[below](14,-13) node{$\Cc(Y, X)$};
		\draw(19, -7) node{$=$};
		\draw(20,0)--(30,0);
		\draw(20, -14.5)--(30,-14.5);
		\draw(21.5,0)--(21.5,-1);
		\draw(27.5,0)--(27.5,-1);
		\draw[above](21.5,0) node{$\Cc(X, Y)$};
		\draw[above](27.5,0) node{$\Cc(X, Y)$};
		\coproduit{29}{-3}{1.5}
		\coproduit{23}{-3}{1.5}
		\smallbraid{26}{-4}
		\draw(20,-3) ..controls +(0,-0.7) and +(0,0.3).. (21,-4.5);
		\draw(23,-3) ..controls +(0,-0.7) and +(0,0.3).. (25,-4);
		\draw(21, -4.5)--(21,-5);
		\draw(26, -3)--(26,-4);
		\draw [fill=white] (27.5,-1.5) circle (0.8);
		\draw (27.5,-1.5) node {\tiny $\Delta^{X}_{X, Y}$};
		\draw [fill=white] (21.5,-1.5) circle (0.8);
		\draw (21.5,-1.5) node {\tiny $\Delta^{X}_{X, Y}$};
		\smallbraid{29}{-6.5}
		\coproduit{26.5}{-7.5}{1.5}
		\coproduit{22.5}{-7.5}{1.5}
		\draw(29, -3)--(29,-6.5);
		\draw(26,-5) ..controls +(0,-0.7) and +(0,0.7).. (28,-6.5);
		\draw [fill=white] (21,-5.75) circle (0.8);
		\draw (21,-5.75) node {\tiny $\Delta^{Y}_{X, X}$};
		\draw [fill=white] (25,-5.75) circle (0.8);
		\draw (25,-5.75) node {\tiny $\Delta^{Y}_{X, X}$};
		\smallbraid{23.5}{-7.5}
		\produit{23.5}{-8.5}{1}
		\produit{20.5}{-8.5}{1}
		\draw(26.5,-7.5) ..controls +(0,-0.7) and +(0,0.7).. (25.5,-8.5);
		\draw(19.5,-7.5) ..controls +(0,-0.7) and +(0,0.3).. (20.5,-8.5);
		\draw(21.5, -9.5)--(21.5,-10.5);
		\draw [fill=white] (21.5,-11.25) circle (0.8);
		\draw (21.5,-11.25) node {$S_{X, Y}$};
		\draw(21.5,-12) ..controls +(0,-0.7) and +(0,0.7).. (25.5,-13);
		\draw(28,-7.5) ..controls +(0,-0.7) and +(0,0.7).. (27,-8.5);
		\draw(29,-7.5) ..controls +(0,-0.7) and +(0,0.7).. (30,-8.5);
		\draw [fill=white] (27.25,-9.25) circle (0.8);
		\draw (27.25,-9.25) node {$S_{X, Y}$};
		\draw [fill=white] (29.75,-9.25) circle (0.8);
		\draw (29.75,-9.25) node {$S_{X, Y}$};
		\produit{27.5}{-10}{1}
		\produit{26.5}{-11.5}{1}
		\produit{25.5}{-13}{1}
		\draw(24.5,-10) ..controls +(0,-0.7) and +(0,0.7).. (26.5,-11.5);
		\draw[below](26.5,-14.5) node{$\Cc(Y, X)$};
	\end{tikzpicture}\]
	\vspace*{-1cm}
	\[\begin{tikzpicture}[scale=0.425]
		\tiny
		\draw(-1.5, -10) node{$=$};
		\draw(0,0)--(10,0);
		\draw(0, -15.5)--(10,-15.5);
		\draw(1.5,0)--(1.5,-1);
		\draw(7.5,0)--(7.5,-1);
		\draw[above](1.5,0) node{$\Cc(X, Y)$};
		\draw[above](7.5,0) node{$\Cc(X, Y)$};
		\coproduit{9}{-3}{1.5}
		\coproduit{3}{-3}{1.5}
		\draw [fill=white] (7.5,-1.5) circle (0.8);
		\draw (7.5,-1.5) node {\tiny $\Delta^{X}_{X, Y}$};
		\draw [fill=white] (1.5,-1.5) circle (0.8);
		\draw (1.5,-1.5) node {\tiny $\Delta^{Y}_{X, Y}$};
		\coproduit{4.5}{-5.5}{1.5}
		\draw [fill=white] (3,-3.75) circle (0.8);
		\draw (3,-3.75) node {\tiny $\Delta^{X}_{Y, Y}$};
		\smallbraid{5.5}{-5.5}
		\coproduit{6}{-9}{1.5}
		\draw(4.5, -6.5)--(4.5,-7);
		\draw [fill=white] (4.5,-7.75) circle (0.8);
		\draw (4.5,-7.75) node {\tiny $\Delta^{Y}_{X, X}$};
		\smallbraid{3}{-9}
		\draw(1.5, -5.5)--(1.5,-8);
		\draw(1.5,-8) ..controls +(0,-0.7) and +(0,0.7).. (2,-9);
		\produit{3}{-10}{1}
		\produit{0}{-10}{1}
		\draw(0, -3)--(0,-10);
		\draw(6,-9) ..controls +(0,-0.7) and +(0,0.3).. (5,-10);
		\draw(1, -11)--(1,-12);
		\draw [fill=white] (1,-12.25) circle (0.8);
		\draw (1,-12.25) node {$S_{X, Y}$};
		\draw(5.5,-6.5) ..controls +(0,-0.7) and +(0,0.7).. (8,-7.5);
		\smallbraid{9}{-7.5}
		\draw(8,-8.5) ..controls +(0,-0.7) and +(0,0.7).. (7,-9.5);
		\draw(9,-8.5) ..controls +(0,-0.7) and +(0,0.7).. (10,-9.5);
		\draw [fill=white] (7.25,-10.25) circle (0.8);
		\draw (7.25,-10.25) node {$S_{X, Y}$};
		\draw [fill=white] (9.75,-10.25) circle (0.8);
		\draw (9.75,-10.25) node {$S_{X, Y}$};
		\produit{7.5}{-11}{1}
		\produit{6.5}{-12.5}{1}
		\produit{5.5}{-14}{1}
		\draw(9, -3)--(9,-7.5);
		\draw(6, -3)--(6,-4.5);
		\draw(6,-4.5) ..controls +(0,-0.7) and +(0,0.7).. (5.5,-5.5);
		\draw(4,-11.5) ..controls +(0,-0.7) and +(0,0.7).. (6.5,-12.5);
		\draw(1,-13) ..controls +(0,-0.7) and +(0,0.7).. (5.5,-14);
		\draw[below](6.5,-15.5) node{$\Cc(Y, X)$};
		\draw(12, -10) node{$=$};
		\draw(14,0)--(24,0);
		\draw(14, -18)--(24,-18);
		\draw(16.5,0)--(16.5,-1);
		\draw(20.5,0)--(20.5,-5.5);
		\draw[above](16.5,0) node{$\Cc(X, Y)$};
		\draw[above](20.5,0) node{$\Cc(X, Y)$};
		\coproduit{18}{-3}{1.5}
		\draw [fill=white] (16.5,-1.5) circle (0.8);
		\draw (16.5,-1.5) node {\tiny $\Delta^{Y}_{X, Y}$};
		\coproduit{19.5}{-5.5}{1.5}
		\draw [fill=white] (18,-3.75) circle (0.8);
		\draw (18,-3.75) node {\tiny $\Delta^{X}_{Y, Y}$};
		\smallbraid{20.5}{-5.5}
		\coproduit{21}{-9}{1.5}
		\draw(19.5, -6.5)--(19.5,-7);
		\draw [fill=white] (19.5,-7.75) circle (0.8);
		\draw (19.5,-7.75) node {\tiny $\Delta^{X}_{X, Y}$};
		\coproduit{19.5}{-11.5}{1.5}
		\draw [fill=white] (18,-9.75) circle (0.8);
		\draw (18,-9.75) node {\tiny $\Delta^{Y}_{X, X}$};
		\smallbraid{16.5}{-11.5}
		\produit{16.5}{-12.5}{1}
		\draw [fill=white] (21,-9.75) circle (0.8);
		\draw (21,-9.75) node {$S_{X, Y}$};
		\draw [fill=white] (23.5,-9.75) circle (0.8);
		\draw (23.5,-9.75) node {$S_{X, Y}$};
		\produit{21.25}{-10.5}{1}
		\draw(20.5,-6.5) ..controls +(0,-0.7) and +(0,1).. (23.5,-8);
		\draw (23.5,-8)-- (23.5,-9);
		\draw(19.5,-11.5) ..controls +(0,-0.7) and +(0,0.3).. (18.5,-12.5);
		\produit{13.5}{-12.5}{1}
		\draw(16.5,-5.5) ..controls +(0,-0.7) and +(0,0.7).. (15.5,-9);
		\draw(15,-3) ..controls +(0,-0.7) and +(0,0.7).. (13.5,-7);
		\draw(15.5, -9)--(15.5,-11.5);
		\draw(13.5, -7)--(13.5,-12.5);
		\draw(22.25,-12) ..controls +(0,-0.7) and +(0,0.7).. (19.5,-14);
		\draw(14.5,-15.5) ..controls +(0,-0.7) and +(0,0.3).. (15.5,-16.5);
		\draw(18.5,-15.5) ..controls +(0,-0.7) and +(0,0.3).. (17.5,-16.5);
		\draw [fill=white] (14.5,-14.75) circle (0.8);
		\draw (14.5,-14.75) node {$S_{X, Y}$};
		\produit{17.5}{-14}{1}
		\produit{15.5}{-16.5}{1}
		\draw[below](16.5,-18) node{$\Cc(Y, X)$};
		\draw(26, -10) node{$=$};
	\end{tikzpicture}\]
	\[\begin{tikzpicture}[scale=0.425]
		\tiny
		\draw(-1, -10) node{$=$};
		\draw(0,0)--(10,0);
		\draw(0, -18)--(10,-18);
		\draw(2.5,0)--(2.5,-1);
		\draw(6.5,0)--(6.5,-5.5);
		\draw[above](2.5,0) node{$\Cc(X, Y)$};
		\draw[above](6.5,0) node{$\Cc(X, Y)$};
		\coproduit{4}{-3}{1.5}
		\draw [fill=white] (2.5,-1.5) circle (0.8);
		\draw (2.5,-1.5) node {\tiny $\Delta^{Y}_{X, Y}$};
		\coproduit{5.5}{-5.5}{1.5}
		\draw [fill=white] (4,-3.75) circle (0.8);
		\draw (4,-3.75) node {\tiny $\Delta^{X}_{Y, Y}$};
		\smallbraid{6.5}{-5.5}
		\coproduit{7}{-9}{1.5}
		\draw(5.5, -6.5)--(5.5,-7);
		\draw [fill=white] (5.5,-7.75) circle (0.8);
		\draw (5.5,-7.75) node {\tiny $\Delta^{Y}_{X, Y}$};
		\coproduit{8.5}{-11.5}{1.5}
		\draw [fill=white] (7,-9.75) circle (0.8);
		\draw (7,-9.75) node {\tiny $\Delta^{X}_{Y, Y}$};
		\smallbraid{3.5}{-10.5}
		\produit{3.5}{-11.5}{1}
		\draw [fill=white] (8.5,-11.75) circle (0.8);
		\draw (8.5,-11.75) node {$S_{X, Y}$};
		\draw [fill=white] (11,-11.75) circle (0.8);
		\draw (11,-11.75) node {$S_{X, Y}$};
		\produit{8.75}{-12.5}{1}
		\draw(6.5,-6.5) ..controls +(0,-0.7) and +(0,1).. (8.5,-8);
		\draw(8.5,-8) ..controls +(0,-0.7) and +(0,1).. (11,-10);
		\draw (11,-10)-- (11,-11);
		\produit{0.5}{-11.5}{1}
		\draw(4,-9) ..controls +(0,-0.7) and +(0,0.7).. (3.5,-10.5);
		\draw(1,-3) ..controls +(0,-0.7) and +(0,0.7).. (0.5,-7);
		\draw(0.5, -7)--(0.5,-11.5);
		\draw(2.5, -5.5)--(2.5,-10.5);
		\draw(4.5,-13) ..controls +(0,-0.7) and +(0,0.7).. (7.75,-14);
		\draw(1.5,-14.5) ..controls +(0,-0.7) and +(0,0.7).. (4.5,-16.5);
		\draw(8.75,-15.5) ..controls +(0,-0.7) and +(0,0.7).. (6.5,-16.5);
		\draw [fill=white] (1.5,-13.75) circle (0.8);
		\draw (1.5,-13.75) node {$S_{X, Y}$};
		\produit{7.75}{-14}{1}
		\produit{4.5}{-16.5}{1}
		\draw[below](5,-18) node{$\Cc(Y, X)$};
		\draw(12, -10) node{$=$};
		\draw(14,0)--(24,0);
		\draw(14, -18.5)--(24,-18.5);
		\draw(16.5,0)--(16.5,-1);
		\draw(20.5,0)--(20.5,-5.5);
		\draw[above](16.5,0) node{$\Cc(X, Y)$};
		\draw[above](20.5,0) node{$\Cc(X, Y)$};
		\coproduit{18}{-3}{1.5}
		\draw [fill=white] (16.5,-1.5) circle (0.8);
		\draw (16.5,-1.5) node {\tiny $\Delta^{Y}_{X, Y}$};
		\coproduit{19.5}{-5.5}{1.5}
		\draw [fill=white] (18,-3.75) circle (0.8);
		\draw (18,-3.75) node {\tiny $\Delta^{X}_{Y, Y}$};
		\smallbraid{20.5}{-5.5}
		\coproduit{21}{-9}{1.5}
		\draw(19.5, -6.5)--(19.5,-7);
		\draw [fill=white] (19.5,-7.75) circle (0.8);
		\draw (19.5,-7.75) node {\tiny $\Delta^{Y}_{X, Y}$};
		\coproduit{22.5}{-11.5}{1.5}
		\draw [fill=white] (21,-9.75) circle (0.8);
		\draw (21,-9.75) node {\tiny $\Delta^{X}_{Y, Y}$};
		\smallbraid{18}{-9}
		\produit{19.75}{-14}{1}
		\draw [fill=white] (22.5,-11.75) circle (0.8);
		\draw (22.5,-11.75) node {$S_{X, Y}$};
		\draw(25,-12.25) ..controls +(0,-1) and +(0,1).. (22.75,-15.5);
		\draw (25,-10)-- (25,-11);
		\draw [fill=white] (25,-11.5) circle (0.8);
		\draw (25,-11.5) node {$S_{X, Y}$};
		\produit{20.75}{-12.5}{1}
		\draw(18,-10) ..controls +(0,-1) and +(0,1).. (19.75,-14);
		\draw(19.5,-11.5) ..controls +(0,-0.7) and +(0,0.7).. (20.75,-12.5);
		\draw(22.5,-8) ..controls +(0,-0.7) and +(0,1).. (25,-10);
		\produit{15}{-10}{1}
		\draw(16.5,-5.5) ..controls +(0,-0.7) and +(0,0.7).. (17,-9);
		\draw(15, -3)--(15,-10);
		\draw(20.5,-6.5) ..controls +(0,-0.7) and +(0,0.7).. (22.5,-8);
		\draw(16,-13) ..controls +(0,-1) and +(0,1).. (19.75,-17);
		\draw [fill=white] (16,-12.25) circle (0.8);
		\draw (16,-12.25) node {$S_{X, Y}$};
		\produit{20.75}{-15.5}{1}
		\produit{19.75}{-17}{1}
		\draw[below](20.5,-18.5) node{$\Cc(Y, X)$};
		\draw(27, -10) node{$=$};
		\draw(28,0)--(36,0);
		\draw(28, -14)--(36,-14);
		\draw(30.5,0)--(30.5,-1);
		\draw(34.5,0)--(34.5,-5.5);
		\draw[above](30.5,0) node{$\Cc(X, Y)$};
		\draw[above](34.5,0) node{$\Cc(X, Y)$};
		\coproduit{32}{-3}{1.5}
		\draw [fill=white] (30.5,-1.5) circle (0.8);
		\draw (30.5,-1.5) node {\tiny $\Delta^{Y}_{X, Y}$};
		\coproduit{33.5}{-5.5}{1.5}
		\draw [fill=white] (32,-3.75) circle (0.8);
		\draw (32,-3.75) node {\tiny $\Delta^{X}_{Y, Y}$};
		\smallbraid{34.5}{-5.5}
		\smallbraid{32}{-7.5}
		\produit{29}{-8.5}{1}
		\draw(30.5,-5.5) ..controls +(0,-1) and +(0,1).. (31,-7.5);
		\draw(33.5,-6.5) ..controls +(0,-1) and +(0,1).. (32,-7.5);
		\draw(34.5,-6.5)--(34.5,-8);
		\draw(34.5,-9) ..controls +(0,-0.7) and +(0,0.3).. (34,-10);
		\draw [fill=white] (34.5,-8.5) circle (0.8);
		\draw (34.5,-8.5) node {$S_{X, Y}$};
		\draw(32,-8.5)--(32,-10);
		\draw(29,-3)--(29,-8.5);
		\produit{32}{-10}{1}
		\draw [fill=white] (30,-10.75) circle (0.8);
		\draw (30,-10.75) node {$S_{X, Y}$};
		\produit{31}{-12.5}{1}
		\draw(33,-11.5)--(33,-12.5);
		\draw(30,-11.5) ..controls +(0,-0.7) and +(0,0.7).. (31,-12.5);
		\draw[below](32,-14) node{$\Cc(Y, X)$};
	\end{tikzpicture}\]
	\[\begin{tikzpicture}[scale=0.45]
		\tiny
		\draw(-1, -7) node{$=$};
		\draw(0,0)--(6,0);
		\draw(0, -10)--(6,-10);
		\draw(2.5,0)--(2.5,-1);
		\draw(5,0)--(5,-3);
		\draw[above](2,0) node{$\Cc(X, Y)$};
		\draw[above](5.25,0) node{$\Cc(X, Y)$};
		\coproduit{4}{-3}{1.5}
		\draw [fill=white] (2.5,-1.5) circle (0.8);
		\draw (2.5,-1.5) node {\tiny $\Delta^{Y}_{X, Y}$};
		\smallbraid{5}{-3}
		\draw(5,-4)--(5,-6.25);
		\draw [fill=white] (5,-5) circle (0.6);
		\draw (5,-5) node {\tiny $\varepsilon_{Y}$};
		\draw(5,-6.25) node{$\bullet$};
		\draw(5,-7) node{$\bullet$};
		\draw(5,-7)--(5,-7.5);
		\produit{1}{-5}{1}
		\draw(4,-4) ..controls +(0,-0.7) and +(0,0.7).. (3,-5);
		\draw(2,-8)--(2,-8.5);
		\draw(1,-3)--(1,-5);
		\draw [fill=white] (2,-7.25) circle (0.8);
		\draw (2,-7.25) node {$S_{X, Y}$};
		\produit{2}{-8.5}{1}
		\draw(5,-7.5) ..controls +(0,-0.7) and +(0,0.7).. (4,-8.5);
		\draw[below](3.5,-10) node{$\Cc(Y, X)$};
		\draw(7, -7) node{$=$};
		\draw(9,0)--(15,0);
		\draw(9, -7.5)--(15,-7.5);
		\draw(11,0)--(11,-1);
		\draw(13.5,0)--(13.5,-1);
		\draw[above](11,0) node{$\Cc(X, Y)$};
		\draw[above](14.25,0) node{$\Cc(X, Y)$};
		\produit{11}{-1}{1.25}
		\draw(11.5,-6.5)--(11.5,-7.5);
		\draw [fill=white] (11.5,-5.75) circle (0.8);
		\draw (11.5,-5.75) node {$S_{X, Y}$};
		\draw(12.25,-2.75) ..controls +(0,-0.7) and +(0,0.7).. (11.5,-5);
		\draw[below](11.5,-7.5) node{$\Cc(Y, X)$};
		\draw(16, -7) node{$.$};
	\end{tikzpicture}\]
	which proves $(1)$. The proof of $(2)$ follows by applying the same diagrammatic reasoning as above, with the diagrams inverted.
	\epf
	\bp \label{Prop : Takeuchi}
	Let \(\VV\) be a flat regular braided category. Let $\Cc$ be a faithfully flat $\VV$-cogroupoid. Then $\Cc$ is a Takeuchi $\VV$-cocategory.
	\ep

	\bpf
	By definition, it suffices to show that, for any $X, Y, Z \in \ob(\Cc)$, the map \[\bar{\Delta}^{Z}_{X, Y}: \Cc(X, Y) \longrightarrow \Cc(X, Z) \Box_{\Cc(Z, Z)} \Cc(Z, Y)\] is an isomorphism of $\VV$.
	To do so, we will check that 
	\[1_{\Cc(Z, X)} \otimes \bar{\Delta}^{Z}_{X, Y} : \Cc(Z, X) \otimes \Cc(X, Y) \longrightarrow \Cc(Z, X) \otimes \Cc(X, Z) \Box_{\Cc(Z, Z)} \Cc(Z, Y)\] is an isomorphism.
	We define the morphism $$f \colon \Cc(Z, X) \otimes \Cc(X, Z) \Box_{\Cc(Z, Z)} \Cc(Z, Y) \longrightarrow \Cc(Z, X) \otimes \Cc(X, Y)$$ as below
	\begin{center}
		\footnotesize
		\begin{tikzcd}[column sep= 2cm,row sep= 1cm]
			\Cc(Z, X) \otimes \Cc(X, Z) \Box_{\Cc(Z, Z)} \Cc(Z, Y) \arrow[r, "f"]\arrow[d, "1 \otimes eq" ']  &   \Cc(Z, X) \otimes \Cc(X, Y)\\
			\Cc(Z, X) \otimes \Cc(X, Z) \otimes \Cc(Z, Y) \ar[d, "1 \otimes 1 \otimes \Delta^{X}_{Z, Y}" ',  ""{name=U}] &\Cc(Z, X) \otimes \Cc(Z, X) \otimes \Cc(X, Y)\arrow[u, "m \otimes 1", ""{name=D}] \\
			\Cc(Z, X) \otimes \Cc(X, Z) \otimes \Cc(Z, X) \otimes \Cc(X, Y) \ar[r, "1 \otimes S_{X, Z} \otimes 1 \otimes 1"] &\Cc(Z, X) \otimes \Cc(Z, X) \otimes \Cc(Z, X) \otimes \Cc(X, Y) \ar[u, "1 \otimes m \otimes 1"]
			\arrow[to path={(D) node[left, scale=1]{$\underset{\tiny{\hspace*{2cm}}}{\circlearrowleft \hspace*{4cm}}$} (U)}]{}
		\end{tikzcd}
	\end{center}
	
	We first have
	\[\begin{tikzpicture}[scale=0.45]
		\tiny
		\draw(-1.5,0)--(8,0);
		\draw(-1.5,-10.5)--(8,-10.5);
		\draw(0,0)--(0,-7);
		\draw(5,0)--(5,-3.5);
		\draw[above](0,0) node{$\Cc(Z, X)$};
		\draw[above](5,0) node{$\Cc(X, Y)$};
		\draw [fill=white] (5,-2) circle (0.8);
		\draw (5,-2) node {\tiny $\bar{\Delta}^{Z}_{X, Y}$};
		\coproduit{6}{-5}{1}
		\draw [fill=white] (5,-4) circle (0.5);
		\draw (5,-4) node {$\eq$};
		\coproduit{7.5}{-7.5}{1.5}
		\draw [fill=white] (6,-5.75) circle (0.8);
		\draw (6,-5.75) node {\tiny $\Delta^{X}_{Z, Y}$};
		\draw(4,-5) ..controls +(0,-0.7) and +(0,0.7).. (2.5,-6);
		\draw [fill=white] (2.5,-6.75) circle (0.75);
		\draw (2.5,-6.75) node {$S_{X, Z}$};
		\produit{2.5}{-7.5}{1}
		\produit{1.5}{-9}{1}
		\draw(0,-7) ..controls +(0,-0.7) and +(0,0.7).. (1.5,-9);
		\draw(7.5,-7.5)--(7.5,-10.5);
		\draw[below](2.5,-10.5) node{$\Cc(Z, X)$};
		\draw[below](7,-10.5) node{$\Cc(X, Y)$};
		\draw(9, -5.5) node{$=$};
		\draw(10,0)--(18,0);
		\draw(10,-9.25)--(18,-9.25);
		\draw(10.5,0)--(10.5,-6);
		\draw(14.5,0)--(14.5,-2.5);
		\draw[above](11,0) node{$\Cc(Z, X)$};
		\draw[above](14.5,0) node{$\Cc(X, Y)$};
		\coproduit{16}{-3.75}{1.5}
		\draw [fill=white] (14.5,-2) circle (0.8);
		\draw (14.5,-2) node {\tiny $\Delta^{Z}_{X, Y}$};
		\coproduit{17.5}{-6.25}{1.5}
		\draw [fill=white] (16,-4.5) circle (0.8);
		\draw (16,-4.5) node {\tiny $\Delta^{X}_{Z, Y}$};
		\draw(13,-3.75) ..controls +(0,-0.7) and +(0,0.7).. (12.5,-5.5);
		\draw [fill=white] (12.5,-5.5) circle (0.75);
		\draw (12.5,-5.5) node {$S_{X, Z}$};
		\produit{12.5}{-6.25}{1}
		\produit{11.5}{-7.75}{1}
		\draw(10.5,-6) ..controls +(0,-0.7) and +(0,0.7).. (11.5,-7.75);
		\draw(17.5,-6.25)--(17.5,-9.25);
		\draw[below](13,-9.25) node{$\Cc(Z, X)$};
		\draw[below](17,-9.25) node{$\Cc(X, Y)$};
		\draw(19, -5.5) node{$=$};
		\draw(20,0)--(28,0);
		\draw(20,-10.25)--(28,-10.25);
		\draw(20.5,0)--(20.5,-6.5);
		\draw(26,0)--(26,-2);
		\draw[above](21,0) node{$\Cc(Z, X)$};
		\draw[above](26,0) node{$\Cc(X, Y)$};
		\coproduit{27.5}{-3.75}{1.5}
		\draw [fill=white] (26,-2) circle (0.8);
		\draw (26,-2) node {\tiny $\Delta^{X}_{X, Y}$};
		\coproduit{26}{-6.25}{1.5}
		\draw [fill=white] (24.5,-4.5) circle (0.8);
		\draw (24.5,-4.5) node {\tiny $\Delta^{Z}_{X, X}$};
		\draw(26,-6.25) ..controls +(0,-0.7) and +(0,0.7).. (25,-7.25);
		\draw [fill=white] (23,-6.5) circle (0.75);
		\draw (23,-6.5) node {$S_{X, Z}$};
		\produit{23}{-7.25}{1}
		\produit{22}{-8.75}{1}
		\draw(20.5,-6.5) ..controls +(0,-0.7) and +(0,0.7).. (22,-8.75);
		\draw(27.5,-3.75)--(27.5,-10.25);
		\draw[below](23,-10.25) node{$\Cc(Z, X)$};
		\draw[below](27,-10.25) node{$\Cc(X, Y)$};
		\draw(28.5, -5.5) node{$=$};
		\draw(29.5,0)--(35,0);
		\draw(29.5,-10)--(35,-10);
		\draw(30,0)--(30,-6.5);
		\draw(33,0)--(33,-2);
		\draw[above](30.25,0) node{$\Cc(Z, X)$};
		\draw[above](33.5,0) node{$\Cc(X, Y)$};
		\coproduit{34.5}{-3.75}{1.5}
		\draw [fill=white] (33,-2) circle (0.8);
		\draw (33,-2) node {\tiny $\Delta^{X}_{X, Y}$};
		\draw(31.5,-5)--(31.5,-5.65);
		\draw [fill=white] (31.5,-4.25) circle (0.7);
		\draw (31.5,-4.25) node {$\varepsilon_{X}$};
		\draw (31.5,-5.65) node {$\bullet$};
		\draw (31.5,-6.25) node {$\bullet$};
		\draw(31.5,-6.25)--(31.5,-6.85);
		\draw [fill=white] (31.5,-7.65) circle (0.8);
		\draw (31.5,-7.65) node {\tiny $u_{Z,X}$};
		\produit{29.5}{-8.5}{1}
		\draw(30,-6.5) ..controls +(0,-0.7) and +(0,0.7).. (29.5,-8.5);
		\draw(34.5,-3.75)--(34.5,-10);
		\draw[below](30.5,-10) node{$\Cc(Z, X)$};
		\draw[below](34.5,-10) node{$\Cc(X, Y)$};
	\end{tikzpicture}\]
	which means
	\begin{align*}
		f \circ (1 \otimes \bar{\Delta}^{Z}_{X, Y}) &= \big(m \otimes 1\big)\big(1 \otimes m \otimes 1\big)\big(1 \otimes S_{X, Z} \otimes 1_{2}\big)\big(1_{2} \otimes \Delta^{X}_{Z, Y}\big)\big(1 \otimes eq\big)\big(1 \otimes \bar{\Delta}^{Z}_{X, Y}\big)\\
		&= \big(m \otimes 1\big)\big(1 \otimes m \otimes 1\big)\big(1 \otimes S_{X, Z} \otimes 1_{2}\big)\big(1_{2} \otimes \Delta^{X}_{Z, Y}\big)\big(1 \otimes \Delta^{Z}_{X, Y}\big)\\
		&= \big(m \otimes 1\big)\big(1 \otimes m \otimes 1\big)\big(1 \otimes S_{X, Z} \otimes 1_{2}\big)\big(1 \otimes \Delta^{Z}_{X, X} \otimes 1\big)\big(1 \otimes \Delta^{X}_{X, Y}\big) &(\text{by \eqref{asso}})\\
		&=  \big(m \otimes 1\big)(1 \otimes u \otimes 1)(1 \otimes \varepsilon_{X} \otimes 1)\big(1 \otimes \Delta^{X}_{X, Y}\big) &\text{(by \eqref{antipode})}\\
		&= \id_{\Cc(Z, X) \otimes \Cc(X, Y)} &(\text{by \eqref{counit}}).
	\end{align*}
	On the other hand,
	\[\begin{tikzpicture}[scale=0.45]
		\tiny
		\draw(-1.5,0)--(9,0);
		\draw(-1.5,-8.5)--(9,-8.5);
		\draw(0,0)--(0,-1);
		\draw(4.5,0)--(4.5,-1);
		\draw[above](0,0) node{$\Cc(Z, X)$};
		\draw[above](5,0) node{$\Cc(X, Z) \Box \Cc(Z, Y)$};
		\coproduit{5.5}{-2}{1}
		\draw [fill=white] (4.5,-1) circle (0.5);
		\coproduit{7}{-4.2}{1.5}
		\draw [fill=white] (5.5,-2.7) circle (0.8);
		\draw (5.5,-2.7) node {\tiny $\Delta^{X}_{Z, Y}$};
		\draw (4.5,-1) node {$\eq$};
		\draw(3.5,-2) ..controls +(0,-0.7) and +(0,0.7).. (2.5,-3.25);
		\draw [fill=white] (2.35,-3.8) circle (0.75);
		\draw (2.35,-3.8) node {$S_{X, Z}$};
		\produit{0}{-4.5}{1}
		\draw(0,0)--(0,-4.5);
		\produit{1}{-6}{1}
		\draw(4,-4.2) ..controls +(0,-0.7) and +(0,0.7).. (3,-6);
		\coproduit{8.25}{-8}{1.5}
		\draw [fill=white] (6.75,-6.3) circle (0.8);
		\draw (6.75,-6.3) node {\tiny $\Delta^{Z}_{X, Y}$};
		\draw(7,-4.2) ..controls +(0,-0.7) and +(0,0.7).. (6.5,-5.5);
		\draw(2,-7.5)--(2,-8.5);
		\draw(5.25,-8)--(5.25,-8.5);
		\draw(8.25,-8)--(8.25,-8.5);
		\draw[below](2,-8.5) node{$\Cc(Z, X)$};
		\draw[below](5.25,-8.5) node{$\Cc(X, Z)$};
		\draw[below](8.25,-8.5) node{$\Cc(Z, Y)$};
		\draw(11, -5.5) node{$=$};
		\draw(12,0)--(22,0);
		\draw(12,-8.5)--(22,-8.5);
		\draw(13,0)--(13,-1);
		\draw(18,0)--(18,-1);
		\draw[above](13,0) node{$\Cc(Z, X)$};
		\draw[above](18,0) node{$\Cc(X, Z) \Box \Cc(Z, Y)$};
		\coproduit{19}{-2}{1}
		\draw [fill=white] (18,-1) circle (0.5);
		\draw (18,-1) node {$\eq$};
		\coproduit{20.5}{-4.5}{1.5}
		\draw [fill=white] (19,-2.7) circle (0.8);
		\draw (19,-2.7) node {\tiny $\Delta^{Z}_{Z, Y}$};
		\draw(17,-2) ..controls +(0,-1) and +(0,1).. (15.5,-3.25);
		\draw [fill=white] (15.25,-3.8) circle (0.75);
		\draw (15.25,-3.8) node {$S_{X, Z}$};
		\produit{13}{-4.5}{1}
		\draw(13,0)--(13,-4.5);
		\produit{14}{-7}{1}
		\coproduit{19}{-7}{1.5}
		\draw [fill=white] (17.5,-5.2) circle (0.8);
		\draw (17.5,-5.2) node {\tiny $\Delta^{X}_{Z, Z}$};
		\draw(14,-6)--(14,-7);
		\draw(20.5,-4.5)--(20.5,-8.5);
		\draw(19,-7) ..controls +(0,-1) and +(0,1).. (18,-8.5);
		\draw[below](14.5,-8.5) node{$\Cc(Z, X)$};
		\draw[below](18, -8.5) node{$\Cc(X, Z)$};
		\draw[below](21,-8.5) node{$\Cc(Z, Y)$};
		\draw(23, -6) node{$=$};
		\draw(24.5,0)--(34.5,0);
		\draw(24.5,-9)--(34.5,-9);
		\draw(25.5, 0)--(25.5,-6);
		\draw(30.5,0)--(30.5,-1);
		\draw[above](25.5,0) node{$\Cc(Z, X)$};
		\draw[above](30.5,0) node{$\Cc(X, Z) \Box \Cc(Z, Y)$};
		\coproduit{31.5}{-2.25}{1}
		\draw [fill=white] (30.5,-1.25) circle (0.5);
		\draw (30.5,-1.25) node {$\eq$};
		\draw(31.5,-2.25) ..controls +(0,-1) and +(0,1).. (33.75,-4.5);
		\coproduit{31}{-4.5}{1.5}
		\draw [fill=white] (29.5,-3) circle (0.8);
		\draw (29.5,-3) node {\tiny $\Delta^{Z}_{X, Z}$};
		
		\draw [fill=white] (27.85,-5.3) circle (0.75);
		\draw (27.85,-5.3) node {$S_{X, Z}$};
		\produit{25.5}{-6}{1}
		\produit{26.5}{-7.5}{1}
		\draw(29.5,-6.75) ..controls +(0,-0.7) and +(0,0.7).. (28.5,-7.5);
		\coproduit{32.5}{-6.75}{1.5}
		\draw [fill=white] (31,-5.25) circle (0.8);
		\draw (31,-5.25) node {\tiny $\Delta^{X}_{Z, Z}$};
		\draw(32.5,-6.75) ..controls +(0,-1) and +(0,1).. (30.5,-8.5);
		\draw(30.5,-8.5)--(30.5,-9);
		\draw(33.75,-4.5)--(33.75,-9);
		\draw[below](27.5,-9) node{$\Cc(Z, X)$};
		\draw[below](30.75,-9) node{$\Cc(X, Z)$};
		\draw[below](33.75,-9) node{$\Cc(Z, Y)$};
	\end{tikzpicture}\]
	\[\begin{tikzpicture}[scale=0.45, base align tikzpicture]
		\tiny
		\draw(0,-12)--(11.5 ,-12);
		\draw(0,-23.5)--(11.5,-23.5);
		\draw(1.5,-12)--(1.5,-19);
		\draw(8,-12)--(8,-13);
		\draw[above](1.5,-12) node{$\Cc(Z, X)$};
		\draw[above](8,-12) node{$\Cc(X, Z) \Box \Cc(Z, Y)$};
		\coproduit{9}{-14.25}{1}
		\draw [fill=white] (8,-13.25) circle (0.5);
		\draw (8,-13.25) node {$\eq$};
		\draw(9,-14.25) ..controls +(0,-1) and +(0,1).. (10.5,-16.5);
		\coproduit{7}{-18.75}{1.5}
		\coproduit{8.5}{-16.5}{1.5}
		\draw [fill=white] (7,-15) circle (0.8);
		\draw (7,-15) node {\tiny $\Delta^{X}_{X, Z}$};
		\draw(1.5,-19) ..controls +(0,-0.7) and +(0,0.7).. (3.5,-22);
		\draw [fill=white] (4,-19.3) circle (0.75);
		\draw (4,-19.3) node {$S_{X, Z}$};
		\produit{3.5}{-22}{1}
		\produit{4}{-20}{1.5}
		\draw [fill=white] (5.5,-17.25) circle (0.8);
		\draw (5.5,-17.25) node {\tiny $\Delta^{Z}_{X, X}$};
		\draw(7,-18.75)--(7,-20);
		\draw(10.5,-16.5)--(10.5,-23.5);
		\draw(8.5,-16.5)--(8.5,-21);
		\draw(8.5,-21) ..controls +(0,-0.7) and +(0,0.7).. (7.5,-23.5);
		\draw[below](3.5,-23.5) node{$\Cc(Z, X)$};
		\draw[below](7,-23.5) node{$\Cc(X, Z)$};
		\draw[below](10.5,-23.5) node{$\Cc(Z, Y)$};
	\end{tikzpicture}
	\begin{tikzpicture}[scale=0.45, base align tikzpicture]
		\tiny
		\draw(-4, -6.5) node{$=$};
		\draw(-3,0)--(7,0);
		\draw(-3,-10)--(7,-10);
		\draw(-1.5,0)--(-1.5,-1);
		\draw(3,0)--(3,-1);
		\draw[above](-1.5,0) node{$\Cc(Z, X)$};
		\draw[above](4,0) node{$\Cc(X, Z) \Box \Cc(Z, Y)$};
		\coproduit{4}{-2}{1}
		\draw [fill=white] (3,-1) circle (0.5);
		\draw (3,-1) node {$\eq$};
		\coproduit{3.5}{-4.2}{1.5}
		\draw [fill=white] (2,-2.7) circle (0.8);
		\draw (2,-2.7) node {\tiny $\Delta^{X}_{X, Z}$};
		\draw(-1.5,0)--(-1.5,-6);
		\draw(-1.5,-6) ..controls +(0,-1) and +(0,1).. (-1,-8.5);
		\draw(1,-5)--(1,-6);
		\draw [fill=white] (1,-4.75) circle (0.8);
		\draw (1,-4.75) node {$\varepsilon_{X}$};
		\draw (1,-6) node {$\bullet$};
		\draw (1,-6.5) node {$\bullet$};
		\draw(1,-6.5)--(1,-7);
		\draw [fill=white] (1,-7.75) circle (0.8);
		\draw (1,-7.75) node {$u_{Z,X}$};
		\produit{-1}{-8.5}{1}
		\draw(4,-2) ..controls +(0,-0.7) and +(0,0.7).. (6,-4);
		\draw(3.5,-4.2)--(3.5,-8);
		\draw(3.5,-8) ..controls +(0,-0.7) and +(0,0.7).. (3,-10);
		\draw(6,-4)--(6,-10);
		\draw[below](0,-10) node{$\Cc(Z, X)$};
		\draw[below](3.25,-10) node{$\Cc(X, Z)$};
		\draw[below](6.25,-10) node{$\Cc(Z, Y)$};
		\draw(9, -6.5) node{$=$};
		\draw(10,-4)--(19,-4);
		\draw(10,-8)--(19,-8);
		\draw(12,-4)--(12,-8);
		\draw(16.5,-4)--(16.5,-5);
		\draw[above](12,-4) node{$\Cc(Z, X)$};
		\draw[above](16.5,-4) node{$\Cc(X, Z) \Box \Cc(Z, Y)$};
		\coproduit{17.5}{-6}{1}
		\draw [fill=white] (16.5,-5) circle (0.5);
		\draw (16.5,-5) node {$\eq$};
		\draw(15.5,-6) ..controls +(0,-1) and +(0,1).. (14.5,-8);
		\draw(17.5,-6)--(17.5,-8);
		\draw[below](11.5,-8) node{$\Cc(Z, X)$};
		\draw[below](14.5,-8) node{$\Cc(X, Z)$};
		\draw[below](27.5,-8) node{$\Cc(Z, Y)$};
	\end{tikzpicture}\]
	which means
	\begin{align*}
		&(1 \otimes \Delta^{Z}_{X, Y}) \circ f = (1 \otimes \Delta^{Z}_{X, Y})\big(m \otimes 1\big)\big(1 \otimes m \otimes 1\big)\big(1 \otimes S_{X, Z} \otimes 1 \otimes 1\big)\big(1 \otimes 1 \otimes \Delta^{X}_{Z, Y}\big)\big(1 \otimes eq\big)\\
		&=\big(m \otimes 1 \otimes  1\big)\big(1 \otimes m \otimes 1 \otimes 1\big)\big(1 \otimes S_{X, Z} \otimes 1 \otimes 1 \otimes 1 \big) (1 \otimes 1 \otimes 1 \otimes \Delta^{Z}_{X, Y}) \big(1 \otimes 1 \otimes \Delta^{X}_{Z, Y}\big)\big(1 \otimes eq\big)\\
		&= \big(m \otimes 1 \otimes  1\big)\big(1 \otimes m \otimes 1 \otimes 1\big)\big(1 \otimes S_{X, Z} \otimes 1 \otimes 1 \otimes 1 \big) (1 \otimes 1 \otimes \Delta^{X}_{Z, Z} \otimes 1) \big(1 \otimes 1 \otimes \Delta^{Z}_{Z, Y}\big)\big(1 \otimes eq\big)\\
		&= \big(m \otimes 1 \otimes  1\big)\big(1 \otimes m \otimes 1 \otimes 1\big)\big(1 \otimes S_{X, Z} \otimes 1 \otimes 1 \otimes 1 \big) (1 \otimes 1 \otimes \Delta^{X}_{Z, Z} \otimes 1) \big(1 \otimes \Delta^{Z}_{X, Z} \otimes 1\big)\big(1 \otimes eq\big)\\
		&\hspace{1cm}(\text{by the definition of $\Cc(X, Z) \Box_{\Cc(Z, Z)} \Cc(Z, Y)$ as an equalizer})\\
		&= \big(m \otimes 1 \otimes  1\big)\big(1 \otimes m \otimes 1 \otimes 1\big)\big(1 \otimes S_{X, Z} \otimes 1 \otimes 1 \otimes 1 \big) (1 \otimes \Delta^{Z}_{X, X} \otimes 1 \otimes 1) \big(1 \otimes \Delta^{X}_{X, Z} \otimes 1\big)\big(1 \otimes eq\big)\\
		&\hspace*{1cm} \text{(by \eqref{asso})}\\
		&= (m \otimes 1 \otimes 1)(1 \otimes u \otimes 1 \otimes 1)(1 \otimes \varepsilon_{X} \otimes 1 \otimes 1)\big(1 \otimes \Delta^{X}_{X, Z} \otimes 1\big)\big(1 \otimes eq\big) \quad \text{by \eqref{antipode}}\\
		&= 1 \otimes eq \hspace*{9.75cm} \text{by \eqref{counit}}.
	\end{align*}
	Thus
	$(1 \otimes eq)(1 \otimes \bar{\Delta}^{Z}_{X, Y}) \circ f = 1 \otimes eq$. Since $(1 \otimes eq)$ is also a monomorphism, we deduce that $(1 \otimes \bar{\Delta}^{Z}_{X, Y}) \circ f = \id$. We obtain that $1_{\CC(Z, X)} \otimes \bar{\Delta}^{Z}_{X, Y}$ is an isomorphism
	\[\Cc(Z, X) \otimes \Cc(X, Y) \cong \Cc(Z, X) \otimes \Cc(X, Z) \Box_{\Cc(Z, Z)} \Cc(Z, Y).\]
Using the faithful flatness hypothesis, we conclude that $\bar{\Delta}^{Z}_{X, Y}$ is an isomorphism. This completes the proof.
	\epf
We now obtain the main result of this section:
	\bt\label{Thm: monoid func}
	Let $\VV$ be a flat regular braided category, and let $\Cc$ be a faithfully flat $\VV$-cogroupoid. For any $X, Y \in \ob(\Cc)$, we have equivalences of monoidal categories
	\[\VV^{\Cc(X, X)} \cong^{\otimes} \VV^{\Cc(Y, Y)}.\]
	\et
	\bpf
	By Proposition $\ref{Prop : Takeuchi}$, we know that $\Cc$ is a Takeuchi $\VV$-cocategory. Thus, by Corollary \ref{Cor : monoi equi}, we obtain equivalences of monoidal categories that are inverse to each other
	\begin{align*}
		F : \VV^{\Cc(X, X)} &\cong^{\otimes} \VV^{\Cc(Y, Y)} \quad &G : \VV^{\Cc(Y, Y)} &\cong^{\otimes} \VV^{\Cc(X, X)}\\
		V &\longmapsto V \Box_{\Cc(X, X)} \Cc(X, Y) &\quad V &\longmapsto V \Box_{\Cc(Y, Y)} \Cc(Y, X). \qedhere
	\end{align*}
	\epf
	\brq
	Let $\Cc$ be a $\VV$-cogroupoid. For any $X, Y \in \ob(\Cc)$, the object $\Cc(X, Y)$ is a right $\Cc(Y, Y)$-Galois object in the sense of Schauenburg \cite[Definition 3.1]{MR2294763}, since the morphism
	\begin{center}
		\footnotesize
		\begin{tikzcd}[column sep= 1.5cm,row sep= 1cm]
			\kappa_{r} \colon \Cc(X, Y) \otimes \Cc(X, Y)\ar[r, "1 \otimes \Delta^{Y}_{X, Y}"] &\Cc(X, Y) \otimes \Cc(X, Y) \otimes \Cc(Y, Y) \ar[r, "m \otimes 1"] &\Cc(X, Y) \otimes \Cc(Y, Y)
		\end{tikzcd}
	\end{center}
	is an isomorphism with inverse $\kappa_{r}^{-1}$ given as follows:

	\begin{center}
		\footnotesize
		\begin{tikzcd}[column sep= 1.7cm,row sep= 1cm]
			\Cc(X, Y) \otimes \Cc(Y, Y) \ar[r, "\kappa_{r}^{-1}", ""{name = D}]\ar[d, "1 \otimes \Delta^{X}_{Y, Y}" '] &\Cc(X, Y) \otimes \Cc(Y, Y)\\
			\Cc(X, Y) \otimes \Cc(Y, X) \otimes \Cc(X, Y) \ar[r, "1 \otimes S_{Y, X} \otimes 1"] &\Cc(X, Y) \otimes \Cc(X, Y) \otimes \Cc(X, Y) \ar[u, "m \otimes 1" ', ""{name = U}].
			\arrow[to path={(U) node[left, scale=1]{$\underset{{\hspace*{2cm}}}{\circlearrowleft \hspace*{3cm}}$} (K)}]{}
		\end{tikzcd}
	\end{center}
	Hence, Lemma \ref{Lemma: Tensor Compa} can be deduced, under the additional assumption of faithful flatness, from \cite[Lemmas 2.3 and 3.6]{MR2294763}.
	
	Conversely, it can be shown that a faithfully flat right $H$-Galois object $A$ always gives rise to a cogroupoid. Indeed, setting $L := (A \otimes A)^{coH}$ and $A^{-1} := (H \otimes A)^{coH}$, Schauenburg shows that $L$ is a Hopf algebra, that $A$ is $L-H$-bi-Galois object in $\VV$, and that  $A^{-1}$ is an $H$–$L$ bi-Galois object. Working as Grunspan did in \cite{Grunspan2}, we can reconstruct a full $\VV$-cogroupoid structure. Since this viewpoint is not essential for the main results of the present paper, we omit the details.
	
Notice however that, in a faithfully flat cogroupoid, we have isomorphisms \[\Cc(X, Y) \simeq \big(\Cc(X, Z) \otimes \Cc(Y, Z)\big)^{co\Cc(Z, Z)}\] fitting into the commutative diagram
\begin{center}
	\tiny
	\begin{tikzcd}[column sep=1.7cm, row sep=1cm]
		\Cc(X, X) \ar[rr, "\sim"] \ar[rd, "\overline{\Delta}^{Z}_{X, X}" '] && \big(\Cc(X, Z) \otimes \Cc(Y, Z)\big)^{co\Cc(Z, Z)} \\
		& \Cc(X, Z) \Box_{\Cc(Z, Z)} \Cc(Z, Y) \ar[ru, "1 \Box_{\Cc(Z, Z)} S_{Z, Y}" '] &
	\end{tikzcd}
\end{center}
which shows that, if we apply the Schauenburg–Grunspan construction above to a braided cogroupoid, we recover (essentially) the same object.
	\erq

	\section{Cogroupoids of coinvariants and Bosonizations} \label{Sec : Bosonization}
	In this section, we construct a braided cogroupoid over the category of Yetter–Drinfeld modules from a cogroupoid endowed with a kind of projection. Conversely, as in the case of Hopf algebras, we show that the bosonization of a braided cogroupoid in a category of Yetter–Drinfeld modules yields an ordinary cogroupoid. To this end, we first introduce the following definition of a cogroupoid triple, which generalizes the notion of a Hopf algebra triple \cite{heckenberger2020hopf}.
	\bd \label{Def : cogroup triple}
Let $H$ be a Hopf algebra. \textbf{A cogroupoid triple over} $H$ is a triple $(\mathcal{K}, \pi, i)$, where $\mathcal{K}$ is a $k$-cogroupoid, and
\begin{enumerate}
	\item  $i := \{i_{X, Y} \colon H \longrightarrow \mathcal{K}(X, Y) \mid X, Y \in \ob(\KK)\}$ is a family of algebra maps satisfying, for any $X, Y, Z \in \ob(\KK)$, \[(i_{X, Z} \otimes i_{Z, Y}) \circ \Delta_{H} = \Delta^{Z}_{X, Y} \circ i_{X, Y} \quad \text{and} \quad \varepsilon_{X} \circ i_{X,X} = \varepsilon_{H},\]
	\item $\pi := \{\pi_{X} \colon \KK(X, X) \to H \mid X \in \ob(\KK)\}$ is a family of Hopf algebra maps, such that \[\pi_{X}i_{X, X} = 1_{H}\] for every $X \in \ob(\KK)$.
\end{enumerate}
\ed
It follows that if $(\mathcal{K}, \pi, i)$ is a cogroupoid triple over a Hopf algebra $H$, then for any $X \in \ob(\KK)$, $\big(\KK(X, X), \pi_{X}, i_{X, X}\big)$ is a Hopf algebra triple over $H$ in the category of $k$-vector spaces, in the sense of  \cite[Section 3.10]{heckenberger2020hopf}. The results obtained in this section generalize the work of Majid \cite{Maj94} and Radford \cite{Rad85} in the context of ordinary Hopf algebras.
We also adopt Sweedler’s notation when working with a cogroupoid: let $\KK$ be a $k$-cogroupoid and $X, Y \in \ob(\KK)$, for $h \in H, a^{X, Y} \in \KK(X, Y)$, we write
\[\Delta_{H}(h) = h_{(1)} \otimes h_{(2)}\]
\[\Delta^{Z}_{X, Y}(a^{X, Y}) = a^{X, Z}_{[1]} \otimes a^{Z, Y}_{[2]}.\]
Then the conditions for the maps $i$ read
\begin{equation}
	\Delta^{Z}_{X, Y}\big(i_{X, Y}(h)\big) = i_{X, Z}(h_{(1)}) \otimes i_{Z, Y}(h_{(2)}) \label{condition i}.
\end{equation}
	\subsection{Cogroupoid of coinvariants} \label{Sec: coinv}
	Let $H$ be a fixed Hopf algebra with bijective antipode. Let $(\KK, \pi, i)$ be a cogroupoid triple over $H$. For any $X, Y \in \ob(\KK),$ let
\[^{co\pi_{X}}\KK(X, Y) = \{a^{X, Y} \in \KK(X, Y) \mid (\pi_{X} \otimes 1)\Delta^{X}_{X, Y}(a^{X, Y}) = 1 \otimes a^{X, Y}\}.\]
\bl
Endowed with the following structures, $^{co\pi_{X}}\KK(X, Y)$ is an algebra in the category $\YD_{H}^{H}$:
\begin{enumerate}
	\item the action of $H$ on $^{co\pi_{X}}\KK(X, Y)$ is given by
	\[a^{X, Y} \leftarrow h = i_{X, Y}S_{H}(h_{(1)}) a^{X, Y} i_{X, Y}(h_{(2)});\]
	\item the $H$-coaction is
	\begin{equation}
		a^{X, Y} \longmapsto a^{X, Y}_{(0)} \otimes a^{X, Y}_{(1)} := a^{X, Y}_{[1]} \otimes \pi_{Y}(a^{Y,Y}_{[2]}). \label{H-coaction coin}
	\end{equation}
\end{enumerate}
	\el
	\bpf
It is not difficult to check that the formulas above define, respectively, an $H$-module structure and an $H$-comodule structure on $\,^{co\pi_{X}}\KK(X, Y)$. Now, we verify the compatibility condition: for $a^{X, Y} \in \,^{co\pi_{X}}\KK(X, Y)$ and $h \in H$,
\[(a^{X, Y} \leftarrow h)_{(0)} \otimes (a^{X, Y} \leftarrow h)_{(1)} = (a^{X, Y}_{(0)} \leftarrow h_{(2)}) \otimes S_{H}(h_{(1)}) a^{X, Y}_{(1)} h_{(3)}.\]
The left-hand side equals
\begin{align*}
	&\big(i_{X, Y}S_{H}(h_{(1)}) a^{X, Y} i_{X, Y}(h_{(2)})\big)_{(0)} \otimes \big(i_{X, Y}S_{H}(h_{(1)}) a^{X, Y} i_{X, Y}(h_{(2)})\big)_{(1)}\\
	&= (1 \otimes \pi_{Y})\Delta^{Y}_{X,Y}\Big(i_{X, Y}S_{H}(h_{(1)}) a^{X, Y} i_{X, Y}(h_{(2)})\Big)\\
	&= (1 \otimes \pi_{Y})\Big(i_{X, Y}S_{H}(h_{(2)}) a_{[1]}^{X, Y} i_{X, Y}(h_{(3)}) \otimes i_{Y, Y}S_{H}(h_{(1)}) a_{[2]}^{Y, Y} i_{Y, Y}(h_{(4)})\Big)\\
	&\quad (\text{by using \eqref{condition i} for $i_{Y, X}\big(S_{H}(h_{(1)})\big)$ and $i_{X, Y}(h_{(2)})$})\\
	&= i_{X, Y}S_{H}(h_{(2)}) a_{[1]}^{X, Y} i_{X, Y}(h_{(3)}) \otimes \pi_{Y}\big[i_{Y, Y}S_{H}(h_{(1)}) a_{[2]}^{Y, Y} i_{Y, Y}(h_{(4)})\big]\\
	&= i_{X, Y}S_{H}(h_{(2)}) a_{[1]}^{X, Y} i_{X, Y}(h_{(3)}) \otimes S_{H}(h_{(1)}) \pi_{Y}\big(a^{Y, Y}_{[2]}\big) h_{(4)}\\
	&= (a^{X, Y}_{(0)} \leftarrow h_{(2)}) \otimes S_{H}(h_{(1)}) a^{X, Y}_{(1)} h_{(3)}
\end{align*}
which is exactly the right-hand side.

A straightforward computation shows that the multiplication is $H$-linear as well as $H$-colinear, thereby completing the proof.
\epf

	\bl \label{Lem : prop of v}
Let $\vartheta \colon \KK(X, Y) \longrightarrow \,^{co\pi_{X}}\KK(X, Y)$ be the map given by
\[\vartheta(a^{X, Y}) =( i_{X, Y} \pi_{X} S_{X, X} \otimes 1)\Delta^{X}_{X, Y}(a^{X, Y}) = i_{X, Y} \pi_{X} S_{X, X}(a_{[1]}^{X, X}) a_{[2]}^{X, Y}.\]
Then for any $X, Y \in \ob(\KK)$, $a^{X, Y}, b^{X, Y} \in \KK(X, Y)$ and $h \in H$, we have
\begin{enumerate}
	\item $\vartheta(a^{X, Y}b^{X, Y}) = i_{X, Y} \pi_{X} S_{X, X}(b_{[1]}^{X, X}) \vartheta(a^{X,Y})b_{[2]}^{X, Y}$,
	\item $\vartheta(i_{X, Y}(h)) = \varepsilon_{H}(h)$,
	\item $\Delta^{Z}_{X, Y}\big(\vartheta(a^{X, Y})\big) = \vartheta(a_{[2]}^{X, Z}) \otimes i_{Z, Y} \big(\pi_{X} S_{X, X}(a_{[1]}^{X, X})\big) a_{[3]}^{Z, Y},$
	\item $(1 \otimes \vartheta)\Delta^{Z}_{X, Y}\big(\vartheta(a^{X, Y})\big) = \vartheta(a_{[1]}^{X, Z}) \otimes \vartheta(a_{[2]}^{Z, Y}).$
\end{enumerate}
\el
	\bpf
	First, we see that $\vartheta$ is well-defined, since
	\begin{align*}
		&(\pi_{X} \otimes 1)\Delta^{X}_{X, Y}\vartheta(a^{X, Y}) = (\pi_{X} \otimes 1)\Delta^{X}_{X, Y}\Big(i_{X, Y} \pi_{X} S_{X, X}(a_{[1]}^{X, X}) a_{[2]}^{X, Y}\Big)\\
		&=  (\pi_{X} \otimes 1)(i_{X, X} \otimes i_{X, Y})\Big(\pi_{X} S_{X, X}(a_{[2]}^{X, X}) a_{[3]}^{X, X} \otimes \pi_{X} S_{X, X}(a_{[1]}^{X, X}) a_{[4]}^{X, Y}\Big)\\
		& = 1 \otimes i_{X, Y}\pi_{X} S_{X, X}(a_{[1]}^{X, X}) a_{[2]}^{X, Y}\\
		&= 1 \otimes \vartheta(a^{X, Y}).
	\end{align*}
	The first two statements then follow from a straightforward computation using the definition of $\vartheta$.
	We prove $(3)$:
	\begin{align*}
		&\Delta^{Z}_{X, Y}\big(\vartheta(a^{X, Y})\big) = \Delta^{Z}_{X, Y}\big[i_{X, Y} \pi_{X} S_{X, X}(a_{[1]}^{X, X}) a_{[2]}^{X, Y}\big]\\
		&= i_{X, Z} \big((\pi_{X} S_{X, X}(a_{[1]}^{X, X}))_{(1)}\big) a_{[2]}^{X, Z} \otimes i_{Z, Y} \big((\pi_{X} S_{X, X}(a_{[1]}^{X, X}))_{(2)}\big) a_{[3]}^{Z, Y}\\
		&= i_{X, Z} \big(\pi_{X} S_{X, X}(a_{[2]}^{X, X})\big) a_{[3]}^{X, Z} \otimes i_{Z, Y} \big(\pi_{X} S_{X, X}(a_{[1]}^{X, X})\big) a_{[4]}^{Z, Y}\\
		&= \vartheta(a^{X, Z}_{[2]}) \otimes i_{Z, Y} \big(\pi_{X} S_{X, X}(a_{[1]}^{X, X})\big) a_{[3]}^{Z, Y}.
	\end{align*}
	
	Next, for $a^{X, Y} \in \,^{co\pi_{X}}\KK(X, Y)$,
	\begin{align*}
		(1 \otimes \vartheta)	\Delta^{Z}_{X, Y}\big(\vartheta(a^{X, Y})\big) &= \vartheta(a^{X, Z}_{[2]}) \otimes \vartheta[i_{Z, Y} \big(\pi_{X} S_{X, X}(a_{[1]}^{X, X})\big) a_{[3]}^{Z, Y}]\\
		&= \vartheta(a^{X, Z}_{[2]}) \otimes \varepsilon_{H}\big(\pi_{X} S_{X, X}(a_{[1]}^{X, X})\big)\vartheta(a_{[3]}^{Z, Y}) \quad (\text{by $(1) + (2)$})\\
		&= \vartheta(a^{X, Z}_{[1]}) \otimes \vartheta(a^{Z, Y}_{[2]}).
	\end{align*}
	Statement $(4)$ follows.
	\epf
	\bl\label{(Lem : coDelata)}         
	For any $X, Y, Z \in \ob(\KK)$, there exists an algebra map in $\YD^{H}_{H}$
	\begin{align*}                           
		\underline{\Delta}^{Z}_{X, Y} \colon \,^{co\pi_{X}}\KK(X, Y) &\longrightarrow \,^{co\pi_{X}}\KK(X, Z) \otimes \,^{co\pi_{Z}}\KK(Z, Y)\\
		a^{X, Y} &\longmapsto a^{X, Z}_{[1]} \otimes \vartheta(a_{[2]}^{Z, Y})
	\end{align*}                             
	such that the following diagrams commute 
	\[\begin{tikzcd}                         
		^{co\pi_{X}}\KK(X, Y) \ar[r, "\underline{\Delta}^{Z}_{X, Y}"] \ar[d, "\underline{\Delta}^{T}_{X, Y}"]&^{co\pi_{X}}\KK(X, Z) \otimes \,^{co\pi_{Z}}\KK(Z, Y) \ar[d, "\underline{\Delta}^{T}_{X, Z} \otimes 1"]\\
		^{co\pi_{X}}\KK(X, T) \otimes \,^{co\pi_{T}}\KK(T, Y) \ar[r, "1 \otimes \underline{\Delta}^{Z}_{T, Y}"] &^{co\pi_{X}}\KK(X, T) \otimes \,^{co\pi_{T}}\KK(T, Z) \otimes \,^{co\pi_{Z}}\KK(Z, Y)
	\end{tikzcd}\]                           
	\[\begin{tikzcd}                         
		^{co\pi_{X}}\KK(X, Y) \ar[rd, equals, shift left = 1ex] \ar[d, "\underline{\Delta}^{X}_{X, Y}"] &\\
		^{co\pi_{X}}\KK(X, X) \otimes \,^{co\pi_{X}}\KK(X, Y) \ar[r, "\varepsilon_{X} \otimes 1"] &\,^{co\pi_{X}}\KK(X, Y)
	\end{tikzcd}\]                           
	\[\begin{tikzcd}                         
		^{co\pi_{X}}\KK(X, Y) \ar[rd, equals, shift left = 1ex] \ar[d, "\underline{\Delta}^{Y}_{X, Y}"] &\\
		^{co\pi_{X}}\KK(X, Y) \otimes \,^{co\pi_{X}}\KK(Y, Y) \ar[r, "1 \otimes \varepsilon_{Y}"] &\,^{co\pi_{X}}\KK(X, Y).
	\end{tikzcd}\]                           
	\el                                      
	\bpf
	We first show that $\underline{\Delta}_{X, Y}^{Z}$ is a well-defined algebra morphism in $\YD^{H}_{H}$. Let $a^{X, Y} \in \, 	^{co\pi_{X}}\KK(X, Y)$, we have
	\begin{align*}                           
		(\pi_{X} \otimes 1)(\Delta^{X}_{X, Z} \otimes 1)\big(a_{[1]}^{X, Z} \otimes \vartheta(a^{Z, Y}_{[2]})\big) = \pi_{X}(a_{[1]}^{X, X}) \otimes a_{[2]}^{X, Z} \otimes i_{Z, Y}\pi_{Z}S_{Z, Z}(a_{[3]}^{Z, Z})a^{Z, Y}_{[4]}.
	\end{align*}                             
	Note that \[\pi_{X}(a^{X, X}_{[1]}) \otimes a^{X, Y}_{[2]} = 1 \otimes a^{X, Y}.\]
	Applying $(1 \otimes 1 \otimes \Delta_{Z, Y}^{Z})(1 \otimes \Delta^{Z}_{X, Y})$ to this equality we get
	\begin{equation*}                        
		\pi_{X}(a^{X, X}_{[1]}) \otimes a^{X, Z}_{[2]} \otimes a^{Z, Z}_{[3]} \otimes a^{Z, Y}_{[4]} = 1 \otimes  a^{X, Z}_{[1]} \otimes a^{Z, Z}_{[2]} \otimes a^{Z, Y}_{[3]}.
	\end{equation*}                          
	Thus                                     
	\begin{align*}                           
		(\pi_{X} \otimes 1)(\Delta^{X}_{X, Z} \otimes 1)\big(a_{[1]}^{X, Z} \otimes \vartheta(a^{Z, Y}_{[2]})\big) &= 1 \otimes a_{[1]}^{X, Z} \otimes i_{Z, Y}\pi_{Z}S_{Z, Z}(a_{[2]}^{Z, Z})a^{Z, Y}_{[3]}\\
		&= 1 \otimes a_{[1]}^{X, Z} \otimes \vartheta(a_{[2]}^{X, Y}),
	\end{align*}                             
	and $\underline{\Delta}_{X, Y}^{Z}$ is well defined.
	It is not difficult to verify that this map is both $H$-linear and $H$-colinear. We now show that it is in fact an algebra morphism in $\YD^{H}_{H}$:
	\begin{align*}                           
		&\underline{\Delta}_{X, Y}^{Z}(a^{X, Y})\underline{\Delta}_{X, Y}^{Z}(b^{X, Y}) = (a_{[1]}^{X, Z} \otimes \vartheta(a_{[2]}^{Z, Y})).(b_{[1]}^{X, Z} \otimes \vartheta(b_{[2]}^{Z, Y})) \quad(\text{the braided product})\\
		&= a_{[1]}^{X, Z} b_{[1]}^{X, Z} \otimes \Big(\vartheta(a_{[2]}^{Z, Y}) \leftarrow \pi_{Z}(b_{[2]}^{Z, Z})\Big)\vartheta(b_{[3]}^{Z, Y})\\
		&= a_{[1]}^{X, Z} b_{[1]}^{X, Z} \otimes i_{Z, Y}\big(S_{H}\pi_{Z}(b_{[2]}^{Z, Z})\big)\vartheta(a_{[2]}^{Z, Y})i_{Z, Y}\big(\pi_{Z}(b_{[3]}^{Z, Z})\big)\vartheta(b_{[4]}^{Z, Y})\\
		&= a_{[1]}^{X, Z} b_{[1]}^{X, Z} \otimes i_{Z, Y}\big(\pi_{Z}S_{Z, Z}(b_{[2]}^{Z, Z})\big)\vartheta(a_{[2]}^{Z, Y})i_{Z, Y}\pi_{Z}(b_{[3]}^{Z, Z})i_{Z, Y}\pi_{Z}S_{Z, Z}(b_{[4]}^{Z, Z})b_{[5]}^{Z, Y}\\
		&= a_{[1]}^{X, Z} b_{[1]}^{X, Z} \otimes i_{Z, Y}\big(\pi_{Z}S_{Z, Z}(b_{[2]}^{Z, Z})\big)i_{Z, Y}\big(\pi_{Z}S_{Z, Z}(a_{[2]}^{Z, Z})\big)a_{[3]}^{Z, Y}b_{[3]}^{Z, Y}\\
		&= a_{[1]}^{X, Z} b_{[1]}^{X, Z} \otimes i_{Z, Y}\pi_{Z}S_{Z, Z}(a_{[2]}^{Z, Z}b_{[2]}^{Z, Z})a_{[3]}^{Z, Y}b_{[3]}^{Z, Y}\\
		&= a_{[1]}^{X, Z} b_{[1]}^{X, Z} \otimes \vartheta\big(a_{[2]}^{Z, Y}b_{[2]}^{Z, Y}\big)\\
		&= \underline{\Delta}_{X, Y}^{Z} (a^{X, Y}b^{X, Y}).
	\end{align*}
	The commutativity of the first diagram then follows directly from Lemma~\ref{Lem : prop of v}.(4).
	
	Next, we have
	\begin{align*}
		(\varepsilon_{X} \otimes 1)\underline{\Delta}^{X}_{X, Y}(a^{X, Y}) &= (\varepsilon_{X} \otimes 1)\big(a^{X, X}_{[1]} \otimes i_{X, Y} \pi_{X} S_{X, X}(a^{X, X}_{[2]})a^{X, Y}_{[3]}\big)\\
		&= \varepsilon_{X}(a^{X, X}_{[1]}) i_{X, Y} \pi_{X} S_{X, X}(a^{X, X}_{[2]})a^{X, Y}_{[3]}\\
		&= i_{X, Y}\Big[\varepsilon_{X} \big(S_{X, X}(a^{X, X}_{[1]})\big) \pi_{X}\big(S_{X, X}(a^{X, X}_{[2]})\big)\Big]a^{X, Y}_{[3]}\\
		&= i_{X, Y}\Big[\varepsilon_{X} \big(S_{X, X}(a^{X, X}_{[1]})\big)\Big]a^{X, Y}_{[2]}\\
		&= a^{X, Y}.
	\end{align*}
	On the other hand,
	\begin{align*}
		(1 \otimes \varepsilon_{Y})\underline{\Delta}^{Y}_{X, Y}(a^{X, Y}) &= (1 \otimes \varepsilon_{Y})\big(a^{X, Y}_{[1]} \otimes i_{Y, Y} \pi_{Y} S_{Y, Y}(a^{Y, Y}_{[2]})a^{Y, Y}_{[3]}\big)\\
		&= a^{X, Y}_{[1]} \varepsilon_{Y}(a_{[2]}^{Y, Y})\varepsilon_{Y}(a_{[3]}^{Y, Y})\\
		&= a^{X, Y}.
	\end{align*}
	Consequently, the final two diagrams commute.
	\epf

	\bl \label{Lem : coS}
	For $X, Y \in \ob(\KK)$, there exists a morphism in $\YD^{H}_{H}$
	\begin{align*}
		\underline{S}_{X, Y} \colon \, ^{co\pi_{X}}\KK(X, Y) &\longrightarrow \,^{co\pi_{Y}}\KK(Y, X)\\
		a^{X, Y} &\longmapsto S_{X, Y}(a_{[1]}^{X, Y}) i_{Y, X}\pi_{Y}(a^{Y, Y}_{[2]})
	\end{align*}
	such that the following diagrams commute
	\[\begin{tikzcd}
		^{co\pi_{X}}\KK(X, X) \ar[d, "\underline{\Delta}^{Y}_{X, X}"] \ar[r, "\varepsilon_{X}"] &k \ar[r, "u_{X, Y}"] &\,^{co\pi_{X}}\KK(X, Y)\\
		^{co\pi_{X}}\KK(X, Y) \otimes \,^{co\pi_{Y}}\KK(Y, X) \ar[rr, "1 \otimes \underline{S}_{Y, X}"] && \,^{co\pi_{X}}\KK(X, Y) \otimes \,^{co\pi_{Y}}\KK(X, Y) \ar[u, "m"]
	\end{tikzcd}\]
	\[\begin{tikzcd}
		^{co\pi_{X}}\KK(X, X) \ar[d, "\underline{\Delta}^{Y}_{X, X}"] \ar[r, "\varepsilon_{X}"] &k \ar[r, "u_{Y, X}"] &\,^{co\pi_{Y}}\KK(Y, X)\\
		^{co\pi_{X}}\KK(X, Y) \otimes \,^{co\pi_{Y}}\KK(Y, X) \ar[rr, "\underline{S}_{X, Y} \otimes 1"] && \,^{co\pi_{X}}\KK(X, Y) \otimes \,^{co\pi_{Y}}\KK(X, Y) \ar[u, "m"].
	\end{tikzcd}\]
	\el
	\bpf 
	We see that $\underline{S}_{X, Y}$ is a well-defined morphism in $\YD^{H}_{H}$ by direct computations. Now we must verify that the two diagrams above commute. We begin with the first diagram:
	for $a^{X, X} \in \,^{co\pi_{X}}\KK(X, X)$,
	\begin{align*}
		&m(1 \otimes \underline{S}_{Y, X})\underline{\Delta}^{Y}_{X, X}(a^{X, X}) = a^{X, Y}_{[1]} \underline{S}_{Y, X}\big(\vartheta(a^{Y, X}_{[2]})\big)\\
		&= a^{X, Y}_{[1]} S_{Y, X}\big(\vartheta(a^{Y, X}_{[3]})\big)i_{X, Y}\pi_{X}\big(i_{X, X}\pi_{Y}S_{Y, Y}(a^{Y, Y}_{[2]})a^{X, X}_{[4]}\big) \quad (\text{by Lemma \ref{Lem : prop of v}.$(3)$})\\
		&= a^{X, Y}_{[1]} S_{Y, X}\big(\vartheta(a^{Y, X}_{[3]})\big)i_{X, Y}\pi_{Y}S_{Y, Y}(a^{Y, Y}_{[2]})i_{X, Y}\pi_{X}(a^{X, X}_{[4]})\\
		&= a^{X, Y}_{[1]} S_{Y, X}\big(i_{Y, X}\pi_{Y}S_{Y, Y}(a^{Y, Y}_{[3]})a^{Y, X}_{[4]}\big)i_{X, Y}\pi_{Y}S_{Y, Y}(a^{Y, Y}_{[2]})i_{X, Y}\pi_{X}(a^{X, X}_{[5]})\\
		&= a^{X, Y}_{[1]} S_{Y, X}(a^{Y, X}_{[4]})S_{Y, X}\big(i_{Y, X}\pi_{Y}S_{Y, Y}(a^{Y, Y}_{[3]})\big)i_{X, Y}\pi_{Y}S_{Y, Y}(a^{Y, Y}_{[2]})i_{X, Y}\pi_{X}(a^{X, X}_{[5]})\\
		&= a^{X, Y}_{[1]} S_{Y, X}(a^{Y, X}_{[2]})i_{X, Y}\pi_{X}(a^{X, X}_{[3]})\\
		&= i_{X, Y}\pi_{X}(a^{X, X})\\
		&= 1\varepsilon_{X}(a^{X, X}) \quad (\text{because $a^{X, X} \in \,^{co\pi_{X}}\KK(X, X)$}).
	\end{align*}
	For the second diagram,
	\begin{align*}
		m(\underline{S}_{Y, X} \otimes 1)\underline{\Delta}^{Y}_{X, X}(a^{X, X}) &= \underline{S}_{X, Y}(a^{X, Y}_{[1]}) \vartheta(a^{Y, X}_{[2]})\\
		&= S_{X, Y}(a^{X, Y}_{[1]})i_{Y, X}\pi_{Y}(a^{Y, Y}_{[2]})\vartheta(a^{Y, X}_{[3]})\\
		&= S_{X, Y}(a^{X, Y}_{[1]})i_{Y, X}\pi_{Y}(a^{Y, Y}_{[2]})i_{Y, X}\pi_{Y}S_{Y, Y}(a^{Y, Y}_{[3]})a^{Y, X}_{[4]}\\
		&= S_{X, Y}(a^{X, Y}_{[1]})a^{Y, X}_{[2]}\\
		&= \varepsilon_{X}(a^{X, X})1. \qedhere
	\end{align*} 
	\epf
	\btp \label{Def: Cogr coinva}
	Let $H$ be a Hopf algebra with bijective antipode and let $(\KK, \pi, i)$ be a cogroupoid triple over $H$. Then $^{co\pi}\KK$ is a cogroupoid in $\YD_{H}^{H}$, called the cogroupoid of coinvariants, defined as follows:
	\begin{enumerate}
		\item $\ob(^{co\pi}\KK) = \ob(\KK),$
		\item for any $X, Y \in \ob(\KK)$, $^{co\pi}\KK(X, Y) = \,^{co\pi_{X}}\KK(X, Y)$,
		\item the structural maps $\underline{\Delta}^{\bullet}_{\bullet, \bullet}$, $\varepsilon_{\bullet}$  and $\underline{S}_{\bullet, \bullet}$ are defined in Lemma \ref{(Lem : coDelata)} and Lemma \ref{Lem : coS}.
	\end{enumerate}
	\etp
	\subsection{Bosonizations}
	Let $H$ be a Hopf algebra with bijective antipode, and let $A$ be a Hopf algebra in $\YD^{H}_{H}$. We use Sweedler's notation as follows:
	\[\Delta_A(a) = a_{[1]} \otimes a_{[2]}, \quad \Delta_H(h) = h_{(1)}\otimes h_{(2)}, \quad  \beta^{H}(a) = a_{(0)} \otimes a_{(1)}.\]
	The \textsl{bosonization} (or the Radford biproduct)  [\citen{Maj94, Rad85} ] $H\#A$ is then the ordinary Hopf algebra that has $H\otimes A$ as underlying vector space, has the unit and counit of the ordinary tensor product of algebras and coalgebras, and comultiplication, product and antipode given by 
	\[ h\#a \cdot k\# b = hk_{(1)} \# (a \leftarrow k_{(2)})b, \quad \Delta(h\#a)= (h_{(1)} \# a_{[1](0)}) \otimes (h_{(2)}a_{[1](1)} \# a_{[2]}) \]
	\begin{align*} S(h\#a) &= (1\#S_A(a_{(0)}))\cdot (S_H(ha_{(1)})\#1) \\
		& = S_H(h_{(2)}a_{(2)}) \# [S_A(a_{(0)})  \leftarrow S_H(h_{(1)}a_{(1)})]
	\end{align*}
	for any $h, k \in H$ and $a, b \in A$.
	
	We will now construct the bosonization of a cogroupoid over $\YD^{H}_{H}:$ let $\Cc$ be a $\YD^{H}_{H}$-cogroupoid. For any $a^{X, Y} \in \Cc(X, Y)$, we also write \[\beta^{H}(a^{X, Y}) = a_{(0)}^{X, Y} \otimes a_{(1)}^{X, Y}.\]
	Recall that, in this case, the compatibility condition $\eqref{compatibiliy}$ is characterized, for $a^{X, Y} \in \Cc(X, Y), h \in H$, by
	\begin{align}
		(a^{X,Y} \leftarrow h)_{(0)} \otimes (a^{X,Y} \leftarrow h)_{(1)} = a^{X,Y}_{(0)} \leftarrow h_{(2)} \otimes S_{H}(h_{(1)})a^{X,Y}_{(1)}h_{(3)}.\label{compatibiliy2}
	\end{align}
	The property that $\Delta^{Z}_{X,Y}$ is an algebra morphism in $\YD_{H}^{H}$ is expressed by
	\begin{align}
		\Delta^{Z}_{X,Y}(a^{X,Y}b^{X,Y}) &= (a^{X,Z}b^{X,Z})_{[1]} \otimes (a^{Z,Y}b^{Z,Y})_{[2]} \nonumber\\
		&= a_{[1]}^{X,Z}b_{[1](0)}^{X,Z} \otimes \big(a_{[2]}^{Z,Y}\leftarrow b_{[1](1)}^{X,Z}\big)b_{[2]}^{Z,Y}, \label{Delta_mor}
	\end{align}
	and the property that $\Delta^{Z}_{X, Y}$ is $H$-colinear is expressed by
	\begin{align}
		a^{X, Z}_{[1](0)} \otimes a^{Z, Y}_{[2](0)} \otimes a^{X, Z}_{[1](1)}a^{Z, Y}_{[2](1)} = a^{X, Z}_{(0)[1]} \otimes a^{Z, Y}_{(0)[2]} \otimes a^{X, Y}_{(1)} \label{H_colinear}.
	\end{align}
	Applying $(\beta^{H} \otimes 1 \otimes 1)$ and $( 1 \otimes \beta^{H} \otimes 1)$ respectively, to \eqref{H_colinear}, we get
	\begin{align}
		&a^{X, Z}_{[1](0)} \otimes a^{X, Z}_{[1](1)} \otimes a^{Z, Y}_{[2](0)} \otimes a^{X, Z}_{[1](2)}a^{Z, Y}_{[2](1)} = a^{X, Z}_{(0)[1](0)} \otimes a^{X, Z}_{(0)[1](1)}\otimes a^{Z, Y}_{(0)[2]} \otimes a^{X, Y}_{(1)}; \label{H_colinear1}\\
		&a^{X, Z}_{[1](0)} \otimes a^{Z, Y}_{[2](0)} \otimes a^{Z, Y}_{[2](1)} \otimes a^{X, Z}_{[1](1)}a^{Z, Y}_{[2](2)} = a^{X, Z}_{(0)[1]} \otimes a^{Z, Y}_{(0)[2](0)}\otimes a^{Z, Y}_{(0)[2](1)} \otimes a^{X, Y}_{(1)}  \label{H_colinear2}.
	\end{align}
	\bl \label{algebra}
	For any $X, Y \in \ob(\Cc)$, $H \# \Cc(X, Y)$ is a $k$-algebra with the product defined by
	\[ \widetilde{m}\big(h\#a^{X, Y} \otimes k\# b^{X, Y}\big) := (h\#a^{X, Y}) \cdot (k\# b^{X, Y}) = hk_{(1)} \# (a^{X, Y} \leftarrow k_{(2)})b^{X, Y}\]
	and unit $1 \# 1$.
	
	The maps $\widetilde{\Delta}^{Z}_{X,Y} : H \# \Cc(X, Z) \to H \# \Cc(X, Z) \otimes H \# \Cc(Z, Y)$, defined by
	\[\widetilde{\Delta}^{Z}_{X,Y}(h \# a^{X, Y}) = (h_{(1)} \# a^{X, Z}_{[1](0)}) \otimes (h_{(2)}a^{X, Z}_{[1](1)} \# a^{Z, Y}_{[2]}),\]
	and 
	\begin{align*}
		\widetilde{\varepsilon}_{X} : H  \# \Cc(X, X) &\longrightarrow k\\
		h \# a^{X, X} &\longmapsto \varepsilon_{H}(h) \varepsilon_{X}(a^{X, X})
	\end{align*}
	are morphisms of algebras and satisfy the $k$-cocategory axioms.
	\el 
	\bpf
	A direct computation shows that the product is associative.
	We will show that the map $\widetilde{\Delta}^{Z}_{X,Y}$ is a morphism of algebras:
	\begin{align*}
		\widetilde{\Delta}^{Z}_{X,Y}&\big[(h\#a^{X, Y}) \cdot (k\# b^{X, Y})\big] = \widetilde{\Delta}^{Z}_{X,Y}\big[hk_{(1)} \# (a^{X, Y} \leftarrow k_{(2)})b^{X, Y}\big]\\
		&= (hk_{(1)})_{(1)} \# \big((a^{X, Z} \leftarrow k_{(2)})b^{X, Z}\big)_{[1](0)}\\ &\quad \otimes (hk_{(1)})_{(2)}\big((a^{X, Z} \leftarrow k_{(2)})b^{X, Z}\big)_{[1](1)} \# \big((a^{Z, Y} \leftarrow k_{(2)})b^{Z, Y}\big)_{[2]}\\
		&= h_{(1)}k_{(1)} \# \big[(a^{X, Z} \leftarrow k_{(3)})_{[1]}b_{[1](0)}^{X, Z}\big]_{(0)} \\
		&\quad \otimes h_{(2)}k_{(2)} \big[(a^{X, Z} \leftarrow k_{(3)})_{[1]}b_{[1](0)}^{X, Z}\big]_{(1)} \# \big[(a^{Z, Y} \leftarrow k_{(3)})_{[2]} \leftarrow b_{[1](1)}^{X, Z}\big]b_{[2]}^{Z, Y}\\
		&(\text{by applying \eqref{Delta_mor} to $(a^{X, Z} \leftarrow k_{(2)})$ and $b^{X,Z}$})\\
		&= h_{(1)}k_{(1)} \# \big[(a^{X, Z}_{[1]} \leftarrow k_{(3)})b_{[1](0)}^{X, Z}\big]_{(0)} \\
		&\quad \otimes h_{(2)}k_{(2)} \big[(a^{X, Z}_{[1]} \leftarrow k_{(3)})b_{[1](0)}^{X, Z}\big]_{(1)} \# \big[(a^{Z, Y}_{[2]} \leftarrow k_{(4)}) \leftarrow b_{[1](1)}^{X, Z}\big]b_{[2]}^{Z, Y}\\
		&(\text{$\Delta^{Z}_{X,Y}$ is $H$-linear})\\
		&= h_{(1)}k_{(1)} \# (a^{X, Z}_{[1]} \leftarrow k_{(3)})_{(0)}b_{[1](0)}^{X, Z} \otimes h_{(2)}k_{(2)} (a^{X, Z}_{[1]} \leftarrow k_{(3)})_{(1)}b_{[1](1)}^{X, Z} \# \big[a^{Z, Y}_{[2]} \leftarrow k_{(4)}b_{[1](2)}^{X, Z}\big]b_{[2]}^{Z, Y}\\
		&= h_{(1)}k_{(1)} \# (a^{X, Z}_{[1](0)} \leftarrow k_{(4)})b_{[1](0)}^{X, Z} \otimes h_{(2)}k_{(2)} S_{H}(k_{(3)})a^{X, Z}_{[1](1)}k_{(5)}b_{[1](1)}^{X, Z} \# \big[a^{Z, Y}_{[2]} \leftarrow k_{(6)}b_{[1](2)}^{X, Z}\big]b_{[2]}^{Z, Y}\\
		&\text{(by applying $\eqref{compatibiliy2}$ to $a^{X, Z}_{[1]}$ and $k_{(3)}$)}\\
		&=h_{(1)}k_{(1)} \# (a^{X, Z}_{[1](0)} \leftarrow k_{(2)})b_{[1](0)}^{X, Z} \otimes h_{(2)}a^{X, Z}_{[1](1)}k_{(3)}b_{[1](1)}^{X, Z} \# \big[a^{Z, Y}_{[2]} \leftarrow k_{(4)}b_{[1](2)}^{X, Z}\big]b_{[2]}^{Z, Y}\\
		&= (h_{(1)} \# a^{X, Z}_{[1](0)})\cdot (k_{(1)} \# b^{X, Z}_{[1](0)}) \otimes (h_{(2)}a^{X, Z}_{[1](1)} \# a^{Z, Y}_{[2]})\cdot (k_{(2)}b^{X, Z}_{[1](1)} \# b^{Z, Y}_{[2]}).
	\end{align*}
	Next, for any $X, Y, Z, T \in \ob(\Cc)$ and $a^{X, Y} \in \Cc(X, Y), h\in H$, we have
	\begin{align*}
		\big(\widetilde{\Delta}^{T}_{X,Z} \otimes 1\big)&\widetilde{\Delta}^{Z}_{X,Y} (h \# a^{X, Y}) = \big(\widetilde{\Delta}^{T}_{X,Z} \otimes 1\big)\big((h_{(1)} \# a^{X, Z}_{[1](0)}) \otimes (h_{(2)}a^{X, Z}_{[1](1)} \# a^{Z, Y}_{[2]})\big)\\
		&=(h_{(1)} \# a^{X, T}_{[1](0)[1](0)}) \otimes  (h_{(2)}a^{X, T}_{[1](0)[1](1)} \# a^{T, Z}_{[1](0)[2]}) \otimes \big(h_{(3)}a^{X, Z}_{[1](1)} \# a^{Z, Y}_{[2]}\big).
	\end{align*}
	Applying the $H$-colinearity of $\Delta^{T}_{X,Z}$ \eqref{H_colinear1} to $a^{X, Z}_{[1]}$, we obtain
	\begin{align*}
		\big(\widetilde{\Delta}^{T}_{X,Z} \otimes 1\big)&\widetilde{\Delta}^{Z}_{X,Y} (h \# a^{X, Y}) =(h_{(1)} \# a^{X, T}_{[1](0)}) \otimes  (h_{(2)}a^{X, T}_{[1](1)} \# a^{T, Z}_{[2](0)}) \otimes \big(h_{(3)}a^{X, T}_{[1](2)}a^{T, Z}_{[2](1)} \# a^{Z, Y}_{[3]}\big)\\
		&= \big(1 \otimes \widetilde{\Delta}^{Z}_{T,Y}\big)\widetilde{\Delta}^{T}_{X,Y} (h \# a^{X, Y}).
	\end{align*}
	It is not difficult to check that, for any $X \in \ob(\Cc)$, the map $\widetilde{\varepsilon}_{X}$ is a morphism of algebras and satisfies the $k$-cocategory axioms as follows:
	\begin{align*}
		\big(1 \otimes \widetilde{\varepsilon}_{Y}\big)&\widetilde{\Delta}^{Y}_{X,Y} (h \# a^{X, Y}) = \big(1 \otimes \widetilde{\varepsilon}_{Y}\big)\big((h_{(1)} \# a^{X, Y}_{[1](0)}) \otimes (h_{(2)}a^{X, Y}_{[1](1)} \# a^{Y, Y}_{[2]})\big)\\
		&= (h_{(1)} \# a^{X, Y}_{[1](0)}) \otimes \varepsilon_{H}\big((h_{(2)}a^{X, Y}_{[1](1)}\big) \# \varepsilon_{Y}(a^{Y, Y}_{[2]})\\
		&= h \# a^{X, Y}.
	\end{align*}
	Similarly, $\big(\widetilde{\varepsilon}_{X} \otimes 1\big)\widetilde{\Delta}^{X}_{X,Y} (h \# a^{X, Y}) = h \# a^{X, Y}.$ 
	\epf
	\bl \label{Lem : antipode}
	For any $X, Y \in \ob(\Cc)$, the linear maps $\widetilde{S}_{X,Y} : H \# \Cc(X, Y) \longrightarrow H \# \Cc(Y, X)$, defined by
	\begin{align*}
		\widetilde{S}_{X,Y} (h \# a^{X, Y}) &= \big(1 \# S_{X, Y}(a_{(0)}^{X, Y})\big) \cdot \big(S_{H}(ha_{(1)}^{X, Y}) \# 1\big)\\
		&=S_{H}(h_{(2)}a_{(2)}^{X, Y}) \# \big[S_{X, Y}(a_{(0)}^{X, Y}) \leftarrow S_{H}(h_{(1)}a_{(1)}^{X, Y})\big],
	\end{align*}
	satisfy the additional cogroupoid axioms.
	\el
	\bpf
	For $a^{X, X} \in \Cc(X, X), h \in H$, we have
	\begin{align*}
		\widetilde{m}(\widetilde{S}_{X,Y} \otimes 1) &\Delta^{Y}_{X,X}(h \# a^{X, X}) = \widetilde{S}_{X,Y}\big(h_{(1)} \# a^{X, Y}_{[1](0)}\big) \cdot (h_{(2)}a^{X, Y}_{[1](1)} \# a^{Y, X}_{[2]})\\
		&= \big[S_{H}\big(h_{(2)}a_{[1](2)}^{X, Y}\big) \# S_{X, Y}\big(a_{[1](0)}^{X, Y}\big) \leftarrow S_{H}(h_{(1)}a_{[1](1)}^{X, Y})\big]\cdot (h_{(3)}a^{X, Y}_{[1](3)} \# a^{Y, X}_{[2]})\\
		&= S_{H}\big(h_{(2)}a_{[1](2)}^{X, Y}\big)h_{(3)}a^{X, Y}_{[1](3)} \# \big[S_{X, Y}\big(a_{[1](0)}^{X, Y}\big) \leftarrow S_{H}(h_{(1)}a_{[1](1)}^{X, Y})h_{(4)}a^{X, Y}_{[1](4)} \big]a^{Y, X}_{[2]}\\
		&= \varepsilon_{H}(h) \varepsilon_{X}(a^{X, X}).\\
		\widetilde{m}(1 \otimes \widetilde{S}_{Y, X}) &\Delta^{Y}_{X,X}(h \# a^{X, Y}) = \big(h_{(1)} \# a^{X, Y}_{[1](0)}\big) \cdot \widetilde{S}_{X,Y}\big(h_{(2)}a^{X, Y}_{[1](1)} \# a^{Y, X}_{[2]}\big)\\
		&= \big(h_{(1)} \# a^{X, Y}_{[1](0)}\big) \cdot \big[S_{H}\big(h_{(3)}a^{X, Y}_{[1](2)}a^{Y, X}_{[2](2)}\big) \# S_{Y, X}\big(a^{Y, X}_{[2](0)}\big) \leftarrow S_{H}\big(h_{(2)}a^{X, Y}_{[1](1)}a^{Y, X}_{[2](1)}\big)\big]
	\end{align*}
	By applying $(1 \otimes 1 \otimes \Delta_{H})$ to \eqref{H_colinear}, we get
	\begin{align*}
		&\widetilde{m}(1 \otimes \widetilde{S}_{Y, X}) \Delta^{Y}_{X,X}(h \# a^{X, Y}) = \big(h_{(1)} \# a^{X, Y}_{(0)[1]}\big) \cdot \big[S_{H}\big(h_{(3)}a^{X, X}_{(2)}\big) \# S_{Y, X}\big(a^{Y, X}_{(0)[2]}\big)\leftarrow S_{H}\big(h_{(2)}a^{X, X}_{(1)}\big)\big]\\
		&= h_{(1)}\big(S_{H}\big(h_{(3)}a^{X, X}_{(2)}\big)\big)_{(1)} \# \left(a^{X, Y}_{(0)[1]}\leftarrow \big(S_{H}\big(h_{(3)}a^{X, X}_{(2)}\big)\big)_{(2)} \right)\left(S_{Y, X}\big(a^{Y, X}_{(0)[2]}\big)\leftarrow S_{H}\big(h_{(2)}a^{X, X}_{(1)}\big)\right)\\
		&= h_{(1)}S_{H}\big(h_{(4)}a^{X, X}_{(3)}\big) \# \left(a^{X, Y}_{(0)[1]}\leftarrow S_{H}\big(h_{(3)}a^{X, X}_{(2)}\big) \right)\left(S_{Y, X}\big(a^{Y, X}_{(0)[2]}\big)\leftarrow S_{H}\big(h_{(2)}a^{X, X}_{(1)}\big)\right)\\
		&= h_{(1)}S_{H}\big(h_{(3)}a^{X, X}_{(2)}\big) \# \left(a^{X, Y}_{(0)[1]}S_{Y, X}\big(a^{Y, X}_{(0)[2]}\big)\right)\leftarrow S_{H}\big(h_{(2)}a^{X, X}_{(1)}\big)\quad (\text{because $m_{\Cc(X, Y)}$ is $H$-linear})\\
		&= \varepsilon_{H}(h)\varepsilon_{X}(a^{X, X}). \qedhere
	\end{align*}
	\epf
	We summarize the previous constructions in the following result:
	\btp \label{Thm-Def: boso}
	Let $H$ be a Hopf algebra with bijective antipode. Let $\Cc$ be a $\YD^{H}_{H}$-cogroupoid. The bosonization $H \# \Cc$ is the $k$-cogroupoid defined as follows:
	\begin{itemize}
		\item $\ob(H \# \Cc) = \ob(\Cc)$,
		\item for any $X, Y \in \ob(\Cc)$, the algebra $(H \# \Cc)(X, Y)$ is $H \# \Cc(X, Y)$ with the structural maps $\widetilde{\Delta}^{\bullet}_{\bullet, \bullet}, \widetilde{\varepsilon}_{\bullet}$ and $\widetilde{S}_{\bullet, \bullet}$ defined in Lemmas \ref{algebra} and \ref{Lem : antipode}.
	\end{itemize}
	\etp

	\bt
	Let $H$ be a Hopf algebra with bijective antipode. Let $\Cc$ be a faithfully flat $\YD^{H}_{H}$-cogroupoid. Then, for any $X, Y \in \ob(\CC)$, we have $k$-linear monoidal equivalences
	\[(\YD^H_{H})^{\Cc(X,X)} \cong^{\otimes} (\YD^H_{H})^{\Cc(Y,Y)} \quad \text{and} \quad \MM^{H \# \CC(X, X)} \cong^{\otimes} \MM^{H \# \CC(Y, Y)}.\]
	\et
	\bpf
	The first equivalence follows directly from Theorem \ref{Thm: monoid func}, while the second one follows from \cite[Theorem 2.12]{MR3285340}, since the bosonization is a $k$-cogroupoid by Theorem-Definition \ref{Thm-Def: boso}.
	\epf
	Since, by Remark~\ref{rk:YD-coquasi}, the category of $H$-comodules over a coquasitriangular Hopf algebra $H$ embeds into the category of Yetter--Drinfeld modules over $H$, we deduce the following result:
	\bc \label{Cor : equiva boso}
	Let $H$ be a coquasitriangular Hopf algebra. Let $\Cc$ be a faithfully flat $\MM^{H}$-cogroupoid. Then, for any $X, Y \in \ob(\CC)$, there exists a $k$-linear monoidal equivalence 
	\[\MM^{H \# \CC(X, X)} \cong^{\otimes} \MM^{H \# \CC(Y, Y)}.\]
	\ec
	\brq
	Note that when $H$ is a coquasitriangular Hopf algebra and $\CC$ is a $\MM^{H}$-cogroupoid, then by Theorem \ref{Thm: monoid func}, for any $X, Y \in \ob(\CC)$, we obtain a $k$-linear equivalence:
	\[\big(\MM^{H}\big)^{\CC(X, X)} \cong^{\otimes} \big(\MM^{H}\big)^{\CC(Y, Y)}.\]
	Moreover, by \cite[Propostion 1.2]{HN24} or \cite [Theorem 9.4.12]{majid1997foundations}, we have 
	\[\big(\MM^{H}\big)^{\CC(X, X)} \cong^{\otimes} \MM^{H \# \CC(X, X)} \]
	Thus, the monoidal equivalence in the above proposition follows as well.
	\erq
	So far, by Theorem–Definition~\ref{Def: Cogr coinva}, we have seen that given a cogroupoid triple $\KK$, we obtain a braided cogroupoid over $\YD_{H}^{H}$, namely its cogroupoid of coinvariants. The following theorem, where the notion of isomorphism between cogroupoids is the obvious one, shows that $\KK$ can be reconstructed from its coinvariants via bosonization.
	\bt \label{Thm :  isoboso}
	Let $H$ be a Hopf algebra, and let $(\KK, \pi, i)$ be a cogroupoid triple over $H$. Then, for any $X, Y \in \ob(\KK)$, we have an isomorphism
	\begin{align*}
		\Theta \colon \KK(X, Y) &\longrightarrow H \# \,^{co\pi_{X}}\KK(X, Y)\\
		a^{X, Y} &\longmapsto \pi_{X}(a^{X, X}_{[1]}) \# \vartheta(a^{X, Y}_{[2]}),
	\end{align*}
	where $\vartheta$ is the map defined in Lemma \ref{Lem : prop of v}, and this provides an isomorphism between the cogroupoids $\KK$ and $H \# \,^{co\pi}\KK$.
	\et
	\bpf
	To begin with, we verify that $\Theta$ is a morphism of algebras: for $a^{X, Y}, b^{X, Y} \in \KK(X, Y),$
	\begin{align*}
		\Theta (a^{X, Y}b^{X, Y}) &= \pi_{X}(a^{X, X}_{[1]}b_{[1]}^{X, X}) \# \vartheta (a_{[2]}^{X, Y}b_{[2]}^{X, Y})\\
		&= \pi_{X}(a^{X, X}_{[1]}b_{[1]}^{X, X}) \# i_{X, Y}\pi_{X} S_{X, X}(b_{[2]}^{X, X})\vartheta(a_{[2]}^{X, Y})b_{[3]}^{X, X} \quad (\text{by Lemma \ref{Lem : prop of v}}),
	\end{align*}
	and 
	\begin{align*}
		&\Theta (a^{X, Y})\Theta(b^{X, Y}) = \big(\pi_{X}(a^{X, X}_{[1]}) \# \vartheta(a_{[2]}^{X, Y})\big)\big(\pi_{X}(b^{X, X}_{[1]}) \# \vartheta(b_{[2]}^{X, Y})\big)\\
		&= \pi_{X}(a^{X, X}_{[1]})\pi_{X}(b^{X, X}_{[1]})_{(1)} \# \big(\vartheta(a_{[2]}^{X, Y}) \leftarrow \pi_{X}(b^{X, X}_{[1]})_{(2)}\big)\vartheta(b_{[2]}^{X, Y})\\
		&= \pi_{X}(a^{X, X}_{[1]})\pi_{X}(b^{X, X}_{[1]}) \# \big(\vartheta(a_{[2]}^{X, Y}) \leftarrow \pi_{X}(b^{X, X}_{[2]})\big)\vartheta(b_{[3]}^{X, Y})\\
		&= \pi_{X}(a^{X, X}_{[1]})\pi_{X}(b^{X, X}_{[1]}) \# S_{Y, X}i_{Y,X}\pi_{X}(b^{X, X}_{[2]})\vartheta(a_{[2]}^{X, Y}) i_{X,Y}\pi_{X}(b^{X, X}_{[3]})\vartheta(b_{[4]}^{X, Y})\\
		&= \pi_{X}(a^{X, X}_{[1]})\pi_{X}(b^{X, X}_{[1]}) \# S_{Y, X}i_{Y,X}\pi_{X}(b^{X, X}_{[2]})\vartheta(a_{[2]}^{X, Y}) i_{X,Y}\pi_{X}(b^{X, X}_{[3]})i_{X, Y}\pi_{X}S_{X, X}(b_{[4]}^{X, X})b_{[5]}^{X, Y}\\
		&= \pi_{X}(a^{X, X}_{[1]})\pi_{X}(b^{X, X}_{[1]}) \# S_{Y, X}i_{Y,X}\pi_{X}(b^{X, X}_{[2]})\vartheta(a_{[2]}^{X, Y}) b_{[3]}^{X, Y}\\
		&= \Theta (a^{X, Y}b^{X, Y}).
	\end{align*}
	Define
	\begin{align*}
		\Theta^{-1} \colon H \# \,^{co\pi_{X}}\KK(X, Y) &\longrightarrow \KK(X, Y)\\
		h \# a^{X, Y} &\longmapsto i_{X, Y}(h)a^{X, Y}.
	\end{align*}
	We have
	\begin{align*}
		\Theta^{-1} \circ \Theta(a^{X, Y}) &= i_{X, Y}\pi_{X}(a^{X, X}_{[1]})i_{X, Y} \pi_{X} S_{X, X}(a_{[2]}^{X, X}) a_{[3]}^{X, Y}\\
		&= i_{X, Y}\pi_{X}\big(a^{X, X}_{[1]}S_{X, X}(a_{[2]}^{X, X})\big)a_{[3]}^{X, Y}\\
		&= a^{X, Y},
	\end{align*}
	and
	\begin{align*}
		&\Theta\circ \Theta^{-1} (h \# a^{X, Y}) = \Theta\big(i_{X, Y}(h)a^{X, Y}\big)\\
		&= \pi_{X}(i_{X, Y}(h)^{X, X}_{[1]}a^{X, X}_{[1]}) \# i_{X, Y} \pi_{X} S_{X, X}(i_{X, Y}(h)^{X, X}_{[2]}a_{[2]}^{X, X}) i_{X, Y}(h)^{X, Y}_{[3]}a_{[3]}^{X, Y}\\
		&= \pi_{X}(i_{X, X}(h_{(1)})a^{X, X}_{[1]}) \# i_{X, Y} \pi_{X} S_{X, X}(i_{X, X}(h_{(2)})a_{[2]}^{X, X}) i_{X, Y}(h_{(3)})a_{[3]}^{X, Y}\\
		&= h_{(1)}\pi_{X}(a^{X, X}_{[1]}))  \# i_{X, Y} \pi_{X} S_{X, X}(a_{[2]}^{X, X}) i_{X, Y} S_{H}(h_{(2)})i_{X, Y}(h_{(3)})a_{[3]}^{X, Y}\\
		&= h \pi_{X}(a^{X, X}_{[1]})) \# i_{X, Y} \pi_{X} S_{X, X}(a_{[2]}^{X, X}) a_{[3]}^{X, Y}.
	\end{align*}
	Moreover, since $a^{X, Y} \in \,^{co\pi_{X}}\KK(X, Y)$, we have $(\pi_{X} \otimes 1)\Delta^{X}_{X, Y}(a^{X, Y}) = 1 \otimes a^{X, Y}$. Thus
	\[\Theta\circ \Theta^{-1} (h \# a^{X, Y}) =  h \# i_{X, Y} \pi_{X} S_{X, X}(a_{[1]}^{X, X}) a_{[2]}^{X, Y} = h \# a^{X, Y}.\]
	It follows that $\Theta$ is bijective. To conclude, we note that $\Theta$ also fits into the following commutative diagram:
	\[\begin{tikzcd}
		\KK(X, Y) \ar[r, "\Delta^{Z}_{X, Y}"] \ar[d, "\Theta"] &\KK(X, Z) \otimes \KK(Z, Y) \ar[d, "\Theta \otimes \Theta"]\\
		H \# \,^{co\pi_{X}}\KK(X, Y) \ar[r, "\widetilde{\Delta}^{Z}_{X, Y}"] &\big(H \# \,^{co\pi_{X}}\KK(X, Z) \big)\otimes \big(H \# \,^{co\pi_{X}}\KK(Z, Y)\big).
	\end{tikzcd}
	\]
	Indeed, \[(\Theta \otimes \Theta)\Delta^{Z}_{X, Y}(a^{X, Y}) = \big(\pi_{Z}(a^{X, X}_{[1]}) \# \vartheta(a^{X, Z}_{[2]}) \big)\otimes \big(\pi_{X}(a^{Z, Z}_{[3]}) \# \vartheta(a^{Z, Y}_{[4]})\big).\]
	On the other hand,
	\begin{align*}
		&\widetilde{\Delta}^{Z}_{X, Y} \Theta(a^{X, Y}) = \widetilde{\Delta}^{Z}_{X, Y}\big(\pi_{X}(a^{X, X}_{[1]}) \# \vartheta(a^{X, Y}_{[2]}\big)\\
		&=\Big(\pi_{X}(a^{X, X}_{[1]})_{(1)} \# \vartheta(a^{X, Z}_{[2]})_{(0)}\Big) \otimes \Big(\pi_{X}(a^{X, X}_{[1]})_{(2)}\vartheta(a^{X, Z}_{[2]})_{(1)} \# \vartheta(a^{Z, Y}_{[3]})\Big)
	\end{align*}
	Using the first statement of Lemma~\ref{Lem : prop of v}, the $H$-coaction on $\vartheta(a^{X, Z})$ is given by:
	\[\vartheta(a^{X, Z}) \mapsto \vartheta(a_{[2]}^{X, Z}) \otimes \pi_{X}S_{X, X}(a^{X, X}_{[1]})\pi_{Z}(a^{Z, Z}_{[3]}).\]
	Thus
	\begin{align*}
		&\widetilde{\Delta}^{Z}_{X, Y} \Theta(a^{X, Y})\\
		&= \Big(\pi_{X}(a^{X, X}_{[1]})_{(1)} \# \vartheta(a^{X, Z}_{[3]})\Big) \otimes \Big(\pi_{X}(a^{X, X}_{[1]})_{(2)} \pi_{X}S_{X, X}(a^{X, X}_{[2]})\pi_{Z}(a^{Z, Z}_{[4]}) \# \vartheta(a^{Z, Y}_{[5]})\Big)\\
		&= \Big(\pi_{X}(a^{X, X}_{[2]}) \# \vartheta(a^{X, Z}_{[4]})\Big) \otimes \Big(\pi_{X}(a^{X, X}_{[2]}) \pi_{X}S_{X, X}(a^{X, X}_{[3]})\pi_{Z}(a^{Z, Z}_{[5]}) \# \vartheta(a^{Z, Y}_{[6]})\Big)\\
		&= \Big(\pi_{X}(a^{X, X}_{[2]}) \# \vartheta(a^{X, Z}_{[2]})\Big) \otimes \Big(\pi_{Z}(a^{Z, Z}_{[3]}) \# \vartheta(a^{Z, Y}_{[4]})\Big),
	\end{align*}
	which is exactly the expression of $(\Theta \otimes \Theta)\Delta^{Z}_{X, Y}(a^{X, Y})$, and this completes our proof.
	\epf
	%
	
	\subsection{Examples : braided SL\texorpdfstring{$_{n}$}{SLn} cogroupoid} \label{Ex : GLn}
	In this section, we construct the braided $\mathrm{SL}_n$ cogroupoid from the cogroupoid associated with the multiparameter quantum $\mathrm{GL}_n$ $(n \in \mathbb{N}^{*})$ and its cogroupoid of coinvariants. To this end, we begin by recalling the $2$-cocycle cogroupoid of a Hopf algebra introduced in \cite[Section~3.3]{MR3285340}.
	\subsubsection{The $2$-cocycle cogroupoid of a Hopf algebra}
	Let $A$ be a Hopf algebra. Recall \cite{MR1213985} that a $2$-cocycle on $A$ is a convolution invertible linear map $\sigma \colon A \otimes A \to k$ satisfying
	\[\sigma(x_{(1)}, y_{(1)}) \sigma(x_{(2)}y_{(2)}, z) = \sigma(y_{(1)}, z_{(1)})\sigma(x, y_{(2)}z_{(2)})\]
	and $\sigma(x, 1) = \sigma(1, x) = \varepsilon(x)$, for all $x, y, z \in A$. The set of $2$-cocyles on $A$ is denoted by $Z^{2}(A)$.
	
	We also recall that $_{\sigma}A$ denotes the algebra whose underlying vector space is $A$, endowed with the product
	\[[x]\ast [y] = \sigma(x_{(1)}, y_{(1)})x_{(2)}y_{(2)} \quad \text{for $x, y \in A$}.\]
	
	Let $\sigma, \tau \in Z^{2}(A)$. The algebra $A(\sigma, \tau)$ is the algebra having $A$ as underlying vector space and whose product is defined by
	\[[x]\ast [y] = \sigma(x_{(1)}, y_{(1)})x_{(2)}y_{(2)}\tau^{-1}(x_{(3)}, y_{(3)}).\]
	
	For any $\sigma, \tau, \rho \in Z^{2}(A)$, there exist algebra maps:
	\begin{align} \label{cocycle structural}
		\Delta^{\rho}_{\sigma, \tau} = \Delta \colon A(\sigma, \tau) &\longrightarrow A(\sigma, \rho) \otimes A(\rho, \tau) \nonumber\\
		x &\longmapsto x_{(1)} \otimes x_{(2)} \nonumber\\
		\varepsilon_{\sigma} = \varepsilon \colon A(\sigma, \sigma) &\longrightarrow k \\
		S_{\sigma, \tau} \colon A(\sigma, \tau) &\longrightarrow A(\tau, \sigma)^{op}\nonumber\\
		x &\longmapsto \sigma(x_{(1)}, S(x_{(2)})) S(x_{(3)}) \tau^{-1}(S(x_{(4)}), x_{(5)}).\nonumber
	\end{align}
	The $2$-cocycle cogroupoid of $A$, denoted by $\mathcal{Z}^{2}(A)$, is defined as follows \cite{MR3285340} : 
	\begin{itemize}
		\item [(i)] $\ob(\mathcal{Z}^{2}(A)) = Z^{2}(A)$,
		\item[(ii)] for $\sigma, \tau \in Z^{2}(A)$, the algebra $\mathcal{Z}^{2}(A)(\sigma, \tau)$ is the algebra $A(\sigma, \tau)$ defined above,
		\item[(iii)]the structural maps $\Delta^{\bullet}_{\bullet,\bullet}, \varepsilon_{\bullet}$ and $S_{\bullet, \bullet}$ are given in \eqref{cocycle structural}.
	\end{itemize}
	
	We note that when $\tau = 1$, the algebra $A(\sigma, 1)$ is simply the algeba $_{\sigma}A$ given above, and when $\tau = \sigma$, the algebra $A(\sigma, \sigma)$ is the Hopf algebra $A^{\sigma}$ introduced by Doi in \cite{MR1213985}.
	
	\brq \label{Rk : twisted cocycle}
	Let $A$ be a Hopf algebra and let $\Gamma$ be a group.
	Let $\rho \colon A \to k\Gamma$ be a Hopf algebra map. Via $\rho$, we may view $A$ as an algebra in the category of $k\Gamma$-comodules, equivalently as a $\Gamma$-graded algebra. Furthermore, $A$ becomes a $\Gamma \times \Gamma$ - graded algebra
	\[A = \bigoplus_{g, h \in \Gamma}\, _{g}A_{h},\]
	where
	\[_{g}A_{h} = \{a \in A \mid \rho(a_{(1)}) \otimes a_{(2)} = g \otimes a; a_{(1)} \otimes \rho(a_{(2)}) = a \otimes h\}.\]
	Let $\sigma, \tau \in Z^{2}(\Gamma, k^{*})$. Then $\sigma$ and $\tau$ might be viewed as elements of $\mathcal{Z}^{2}(A)$ by composing with $\rho \otimes \rho$, and then the algebra $A(\sigma, \tau)$ is defined to be the vector space $A$ endowed with the multiplication
	\[[x]\ast [y] = \sigma(\rho(x_{(1)}), \rho(y_{(1)}))x_{(2)}y_{(2)}\tau^{-1}(\rho(x_{(3)}), \rho(y_{(3)})) \quad \text{for $x, y \in A$}.\]
	If $x \in \,_{g}A_{h}$ and $y \in \,_{k}A_{l}$ for $g, h, k, l \in \Gamma$, then this product in $A(\sigma, \tau)$  simplifies to
	\[[x]\ast [y] = \sigma(g, k)\tau^{-1}(h, l)xy.\]
	Define a map
	\begin{align*}
		\omega\colon \big(\Gamma \times \Gamma\big) \times \big(\Gamma \times \Gamma\big) &\longrightarrow k^{*}\\
		((g, h), (k, l)) &\longmapsto \sigma(g, k)\tau^{-1}(h, l).
	\end{align*}
	Then $\omega$ is a $2$-cocycle on the group $\Gamma \times \Gamma$. With this notation, the algebra $A(\sigma, \tau)$  is precisely the $2$-cocycle twist algebra $_{\omega}A$.
	\erq

	\subsubsection{The multiparametric $\mathrm{GL}_{q;n}$ cogroupoid}\label{coinGL_n}
	Now, we aim to construct a multiparametric GL$_{q;n}$ cogroupoid ($q \in k^{*}, n \in \N^{*}$) using the construction of $2$-cocycle cogroupoids recalled in the previous subsection.
	
	\bd \label{Def : OqSLn}
	Let $q \in k^{*}$. The single parameter quantum $n \times n$ matrix algebra, denoted by $\Oq$, is the algebra presented by generators $x_{ij}$ for $i, j = 1, \dots, n$, subject to the relations
	\begin{align}
		x_{im} x_{ik} &= q x_{ik} x_{im} &(k < m)\\
		x_{jk} x_{ik} &= q x_{ik} x_{jk} &(i < j)\\
		x_{jm} x_{ik} &=  x_{ik} x_{jm} &(i < j, k > m)\\
		x_{jm} x_{ik} - x_{ik} x_{jm} &= (q - q^{-1})x_{im}x_{jk} &(i <j, k <m). \label{formule det}
	\end{align}
	\ed
	The quantum determinant in $\Oq$ is expressed by
	\[D_{q} = \sum_{\sigma \in S_{n}} (-q)^{-\ell(\sigma)} x_{1\sigma(1)} x_{2\sigma(2)} \cdots  x_{n\sigma(n)}\]
	where $\ell(\sigma)$ denotes the length of the permutation $\sigma$. The quantum determinant is central in $\Oq$, and the single parameter quantum SL$_{n}$ and quantum GL$_{n}$ are defined by
	\[\OQ = \Oq / (D_{q} - 1) \quad \text{and} \quad \Ogl = \Oq [D_{q}^{-1}].\]
	%
	
	Fix $i, j \in \{1, \cdots, n\}$. Let $M_{ji}$ be the $(n - 1) \times (n - 1)$ quantum minor obtained by deleting row $j$ and column $i$ from $(x_{ij})_{1 \leq i < j < \leq n}$ :
	\[M_{ji} = \sum_{\sigma \in S_{n - 1}} (-q)^{-\ell (\sigma)} x_{a_{1}, b_{\sigma(1)}} \cdots  x_{a_{n -1}, b_{\sigma(n - 1)}} \]
	where $a_1 < \cdots < a_{n-1}$ are distinct from $j$, and $b_1 < \cdots < b_{n-1}$ are distinct from $i$.
	
	It is well known (see e.g. \cite{KSBook97,BG02}) that these algebras $\Ogl$ and $ \OQ$ are Hopf algebras, with the antipode given by
	\begin{align*}
		S_{q} \colon \Ogl &\longrightarrow \Ogl\\
		x_{ij} &\longmapsto (-q)^{j - i} M_{ji}D_{q}^{-1}.
	\end{align*}
	
	A matrix $\bar{p} = (p_{ij}) \in M_{n}(k)$ is called an AST-matrix if it satisfies $p_{ii} = 1$ and $p_{ij}p_{ji} = 1$ for all $i$ and $j$, following Artin-Shelter-Tate \cite{MR1127037}. We denote by $\mathrm{AST}(n)$ the set of AST matrices of size $n$.
	\bd
	Let $q \in k^{*}$ and $\bar{p} \in \mathrm{AST}(n)$. The algebra $\Opgl$ is the algebra presented by generators $x_{ij}, 1 \leq i,j \leq n$ and $(\Dp_{q})^{-1}$, subject to the relations
	\begin{align*}
		&x_{im}x_{ik} = q p_{mk}x_{ik} x_{im}\quad (k < m)\\
		&x_{jk}x_{ik}  = qp_{ij} x_{ik} x_{jk} x_{ik} \quad (i < j)\\
		&p_{ji} x_{jm} x_{ik} =  p_{mk} x_{ik} x_{jm} \quad (i < j, k > m)\\
		&p_{ji}x_{jm}x_{ik} - p_{mk}x_{ik}x_{jm} = (q - q^{-1})x_{im}x_{jk} \quad (i <j, k <m)\\
		1 &= (\Dp_{q})^{-1}\Bigg(\sum_{\sigma \in S_{n}} (-q)^{-\ell(\sigma)}\Bigg(\prod_{\substack{1 \leq i< j \leq n\\ \sigma(i) > \sigma(j)}} (p_{\sigma(j) \sigma(i)})\Bigg)x_{1\sigma(1)} x_{2\sigma(2)} \cdots  x_{n\sigma(n)}\Bigg) \\
		& = \Bigg(\sum_{\sigma \in S_{n}} (-q)^{-\ell(\sigma)}\Bigg(\prod_{\substack{1 \leq i< j \leq n\\ \sigma(i) > \sigma(j)}} (p_{\sigma(j) \sigma(i)})\Bigg)x_{1\sigma(1)} x_{2\sigma(2)} \cdots  x_{n\sigma(n)}\Bigg)(\Dp_{q})^{-1}.
	\end{align*}
	\ed
	
	We note that our definition of $\Ogl$ corresponds to $\mathcal{O}_{q^{-1}}(\mathrm{GL}_{n}(k))$ in the conventions of Brown–Goodearl \cite{BG02}, and that our definition of $\mathcal{O}_{q}^{ \bar{p}}(\mathrm{GL}_{n}(k))$, with $\tilde{p}_{ij} = p_{ij}^{-1}$, corresponds to $\mathcal{O}_{q^{-1},
		\widetilde{p}}(\mathrm{GL}_{n}(k))$ in the sense of Hayashi \cite{HAYASHI1992146}.
	\bd\label{Oqpr}
	Let $q \in k^{*}$ and $\bar{p}, \bar{r} \in \mathrm{AST}(n)$. The algebra $\Opr$ is the algebra presented by generators $x_{ij}, 1 \leq i,j \leq n$ and $(\Dpr_{q})^{-1}$, subject to relations
	\begin{align}
		& x_{im}x_{ik} = q r_{mk}x_{ik} x_{im}\quad (k < m) \label{line}\\
		&x_{jk}x_{ik}  = q p_{ij} x_{ik} x_{jk} \quad (i < j) \label{column}\\
		&p_{ji} x_{jm} x_{ik} = r_{mk} x_{ik} x_{jm} \quad (i < j, k > m) \label{cross1}\\
		&p_{ji}x_{jm}x_{ik} - r_{mk}x_{ik}x_{jm} = (q - q^{-1})x_{im}x_{jk} \quad (i <j, k <m) \rangle. \label{cross2}\\
		1 &= (\Dpr_{q})^{-1}\Bigg(\sum_{\sigma \in S_{n}} (-q)^{-\ell(\sigma)}\prod_{1 \leq i< j \leq n} p_{ij}\prod_{\substack{1 \leq i< j \leq n\\ \sigma(i) < \sigma(j)}}r_{\sigma(j), \sigma(i)} \prod_{i = 1}^{n} x_{i \sigma(i)}\Bigg) \label{detpr} \\
		& = \Bigg(\sum_{\sigma \in S_{n}} (-q)^{-\ell(\sigma)}\prod_{1 \leq i< j \leq n} p_{ij}\prod_{\substack{1 \leq i< j \leq n\\ \sigma(i) < \sigma(j)}}r_{\sigma(j), \sigma(i)} \prod_{i = 1}^{n} x_{i \sigma(i)}\Bigg)(\Dpr_{q})^{-1}. \nonumber
	\end{align}
	\ed
	
	When $\bar{p} = \bar{r}$, it is clear that $\mathcal{O}^{\bar{p}, \bar{p}}_{q}(\mathrm{GL}_{n}(k)) = \mathcal{O}^{\bar{p}}_{q}(\mathrm{GL}_{n}(k))$.
	
	From now on, we set $A = \mathcal{O}_{q}(\mathrm{GL}_{n}(k))$. The following lemma is verified by a straightforward computation:
	\bl \label{lem:morphism pi}
	Let $\Z^{n} = \langle g_{1}, \cdots, g_{n}\rangle$ be the free abelian group of rank $n$. There exists a unique algebra map:
	\begin{align*}
		\rho\colon A &\longrightarrow k\Z^{n}\\
		x_{ij} &\longmapsto \delta_{ij}g_{i}.
	\end{align*}
	\el

	Let $\bar{p}, \bar{r} \in \text{AST}(n)$. We consider the unique bicharacters
	\begin{align} \label{cocycle of kZ}
		\psi_{\bar{p}} \colon \Z^{n} \times \Z^{n} &\longrightarrow k^{*} &\psi_{\bar{r}} \colon \Z^{n} \times \Z^{n} &\longrightarrow k^{*}  \nonumber\\
		(g_{i}, g_{j}) &\mapsto \begin{cases}
			p_{ji} &\text{for} \, i < j\\
			1 & \text{for} \, i \geq j.
		\end{cases}
		&(g_{i}, g_{j}) &\mapsto \begin{cases}
			r_{ji} &\text{for} \, i < j\\
			1 & \text{for} \, i \geq j.
		\end{cases}
	\end{align} 
	This defines convolution-invertible $2$-cocyles on the group algebra $k\Z^{n}$. Hence
	\[\tau_{p}:= \psi_{\bar{p}}\circ (\rho \otimes \rho) \colon A \otimes A \to k^{*}\] is a $2$-cocycle on $A$, and $\tau_{r} := \psi_{\bar{r}} \circ (\rho \otimes \rho)$ is as well.
	
	\bp \label{Prop: isoApr}
	Let $q \in k^{*}$ and $\bar{p}, \bar{r} \in \mathrm{AST}(n)$. The algebra $\Opr$ is isomorphic to the twisted algebra $A(\tau_{p}, \tau_{r})$.
	\ep
	\bpf
	In $A(\tau_{p}, \tau_{r})$, we have 
	\begin{itemize}
		\item for $k <m$,
		\begin{align*}
			[x_{im}]*[x_{ik}] = \psi_{p}(g_{i}, g_{i})[x_{im}x_{ik}] \psi^{-1}_{r}(g_{m}, g_{k}) = [x_{im}x_{ik}] = q[x_{ik}x_{im}],
		\end{align*}
		and 
		\begin{align*}
			[x_{ik}]*[x_{im}] = \psi_{p}(g_{i}, g_{i})[x_{ik}x_{im}] \psi^{-1}_{r}(g_{k}, g_{m}) = r^{-1}_{mk}[x_{ik}x_{im}].
		\end{align*}
		Thus $[x_{im}]*[x_{ik}] = qr_{mk}[x_{ik}]*[x_{im}]$;
		\item for $i < j$ and $k <m$,
		\begin{align*}
			[x_{jm}]*[x_{ik}] = \psi_{p}(g_{j}, g_{i})[x_{jm}x_{ik}] \psi^{-1}_{r}(g_{m}, g_{k}) = [x_{jm}x_{ik}],
		\end{align*}
		\begin{align*}
			[x_{ik}]*[x_{jm}] = \psi_{p}(g_{i}, g_{j})[x_{ik}x_{jm}] \psi^{-1}_{r}(g_{k}, g_{m}) = p_{ji}r^{-1}_{mk}[x_{ik}x_{jm}].
		\end{align*}
		and 
		\begin{align*}
			[x_{im}]*[x_{jk}] = \psi_{p}(g_{i}, g_{j})[x_{im}x_{jk}] \psi^{-1}_{r}(g_{m}, g_{k}) = p_{ji}[x_{im}x_{jk}].
		\end{align*}
		Hence $p_{ji}[x_{jm}]*[x_{ik}] - r_{mk}[x_{ik}]*[x_{jm}] = (q - q^{-1})[x_{im}]*[x_{jk}]$.
		Similarly, we also have \[[x_{jk}]*[x_{ik}] = qp_{ij}[x_{ik}]*[x_{jk}] \quad (i <j),\] and \[p_{ji} [x_{jm}]*[x_{ik}] = r_{mk} [x_{ik}]*[x_{jm}] \quad (i < j, k > m).\]
		\item for a fixed $\sigma \in S_{n}$,
		proceeding by induction on $k \in \llbracket 1, n \rrbracket$, we can show that
		\begin{align*}
			&[x_{1\sigma(1)}]*[x_{2\sigma(2)}]*\cdots*[x_{k\sigma(k)}] \\
			&= \prod_{1 \leq i < j \leq k} \psi_{p}(g_{i}, g_{j})[x_{1\sigma(1)}x_{2\sigma(2)} \cdots x_{k\sigma(k)} ]\prod_{1 \leq i < j \leq k} \psi^{-1}_{r}(g_{\sigma(i)}, g_{\sigma(j)})\\
			&= \prod_{1 \leq i < j \leq k} p_{ji}[x_{1\sigma(1)}x_{2\sigma(2)} \cdots x_{k\sigma(k)}] \prod_{\substack{1 \leq i< j \leq k\\ \sigma(i) < \sigma(j)}}r^{-1}_{\sigma(j), \sigma(i)}.
		\end{align*}
		Then we obtain 
		\[[D_{q}] =\sum_{\sigma \in S_{n}} (-q)^{-\ell(\sigma)} (\prod_{1 \leq i < j \leq n} p_{ij})(\prod_{\substack{1 \leq i< j \leq k\\ \sigma(i) < \sigma(j)}}r_{\sigma(j), \sigma(i)})[x_{1\sigma(1)}]*\cdots*[x_{n\sigma(n)}].\]
	\end{itemize}
	These relations ensure that there exists a unique algebra morphism
	\begin{align*}
		\Opr &\longrightarrow A(\tau_{p}, \tau_{r})\\
		x_{ij} &\longmapsto [x_{ij}]\\
		(\Dpr_{q})^{-1} &\longmapsto [D_{q}]^{-1}
	\end{align*}
	which is indeed an isomorphism. This follows by combining Remark \ref{Rk : twisted cocycle} with the results of \cite[Section~3]{MR2945585}.
	\epf
	When $\bar{p} = \bar{r}$, a simple substitution in Definition~\ref{Oqpr}, together with the following lemma, shows that we recover the Hopf algebra $A(\tau_{p}, \tau_{p}) = A^{\tau_{p}}$, which is isomorphic to $\mathcal{O}_{q}^{\bar{p}}(\mathrm{GL}_{n}(k))$.
	\bl \label{Lem : p,r}
	Let $\sigma \in S_{n}$ and $\bar{p} = (p_{ij}) \in \mathrm{AST}(n)$. Then
	\[\prod_{1 \leq i< j \leq n} p_{ij}\prod_{\substack{1 \leq i< j \leq n\\ \sigma(i) < \sigma(j)}}p_{\sigma(j), \sigma(i)} = \prod_{\substack{1 \leq i< j \leq n\\ \sigma(i) > \sigma(j)}} p_{\sigma(j), \sigma(i)}.\]
	\el
	\bpf
	The proof is a simple verification and is left to the reader.
	\epf
	The next lemma shows that $\Dpr_{q}$ is a normal element, which will be used in the next subsection (in Proposition~\ref{Prop : coinGLn}).
	\bl \label{Lem : D_commute}
	In $\Opr$, we have, for $1 \leq i, j \leq n$,
	\[(\Dpr_{q})^{-1}x_{ij} = \big(\prod^{n}_{\ell = 1} p_{\ell i}r_{j \ell}\big)x_{ij}(\Dpr_{q})^{-1}.\]
	\el
	\bpf
	Since $D_{q}$ is a group-like and central element in $\mathcal{O}_{q}(\mathrm{M}_{n}(k))$, we observe that, in the twisted algebra $A(\tau_{p}, \tau_{r})$,
	\begin{align*}
		[x_{ij}] * [D_{q}] &= \prod^{n}_{\ell = 1}\psi_{p}(g_{i}, g_{\ell}) [x_{ij}D_{q}] \prod^{n}_{\ell = 1}\psi^{-1}_{r}(g_{j}, g_{\ell})\\
		&= \prod_{\ell > i}p_{\ell i}\prod_{\ell > j}r^{-1}_{\ell j}[D_{q}x_{ij}].
	\end{align*}
	On the other hand, 
	\begin{align*}
		[D_{q}] * [x_{ij}] &= \prod^{n}_{\ell = 1}\psi_{p}(g_{k}, g_{i}) [x_{ij}D_{q}] \prod^{n}_{\ell = 1}\psi^{-1}_{r}(g_{\ell}, g_{j})\\
		&= \prod_{\ell < i}p_{i\ell}\prod_{\ell < j}r^{-1}_{j\ell}[D_{q}x_{ij}]\\
		&= \prod^{n}_{\ell= 1}p_{i\ell}r^{-1}_{j\ell} \,[x_{ij}] * [D_{q}].
	\end{align*}
	Hence, \[[D_{q}]^{-1}* [x_{ij}] = \big(\prod^{n}_{\ell = 1} p_{\ell i}r_{j \ell}\big)x_{ij}[D_{q}]^{-1}.\]
	The proof is then completed using the isomorphism of Proposition \ref{Prop: isoApr}.
	\epf
	\bl \label{Lem : GL structure}
	We denote $\mathcal{O}_{\bar{p}, \bar{r}} =\Opr$.
	\begin{enumerate}
		\item For any $\bar{p}, \bar{r}, \bar{s} \in \mathrm{AST}(n)$, there exists an algebra map
		\begin{align*}
			\Delta^{\bar{s}}_{\bar{p}, \bar{r}} \colon \mathcal{O}_{\bar{p}, \bar{r}} &\longrightarrow \mathcal{O}_{\bar{p}, \bar{s}} \otimes \mathcal{O}_{\bar{s}, \bar{r}}\\
			x_{ij} &\longmapsto \sum^{n}_{k = 1} x_{ik} \otimes x_{kj}.
		\end{align*}
		such that for any $\bar{t}\in \mathrm{AST}(n)$,
		\[(\Delta^{\bar{t}}_{\bar{p}, \bar{s}} \otimes 1)\Delta^{\bar{s}}_{\bar{p}, \bar{r}} = (1 \otimes \Delta^{\bar{t}}_{\bar{s}, \bar{r}})\Delta^{\bar{s}}_{\bar{p}, \bar{r}}.\]
		\item For any $\bar{p} \in \mathrm{AST}(n)$, there exists an algebra map
		\begin{align*}
			\varepsilon_{\bar{p}} \colon \mathcal{O}_{\bar{p}, \bar{p}}&\longrightarrow k\\
			x_{ij} &\longmapsto \delta_{ij}.
		\end{align*}
		such that 
		\[(1 \otimes \varepsilon_{\bar{r}}) \circ \Delta^{\bar{r}}_{\bar{p}, \bar{r}} = \id = (\varepsilon_{\bar{p}} \otimes 1) \circ \Delta^{\bar{p}}_{\bar{p}, \bar{r}}.\]
		\item For any $\bar{p}, \bar{r} \in \mathrm{AST}(n)$, there exists an algebra map
		\begin{align*}
			S_{\bar{p}, \bar{r}} \colon \mathcal{O}_{\bar{p}, \bar{r}} &\longrightarrow \mathcal{O}_{\bar{r}, \bar{p}}^{op}\\
			x_{ij} &\longmapsto (-q)^{j - i} (\prod_{1 \leq s\leq t \leq n}p_{ts}r^{-1}_{ts}) (\prod_{k < i}p^{-1}_{ik})(\prod_{k < j}r_{jk})M^{\bar{r}, \bar{p}}_{ji}(D_{q}^{\bar{r}, \bar{p}})^{-1}.
		\end{align*}
		where $M^{\bar{r}, \bar{p}}_{ji}$ is the multiparameter quantum minor,
		such that
		\[m(1 \otimes S_{\bar{r}, \bar{p}})\Delta^{\bar{s}}_{\bar{p}, \bar{p}} = u \circ \varepsilon_{\bar{p}} \quad \text{and} \quad m(S_{\bar{p}, \bar{r}} \otimes 1)\Delta^{\bar{s}}_{\bar{p}, \bar{p}} = u \circ \varepsilon_{\bar{p}}.\]
	\end{enumerate}
	\el
	\bpf
	Statements $(1)$ and $(2)$ follow from Proposition~\ref{Prop: isoApr} together with the transport of the $2$-cocycle cogroupoid structure. For $(3)$: using the deformation by $2$-cocycles, there exists an algebra map,
	\begin{align*}
		S_{\bar{p}, \bar{r}} \colon A(\tau_{p}, \tau_{r}) &\longrightarrow A(\tau_{r}, \tau_{p})^{op}\\
		x_{ij} &\longmapsto [S_{q}(x_{ij})]
	\end{align*}
	where we still denote $A = \Ogl$ and $S_{q}$ is the antipode of $\Ogl$. Hence
	\[[S_{q}(x_{ij})] = (-q)^{j - i}[M_{ji}D^{-1}_q].\] Moreover, in $A(\tau_{r}, \tau_{p})$
	\begin{align*}
		[M_{ji}]*[D^{-1}_q] &= \prod^{n}_{k = 1}\psi_{\bar{r}}(g_{k}, g_{j})^{-1}[M_{ji}D^{-1}_q] \prod^{n}_{k = 1}\psi_{\bar{p}}(g_{k}, g_{i})\\
		&= \prod_{k < j}r^{-1}_{jk}\prod_{k < i}p_{ik} [M_{ji}D^{-1}_q].
	\end{align*}
	and
	\begin{align*}
		[D_q]\ast[D^{-1}_q] = \tau_{r}\big(D_{q}, D^{-1}_{q}\big)\tau^{-1}_{p}\big(D_{q}, D^{-1}_{q}\big)
		&= \prod^{n}_{s, t = 1}\psi_{\bar{r}}(g_{s}, g_{t})^{-1}\psi_{\bar{p}}(g_{s}, g_{t})\\
		&= \prod_{1 \leq t < s \leq n}p_{ts}r^{-1}_{ts}.
	\end{align*}
	Thus
	\[[M_{ji}D^{-1}_q] = (\prod_{1 \leq s\leq t \leq n}p_{ts}r^{-1}_{ts}) (\prod_{k < i}p^{-1}_{ik})(\prod_{k < j}r_{jk}) [M_{ji}]*[D_q]^{-1}.\]
	The result then follows by applying the isomorphism of Proposition \ref{Prop: isoApr}.
	\epf
	\brq \label{Rk : permute Mij-D}
	Using the same reasoning as in the proof of Lemma \ref{Lem : GL structure}.(3).  
	We can also check that, in $\Opr$, we have 
	\[M^{\bar{r}, \bar{p}}_{ji}(D_{q}^{\bar{r}, \bar{p}})^{-1} = \prod_{k \neq i}p_{ik} \prod_{k \neq j}r^{-1}_{kj}(D_{q}^{\bar{r}, \bar{p}})^{-1} M^{\bar{r}, \bar{p}}_{ji}.\]
	This remark will be useful in Lemma \ref{Lem: SL structure}.
	\erq
	These results lead us to the following definition:
	\bd
	Let $q \in k^{*}$. The multiparametric GL$_{q; n}$-cogroupoid, denoted by \textbf{GL}$_{q; n}$, is defined as follows:
	\begin{itemize}
		\item $\ob(\textbf{GL}_{q; n}) = \mathrm{AST}(n)$
		\item for $\bar{p}, \bar{r} \in \mathrm{AST}(n)$, the algebra $\textbf{GL}_{q; n}(\bar{p}, \bar{r})$ is the algebra $\Opr$ defined in Definition \ref{Oqpr}.
		\item the structure maps $\Delta^{\bullet}_{\bullet, \bullet}, \varepsilon_{\bullet}$ and $S_{\bullet,\bullet}$ given in the previous lemma.
	\end{itemize}
	\ed
	
	When $q = 1$, this is precisely the multiparametric GL$_n$ cogroupoid in \cite[Section 3.4]{MR3285340}. It is immediate to verify that the following result holds.
	\bl \label{lem:i and pi}
Let $q \in k^{*}$, let $\bar{p}, \bar{r} \in \mathrm{AST}(n)$. Consider $\mathbb{Z}=\langle z\rangle$, the infinite cyclic group. Then there exist, respectively, an algebra morphism and a Hopf algebra morphism:
			\begin{align} \label{pi p}
				i_{\bar{p}, \bar{r}} \colon k\mathbb{Z} &\longrightarrow \Opr,
				& \pi_{\bar{p}} \colon \Opgl &\longrightarrow k\mathbb{Z}, \nonumber\\
				z &\longmapsto \Dpr_{q},
				& x_{ij} &\longmapsto 
				\begin{cases}
					z \delta_{ij}, & \text{if } i = 1, \\
					\delta_{ij}, & \text{if } i > 1,
				\end{cases} \\
				& 
				& (\Dpr_{q})^{-1} &\longmapsto z^{-1} \nonumber
			\end{align}
			such that $\pi_{\bar{p}} \circ i_{\bar{p}, \bar{p}} = 1_{k\Z}$. 
				\el 
It follows from this lemma that, considering the family of algebra morphisms
$i := \{\, i_{\bar{p}, \bar{r}} \mid \bar{p}, \bar{r} \in \mathrm{AST}(n) \,\}$,
and the family of Hopf algebra morphisms
$\pi := \{\, \pi_{\bar{p}} \mid \bar{p} \in \mathrm{AST}(n) \,\}$,
we obtain a $\mathrm{GL}_{q;n}$-cogroupoid triple
$(\mathbf{GL}_{q;n}, \pi, i)$, which will be used in the next subsection.
			\subsubsection{Braided SL$_{n}$ cogroupoid} \label{subsec : SL_n}
			Here, we introduce the algebra of multiparameter deformations of SL$_{n}$ by generators and relations. We then show that it can be placed within the cogroupoid framework and that it is, in fact, the cogroupoid of coinvariants associated with the cogroupoid triple (\textbf{GL}$_{q; n}$, $\pi$, $i$).
			\bd
			Let $q \in k^{*}$ and $\bar{p}, \bar{r} \in \mathrm{AST}(n)$. The algebra $\mathcal{O}_{q}^{\bar{p}, \bar{r}}(\mathrm{SL}_{n}(k))$ is the algebra presented by generators $X_{ij}, 1 \leq i,j \leq n$ subject to relations
			\begin{align*}
				X_{1m}X_{1k} &= q(\prod^{n}_{\ell = 1, \ell \neq k} r_{\ell m}r^{-1}_{\ell k}) X_{1k}X_{1m} \quad &\text{$(k < m)$}\\
				X_{im}X_{ik} &= qr_{m k} X_{ik}X_{im} \quad& \text{$(i > 1, k < m)$}\\
				X_{jk}X_{1k} &= q(\prod^{n}_{\ell = 2}p_{j\ell}) (\prod^{n}_{\ell = 1}r_{\ell k}) X_{1k}X_{jk} \quad &\text{$(j > 1)$}\\
				X_{jk}X_{ik} &= qp_{ij} X_{ik}X_{jm} \quad &\text{$(j > i > 1)$}\\
				X_{jm}X_{1k} &= (\prod^{n}_{\ell = 2}p_{j\ell})(\prod^{n}_{\ell = 1, \ell \neq k} r_{\ell m}) X_{1k}X_{jm} \quad &\text{($j > 1, k > m$)}\\
				p_{ji}X_{jm}X_{ik} &= r_{mk} X_{ik}X_{jm} \quad &\text{($j > i > 1, k > m$)}\\
				(\prod^{n}_{\ell = 2}p_{\ell j})(\prod^{n}_{\ell = 1}r_{m \ell}) X_{jm}X_{1k} - &r_{mk} X_{1k}X_{jm} = (q - q^{-1})X_{1m}X_{jk} \quad &\text{($j > 1, k < m$)}\\
				p_{ji}X_{jm}X_{ik} - &r_{mk} X_{ik}X_{jm} = (q - q^{-1})X_{im}X_{jk} \quad &\text{($j > i > 1, k < m$)}\\
				\sum_{\sigma \in S_{n}} (-q)^{-\ell(\sigma)}&(\prod_{1 \leq i< j \leq n} p_{ij})(\prod_{\substack{1 \leq i< j \leq n\\ \sigma(i) < \sigma(j)}}r_{\sigma(j), \sigma(i)}) X_{1 \sigma(1)} \cdots X_{n\sigma(n)} = 1.
			\end{align*}
			We also write $\mathcal{O}_{q}^{\bar{p}}(\mathrm{SL}_{n}(k)) := \mathcal{O}_{q}^{\bar{p}, \bar{p}}(\mathrm{SL}_{n}(k))$ when $\bar{p} = \bar{r}$.
			\ed
			When $n = 2$, consider the matrices
			\[\bar{p} = \begin{pmatrix}
				1 &p\\
				p^{-1} & 1
			\end{pmatrix} \quad \text{and} \quad \bar{r} = \begin{pmatrix}
				1 &r\\
				r^{-1} & 1
			\end{pmatrix}.\] If $p = r$, then the algebra $\mathcal{O}_{q}^{\bar{r}, \bar{r}}(\mathrm{SL}_{2}(k))$ coincides with the two-parameter braided quantum algebra $\mathcal{O}_{qr^{-1},\, qr}(\mathrm{SL}_{2}(k))$ defined in \cite[Definition~6.1]{bichon2024cohomological}. Moreover, if $p \neq r$, we can show that the algebra $\mathcal{O}_{q}^{\bar{p}, \bar{r}}(\mathrm{SL}_{2}(k))$ is still isomorphic to $\mathcal{O}_{qr^{-1},\, qr}(\mathrm{SL}_{2}(k))$.
			\bp\label{prop:kZ coaction SLn}
			Let $q \in k^{*}$ and $\bar{p}, \bar{r} \in \mathrm{AST}(n)$. The algebra $\mathcal{O}_{q}^{\bar{p}, \bar{r}}(\mathrm{SL}_{n}(k))$ has a right $k\Z$-comodule structure
			\begin{align*}
				\mathcal{O}_{q}^{\bar{p}, \bar{r}}(\mathrm{SL}_{n}(k)) \quad &\longrightarrow \quad \mathcal{O}_{q}^{\bar{p}, \bar{r}}(\mathrm{SL}_{n}(k)) \otimes k\Z\\
				\begin{pmatrix}
					X_{11} & X_{12} & \cdots & X_{1j} & \cdots & X_{1n} \\
					X_{21} & X_{22} & \cdots & X_{2j} & \cdots & X_{2n} \\
					\vdots & \vdots & \ddots & \vdots &        & \vdots \\
					X_{i1} & \cdots & \cdots & X_{ij} & \cdots & X_{in} \\
					\vdots &        &        & \vdots & \ddots & \vdots \\
					X_{n1} & X_{n2} & \cdots & X_{nj} & \cdots & X_{nn}
				\end{pmatrix}
				&\longmapsto \begin{pmatrix}
					X_{11} \otimes 1 & X_{12} \otimes z^{-1 }& \cdots & X_{1j} \otimes z^{-1} & \cdots & X_{1n} \otimes z^{-1}\\
					X_{21} \otimes z & X_{22} \otimes 1 & \cdots & X_{2j} \otimes 1 & \cdots & X_{2n} \otimes 1 \\
					\vdots & \vdots & \ddots & \vdots &        & \vdots \\
					X_{i1} \otimes z & \cdots & \cdots & X_{ij} \otimes 1 & \cdots & X_{in} \otimes 1 \\
					\vdots &        &        & \vdots & \ddots & \vdots \\
					X_{n1} \otimes z & X_{n2}\otimes 1 & \cdots & X_{nj} \otimes 1 & \cdots & X_{nn} \otimes 1
				\end{pmatrix}
			\end{align*}
			and a right $k\Z$-module structure
			\begin{align*}
				\mathcal{O}_{q}^{\bar{p}, \bar{r}}(\mathrm{SL}_{n}(k)) \otimes k\Z  &\longrightarrow  \mathcal{O}_{q}^{\bar{p}, \bar{r}}(\mathrm{SL}_{n}(k))\\
				X_{ij} \otimes z 
				&\longmapsto 
				(\prod^{n}_{\ell = 1} p_{\ell i} r_{j \ell}) X_{ij},
			\end{align*}
			where $\Z = \langle z \rangle$ is the infinite cyclic group. 
			In this way, $\mathcal{O}_{q}^{\bar{p}, \bar{r}}(\mathrm{SL}_{n}(k))$ is an algebra object in the category $\YD_{k\Z}^{k\Z}$ of Yetter-Drinfeld modules.
			\ep
			\bpf
			This is a direct verification.
			\epf
			\bp \label{Prop : IsoSL}
			The smash product  $k\Z \# \mathcal{O}_{q}^{\bar{p}, \bar{r}}(\mathrm{SL}_{n}(k))$ is isomorphic to $\Opr$ via 
			\begin{align*}
				g \colon \Opr &\longrightarrow k\Z \# \mathcal{O}_{q}^{\bar{p}, \bar{r}}(\mathrm{SL}_{n}(k))\\
				x_{ij} &\longmapsto \begin{cases}
					z \# X_{1j}\\
					1 \# X_{ij} \quad \text{for $i > 1$},
				\end{cases}\\
				(\Dpr_{q})^{-1} &\longmapsto z^{-1} \# 1,
			\end{align*}
				\ep
				\bpf The construction of $g$ is a straightforward verification. Define
				\begin{align*}
					h \colon k\Z \# \mathcal{O}_{q}^{\bar{p}, \bar{r}}(\mathrm{SL}_{n}(k)) &\longrightarrow \Opr&\\
					1 \# X_{ij} &\longmapsto \begin{cases} (\Dpr_{q})^{-1}x_{1j}  \quad &\text{if $i = 1$}\\
						x_{ij} \quad \quad &\text{if $i > 1$},
					\end{cases}\\
					z \# 1 &\longmapsto \Dpr_{q}.&
				\end{align*}
				It is not difficult to check that $h$ is well defined and extends to an algebra morphism; moreover, it is the inverse of $g$.
				\epf

					Moreover, (\textbf{GL}$_{q;n}$, $\pi$, $i$) is a cogroupoid triple. Applying Theorem \ref{Thm :  isoboso}, we obtain the following result:
					\bp \label{Prop : IsocoGL}
					Let $q \in k^{\ast}$, and let $\bar{p}, \bar{r} \in  \mathrm{AST}(n)$. The algebra $\Opr$ is isomorphic to $k\Z \# \,^{co\pi_{\bar{p}}}\Opr$ via
					\begin{align*}
						\Phi \colon \Opr &\longrightarrow k\Z \# ^{co\pi_{\bar{p}}}\Opr\\
						x_{ij} &\longmapsto \begin{cases}
							z \# (\Dpr_{q})^{-1}x_{ij} \quad \text{if $i = 1$},\\
							1 \# x_{ij} \quad \quad \text{if $i > 1$},
						\end{cases}\\
						(\Dpr_{q})^{-1} &\longmapsto z^{-1} \# 1.
					\end{align*}
					where $\pi_{\bar{p}}$ is the Hopf algebra morphism as defined in \eqref{pi p}.
					\ep
					In view of Propositions~\ref{Prop : IsoSL} and \ref{Prop : IsocoGL}, we aim to establish a connection between $\mathcal{O}_{q}^{\bar{p}, \bar{r}}(\mathrm{SL}_{n}(k))$ and $^{co\pi_{\bar{p}}}\Opr$. To this end, we prove the following proposition:

					\bp \label{Prop : coinGLn}
					Let $q \in k^{\ast}$ and $\bar{p}, \bar{r} \in \mathrm{AST}(n)$. Consider the Hopf algebra map $\pi_{\bar{p}}$ defined in \eqref{pi p}. Then the coinvariant subalgebra $^{co\pi_{\bar{p}}}\Opr$ is isomorphic to $\mathcal{O}_{q}^{\bar{p}, \bar{r}}(\mathrm{SL}_{n}(k))$ via
					\begin{align*}
						f \colon\mathcal{O}_{q}^{\bar{p}, \bar{r}}(\mathrm{SL}_{n}(k)) &\longrightarrow \,^{co\pi_{\bar{p}}}\Opr\\
						X_{1k} &\longmapsto (\Dpr_{q})^{-1}x_{1k}\\
						X_{ik} &\longmapsto x_{jk} \quad \text{for $i> 1$}.
					\end{align*}
					\ep
					\bpf We verify that $f$ is a well-defined algebra morphism. To do so,
					we first observe that, for any $j > 1$, if $i > 1$, we have
					\begin{align*}
						(\pi_{\bar{p}} \otimes 1)\Delta (x_{ij}) &= 1 \otimes x_{ij};
					\end{align*}
					and, if $i = 1$,
					\[(\pi_{\bar{p}} \otimes 1)\Delta \big((\Dpr_{q})^{-1}x_{1j}\big) = (\pi_{\bar{p}} \otimes 1)\sum^{n}_{k= 1}(\Dpr_{q})^{-1}x_{1k} \otimes (\Dpr_{q})^{-1}x_{kj} = 1 \otimes (\Dpr_{q})^{-1}x_{1j}.\]
					Now, using Lemma \ref{Lem : D_commute}, we have
					\begin{enumerate}
						\item for $m > k$, 
						\begin{align*}
							\big((\Dpr_{q})^{-1}x_{1m}\big)\big((\Dpr_{q})^{-1}x_{1k}\big) &= \big(\prod^{n}_{\ell = 1}p_{1 \ell}r_{\ell m}\big) (\Dpr_{q})^{-2}x_{1m}x_{1k}\\
							(\text{by \eqref{line}})&= q(\prod^{n}_{\ell = 1}p_{1 \ell}r_{\ell m}) r_{mk}(\Dpr_{q})^{-2}x_{1k}x_{1m};\\
							\big((\Dpr_{q})^{-1}x_{1k}\big)\big((\Dpr_{q})^{-1}x_{1m}\big) &= \big(\prod^{n}_{\ell = 1}p_{1 \ell}r_{\ell k}\big) (\Dpr_{q})^{-2}x_{1k}x_{1m}.
						\end{align*}
						Thus \[\big((\Dpr_{q})^{-1}x_{1m}\big)\big((\Dpr_{q})^{-1}x_{1k}\big) = q\big(\prod^{n}_{\ell = 1, \ell \neq k} r_{\ell m} r^{-1}_{\ell k}\big)\big((\Dpr_{q})^{-1}x_{1k}\big)\big((\Dpr_{q})^{-1}x_{1m}\big).\]
						\item for $j > 1$,
						\begin{align*}
							x_{jk}\big((\Dpr_{q})^{-1}x_{1k}\big) &= \big(\prod^{n}_{\ell = 1}p_{j \ell}r_{\ell k}\big)(\Dpr_{q})^{-1}x_{jk}x_{1k}\\
							(\text{by \eqref{column}})	&= q\big(\prod^{n}_{\ell = 1}p_{j \ell}r_{\ell k}\big)p_{1j}(\Dpr_{q})^{-1}x_{1k}x_{jk} = q(\prod^{n}_{\ell = 2}p_{j \ell})(\prod^{n}_{\ell = 1}r_{\ell k})(\Dpr_{q})^{-1}x_{1k}x_{jk}.
						\end{align*}
						\item for $j > 1, k > m$,
						\begin{align*}
							x_{jm}(\Dpr_{q})^{-1}x_{1k}) &= \big(\prod^{n}_{\ell = 1}p_{j \ell}r_{\ell m}\big) (\Dpr_{q})^{-1}x_{jm}x_{1k}\\
							(\text{by \eqref{cross1}})&= q\big(\prod^{n}_{\ell = 1}p_{j \ell}r_{\ell m}\big)p_{1j}r_{mk}(\Dpr_{q})^{-1}x_{1k}x_{jm} \\
							&= q\big(\prod^{n}_{\ell = 2}p_{j \ell}\big)\big(\prod^{n}_{\ell = 1, \ell \neq k}r_{\ell k}\big)(\Dpr_{q})^{-1}x_{1m}x_{jk}.
						\end{align*}
						\item for $j > 1, k < m$,
						\begin{align*}
							(\Dpr_{q})^{-1}x_{jm}x_{1k} = \prod^{n}_{\ell = 1} p_{\ell j}r_{m \ell} x_{jm} (\Dpr_{q})^{-1}x_{1k},
						\end{align*}
						then by \eqref{cross2}, we obtain
						\begin{align*}
							(\prod^{n}_{\ell = 2} p_{\ell j})(\prod^{n}_{\ell = 1}r_{m \ell}) x_{jm} ((\Dpr_{q})^{-1}x_{1k}) - r_{mk}(\Dpr_{q})^{-1}x_{1k}x_{jm} = (q - q^{-1})(\Dpr_{q})^{-1}x_{1m}x_{jk};
						\end{align*}
					\end{enumerate}
					Combining with \eqref{detpr},
					these computations guarantee the existence and uniqueness of an algebra morphism $f$, as announced. Using Proposition \ref{Prop : IsoSL} and Proposition \ref{Prop : IsocoGL}, we have
					\begin{align*}
						k\Z \# \mathcal{O}_{q}^{\bar{p}, \bar{r}}(\mathrm{SL}_{n}(k)) &\xrightarrow[\;\sim\;]{\; g^{-1}\;} \Opr \xrightarrow[\;\sim\;]{\;\Phi\;} k\Z \# \,^{co\pi_{\bar{p}}}\Opr\\
						1 \# X_{ij} &\longmapsto \quad \begin{cases}
							(\Dpr_{q})^{-1}x_{1j} \\
							x_{ij}
						\end{cases}\longmapsto \begin{cases}
							1 \# (\Dpr_{q})^{-1}x_{1j} \quad \text{if $i = 1$}\\
							1 \# x_{ij} \quad \qquad \qquad \text{if $i > 1$}.
						\end{cases}\\
						z \# 1 &\longmapsto \qquad \Dpr_{q} \qquad\longmapsto  \qquad z \# 1.
					\end{align*}
					Then the algebra map
					\begin{align*}
						1 \# f \colon 	k\Z \# \mathcal{O}_{q}^{\bar{p}, \bar{r}}(\mathrm{SL}_{n}(k)) \longrightarrow k\Z \# \,^{co\pi_{\bar{p}}}\Opr
					\end{align*}
					is an isomorphism; hence, so is $f$.
					\epf

					We now transport the structure of the coinvariant cogroupoid of $\textbf{GL}_{q;n}$ via the isomorphism of Proposition~\ref{Prop : coinGLn} and obtain the following results:
					\bl \label{Lem: SL structure}
					\begin{enumerate}
						\item For any $\bar{p}, \bar{r}, \bar{s} \in \mathrm{AST}(n)$, there exists an algebra map in $\YD^{k\Z}_{k\Z}$
						\begin{align*}
							\overline{\Delta}^{\bar{s}}_{\bar{p}, \bar{r}} \colon \mathcal{O}_{q}^{\bar{p}, \bar{r}}(\mathrm{SL}_{n}(k)) &\longrightarrow \mathcal{O}_{q}^{\bar{p}, \bar{s}}(\mathrm{SL}_{n}(k)) \otimes \mathcal{O}_{q}^{\bar{s}, \bar{r}}(\mathrm{SL}_{n}(k))\\
							X_{ij} &\longmapsto \sum^{n}_{k = 1} X_{ik} \otimes X_{kj}.
						\end{align*}
						such that, for any $\bar{t}\in \mathrm{AST}(n)$, the following diagram commutes
						\[\footnotesize\begin{tikzcd}                         
							\mathcal{O}_{q}^{\bar{p}, \bar{r}}(\mathrm{SL}_{n}(k)) \ar[r, "\underline{\Delta}^{\bar{s}}_{\bar{p}, \bar{r}}"] \ar[d, "\underline{\Delta}^{\bar{s}}_{\bar{p}, \bar{r}}"] & \mathcal{O}_{q}^{\bar{p}, \bar{s}}(\mathrm{SL}_{n}(k)) \otimes \mathcal{O}_{q}^{\bar{s}, \bar{r}}(\mathrm{SL}_{n}(k)) \ar[d, "\underline{\Delta}^{\bar{t}}_{\bar{p}, \bar{s}} \otimes 1"]\\
							\mathcal{O}_{q}^{\bar{p}, \bar{s}}(\mathrm{SL}_{n}(k)) \otimes \mathcal{O}_{q}^{\bar{s}, \bar{r}}(\mathrm{SL}_{n}(k)) \ar[r, "1 \otimes \underline{\Delta}^{\bar{t}}_{\bar{s}, \bar{r}}"] &\mathcal{O}_{q}^{\bar{p}, \bar{s}}(\mathrm{SL}_{n}(k)) \otimes \mathcal{O}_{q}^{\bar{s}, \bar{t}}(\mathrm{SL}_{n}(k))  \otimes \mathcal{O}_{q}^{\bar{t}, \bar{r}}(\mathrm{SL}_{n}(k)).
						\end{tikzcd}\]
						\item For $\bar{p} \in \mathrm{AST}(n)$, the linear map 
						\begin{align*}
							\underline{\varepsilon}_{\bar{p}} \colon 	\mathcal{O}_{q}^{\bar{p}}(\mathrm{SL}_{n}(k)) &\longrightarrow k\\
							X_{ij} &\longmapsto \delta_{ij}
						\end{align*} is an algebra map in $\YD^{k\Z}_{k\Z}$, such that, for any $\bar{r} \in \mathrm{AST}(n)$, the following diagrams commute
						\[\footnotesize\begin{tikzcd}                         
							\mathcal{O}_{q}^{\bar{p}, \bar{r}}(\mathrm{SL}_{n}(k)) \ar[rd, equals, shift left = 1ex] \ar[d, "\underline{\Delta}^{\bar{p}}_{\bar{p}, \bar{r}}"] & &\mathcal{O}_{q}^{\bar{p}, \bar{r}}(\mathrm{SL}_{n}(k)) \ar[rd, equals, shift left = 1ex] \ar[d, "\underline{\Delta}^{\bar{r}}_{\bar{p}, \bar{r}}"]&\\
							\mathcal{O}_{q}^{\bar{p}}(\mathrm{SL}_{n}(k)) \otimes \mathcal{O}_{q}^{\bar{p}, \bar{r}}(\mathrm{SL}_{n}(k)) \ar[r, "\underline{\varepsilon}_{\bar{p}} \otimes 1"] &\mathcal{O}_{q}^{\bar{p}, \bar{r}}(\mathrm{SL}_{n}(k)) &\mathcal{O}_{q}^{\bar{p}, \bar{r}}(\mathrm{SL}_{n}(k)) \otimes \mathcal{O}_{q}^{\bar{r}}(\mathrm{SL}_{n}(k)) \ar[r, "1 \otimes \underline{\varepsilon}_{\bar{r}}"] &\mathcal{O}_{q}^{\bar{p}, \bar{r}}(\mathrm{SL}_{n}(k)) 
						\end{tikzcd}\]
						\item For any $\bar{p}, \bar{r} \in \mathrm{AST}(n)$, let $\overline{M}^{\bar{r}, \bar{p}}_{ji}$ be the $(n - 1) \times (n - 1)$ quantum minor 
						\[\overline{M}^{\bar{p}, \bar{r}}_{ji} = (\prod_{ k < l; k, l \neq j} r_{kl})(\prod_{\substack{ k < l; k,l \neq i\\ \sigma(k) < \sigma(l)}}p_{\sigma(l), \sigma(k)}) X_{a_{1}b_{\sigma(1)}} \cdots X_{a_{n -1}b_{\sigma(n -1)}},\]
						where $a_{1} < \cdots < a_{n-1}$, $b_1 < \cdots < b_{n - 1}$ are the row and column indices distinct from $j$ and $i$, respectively.
						 There exists an algebra map
						\[\underline{S}_{\bar{p}, \bar{r}} \colon \mathcal{O}_{q}^{\bar{p}, \bar{r}}(\mathrm{SL}_{n}(k)) \longrightarrow \mathcal{O}_{q}^{\bar{r}, \bar{p}}(\mathrm{SL}_{n}(k))^{op}\] defined by
						\begin{align*}
							\underline{S}_{\bar{p}, \bar{r}}\big(X_{ij}\big) = (\prod_{1 \leq s < t \leq n}p_{ts}r^{-1}_{ts})\begin{cases} (-q)^{1 - i}(\prod_{1 \leq k < i \leq n} p_{ki})\overline{M}^{\bar{r}, \bar{p}}_{ji} \quad \text{if $j = 1$}\\
								(-q)^{j - i}(\prod_{1 \leq i < k \leq n} p_{ik})(\prod_{1 \leq j < k \leq n} r_{kj}) \overline{M}^{\bar{r}, \bar{p}}_{ji} \quad \text{if $j \neq 1$}
							\end{cases}
						\end{align*}
					 such that the following diagrams commute
						\[\footnotesize\begin{tikzcd}                         
							\mathcal{O}_{q}^{\bar{p}, \bar{p}}(\mathrm{SL}_{n}(k)) \ar[r, "\varepsilon_{\bar{p}}"] \ar[d, "\underline{\Delta}^{\bar{r}}_{\bar{p}, \bar{p}}"] &k \ar[r, "u"] &\mathcal{O}_{q}^{\bar{p}, \bar{r}}(\mathrm{SL}_{n}(k))\\
							\mathcal{O}_{q}^{\bar{p}, \bar{r}}(\mathrm{SL}_{n}(k)) \otimes \mathcal{O}_{q}^{\bar{r}, \bar{p}}(\mathrm{SL}_{n}(k)) \ar[rr, "1 \otimes \underline{S}_{\bar{r}, \bar{p}}"] &&\mathcal{O}_{q}^{\bar{p}, \bar{r}}(\mathrm{SL}_{n}(k)) \otimes \mathcal{O}_{q}^{\bar{p}, \bar{r}}(\mathrm{SL}_{n}(k)) \ar[u, "m"]
						\end{tikzcd}\]
						\[\footnotesize\begin{tikzcd}                         
							\mathcal{O}_{q}^{\bar{p}, \bar{p}}(\mathrm{SL}_{n}(k)) \ar[r, "\varepsilon_{\bar{p}}"] \ar[d, "\underline{\Delta}^{\bar{r}}_{\bar{p}, \bar{p}}"] &k \ar[r, "u"] &\mathcal{O}_{q}^{\bar{r}, \bar{p}}(\mathrm{SL}_{n}(k))\\
							\mathcal{O}_{q}^{\bar{p}, \bar{r}}(\mathrm{SL}_{n}(k)) \otimes \mathcal{O}_{q}^{\bar{r}, \bar{p}}(\mathrm{SL}_{n}(k)) \ar[rr, "\underline{S}_{\bar{p}, \bar{r}} \otimes 1"] &&\mathcal{O}_{q}^{\bar{r}, \bar{p}}(\mathrm{SL}_{n}(k)) \otimes \mathcal{O}_{q}^{\bar{r}, \bar{p}}(\mathrm{SL}_{n}(k)) \ar[u, "m"]
						\end{tikzcd}\]
					\end{enumerate}
					\el
					\bpf
					The first two statements follow by direct verification using \ref{(Lem : coDelata)} and Proposition \ref{Prop : coinGLn}. For the last statement, Lemma \ref{Lem : coS} ensures that there exists a morphism in $\YD_{k\Z}^{k \Z}$,
					\[\underline{S}_{\bar{p}, \bar{r}} \colon \, ^{co \pi_{\bar{p}}}\Opr \longrightarrow \,^{co\pi_{\bar{r}}}\mathcal{O}_{q}^{\bar{r}, \bar{p}}(\mathrm{GL}_{n}(k)),\]
					such that, for $i \in \N$,
					\begin{align*}
						\underline{S}_{\bar{p}, \bar{r}} (x_{i1}) &= \sum^{n}_{k = 1}S_{\bar{p}, \bar{r}}(x_{ik}) i_{\bar{r}, \bar{p}}\pi_{\bar{r}}(x_{k1}) \\
						&= S_{\bar{p}, \bar{r}}(x_{i1})\Dpr_{q}\\
						&= (-q)^{1 - i}\big(\prod_{1 \leq s < t \leq n}p_{ts}r^{-1}_{ts}\big)\big(\prod_{1 \leq k < i \leq n}p_{ki}\big) M_{1i}^{\bar{r}, \bar{p}}.
					\end{align*}
					For $i,j \in \N$ and $j \neq 1$,
					\begin{align*}
						\underline{S}_{\bar{p}, \bar{r}} (x_{ij}) &= \sum^{n}_{k = 1}S_{\bar{p}, \bar{r}}(x_{ik}) i_{\bar{r}, \bar{p}}\pi_{\bar{r}}(x_{kj}) \\
						&= S_{\bar{p}, \bar{r}}(x_{ij})\\
						&= (-q)^{j - i}\big(\prod_{1 \leq s < t \leq n}p_{ts}r^{-1}_{ts}\big)\big(\prod_{1 \leq k < i \leq n}p_{ki}\big) \big(\prod_{1 \leq k < j \leq n}r_{jk}\big)M_{ji}^{\bar{r}, \bar{p}}(\Dpr_{q})^{-1}\\
						&= (-q)^{j - i}\big(\prod_{1 \leq s < t \leq n}p_{ts}r^{-1}_{ts}\big)\big(\prod_{1 \leq i < k \leq n}p_{ik}\big) \big(\prod_{1 \leq j < k \leq n}r_{kj}\big)(\Dpr_{q})^{-1}M_{ji}^{\bar{r}, \bar{p}} \quad \text{by Remark \ref{Rk : permute Mij-D}}.
					\end{align*}
					It is not difficult to verify that this is indeed an algebra antimorphism in $\YD_{k\Z}^{k \Z}$. Hence, by applying the isomorphism in Proposition \ref{Prop : coinGLn}, we obtain the desired morphism.
					\epf
					We therefore propose the following definition:
					\bd
					The braided multiparameter $\mathrm{SL}_{q;n}$-cogroupoid, denoted by $\mathbf{SL}_{q;n}$, is defined as follows:
					\begin{itemize}
						\item $\ob(\mathbf{SL}_{q;n}) = \mathrm{AST}(n)$
						\item for $\bar{p}, \bar{r} \in \mathrm{AST}(n)$, the algebra $\mathbf{SL}_{q;n}(\bar{p}, \bar{r})$ is the algebra $\mathcal{O}_{q}^{\bar{p}, \bar{r}}(\mathrm{SL}_{n}(k))$  defined in Definition \ref{Oqpr}.
						\item the structure maps $\underline{\Delta}^{\bullet}_{\bullet, \bullet}$, $\underline{\varepsilon}_{\bullet}$ and $\underline{S}_{\bullet, \bullet}$ are given by the lemma above.
					\end{itemize}
					This is the cogroupoid of coinvariants of $\mathbf{GL}_{q; n}$.
					\ed
					\bc
					Let $q \in k^{*}$. For $\bar{p}, \bar{r} \in \mathrm{AST}(n)$, we have a $k$-linear equivalence of monoidal categories
					\begin{align*}
						(\YD_{k\Z}^{k \Z})^{\mathcal{O}_{q}^{\bar{p}, \bar{p}}(\mathrm{SL}_{n}(k))} &\cong^{\otimes} (\YD_{k\Z}^{k \Z})^{\mathcal{O}_{q}^{\bar{r}, \bar{r}}(\mathrm{SL}_{n}(k))}.
					\end{align*}
					\ec
				
					\section{Transmutation} \label{Sec : transmutation}
					Throughout this subsection, let $H$ be a coquasitriangular Hopf algebra and \textbf{r} a fixed universal $r$-form of $H$. Following the approach of S. Majid, we consider the passage from the $k$-cogroupoid $\CC$ to a braided cogroupoid over $\MM^{H}$. This transformation is known as transmutation. 
					\subsection{Construction}
					To begin, we recall Majid’s transmutation procedure \cite{Maj93} (we follow the formulation in \cite[Chapter 10, Proposition 36]{KSBook97}). Let $A$ be a Hopf algebra and let $\pi : A \to H$ be a Hopf algebra morphism. Then the vector space $A$ becomes a Hopf algebra $A_{\pi}$ in the category $\MM^{H}$ of right $H$-comodules, with the product given by
					\[a\bullet b = a_{[2]}b_{[2]}\rr\big[\pi \big(S_{A}(a_{[1]})a_{[3]}\big), \pi\big(S_{A}(b_{[1]})\big)\big],\]
					the $H$-coaction
					\[a \mapsto a_{[2]} \otimes \pi\big(S_{A}(a_{[1]})a_{[3]}\big),\]and antipode 
					\[S^{\pi}(a) = S_{A}(a_{[2]}) \rr\big[\pi\big(S^{2}_{A}(a_{[3]})S_{A}(a_{[1]})\big), \pi(a_{[4]})\big].\]
					\brq \label{Rk: twist trans}
					Let $H = k\Gamma$, where $\Gamma$ is an abelian group endowed with a bicharacter $\psi \colon \Gamma \times \Gamma \to k^{*}$, and let $\pi : A \to k\Gamma$ be a Hopf algebra morphism. As in Remark \ref{Rk : twisted cocycle}, the algebra $A$ admits a $\Gamma \times \Gamma$-graded algebra structure
					$A = \bigoplus_{g, h \in \Gamma} \,_{g}A_{h}.$
					For $a \in  \,_{g}A_{h}$ and $b \in \,_{k}A_{\ell}$, the product in $A_{\pi}$ is given by
					\[a \bullet b = ab \psi(g^{-1}h, k^{-1})\]
					Hence, $A_\pi$ identifies with the $2$-cocycle twisted algebra $_{\nu}A$, where $\nu$ is the $2$-cocycle  on the group $\Gamma \times \Gamma$ defined by
					\begin{align*}
						\nu \colon (\Gamma \times \Gamma) \times (\Gamma \times \Gamma) &\longrightarrow k\\
						\big((g, h), (k, \ell)\big) &\longmapsto \psi(g^{-1}h, k^{-1}).
					\end{align*}
					\erq
				
					In what follows, we adopt Sweedler's notation for cogroupoids, as introduced in Section \ref{Sec : Bosonization}: for $a^{X, Y} \in \Cc(X, Y)$, we write
					\[\Delta^{Z}_{X, Y}(a^{X, Y}) = a^{X, Z}_{[1]} \otimes a^{Z, Y}_{[2]}\] and we have, by Proposition \ref{Proprosition: S_X,Y},
					\[\Delta^{Z}_{Y, X}\big(S_{X, Y}(a^{X, Y})\big) = S_{Z, Y}(a^{Z, Y}_{[2]}) \otimes S_{X, Z}(a^{X, Z}_{[1]}).\]
					
					The following result generalizes \cite[Proposition 36]{KSBook97}: 
					
					\bt\label{Thm : Transmutation}
					Let $\Cc$ be a $k$-cogroupoid, let $H$ be a coquasitriangular Hopf algebra equipped with a fixed universal $r$-form $\normalfont{\textbf{r}}$, and let \[\pi = \big(\pi_{X} \colon \Cc(X, X) \to H \mid  X \in \ob(\Cc)\big)\] be a family of Hopf algebra morphisms. Then $\Cc$ becomes an $\MM^{H}$-cogroupoid, denoted by $\Cc^{\pi}$, with structures given as follows: for any $X, Y \in \ob(\CC)$, and for $a^{X, Y}, b^{X, Y} \in \CC(X, Y)$,
					the $k$-algebra $\CC(X, Y)$ equipped with the new product 
					\[[a^{X, Y}] \cdot [b^{X, Y}] = a_{[2]}^{X, Y}b^{X, Y}_{[2]} \normalfont\textbf{r}\big[\pi_{X}\big(S_{X, X}(a^{X, X}_{[1]})\big)\pi_{Y}(a^{Y,Y}_{[3]}); \pi_{X}\big(S_{X, X}(b^{X, X}_{[1]})\big)\big],\] the unit $u_{X, Y}$
					and the $H$-coaction
					\[\beta([a^{X, Y}]) = a_{[2]}^{X, Y} \otimes \pi_{X}\big(S_{X, X}(a^{X, X}_{[1]})\big)\pi_{Y}(a^{Y,Y}_{[3]}),\]
					is an algebra in $\MM^{H}$, denoted by $\CC^{\pi}(X, Y)$.\\
					The algebra morphisms $\varepsilon_{\bullet}$, $\Delta^{\bullet}_{\bullet, \bullet}$ of $\CC^{\pi}$ coincide with those of $\CC$ as a $k$-cogroupoid, and for any $X, Y \in \ob(\CC)$, the morphism
					\[S^{\pi}_{X, Y}: \CC^{\pi}(X, Y) \longrightarrow \CC^{\pi}(Y, X)\] is defined by
					\[S^{\pi}_{X, Y}([a^{X, Y}]) = S_{X, Y}(a^{X, Y}_{[2]})\, \normalfont\textbf{r}\big[\pi_{Y}\big(S^{2}_{Y,Y}(a^{Y,Y}_{[3]})\big)\pi_{X}\big(S_{X, X}(a^{X, X}_{[1]})\big); \pi_{Y}(a^{Y,Y}_{[4]})\big].\]
					\et
					\bpf
					\sloppy
					By a straightforward computation, we can show that for any $X, Y \in \ob(\CC)$, $\CC(X, Y)$ is an algebra in $\MM^{H}$, equipped with the given product and $H$-coaction. Then, for any $X, Y, Z, T \in \ob(\CC)$, the tensor product $\CC(X, Y) \otimes \CC(Z, T)$ is also an algebra in $\MM^{H}$ with respect to the tensor product coaction and the product 
					\begin{align}\label{tensor product}
						& (a^{X, Y} \otimes b^{Z, T}) \bullet (c^{X, Y} \otimes d^{Z, T}) \nonumber\\
						&= a^{X, Y} \cdot c^{X, Y}_{[2]} \otimes b^{Z, T}_{[2]} \cdot d^{Z, T}\, \, \textbf{r}[\pi_{Z}\big(S_{Z, Z}(b^{Z, Z}_{[1]})\big)\pi_{T}(b^{T, T}_{[3]}); \pi_{X}\big(S_{X, X}(c^{X, X}_{[1]})\big)\pi_{Y}(c^{Y, Y}_{[3]})].
					\end{align}
					Now, we check that $\Delta^{Z}_{X, Y}$ is an algebra morphism of $\MM^{H}$: for $a^{X, Y}, b^{X, Y} \in \CC(X, Y),$ we have
					\begin{align*}
						\Delta^{Z}_{X, Y}(a^{X, Y} \cdot b^{X, Y}) &= \Delta^{Z}_{X, Y}\big(a_{[2]}^{X, Y} b_{[2]}^{X, Y}\big) \, \textbf{r}\big[\pi_{X}\big(S_{X, X}(a^{X, X}_{[1]})\big)\pi_{Y}(a^{Y,Y}_{[3]}); \pi_{X}\big(S_{X, X}(b^{X, X}_{[1]})\big)\big]\\
						&= a_{[2]}^{X, Z} b_{[2]}^{X, Z} \otimes a_{[3]}^{Z, Y} b_{[3]}^{Z, Y}\, \textbf{r}\big[\pi_{X}\big(S_{X, X}(a^{X, X}_{[1]})\big)\pi_{Y}(a^{Y,Y}_{[4]}); \pi_{X}\big(S_{X, X}(b^{X, X}_{[1]})\big)\big].
					\end{align*}
					On the other hand, we have
					\begin{align*}
						(a_{[1]}^{X, Z} \otimes &a_{[2]}^{Z, Y}) \bullet (b_{[1]}^{X, Z} \otimes b_{[2]}^{Z, Y})\\
						&= a_{[1]}^{X, Z}\cdot b_{[2]}^{X, Z} \otimes a_{[3]}^{Z, Y}\cdot b_{[4]}^{Z, Y} \, \textbf{r}\big[\pi_{Z}\big(S_{Z, Z}(a^{Z, Z}_{[2]})\big)\pi_{Y}(a^{Y,Y}_{[4]}); \pi_{X}\big(S_{X, X}(b^{X, X}_{[1]})\big) \pi_{Z}\big(b_{[3]}^{Z, Z}\big)\big]\\
						&= a_{[2]}^{X, Z} b_{[3]}^{X, Z} \otimes a_{[6]}^{Z, Y} b_{[6]}^{Z, Y}\, \textbf{r}\big[\pi_{X}\big(S_{X, X}(a^{X, X}_{[1]})\big)\pi_{Z}(a^{Z,Z}_{[3]}); \pi_{X}\big(S_{X, X}(b^{X, X}_{[2]})\big)\big]\\
						&\hspace*{4cm}\textbf{r}\big[\pi_{Z}\big(S_{Z, Z}(a^{Z, Z}_{[5]})\big)\pi_{Y}(a^{Y,Y}_{[7]}); \pi_{Z}\big(S_{Z, Z}(b^{Z, Z}_{[5]})\big)\big]\\
						&\hspace*{4cm}\textbf{r}\big[\pi_{Z}\big(S_{Z, Z}(a^{Z, Z}_{[4]})\big)\pi_{Y}(a^{Y,Y}_{[8]}); \pi_{X}\big(S_{X, X}(b^{X, X}_{[1]})\big)\pi_{Z}\big(b^{Z, Z}_{[4]}\big)\big]\\
						&= a_{[2]}^{X, Z} b_{[3]}^{X, Z} \otimes a_{[5]}^{Z, Y} b_{[5]}^{Z, Y}\, \textbf{r}\big[\pi_{X}\big(S_{X, X}(a^{X, X}_{[1]})\big)\pi_{Z}(a^{Z,Z}_{[3]}); \pi_{X}\big(S_{X, X}(b^{X, X}_{[2]})\big)\big]\\
						&\hspace*{4cm}\textbf{r}\big[\pi_{Z}\big(S_{Z, Z}(a^{Z, Z}_{[4]})\big)\pi_{Y}(a^{Y,Y}_{[6]}); \pi_{X}\big(S_{X, X}(b^{X, X}_{[1]})\big)\varepsilon_{Z}\big(b^{Z, Z}_{[4]}\big)\big]\\
						&= a_{[2]}^{X, Z} b_{[2]}^{X, Z} \otimes a_{[3]}^{Z, Y} b_{[3]}^{Z, Y}\, \textbf{r}\big[\pi_{X}\big(S_{X, X}(a^{X, X}_{[1]})\big)\pi_{Y}(a^{Y,Y}_{[4]}); \pi_{X}\big(S_{X, X}(b^{X, X}_{[1]})\big)\big]\\
						&= \Delta^{Z}_{X, Y}(a^{X, Y} \cdot b^{X, Y}) .
					\end{align*}
					Moreover, $\Delta^{Z}_{X, Y}$ is indeed $H$-colinear:
					\begin{align*}
						\beta \circ \Delta^{Z}_{X, Y}(a^{X, Y}) &= \beta(a^{X, Z}_{[1]} \otimes a^{Z, Y}_{[2]})\\
						&= a^{X, Z}_{[2]} \otimes a^{Z, Y}_{[5]} \otimes \pi_{X}\big(S_{X, X}(a^{X, X}_{[1]})\big)\pi_{Z}(a^{Z,Z}_{[3]}) \pi_{Z}\big(S_{Z, Z}(a^{Z, Z}_{[4]})\big) \pi_{Y}(a^{Y,Y}_{[6]})\\
						&= a^{X, Z}_{[2]} \otimes a^{Z, Y}_{[3]} \otimes \pi_{X}\big(S_{X, X}(a^{X, X}_{[1]})\big)\pi_{Y}(a^{Y,Y}_{[4]})\\
						&=\big(\Delta^{Z}_{X, Y} \otimes 1\big) \beta (a^{X, Y}).
					\end{align*}
					
					There remain to be checked the properties of $S^{\pi}_{X, Y}$ that endow $\CC^{\pi}$ with an $\MM^{H}$-cogroupoid structure: 
					\begin{itemize}
						\item $S^{\pi}_{X, Y}$ is $H$-colinear: for any $a^{X, Y} \in \CC(X, Y)$, we have
						\begin{align*}
							\beta \circ S^{\pi}_{X, Y}(a^{X, Y}) &= \beta\big(S_{X, Y}(a^{X, Y}_{[2]})\big) \textbf{r}\big[\pi_{Y}\big(S^{2}_{Y,Y}(a^{Y,Y}_{[3]})\big)\pi_{X}\big(S_{X, X}(a^{X, X}_{[1]})\big); \pi_{Y}(a^{Y,Y}_{[4]})\big]\\
							&= S_{X, Y}(a^{X, Y}_{[3]}) \otimes \pi_{Y}\big(S^{2}_{Y, Y}(a^{Y, Y}_{[4]})\pi_{X}\big(S_{X, X}(a^{X, X}_{[2]})\big)\\ &\quad \textbf{r}\big[\pi_{Y}\big(S^{2}_{Y,Y}(a^{Y,Y}_{[5]})\big)\pi_{X}\big(S_{X, X}(a^{X, X}_{[1]})\big); \pi_{Y}(a^{Y,Y}_{[6]})\big].
						\end{align*}
						On the other hand, we have
						\begin{align*}
							\big(S^{\pi}_{X, Y} \otimes 1\big)\circ \beta (a^{X, Y}) &= S^{\pi}_{X, Y}(a^{X, Y}_{[2]}) \otimes  \pi_{X}\big(S_{X, X}(a^{X, X}_{[1]})\big)\pi_{Y}(a^{Y,Y}_{[3]})\\
							&= S_{X, Y}(a^{X, Y}_{[3]}) \otimes \pi_{X}\big(S_{X, X}(a^{X, X}_{[1]})\big)\pi_{Y}(a^{Y,Y}_{[6]})\\ & \quad \textbf{r}\big[\pi_{Y}\big(S^{2}_{Y,Y}(a^{Y,Y}_{[4]})\big)\pi_{X}\big(S_{X, X}(a^{X, X}_{[2]})\big); \pi_{Y}(a^{Y,Y}_{[5]})\big]\\
							&= S_{X, Y}(a^{X, Y}_{[3]}) \otimes \pi_{X}\big(S_{X, X}(a^{X, X}_{[1]})\big)\pi_{Y}(a^{Y,Y}_{[7]})\\
							&\quad \textbf{r}\big[\pi_{Y}\big(S^{2}_{Y,Y}(a^{Y,Y}_{[4]})\big); \pi_{Y}(a^{Y,Y}_{[5]})\big] \, \textbf{r}\big[\pi_{X}\big(S_{X, X}(a^{X, X}_{[2]})\big); \pi_{Y}(a^{Y,Y}_{[6]})\big]\\
							&= S_{X, Y}(a^{X, Y}_{[3]}) \otimes \pi_{Y}(a^{Y,Y}_{[6]})\pi_{X}\big(S_{X, X}(a^{X, X}_{[2]})\big)\\
							&\quad \textbf{r}\big[\pi_{Y}\big(S^{2}_{Y,Y}(a^{Y,Y}_{[4]})\big); \pi_{Y}(a^{Y,Y}_{[5]})\big] \, \textbf{r}\big[\pi_{X}\big(S_{X, X}(a^{X, X}_{[1]})\big); \pi_{Y}(a^{Y,Y}_{[7]})\big].
						\end{align*}
						Since $\pi_{X}$ is a Hopf algebra morphism, we have
						\begin{align*}
							\textbf{r}\big[\pi_{Y}\big(S^{2}_{Y,Y}(a^{Y,Y}_{[4]})\big); \pi_{Y}(a^{Y,Y}_{[5]})\big] &= \textbf{r}\big[S_{H}^{2}\big(\pi_{Y}(a^{Y,Y}_{[4]})\big); \pi_{Y}(a^{Y,Y}_{[5]})\big]\\
							&= \textbf{r}^{-1}\big[S_{H}\big(\pi_{Y}(a^{Y,Y}_{[4]})\big); \pi_{Y}(a^{Y,Y}_{[5]})\big] (\text{by  Proposition \ref{Prop : r3}.(1)})
						\end{align*} 
						Applied to the case where $\pi_{Y}(a_{[4]}^{Y, Y}) = x$ in $H$, the equality (2) of Proposition \ref{Prop : r3} becomes
						\begin{align*}
							\textbf{r}^{-1}\big[S_{H}\big(\pi_{Y}(a^{Y,Y}_{[4]})\big); \pi_{Y}(a^{Y,Y}_{[5]})\big]\pi_{Y}(a^{Y,Y}_{[6]}) &= \textbf{r}^{-1}\big[S_{H}\big(\pi_{Y}(a^{Y,Y}_{[5]})\big); \pi_{Y}(a^{Y,Y}_{[6]})\big]\pi_{Y}\big(S^{2}_{Y,Y}(a^{Y,Y}_{[4]})\big)\\
							&= \textbf{r}\big[\pi_{Y}\big(S^{2}_{Y, Y}(a^{Y,Y}_{[5]})\big); \pi_{Y}(a^{Y,Y}_{[6]})\big]\pi_{Y}\big(S^{2}_{Y,Y}(a^{Y,Y}_{[4]})\big).
						\end{align*}
						Hence
						\begin{align*}
							\big(S^{\pi}_{X, Y} \otimes 1\big)\circ \beta (a^{X, Y}) &= S_{X, Y}(a^{X, Y}_{[3]}) \otimes \pi_{Y}\big(S^{2}_{Y,Y}(a^{Y,Y}_{[4]})\big)\pi_{X}\big(S_{X, X}(a^{X, X}_{[2]})\big)\\
							&\quad \textbf{r}\big[\pi_{Y}\big(S^{2}_{Y,Y}(a^{Y,Y}_{[5]})\big); \pi_{Y}(a^{Y,Y}_{[6]})\big] \, \textbf{r}\big[\pi_{X}\big(S_{X, X}(a^{X, X}_{[1]})\big); \pi_{Y}(a^{Y,Y}_{[7]})\big]\\
							&= S_{X, Y}(a^{X, Y}_{[3]}) \otimes \pi_{Y}\big(S^{2}_{Y,Y}(a^{Y,Y}_{[4]})\big)\pi_{X}\big(S_{X, X}(a^{X, X}_{[2]})\big)\\
							&  \quad \textbf{r}\big[\pi_{Y}\big(S^{2}_{Y,Y}(a^{Y,Y}_{[5]})\big)\pi_{X}\big(S_{X, X}(a^{X, X}_{[1]})\big); \pi_{Y}(a^{Y,Y}_{[6]})\big]\\
							&= \beta \circ S^{\pi}_{X, Y}(a^{X, Y}),
						\end{align*}
						and we conclude that $S^{\pi}_{X,Y}$ is $H$-colinear.
						\item For $a^{X, Y} \in \CC(X, Y)$, we compute
						\begin{align*}
							a_{[1]}^{X, Y} \cdot S^{\pi}_{Y, X}(a^{Y, X}_{[2]}) &= a_{[1]}^{X, Y} \cdot S_{Y, X}(a^{Y, X}_{[3]}) \, \textbf{r}\big[\pi_{X}\big(S^{2}_{X, X}(a^{X,X}_{[4]})\big)\pi_{Y}(a^{Y, Y}_{[2]}); \pi_{X}\big(a^{X,X}_{[5]}\big)\big]\\
							&= a_{[1][2]}^{X, Y}S_{Y, X}(a^{Y, X}_{[4]}) \, \textbf{r}\big[\pi_{X}\big(S_{X, X}(a^{X,X}_{[1][1]})\big)\pi_{Y}(a^{Y, Y}_{[1][3]}); \pi_{X}\big(S^{2}_{X, X}(a^{X, X}_{[5]})\big)\big]\\
							&\hspace*{4cm}\textbf{r}\big[\pi_{X}\big(S^{2}_{X, X}(a^{X,X}_{[6]})\big)\pi_{Y}(a^{Y, Y}_{[3]}); \pi_{X}\big(a^{X,X}_{[5]}\big)\big]\\
							&= a_{[2]}^{X, Y}S_{Y, X}(a^{Y, X}_{[5]}) \, \textbf{r}\big[\pi_{X}\big(S_{X, X}(a^{X,X}_{[1]})\big)\pi_{Y}(a^{Y, Y}_{[3]}); \pi_{X}\big(S^{2}_{X, X}(a^{X, X}_{[6]})\big)\big]\\
							&\hspace*{4cm}\textbf{r}\big[\pi_{X}\big(S^{2}_{X, X}(a^{X,X}_{[7]})\big)\pi_{Y}(a^{Y, Y}_{[4]}); \pi_{X}\big(a^{X,X}_{[8]}\big)\big]\\
							&= a_{[2]}^{X, Y}S_{Y, X}(a^{Y, X}_{[5]}) \, \textbf{r}\big[\pi_{X}\big(S_{X, X}(a^{X,X}_{[1]})\big)\pi_{Y}(a^{Y, Y}_{[3]}); \pi_{X}\big(S^{2}_{X, X}(a^{X, X}_{[6]})\big)\big]\\
							&\quad \textbf{r}^{-1}\big[\pi_{X}\big(S_{X, X}(a^{X,X}_{[7]})\big); \pi_{X}\big(a^{X,X}_{[8]}\big)\big] \, \textbf{r}\big[\pi_{Y}(a^{Y, Y}_{[4]}); \pi_{X}\big(a^{X,X}_{[9]}\big)\big].
						\end{align*}
						By Proposition \ref{Prop : r3}.(2), we get
						\[\textbf{r}^{-1}\big[\pi_{X}\big(S_{X, X}(a^{X,X}_{[7]})\big); \pi_{X}\big(a^{X,X}_{[8]}\big)\big]\pi_{X}\big(S^{2}_{X, X}(a^{X, X}_{[6]})\big) = \textbf{r}^{-1}\big[\pi_{X}\big(S_{X, X}(a^{X,X}_{[6]})\big); \pi_{X}\big(a^{X,X}_{[7]}\big)\big]\pi_{X}(a^{X, X}_{[8]}).\]
						Then
						\begin{align*}
							a_{[1]}^{X, Y} \cdot S^{\pi}_{Y, X}(a^{Y, X}_{[2]})
							&= a_{[2]}^{X, Y}S_{Y, X}(a^{Y, X}_{[5]}) \, \textbf{r}\big[\pi_{X}\big(S_{X, X}(a^{X,X}_{[1]})\big)\pi_{Y}(a^{Y, Y}_{[3]}); \pi_{X}\big(a^{X, X}_{[8]}\big)\big]\\
							&\hspace*{2cm}\quad \textbf{r}^{-1}\big[\pi_{X}\big(S_{X, X}(a^{X,X}_{[6]})\big); \pi_{X}\big(a^{X,X}_{[7]}\big)\big] \, \textbf{r}\big[\pi_{Y}(a^{Y, Y}_{[4]}); \pi_{X}\big(a^{X,X}_{[9]}\big)\big] \\
							&= a_{[2]}^{X, Y}S_{Y, X}(a^{Y, X}_{[4]}) \, \textbf{r}\big[\pi_{X}\big(S_{X, X}(a^{X,X}_{[1]})\big)\pi_{Y}(\varepsilon_{Y}(a^{Y, Y}_{[3]})); \pi_{X}\big(a^{X, X}_{[7]}\big)\big]\\
							&\hspace*{2cm}\quad \textbf{r}^{-1}\big[\pi_{X}\big(S_{X, X}(a^{X,X}_{[5]})\big); \pi_{X}\big(a^{X,X}_{[6]}\big)\big]\\
							&= \varepsilon_{X}(a^{X, X})1.
						\end{align*}
						Similarly, we have
						\begin{align*}
							S^{\pi}_{X, Y}(a_{[1]}^{X, Y}) \cdot a^{Y, X}_{[2]} &= S_{X, Y}(a^{X, Y}_{[2]}) \cdot a^{Y, X}_{[5]} \textbf{r}\big[\pi_{Y}\big(S^{2}_{Y,Y}(a^{Y,Y}_{[3]})\big)\pi_{X}\big(S_{X, X}(a^{X, X}_{[1]})\big); \pi_{Y}(a^{Y,Y}_{[4]})\big]\\
							&= S_{X, Y}(a^{X, Y}_{[3]})a^{Y, X}_{[8]} \, \textbf{r}\big[\pi_{Y}\big(S^{2}_{Y,Y}(a^{Y,Y}_{[4]})\big)\pi_{X}\big(S_{X, X}(a^{X, X}_{[2]})\big); \pi_{Y}\big(S_{Y,Y}(a^{Y,Y}_{[2]})\big)\big]\\
							&\hspace*{4cm}\textbf{r}\big[\pi_{Y}\big(S^{2}_{Y,Y}(a^{Y,Y}_{[5]})\big)\pi_{X}\big(S_{X, X}(a^{X, X}_{[1]})\big); \pi_{Y}(a^{Y,Y}_{[6]})\big]\\
							&= S_{X, Y}(a^{X, Y}_{[2]})a^{Y, X}_{[5]} \textbf{r}\big[\pi_{Y}\big(S^{2}_{Y,Y}(a^{Y,Y}_{[3]})\big)\pi_{X}\big(S_{X, X}(a^{X, X}_{[1]})\big); \varepsilon_{Y}\big(a^{Y,Y}_{[4]}\big)\big]\\
							&= \varepsilon_{X}(a^{X, X})1.
						\end{align*}
					\end{itemize}
					This completes the proof.
					\epf

					\subsection{Monoidal equivalences}
					We again consider a Hopf algebra $A$, and a coquasitriangular Hopf algebra $H$ endowed with an $r$-form \textbf{r}. Let $\pi \colon A \longrightarrow H$ be a Hopf algebra morphism. 
					We define a $k$-cogroupoid $\Cc$ with two objects $X, Y$ by setting $\Cc(X, X) = \Cc(X, Y) = \Cc(Y, X) = \Cc(Y, Y) = A$. Consider the morphisms of Hopf algebras $\pi_{X} = u_{H} \circ \varepsilon_{A}$ and $\pi_{Y} = \pi$. Then, using Theorem \ref{Thm : Transmutation}, this data gives rise to a $\MM^{H}$-cogroupoid with $\Cc^{\pi}(X, X) = A$ and $\Cc^{\pi}(Y, Y) = A_{\pi}$. Applying Theorem \ref{Thm : equi}, we obtain 
					\bt
					Let $\pi : A \longrightarrow H$ be a Hopf algebra morphism from a Hopf algebra $A$ to a coquasitriangular Hopf algebra $H$. Then there exists a $k$-linear monoidal equivalence 
						\[F_{\pi}: (\MM^{H})^{A} \xrightarrow{\;\cong^{\otimes}\;} (\MM^{H})^{A_{\pi}}.\]
					\et
					Such a monoidal equivalence was obtained in \cite{HN24}, where the following question naturally arose: if $L$ is a Hopf subalgebra of $H$, can we restrict the above monoidal equivalence to
					\[(\MM^{L})^{A} \cong^{\otimes} (\MM^{L})^{A_{\pi}} ?\]
					In general, there is no reason to expect that the functor $F_{\pi}$ sends an $L$-comodule to an object that still carries an $L$-coaction. Therefore, 
					the equivalence on $(\MM^H)^{A}$ does not automatically restrict to the subcategory $(\MM^{L})^{A}$. Nevertheless, such a monoidal equivalence does exist in the case of group algebras, as shown by Habbestad and Neshveyev in the setting of Hopf $2$-cocycle \cite[Corollary~2.6]{HN24}. This observation leads us to consider a Hopf subalgebra $L$ and to investigate the conditions under which our equivalence holds.
					
					To understand this issue in detail, we will describe the functor $F_{\pi}$ explicitly, and then examine some examples to determine when such a restriction of the equivalence does occur.
					
					For $V \in (\MM^{H})^{A}$, we fix Sweedler’s notation:
					\[\Delta_A \colon A \to A \otimes A, \quad a \mapsto a_{[1]} \otimes a_{[2]}\] 
					\[\beta_{V}^{A} \colon V \to V \otimes A, \quad v \mapsto v_{[0]} \otimes v_{[1]}\]
					\[\beta_{V}^{H} \colon V \to V \otimes H, \quad v \mapsto v_{(0)} \otimes v_{(1)}.\]
					\bp\label{Prop: func Fpi}
					Let $V \in (\MM^{H})^{A}$. Define a right $A_{\pi}$-comodule structure on $V$ by keeping the original right $A$-comodule structure, that is, $\beta_{V}^{A_{\pi}} = v_{[0]} \otimes v_{[1]}$. The map
					\begin{align*}
						\beta_{V}^{H_{\pi}} \colon V &\longrightarrow V \otimes H\\
						v &\longmapsto v_{[0](0)} \otimes v_{[0](1)}\pi(v_{[1]})
					\end{align*}
					provides $V$ with a new right $H$-comodule structure. Then $V$ becomes an object in $(\MM^{H})^{A_{\pi}}$, that we denote by $V_{\pi} := (V, \beta^{A_{\pi}}, \beta^{H_{\pi}})$. This gives rise  to a functor
					\begin{align*}
						F_{\pi} : (\MM^{H})^{A} \rightarrow (\MM^{H})^{A_{\pi}}; \quad 	V \mapsto V_{\pi}, 	f\mapsto f.
					\end{align*}
					\ep %
					\bpf
					A straightforward computation shows that $\beta_{V}^{H_{\pi}} $ is a well-defined right $H$-comodule structure. We verify the compatibility of the two structures, which means that $\beta_{V}^{A_{\pi}}\colon V \to V \otimes A^{\pi}$ is $H$-colinear. We first remark that, since $\beta_{V}^{A}$ is $H$-colinear, for $v \in V$,
					\begin{equation}
						v_{[0](0)} \otimes v_{[1]} \otimes v_{[0](1)} = v_{(0)[0]} \otimes v_{(0)[1]} \otimes v_{(1)}. \label{Cpati_com}
					\end{equation}
					Hence we also have
					\begin{equation}
						\beta_{V}^{H_{\pi}}(v) = v_{(0)[0]} \otimes v_{(1)}\pi(v_{(0)[1]}) \label{beta'}.
					\end{equation}
					Now, we have
					\begin{align*}
						(\beta_{V}^{A_{\pi}} \otimes 1) \circ \beta_{V}^{H_{\pi}} (v) &= v_{(0)[0]} \otimes v_{(0)[1]} \otimes v_{(1)}\pi(v_{(0)[2]})\\
						&= v_{[0](0)} \otimes v_{[1]} \otimes v_{[0](1)}\pi(v_{[2]}) \quad (\text{applying $1 \otimes \Delta_{A} \otimes 1$ to \eqref{Cpati_com}}).
					\end{align*}
					On the other hand,
					\begin{align*}
						\beta^{H}_{V \otimes A_{\pi}} \circ \beta_{V}^{A_{\pi}}(v) &= \beta^{H}_{V \otimes A_{\pi}}\big(v_{[0]} \otimes v_{[1]}\big)\\
						&= v_{[0](0)} \otimes v_{[3]} \otimes v_{[0](1)} \pi(v_{[1]})\pi\big(S_{A}(v_{[2]})v_{[4]}\big)\\
						& \quad (\text{using coassociativity of the right $A$-comodule structure})\\
						&= v_{[0](0)} \otimes v_{[3]} \otimes v_{[0](1)} \pi\big(v_{[1]}S_{A}(v_{[2]})v_{[4]}\big)\\
						&= v_{[0](0)} \otimes v_{[1]} \otimes v_{[0](1)} \pi(v_{[2]}).
					\end{align*}
					which implies that $(\beta_{V}^{A_{\pi}} \otimes 1) \circ \beta_{V}^{H_{\pi}} = \beta^{H}_{V \otimes A_{\pi}} \circ \beta_{V}^{A_{\pi}}$. Thus $\beta^{A_{\pi}}_{V}$ is $H$-colinear. The verification of the last assertion is immediate.
					\epf
					\bp \label{prop:monoidal Api}
					The functor \begin{align*}
						F_{\pi}\colon (\MM^{H})^{A} &\longrightarrow (\MM^{H})^{A_{\pi}}
					\end{align*} is a monoidal equivalence of categories whose monoidal structure is given by the natural isomorphisms defined, for $V , W \in (\MM^{H})^{A}$, by
					\begin{align*}
						\widetilde{F}_{V, W} \colon V_{\pi} \otimes W_{\pi} &\longrightarrow (V \otimes W)_{\pi}\\
						v \otimes w &\longmapsto \rr\Big[\pi(v_{[1]}), w_{(1)}\Big] v_{[0]} \otimes w_{(0)}.
					\end{align*}
					\ep
					\bpf
					We check that for $V , W \in (\MM^{H})^{A}$, $\widetilde{F}_{V, W}$ is a morphism of right $A_{\pi}$-comodules; that is,
					\[\beta^{A}_{V \otimes W} \circ \widetilde{F}_{V, W} = (\widetilde{F}_{V, W} \otimes 1) \circ \beta^{A_{\pi}}_{V \otimes W}.\]  We start with
					\begin{align*}
						\beta^{A}_{V \otimes W} \circ \widetilde{F}_{V, W}(v \otimes w) &= \beta^{A}_{V \otimes W}(v_{[0]} \otimes w_{(0)}) \rr\Big[\pi(v_{[1]}), w_{(1)}\Big]\\
						&= v_{[0]} \otimes w_{(0)[0]} \otimes v_{[1]}w_{(0)[1]} \rr\Big[\pi(v_{[2]}), w_{(1)}\Big]\\
						&= v_{[0]} \otimes w_{[0](0)} \otimes v_{[1]}w_{[1]} \rr\Big[\pi(v_{[2]}), w_{[0](1)}\Big]\\
						&\quad (\text{by applying \eqref{Cpati_com} to $w$}).
					\end{align*}
					On the other hand, we have
					\begin{align*}
						&\beta^{A_{\pi}}_{V \otimes W} (v \otimes w)\\
						&=  v_{[0]} \otimes w_{[0](0)} \otimes v_{[2]} \bullet w_{[2]}  \rr\big[\pi\big(S_{A}(v_{[1]})v_{[3]}\big), w_{[0](1)}\pi(w_{[1]})\big]\\
						& \qquad \quad (\text{multiplication in $A_{\pi}$})\\
						&= v_{[0]} \otimes w_{[0](0)} \otimes v_{[3]} w_{[3]}  \rr\big[\pi\big(S_{A}(v_{[2]})v_{[4]}\big), \pi\big(S_{A}(w_{[2]})\big)\big] \rr\big[\pi\big(S_{A}(v_{[1]})v_{[5]}\big), w_{[0](1)}\pi(w_{[1]})\big]\\
						&= v_{[0]} \otimes w_{[0](0)} \otimes v_{[2]} w_{[1]} \rr\big[\pi\big(S_{A}(v_{[1]})v_{[3]}\big), w_{[0](1)}\big].
					\end{align*}
					Thus
					\begin{align*}
						(\widetilde{F}_{V, W} \otimes 1)\beta^{A_{\pi}}_{V \otimes W} (v \otimes w) &= v_{[0]} \otimes w_{[0](0)} \otimes v_{[3]} w_{[1]}\rr\big[\pi(v_{[1]}), w_{[0](1)}\big] \rr\big[\pi\big(S_{A}(v_{[2]})v_{[4]}\big), w_{[0](2)}\big] \\
						&= v_{[0]} \otimes w_{[0](0)} \otimes v_{[3]} w_{[1]}\rr\big[\pi(v_{[1]})\pi\big(S_{A}(v_{[2]})v_{[4]}\big), w_{[0](1)}\big] \\
						&= v_{[0]} \otimes w_{[0](0)} \otimes v_{[1]} w_{[1]}\rr\big[\pi(v_{[2]}), w_{[0](1)}\big] .
					\end{align*}
					Therefore, the desired equality holds. For naturality, if $f : V \to V'$ and $g: W \to W'$ are morphisms in $(\MM^{H})^{A}$ then 
					\[\Phi_{V', W'} \circ (f \otimes g) = (f \otimes g) \circ \Phi_{V,W}\]
					because the morphisms of comodules preserve coaction components and $\rr$ depends only on those components.
					
					It is not difficult to see that $\widetilde{F}_{V, W}$ is bijective with inverse
					\begin{align*}
						\widetilde{F}_{V, W}^{-1} \colon (V \otimes W)_{\pi} &\longrightarrow V_{\pi} \otimes W_{\pi}\\
						v \otimes w &\longmapsto \rr\big[\pi\big(S_{A}(v_{[1]})\big), w_{(1)}\big] v_{[0]} \otimes w_{(0)}.
					\end{align*}
					
					Let $U, V, W \in (\MM^{H})^{A}.$ We also have
					\begin{align*}
						\widetilde{F}_{U \otimes V, W}(\widetilde{F}_{U,V} \otimes 1)(u \otimes v \otimes w) &= \rr\big[\pi(u_{[1]}), v_{(1)}\big] \widetilde{F}_{U \otimes V, W}(u_{[0]} \otimes v_{(0)} \otimes w)\\
						&= \rr\big[\pi(u_{[2]}), v_{(1)}\big] \rr\big[\pi(u_{[1]}v_{(0)[1]}), w_{(1)}\big]u_{[0]} \otimes v_{(0)[0]} \otimes w_{0}\\
						&= \rr\big[\pi(u_{[2]}), v_{[0](1)}\big] \rr\big[\pi(u_{[1]}v_{[1]}), w_{(1)}\big]u_{[0]} \otimes v_{[0](0)} \otimes w_{0}\\
						&\quad (\text{by applying \eqref{Cpati_com} to $v$}),
					\end{align*}
					and
					\begin{align*}
						\widetilde{F}_{U, V \otimes W}(1 \otimes \widetilde{F}_{V, W})(u \otimes v \otimes w) &= \rr \big[\pi(v_{[1]}), w_{(1)}\big]\widetilde{F}_{U, V \otimes W}(u \otimes v_{[0]} \otimes w_{(0)})\\
						&=  \rr \big[\pi(v_{[1]}), w_{(2)}\big] \rr \big[\pi(u_{[1]}), v_{[0](1)}w_{(1)}\big]u_{[0]}\otimes v_{[0](0)} \otimes w_{(0)}\\
						&=  \rr \big[\pi(u_{[1]}v_{[1]}), w_{(1)}\big]\rr \big[\pi(u_{[2]}), v_{[0](1)}\big]u_{[0]}\otimes v_{[0](0)} \otimes w_{(0)}.
					\end{align*}
					Thus we obtain
					\begin{align*}
						\widetilde{F}_{U \otimes V, W}(\widetilde{F}_{W,V} \otimes 1) = \widetilde{F}_{U, V \otimes W}(1 \otimes \widetilde{F}_{V, W}),
					\end{align*}
					and this finishes our proof.
					\epf
					\brq
					Let $L$ be a Hopf subalgebra of $H$, and let $V \in (\MM^{L})^{A}$. In Proposition \ref{Prop: func Fpi}, we see that $V \in (\MM^{L})^{A_{\pi}}$ if, and only if, for $v \in V,$ \[\beta_{V}^{H_{\pi}}(v) = v_{[0](0)} \otimes v_{[0](1)}\pi(v_{[1]}) \in V \otimes L.\] 
					\erq

					In the sequel, we shall discuss a sufficient condition for this, although there is no reason for it to hold in general. Recall that the adjoint coaction on $A$ is given by 
					\[\mathrm{ad} \colon A \to A \otimes A, \mathrm{ad}(a) = a_{[1]} \otimes S_{A}(a_{[1]})a_{[3]}.\]
					We observe that, if $H$ is commutative, then $(A, \mathrm{ad})$ forms a Hopf algebra object in the category $\MM^{H, \varepsilon \otimes \varepsilon}$.
					\bp \label{Prop : equisubcat}
					Let $L$ be a central Hopf subalgebra of $H$ such that \[\mathrm{ad}_{\pi}(A) := (1 \otimes \pi \circ \mathrm{ad})(A) \subset A \otimes L.\]
					Then $A_{\pi}$ is a Hopf algebra in $\MM^{L, \mathbf{r}}$, with coaction given by
					\begin{align*}
						\beta^{L}_{A_{\pi}} \colon A_{\pi} &\longrightarrow A_{\pi} \otimes L\\
						a &\mapsto a_{[2]} \otimes \pi\big(S_{A}(a_{[1]})a_{[3]}\big).
					\end{align*}
					Moreover, regarding $\big(A, \mathrm{ad}_{\pi}\big)$ as a Hopf algebra in the category $\MM^{L, \varepsilon \otimes \varepsilon}$, we have a monoidal equivalence
					\[(\MM^{L, \mathbf{r}})^{A_{\pi}} \cong^{\otimes} (\MM^{L, \varepsilon \otimes \varepsilon})^{A}.\]
					\ep
					\bpf
					It is immediate from the condition that $A_{\pi}$ is a Hopf algebra in $\MM^{L, \mathbf{r}}$. A straightforward verification shows that the identity functor
					\begin{align*}
						F \colon (\MM^{L, \mathbf{r}})^{A_{\pi}} \longrightarrow  (\MM^{L, \varepsilon \otimes \varepsilon})^{A}; \quad V \longmapsto V, 
						f \longmapsto f.
					\end{align*}
					is well defined and yields a monoidal equivalence between the two categories. The monoidal structure is given by
					\begin{align*}
						\Phi_{V,W} \colon F(V) \otimes F(W) &\longrightarrow F(V \otimes W)\\
						v \otimes w &\longmapsto \rr\Big[\pi(S_{A}(v_{[1]})), w_{[0](1)}\pi(S_{A}(w_{[1]}))\Big] v_{[0]} \otimes w_{[0](0)}.
					\end{align*}
					The unit map is the identity $k \to k$. Indeed, we see that $\Phi_{V,W}$ is bijective with inverse
					\[\Phi_{V, W}^{-1}(v \otimes w) = \rr\Big[\pi(v_{[1]}), w_{[0](1)}\pi(S_{A}(w_{[1]}))\Big]v_{[0]} \otimes w_{[0](0)}.\]
					Now, we will check that $\Phi_{V,W}$ is a morphism of right $A$-comodules; that is, 
					\[(\Phi_{V,W} \otimes 1) \circ \beta^{A}_{V \otimes W} = \beta^{A_{\pi}}_{V \otimes W} \circ \Phi_{V, W}\]
					where $\beta^{A}_{V \otimes W}$ and $\beta^{A_{\pi}}_{V \otimes W}$ denote the right $A$-coaction and $A_{\pi}$-coaction, respectively. 
					
					We first have \[\beta^{A}_{V \otimes W}(v \otimes w) = v_{[0]} \otimes w_{[0]} \otimes v_{[1]}w_{[1]}.\] Apply $\Phi_{V, W} \otimes 1$ to that tensor, we obtain
					\[(\Phi_{V, W} \otimes 1)\beta^{A}_{V \otimes W}(v \otimes w) = v_{[0]} \otimes w_{[0](0)} \otimes v_{[2]}w_{[2]} \rr\Big[\pi(S_{A}(v_{[1]})), w_{[0](1)}\pi(S_{A}(w_{[1]}))\Big].\]
					On the other hand,
					\begin{align*}
						&\beta^{A_{\pi}}_{V \otimes W} \Phi_{V,W}(v \otimes w) = \rr \Big[\pi(S_{A}(v_{[1]})), w_{[0](1)}\pi(S_{A}(w_{[1]}))\Big]\beta^{A_{\pi}}_{V \otimes W}\Big(v_{[0]} \otimes w_{[0](0)}\Big)\\
						&= v_{[0]} \otimes w_{[0](0)[0](0)} \otimes v_{[2]} \bullet w_{[0](0)[1]} \\
						&\quad \rr\Big[\pi(S_{A}(v_{[5]})v_{[3]}), w_{[0](0)[0](1)}\Big] \rr \Big[\pi(S_{A}(v_{[1]})), w_{[0](1)}\pi(S_{A}(w_{[1]}))\Big].
					\end{align*}
					Since $\beta^{A_{\pi}}_{W}$ is $L$-colinear, we have 
					\begin{equation} \label{A-compatile}
						w_{[0](0)} \otimes w_{[2]} \otimes w_{[0](1)}\pi\Big(S_{A}(w_{[1]})w_{[3]}\Big) = w_{(0)[0]} \otimes w_{(0)[1]} \otimes w_{(1)}.
					\end{equation}
					Then apply $\beta^{L}_{W} \otimes 1 \otimes 1$ to that equality, we get 
					\begin{equation} \label{A-compatible2}
						w_{[0](0)} \otimes w_{[0](1)} \otimes w_{[2]} \otimes w_{[0](2)}\pi\Big(S_{A}(w_{[1]})w_{[3]}\Big) = w_{(0)[0](0)} \otimes  w_{(0)[0](1)} \otimes w_{(0)[1]} \otimes w_{(1)}.
					\end{equation}
					Thus, by applying \eqref{A-compatible2} to $w_{[0]}$,
					\begin{align*}
						&\beta^{A_{\pi}}_{V \otimes W} \Phi_{V,W}(v \otimes w) 
						= v_{[0]} \otimes w_{[0](0)} \otimes v_{[2]} \bullet w_{[2]} \rr\Big[\pi(S_{A}(v_{[1]})v_{[3]}), w_{[0](1)}\Big] \\
						&\qquad \rr \Big[\pi\big(S_{A}(v_{[4]})\big), w_{[0](2)}\pi(S_{A}(w_{[1]})w_{[3]})\pi\big(S_{A}(w_{[4]})\big)\Big]\\
						&= v_{[0]} \otimes w_{[0](0)} \otimes v_{[2]} \bullet w_{[2]} \rr\Big[\pi(S_{A}(v_{[1]})v_{[3]}), w_{[0](1)}\Big] 
						\rr \Big[\pi\big(S_{A}(v_{[4]})\big), w_{[0](2)}\pi(S_{A}(w_{[1]}))\Big]\\
						&= v_{[0]} \otimes w_{[0](0)} \otimes v_{[3]} w_{[3]} \rr\Big[\pi(S_{A}(v_{[2]})v_{[4]}), \pi(S_{A}(w_{[2]}))\Big] \\
						&\quad \rr\Big[\pi(S_{A}(v_{[1]})v_{[5]}), w_{[0](1)}\Big] \rr \Big[\pi\big(S_{A}(v_{[6]})\big), w_{[0](2)}\pi(S_{A}(w_{[1]}))\Big]\\
						&=  v_{[0]} \otimes w_{[0](0)} \otimes v_{[2]} w_{[3]} \rr\Big[\pi(S_{A}(v_{[1]})v_{[3]}), w_{[0](1)}\pi(S_{A}(w_{[2]}))\Big] \\
						& \quad \rr \Big[\pi\big(S_{A}(v_{[4]})\big), w_{[0](2)}\pi(S_{A}(w_{[1]}))\Big]\\
						&= v_{[0]} \otimes w_{[0](0)} \otimes v_{[2]} w_{[2]} \rr\Big[\pi(S_{A}(v_{[1]})v_{[3]})\pi\big(S_{A}(v_{[4]})\big), w_{[0](1)}\pi(S_{A}(w_{[1]}))\Big] \\
						&= v_{[0]} \otimes w_{[0](0)} \otimes v_{[2]} w_{[2]} \rr\Big[\pi(S_{A}(v_{[1]})), w_{[0](1)}\pi(S_{A}(w_{[1]}))\Big].
					\end{align*}
					This completes the proof.
					\epf
					We now summarize our results:
					\bc \label{Cor : equiboso}
					Let $(H, \mathbf{r})$ be a coquasitriangular Hopf algebra, and $A$ be a Hopf algebra. Suppose that $\pi \colon A \longrightarrow H$ be a Hopf algebra morphism, and let $L \subset H$ be a central Hopf subalgebra such that \[(1 \otimes \pi \circ \mathrm{ad})(A) \subset A \otimes L.\]
					Then the transmuted Hopf algebra $A_{\pi}$ is a Hopf algebra in $\MM^{L, \mathbf{r}}$, and there is a $k$-linear monoidal equivalence of comodule categories
					\[\MM^{L \# A_{\pi}} \cong^{\otimes} \MM^{L \ltimes A}.\]
					where $L \ltimes A$ is the semidirect product of $L$ and $A$, with coproduct defined by
					\[\Delta_{L \ltimes A}(x \otimes a) = \big(x_{(1)} \otimes a_{[2]}\big) \otimes \big(x_{(2)} \pi\big(S_{A}(a_{[1]}) a_{[3]}\big) \otimes a_{[4]}\big),\] for any $x \in L, a \in A$.
					\ec

					We note that this result generalizes that of Habbestad and Neshveyev in \cite[Theorem 2.5]{HN24} where $H = \mathbb{C} T$ is the group algebra and $L = \mathbb{C}[T/T_{0}]$ in their setting.
					\brq
					By our result on braided cogroupoids, we obtain the $k$-linear monoidal  equivalence $F_{\pi}$. Then, by Corollary \ref{Cor : equiva boso}, there exists a $k$-linear monoidal equivalence
					\[\MM^{H \# A} \cong^{\otimes} \MM^{H \# A_{\pi}}.\]
					Observe that $H \# A_{\pi}$ can be viewed as a twisted algebra with respect to the $2$-cocycle
					\begin{align*}
						\sigma\big((g \otimes a), (h \otimes b)\big) = \varepsilon_{H}(g)\varepsilon_{A}(b)\rr^{-1}(\pi(a), h)
					\end{align*}
					as described by Habbestad and Neshveyev in \cite[Theorem 1.5, Remark 1.6]{HN24}. Howeover, in contrast to Remark 1.6, here we do not need require $H$ to be cocommutative for $\sigma$ to be a well-defined Hopf $2$-cocycle since the monoidal equivalence above holds without any assumption on $H$. Moreover, as $\rr$ is an $r$-form, the composition $\rr^{-1} \circ (\pi \otimes 1)$ is well defined as an invertible skew pairing on $(A, H)$ \cite[Definition 1.3]{MR1213985}. Hence, by \cite[Proposition 1. 5]{MR1213985}, the bilinear map $\sigma$ is a $2$-cocycle.
					\erq
					\subsection{Examples} 
					\subsubsection{Braided multiparameter quantum $SL_{n}$}\label{SLn}
					In this section, we consider $\mathcal{O}_{q}(\mathrm{SL}_{n}(k))$ introduced in Definition~\ref{Def : OqSLn}. We fix a datum $(\Gamma,\psi,(g_{1},\dots,g_{n}))$, where $\Gamma$ is an abelian group endowed with a bicharacter $\psi \colon \Gamma \times \Gamma \to k^{*}$, and $g_{1},\dots,g_{n} \in \Gamma$ are elements that satisfy $\prod_{i=1}^{n} g_{i}=1$. As in Lemma \ref{lem:morphism pi}, we have a Hopf algebra morphism
					\begin{align}
						\pi \colon \mathcal{O}_{q}(\mathrm{SL}_{n}(k))&\longrightarrow k\Gamma \label{pi}\\ 
						x_{ij} &\longmapsto \delta_{ij}g_{i}. \nonumber
					\end{align}
					We aim to transmute the Hopf algebra $\OQ$ to obtain a Hopf algebra $\mathcal{O}^{\pi}_{q}(\mathrm{SL}_{n}(k))$ in the category $\MM^{k\Gamma, \psi}$.
					\bdep \label{def:Ogamma_SLn}
					Let $q \in k^{*}$, and let $(\Gamma,\psi,(g_{1},\dots,g_{n}))$ be the datum introduced above. Define $\mathcal{O}^{\Gamma}_{q}(\mathrm{SL}_{n}(k))$ to be the algebra presented by the elements $a_{ij}$, for $1 \leq i,j \leq n$, subject to the relations
					\begin{align*}
						&a_{im} a_{ik} = q \psi(g^{-1}_{m}g_{k}, g_{i})a_{ik} a_{im} \quad \quad (k < m)\\
						&a_{jk} a_{ik} = q \psi(g_{j}g^{-1}_{k}, g_{i})\psi(g^{-1}_{i}g_{k}, g_{j})a_{ik} a_{jk} \quad(i < j)\\
						&a_{jm} a_{ik} =  \psi(g_{j}g^{-1}_{m}, g_{i}) \psi(g^{-1}_{i}g_{k}, g_{j}) a_{ik} a_{jm} \quad (i < j, k > m)\\
						\psi(g^{-1}_{j}g_{m}, g_{i})\psi(g_{i}, g_{j}) a_{jm} a_{ik} - &\psi(g_{k}, g_{j}) a_{ik}a_{jm} = (q - q^{-1})\psi(g_{m}, g_{j}) a_{im}a_{jk} \quad (i <j, k <m).\\
						&\sum_{\sigma \in S_n} (-q)^{-\ell(\sigma)}\Big(\prod_{1 \leq i < j \leq n} \psi(g^{-1}_{i}g_{\sigma(i)}, g_{j})\Big) \prod_{i = 1}^{n} a_{i \sigma(i)} = 1.
					\end{align*}
					Then $\mathcal{O}^{\Gamma}_{q}(\mathrm{SL}_{n}(k))$ admits a structure of $k\Gamma$-comodule algebra with coaction
					\begin{align*}
						\beta \colon \mathcal{O}^{\Gamma}_{q}(\mathrm{SL}_{n}(k)) &\longrightarrow \mathcal{O}^{\Gamma}_{q}(\mathrm{SL}_{n}(k)) \otimes k\Gamma\\
						a_{ij} &\longmapsto a_{ij} \otimes g^{-1}_{i}g_{j}.
					\end{align*}
					\edep
					\bp \label{prop : Opi_SLn}
					Let $q \in k^{\ast}$. Consider the Hopf algebra morphism $\pi \colon \mathcal{O}_{q}(\mathrm{SL}_{n}(k))\longrightarrow k\Gamma$ in \eqref{pi}. Then the algebra $\mathcal{O}^{\Gamma}_{q}(\mathrm{SL}_{n}(k))$ is isomorphic to the transmutation $\mathcal{O}^{\pi}_{q}(\mathrm{SL}_{n}(k))$ of $\mathcal{O}_{q}(\mathrm{SL}_{n}(k))$.
					\ep
					To prepare for the proof of this proposition, we first prove the following lemma:
					\bl \label{lem: transmute det}
					Let $\sigma \in S_n$. For any $k \geq 1$, in the transmuted algebra $\mathcal{O}^{\pi}_{q}(\mathrm{SL}_{n}(k))$, we have 
					\[\Big[\prod_{i = 1}^{k} x_{i \sigma(i)}\Big] = \Bigg(\prod_{i = 1}^{k} [x_{i \sigma(i)}]\Bigg) \prod_{1 \leq i < j \leq k} \psi(g^{-1}_{i}g_{\sigma(i)}, g_{j})\]
					where $x_{ij}$ denotes the generators of $\OQ$, and $[x_{ij}]$ their
					images in $\mathcal{O}^{\pi}_{q}(\mathrm{SL}_{n}(k))$ under the transmutation.
					\el
					\bpf
					We will proceed by induction: for $k = 1$, this is trivial: $[x_{ij}] = [x_{ij}]$. For $k = 2$, we have $[x_{1\sigma(1)}]. [x_{2\sigma(2)}] = \psi(g^{-1}_{1}g_{\sigma(1)}, g^{-1}_{2}) [x_{1\sigma(1)}x_{2\sigma(2)}]$. Hence
					\[[x_{1\sigma(1)}x_{2\sigma(2)}] =\psi(g^{-1}_{1}g_{\sigma(1)}, g_{2})  [x_{1\sigma(1)}].[x_{2\sigma(2)}]\]
					which proves the desired identity for $k = 2$.
					Assume the formula holds for $k -1$:
					\[\Big[\prod_{i = 1}^{k-1} x_{i \sigma(i)}\Big] = \Bigg(\prod_{i = 1}^{k-1} [x_{i \sigma(i)}]\Bigg) \prod_{1 \leq i < j \leq k -1} \psi(g^{-1}_{i}g_{\sigma(i)}, g_{j}).\]
					We have 
					\begin{align*}
						\prod^{k}_{i = 1} [x_{i \sigma(i)}] &=\Bigg( \prod^{k - 1}_{i = 1} [x_{i \sigma(i)}]\Bigg) [x_{k \sigma(k)}] \\
						&= \Big[\prod_{i = 1}^{k-1} x_{i \sigma(i)}\Big] \Bigg(\prod_{1 \leq i < j \leq k -1} \psi(g^{-1}_{i}g_{\sigma(i)}, g^{-1}_{j})\Bigg) [x_{k \sigma(k)}]\\
						&= \psi\Big(\prod^{k - 1}_{i = 1} g^{-1}_{i}g_{\sigma(i)}, g^{-1}_{k}\Big)\Big[\prod_{i = 1}^{k-1} x_{i \sigma(i)} x_{k \sigma(k)}\Big] \Bigg(\prod_{1 \leq i < j \leq k -1} \psi(g^{-1}_{i}g_{\sigma(i)}, g^{-1}_{j})\Bigg)\\
						&= \prod_{1 \leq i < k} \psi \Big(g^{-1}_{i}g_{\sigma(i)}, g^{-1}_{k}\Big) \prod_{1 \leq i < j \leq k -1} \psi(g^{-1}_{i}g_{\sigma(i)}, g^{-1}_{j}) \Big[\prod_{i = 1}^{k} x_{i \sigma(i)}\Big]\\
						&= \prod_{1 \leq i < j \leq k} \psi(g^{-1}_{i}g_{\sigma(i)}, g^{-1}_{j}) \Big[\prod_{i = 1}^{k} x_{i \sigma(i)}\Big].
					\end{align*}
					Thus, \[\Big[\prod_{i = 1}^{k} x_{i \sigma(i)}\Big] = \Bigg(\prod_{i = 1}^{k} [x_{i \sigma(i)}]\Bigg) \prod_{1 \leq i < j \leq k} \psi(g^{-1}_{i}g_{\sigma(i)}, g_{j}).\]
					This completes the induction.
					\epf
					\begin{proof}[Proof of Proposition \ref{prop : Opi_SLn}] 
						In $\mathcal{O}^{\pi}_{q}(\mathrm{SL}_{n}(k))$, we have 
						\begin{enumerate}
							\item for $k < m$,
							\[[x_{ik}].[x_{im}] = \psi(g^{-1}_{i}g_{k}, g^{-1}_{i}) [x_{ik} x_{im}],\]
							\[[x_{im}].[x_{ik}] = [x_{im} x_{ik}] \psi(g^{-1}_{i}g_{m}, g^{-1}_{i}) = q\psi(g^{-1}_{i}g_{m}, g^{-1}_{i}) [x_{ik} x_{im}],\]
							so \begin{align*}
								[x_{im}].[x_{ik}] &= q \psi(g^{-1}_{m}g_k, g_i) [x_{ik}]. [x_{im}].
							\end{align*}
							\item For $i <j$, 
							\[[x_{ik}]. [x_{jk}] = \psi(g^{-1}_{i}g_{k}, g^{-1}_{j}) [x_{ik} x_{jk}]\]
							\[[x_{jk}].[x_{ik}] = \psi(g^{-1}_{j}g_{k}, g^{-1}_{i}) [x_{jk} x_{ik}] = q\psi(g^{-1}_{j}g_{k}, g^{-1}_{i})[x_{ik} x_{jk}],\]
							so \begin{align*}
								[x_{jk}] .[x_{ik}] &= q\psi(g^{-1}_{j}g_{k}, g^{-1}_{i}) \psi(g^{-1}_{i}g_{k}, g_{j})[x_{ik}] .[x_{jk}].
							\end{align*}
							\item For $i < j$ and $k > m$,
							\[[x_{ik}]. [x_{jm}] = \psi(g^{-1}_{i}g_{k}, g^{-1}_{j}) [x_{ik} x_{jm}]\]
							\[[x_{jm}] .[x_{ik}] = \psi(g^{-1}_{j}g_{m}, g^{-1}_{i}) [x_{jm} x_{ik}] = \psi(g^{-1}_{j}g_{m}, g^{-1}_{i})[x_{ik} x_{jm}],\]
							so \begin{align*}
								[x_{jm}] .[x_{ik}] &= \psi(g_{j}g^{-1}_{m}, g_{i}) \psi(g^{-1}_{i}g_{k}, g_{j})[x_{ik}] .[x_{jk}].
							\end{align*}
							\item For $i <j$ and $k <m$,
							\[[x_{jm}] .[x_{ik}] = \psi(g^{-1}_{j}g_{m}, g^{-1}_{i}) [x_{jm} x_{ik}]\]
							\[[x_{ik}]. [x_{jm}] = \psi(g^{-1}_{i}g_{k}, g^{-1}_{j}) [x_{ik} x_{jm}]\]
							\[[x_{im}].[x_{jk}] = \psi(g^{-1}_{i}g_{m}, g^{-1}_{j}) [x_{im} x_{jk}].\]
							Using \eqref{formule det}, we have
							\[\psi(g^{-1}_{j}g_{m}, g_{i}) [x_{jm}].[x_{ik}] - \psi(g^{-1}_{i}g_{k}, g_{j})[x_{ik}].[x_{jm}] = (q - q^{-1})\psi(g^{-1}_{i}g_{m}, g_{j})[x_{im}].[x_{jk}].\]
							Therefore,
							\[\psi(g^{-1}_{j}g_{m}, g_{i})\psi(g_{i}, g_{j})[x_{jm}].[x_{ik}] - \psi(g_{k}, g_{j})[x_{ik}].[x_{jm}] = (q - q^{-1}) \psi(g_{m}, g_{j})[x_{im}].[x_{jk}].\]
							\item In $\OQ$, we have \[D_{q} = \sum_{\sigma \in S_{n}} (-q)^{-\ell(\sigma)} x_{1\sigma(1)} x_{2\sigma(2)} \cdots  x_{n\sigma(n)}.\] Thus, by Lemma \ref{lem: transmute det}, we obtain
							\begin{align*}
								\sum_{\sigma \in S_{n}} (-q)^{-\ell(\sigma)} \Bigg(\prod_{1 \leq i < j \leq k} \psi(g^{-1}_{i}g_{\sigma(i)}, g_{j})\Bigg)\prod_{i = 1}^{k} [x_{i \sigma(i)}] = 1.
							\end{align*}
						\end{enumerate}
						These relations $(1) - (5)$ define an algebra morphism 
						\begin{align*}
							\mathcal{O}^{\Gamma}_{q}(\mathrm{SL}_{n}(k)) &\longrightarrow \mathcal{O}^{\pi}_{q}(\mathrm{SL}_{n}(k))\\
							a_{ij} &\longmapsto [x_{ij}]
						\end{align*}
						To conclude the proof we remark that since $\mathcal{O}^{\pi}_{q}(\mathrm{SL}_{n}(k))$ can be viewed as the algebra twisted by a $2$-cocycle, the result in \cite[Section 3.2]{MR2945585} again ensures that this algebra morphism is an isomorphism.
					\end{proof}
					Using Propositions~\ref{prop:monoidal Api} and Corollary~\ref{Cor : equiboso}, we deduce the following result:
					\bc
					Let $q \in k^{*}$ and let $(\Gamma,\psi,(g_{1},\dots,g_{n}))$ be the datum defined above. Put $\Gamma_{0} = \langle g^{-1}_{i}g_{j}\mid 1 \leq i, j \leq n\rangle \subset \Gamma$. Then there are $k$-linear monoidal equivalences
					\[(\MM^{k\Gamma})^{\mathcal{O}^{\Gamma}_{q}(\mathrm{SL}_{n}(k))} \cong^{\otimes} (\MM^{k\Gamma})^{\mathcal{O}_{q}(\mathrm{SL}_{n}(k))} \quad \text{and} \quad \MM^{k\Gamma_{0} \# \mathcal{O}^{\Gamma}_{q}(\mathrm{SL}_{n}(k))} \cong^{\otimes} \MM^{k\Gamma_{0} \ltimes \mathcal{O}_{q}(\mathrm{SL}_{n}(k))}.\]
					\ec
					We now establish a connection between the bosonization of $\mathcal{O}^{\Gamma}_{q}(\mathrm{SL}_{n}(k))$ by $k\Gamma$ and $\Opgl$, for $\bar{p} \in \mathrm{AST}(n)$, introduced in Section \ref{Sec : Bosonization}.
					\bp \label{Prop : morphismGLn}
					Let $\bar{p} = (p_{ij})_{1 \leq i,j \leq n} \in M_{n}(k)$ with $p_{ij} = \psi(g^{-1}_{i}g_{j}, g_{n})$. We have a morphism of Hopf algebras
					\begin{align*}
						\Opgl &\longrightarrow  k\Gamma \#\mathcal{O}^{\Gamma}_{q}(\mathrm{SL}_{n}(k))\\
						x_{ij} &\longmapsto g^{-1}_{n}g_{i} \# a_{ij}\\
						(D^{\bar{p}}_{q})^{-1} &\longmapsto g_{n}^{n} \# 1.
					\end{align*}
					\ep
					\bpf
					The proof is lengthy and somewhat technical, so we defer the details to the appendix.
					\epf
		
					While this proposition relates $\mathcal{O}^{\Gamma}_{q}(\mathrm{SL}_{n}(k))$ to $\mathcal{O}^{\bar{p}}_{q}(\mathrm{SL}_{n}(k))$ from Section~\ref{subsec : SL_n}, there is no reason for the two braided Hopf algebras to coincide, even when $\Gamma=\mathbb{Z}$. Examining Proposition~\ref{prop:kZ coaction SLn} shows that one may restrict to the datum $(\mathbb{Z}, \psi, (g_{1}, \dots, g_{n}))$, where $\mathbb{Z}=\langle z\rangle$ is the infinite cyclic group endowed with the bicharacter $\psi$ uniquely determined by $\psi(z,z)=\xi \in k^{*}$, and where $g_{1}=z^{n-1}$ and $g_{i}=z^{-1}$ for $2 \leq i \leq n$.
					We thus consider the matrix  \[
					\bar{p} =
					\begin{pmatrix}
						1 & \xi^{n} & \cdots & \xi^{n} \\
						\xi^{-n} & 1 & \cdots & 1 \\
						\vdots & \vdots & \ddots & \vdots \\
						\xi^{-n} & 1 & \cdots & 1
					\end{pmatrix}
					\in \mathrm{AST}(n).\]
					Then we obtain the following result:
					\bc \label{cor:knz}
					The bosonization
					$k(n\mathbb{Z})\# \mathcal{O}^{\Z}_{q}(\mathrm{SL}_{n}(k))$
					is isomorphic to $\mathcal{O}^{\bar{p}}_{q}(\mathrm{GL}_{n}(k))$ via the map
					\begin{align*}
						\mathcal{O}^{\bar{p}}_{q}(\mathrm{GL}_{n}(k))
						&\longrightarrow
						k(n\mathbb{Z})\# \mathcal{O}^{\Z}_{q}(\mathrm{SL}_{n}(k)),\\
						x_{ij}
						&\longmapsto \begin{cases}
							z^{n}\# a_{1j},\\
							1 \# a_{ij} \quad \text{for $i > 1$}
						\end{cases}\\
						(D^{\bar{p}}_{q})^{-1}
						&\longmapsto
						z^{-n}\# 1.
					\end{align*}
					\ec
					\bpf
					The morphism is a special case of Proposition~\ref{Prop : morphismGLn}. Moreover, it is an isomorphism, with inverse given by
					\begin{align*}
						\theta \colon	k(n\mathbb{Z})\# \mathcal{O}^{\Z}_{q}(\mathrm{SL}_{n}(k))	
						&\longrightarrow
						\mathcal{O}^{\bar{p}}_{q}(\mathrm{GL}_{n}(k)),\\
						1	\# a_{1j}
						&\longmapsto \begin{cases}
							(D^{\bar{p}}_{q})^{-1}x_{ij},\\
							x_{ij}\quad \text{for $i > 1$}
						\end{cases}\\
						z^{n}\# 1
						&\longmapsto
						D^{\bar{p}}_{q}. 
					\end{align*}
					We leave the verification to the reader.
					\epf
					
					\brq \label{Rem : semidirect}
					Let $q \in k^{*}$. The semidirect product $k(n\Z) \ltimes \mathcal{O}_{q}(\mathrm{SL}_{n}(k))$ is also isomorphic to $\mathcal{O}_{q}(\mathrm{GL}_{n}(k))$ via 
					\begin{align*}
						\Ogl &\xrightarrow{\; \simeq\;} k(n\Z) \ltimes \OQ\\
						x_{ij} &\longmapsto \begin{cases}
							z^{n} \# x_{ij} \quad \text{for $ 1 \leq i < n, \forall j$}\\
							1 \# x_{nj} \quad \text{otherwise}
						\end{cases}\\
						D^{-1}_{q} &\longmapsto z^{-n} \# 1.
					\end{align*}
					\erq	
					To conclude this subsection, we investigate the relationship between the transmuted algebras of $\mathcal{O}_{q}(\mathrm{SL}_{n}(k))$ and the algebra of coinvariants of $\mathcal{O}_{q}^{\bar{p}}(\mathrm{GL}_{n}(k)).$
					
					\bt \label{thm:comparasion}
					Let $q \in k^{*}$. Consider the datum $(\mathbb{Z}, \xi, (g_{1}, \dots, g_{n}))$ with $g_{1} = z^{n-1}$ and $g_{i} = z^{-1}$ for $2 \leq i \leq n$. Consider the matrix  \[
					\bar{p} =
					\begin{pmatrix}
						1 & \xi^{n} & \cdots & \xi^{n} \\
						\xi^{-n} & 1 & \cdots & 1 \\
						\vdots & \vdots & \ddots & \vdots \\
						\xi^{-n} & 1 & \cdots & 1
					\end{pmatrix}
					\in \mathrm{AST}(n).\]
					Then there exists an isomorphism of algebras
					\begin{align*}
						\mathcal{O}^{\bar{p}}_{q}(\mathrm{SL}_{n}(k)) &\longrightarrow \mathcal{O}^{\Z}_{q}(\mathrm{SL}_{n}(k))\\
						X_{ij} &\longmapsto a_{ij}.
					\end{align*}
					
						\et
						\bpf
						This is a direct consequence of Proposition \ref{Prop : IsoSL} and Corollary \ref{cor:knz}.
						\epf
						\subsubsection{Braided bilinear cogroupoid}
						First, let us we recall the cogroupoid $\B$ in \cite{MR3285340}. Let $E \in \GL_{m}(k)$ and let $F \in \GL_{n}(k)$. The algebra $\B(E, F)$ is the universal algebra with generators $a_{ij}, 1 \leq i \leq m, 1 \leq j \leq n$, satisfying the relations:
						\begin{equation}F^{-1}a^{t}Ea = I_{n}; \quad aF^{-1}a^{t}E = I_{m}. \label{B(E, F)}
						\end{equation}
						When $E = F$, we write $\B(E) := \B(E, E)$, this is the Hopf algebra introduced in \cite{DUBOISVIOLETTE1990175}, and is interpreted as the quantum symmetry group of the bilinear form corresponding to $E$.
						
						For any $E \in \GL_{m}(k), F \in \GL_{n}(k)$, and $G \in  \GL_{p}(k)$, \cite{MR3285340} there are algebra maps:
						\begin{align} \label{B(E, F) maps}
							\Delta^{G}_{E, F}: \B{(E, F)} &\longrightarrow \B(E, G) \otimes \B(G, F), \nonumber\\
							a_{ij} &\longmapsto \sum^{p}_{\ell = 1} a_{i\ell} \otimes a_{\ell j} \nonumber\\
							\varepsilon_{E} : \B(E) &\longrightarrow k,\\
							a_{ij} &\longmapsto \delta_{ij} \nonumber\\
							S_{E, F} : \B{(E, F)} &\longrightarrow \B(F, E)^{op} \nonumber\\
							a &\longmapsto E^{-1}a^{t}F. \nonumber
						\end{align}
						
						The cogroupoid $\B$ is defined as follows \cite{MR3285340}: 
						\begin{itemize}
							\item [(i)] $\ob(\B) = \{E \in \GL_{m}(k), m \geq 1\}$.
							\item [(ii)] for $E, F \in \ob(\B),$ the algebra $\B(E, F)$ is the algebra defined in \eqref{B(E, F)}.
							\item [(iii)] the structural maps $\Delta^{\bullet}_{\bullet, \bullet}$, $\varepsilon_{\bullet}$ and $S_{\bullet, \bullet}$ are defined in \eqref{B(E, F) maps}.
						\end{itemize}
						\brq \label{remark: basis}
						Let $E \in \GL_{m}(k), F \in \GL_{n}(k)$ with $m,n  \geq 2$. We note that (see e.g. \cite[Proposition 3.4]{MR1998031}) $\B(E, F) \neq 0$ if and only if  Tr($E^{-1}E^{t}) =$ Tr($F^{-1}F^{t})$. In this case, the subcogroupoid of $\B$ with objects $E$ and $F$ is faithfully flat.
						\erq
						We now aim to construct a braided version of the bilinear cogroupoid $\B$ using Theorem~\ref{Thm : Transmutation} (related considerations are studied in \cite{HN24}, in the compact framework). To this end, we begin with a lemma and a definition:
						\bl \label{Lem:morphism kZ}
						Let $E \in \GL_{m}(k)$ and let $\Gamma$ be an abelian group. Let $\{g_{1}, \dots, g_{m}\}$ be a family of elements of $\Gamma$ such that, for all $1 \leq i, j \leq m,$ we have $g_{i}g_{j} = 1$ whenever $E_{ij} \neq 0$ or $E^{-1}_{ij} \neq 0.$ There exists a Hopf algebra morphism
						\[\pi_{E} : \B(E) \longrightarrow k\Gamma\]
						defined by $\pi_{E}(a_{ij}) = \delta_{ij}g_i$ for all $1 \leq i, j \leq m$.
						\el

						\bd \label{Def : datum}
						Let $E \in \GL_{m}(k)$ and $F \in \GL_{n}(k)$. An $(E, F)$-\textbf{datum} $\mathbf{\Gamma}$ is a tuple $ \big(\Gamma, \psi; \bar{g}, \bar{h}\big)$ where 
						\begin{itemize}
							\item $\Gamma$ is an abelian group endowed with the bicharater $\psi \colon \Gamma \times \Gamma \to k^{*}$,
							\item $\bar{g} = (g_{1}, \cdots, g_{m})$ and $\bar{h} = (h_{1}, \cdots, h_{n})$ are families of elements of $\Gamma$ such that $g_{i}g_{j} = 1$ whenever $E_{ij} \neq 0$ or $E^{-1}_{ij} \neq 0$ and $h_{i}h_{j} = 1$ whenever $F_{ij} \neq 0$ or $F^{-1}_{ij} \neq 0.$
						\end{itemize}
						
						When $E = F$ and $\bar{g} = \bar{h}$, we denote the datum by the triple $\big(\Gamma, \psi; \bar{g}\big)$ and call it an $E$-\textbf{datum}. 
						
						For multiple matrices, let $\mathcal{G} = \bigsqcup_{n \geq 1} \GL_n(k)$. A $\mathcal{G}$-\textbf{datum} $\mathbf{\Gamma}:= (\Gamma, \psi;( \bar{g}^{E})_{E \in \mathcal{G}})$ is defined analogously, where $\bar{g}^{E} = (g^{E}_{1}, \cdots, g^{E}_{n})$ is a family of elements of $\Gamma$ depending on $E  \in \GL_{n}(k)$.
						\ed
						\bd \label{Def : Bpi}
						Let $E \in \GL_{m}(k)$ and $F \in \GL_{n}(k)$. Let $\big(\Gamma, \psi; \bar{g}, \bar{h}\big)$ be an $(E, F)$-datum. The algebra $\B^{\Gamma}(E, F)$ is the algebra presented by generators $a_{ij}$ $(1 \leq i \leq m, 1 \leq j \leq n)$ subject to the relations:
						\begin{equation}
							F^{-1}\tilde{a}^{t}\widetilde{E}a = I_{n}, \quad aF^{-1}\tilde{a}^{t}\widetilde{E} = I_{n} \label{Trans BE (1)}
						\end{equation}
						where, for all $1 \leq i \leq m$, $1 \leq j \leq n$, $\tilde{a}_{ij} = a_{ij}\psi(h_j, g^{-1}_i)$ and $\widetilde{E}_{ij} = E_{ij}\psi(g^{-1}_{i}, g_{j})$. Note that $\widetilde{E}$ is an invertible matrix whose inverse is given by $(\widetilde{E}^{-1})_{ij} = (E^{-1})_{ij}\psi(g_{i}, g_{j})$.
						\ed

						As in the ordinary case, when $E= F$, we write $\B^{\Gamma}(E)$ for $\B^{\Gamma}(E, E)$, the algebra associated with the $E$-datum $(\Gamma, \psi; \bar{g})$.
						\bp \label{Prop : B Gamma}
						Let $E \in \GL_{m}(k)$ and $F \in \GL_{n}(k)$ and let $\big(\Gamma, \psi; \bar{g}, \bar{h}\big)$ be an $(E, F)$-datum. The algebra $\B^{\Gamma}(E, F)$ has a $k\Gamma$-comodule algebra structure whose coaction is defined by
						\begin{align*}
							\beta : \B^{\Gamma}(E, F) &\longrightarrow \B^{\Gamma}(E, F) \otimes k\Gamma\\
							a_{ij} &\longmapsto a_{ij} \otimes g^{-1}_{i}h_{j}.
						\end{align*}
						\ep
						\bpf
						This is a straightforward verification.
						\epf
						\bp
						Let $E \in \GL_{m}(k)$ and $F \in \GL_{n}(k)$ and let $\big(\Gamma, \psi; \bar{g}, \bar{h}\big)$ be an $(E, F)$-datum. We consider the following two Hopf algebra morphisms:
						\begin{align*}
							\pi_{E} : \B(E) &\longrightarrow k\Gamma, \hspace*{2cm}\pi_{F} : \B(F) \longrightarrow k\Gamma\\
							a_{ij}&\longmapsto \delta_{ij}g_{i} \hspace*{3cm} a_{ij} \longmapsto \delta_{ij}h_{i}.
						\end{align*}
						The algebra $\B^{\Gamma}(E, F)$ is isomorphic to the transmutation $\B^{\pi}(E, F)$ of $\B(E, F)$.
						\ep
						\bpf
						In $\B(E, F)$, we have
						\begin{align*}
							\sum_{k,l}a_{ki}E_{kl}a_{lj} = F_{ij}.
						\end{align*}
						Using Theorem \ref{Thm : Transmutation}, we obtain, in $\B^{\pi}(E, F),$
						\[[a_{ki}] \cdot [a_{lj}] = [a_{ki}a_{lj}]\psi(g^{-1}_{k}h_{i}, g^{-1}_{l}).\]
						Thus,
						\begin{align*}
							F_{ij} = \big[\sum_{k,l}a_{ki}E_{kl}a_{lj}\big] &= \sum_{k,l}E_{kl}\big[a_{ki}a_{lj}\big]\\
							&= \sum_{k,l}E_{kl}[a_{ki}] \cdot [b_{lj}]\psi(g^{-1}_{k}h_{i}, g_{l})\\
							&= \sum_{k,l}E_{kl}[a_{ki}] \cdot [b_{lj}]\psi(g^{-1}_{k}, g_{l})\psi(h_{i}, g_{l})\\
							&= \sum_{k,l}E_{kl}\psi(g^{-1}_{k}, g_{l})\psi(h_{i}, g^{-1}_{k})[a_{ki}] \cdot [b_{lj}]\quad (\text{for $E_{kl} \neq 0$}).
						\end{align*}
						Similarly, we also have, in $\B^{\pi}(E, F)$,
						\begin{align*}
							(E^{-1})_{ij} = \big[\sum_{k,l}a_{ik}(F^{-1})_{kl}a_{jl}\big] &= \sum_{k,l}(F^{-1})_{kl}\big[a_{ik}a_{jl}\big]\\
							&= \sum_{k,l}(F^{-1})_{kl}[a_{ik}] \cdot [a_{jl}] \psi(g^{-1}_{i}h_{k}, g_{j})\\
							&= \sum_{k,l}(F^{-1})_{kl}[a_{ik}] \cdot [a_{jl}] \psi(g^{-1}_{i}, g_{j})\psi(h_{k}, g_{j})\\
							&= \sum_{k,l}(F^{-1})_{kl}[a_{ik}] \cdot [a_{jl}]\psi(h^{-1}_{l}, g_{j}) \psi(g^{-1}_{i}, g_{j}) \quad (\text{for $(F^{-1})_{kl} \neq 0$}).
						\end{align*}
						Thus, we obtain
						\[(E^{-1})_{ij}\psi(g_{i}, g_{j}) = \sum_{k,l}(F^{-1})_{kl}[a_{ik} ]\cdot [a_{jl}]\psi(h_{l}, g^{-1}_{j}),\]
						and then these relations ensure that there exists a unique algebra morphism
						\begin{align*}
							f : \B^{\Gamma}(E, F) &\longrightarrow \B^{\pi}(E, F)\\
							a_{ij} &\longmapsto [a_{ij}]
						\end{align*}
						The map $f$ is indeed an isomorphism, as follows once again from Remark \ref{Rk: twist trans} and \cite[Section 3]{MR2945585}.
						\epf
						%
						%
						\bl \label{Lem : structure B(E)}
						\begin{enumerate}
							\item For any $E \in \GL_{m}(k), F \in \GL_{n}(k)$, and $G \in  \GL_{p}(k)$, let $(\Gamma, \psi; \bar{g}, \bar{h}, \bar{k})$ be an $(E, F, G)$-datum. There exist algebra maps in $\MM^{k\Gamma}$:
							\begin{align*}
								\Delta^{G}_{E, F}: \B^{\Gamma}{(E, F)} &\longrightarrow \B^{\Gamma}(E, G) \otimes \B^{\Gamma}(G, F)\\
								a_{ij} &\longmapsto \sum^{p}_{\ell = 1} a_{i\ell} \otimes a_{\ell j};\\
								\varepsilon_{E} : \B^{\Gamma}(E) &\longrightarrow k\\
								a_{ij} &\longmapsto \delta_{ij}
							\end{align*}
							and for any $H \in \GL_{m}(k)$, the following diagrams commute
							\[\begin{tikzcd}[scale = 0.6]
								\B^{\Gamma}(E, F) \ar[d, "\Delta^{H}_{E, F}"]\ar[r, "\Delta^{G}_{E, F}"] &\B^{\Gamma}(E, G) \otimes \B^{\Gamma}(G, F) \ar[d, "\Delta^{H}_{E, G} \otimes 1"]\\
								\B^{\Gamma}(E, H) \otimes \B^{\Gamma}(H, F) \ar[r, "1 \otimes \Delta^{G}_{H, F}"] &\B^{\Gamma}(E, H) \otimes \B^{\Gamma}(H, G) \otimes \B^{\Gamma}(G, R)
							\end{tikzcd}\]
							\[\footnotesize\begin{tikzcd}                         
								\B^{\Gamma}(E, F) \ar[rd, equals, shift left = 1ex] \ar[d, "\Delta^{F}_{E, F}"] & &\B^{\Gamma}(E, F) \ar[rd, equals, shift left = 1ex] \ar[d, "\Delta^{E}_{E, F}"]&\\
								\B^{\Gamma}(E, F)  \otimes \B^{\Gamma}(F, F)  \ar[r, "1 \otimes \varepsilon_{F}"] &\B^{\Gamma}(E, F) &\B^{\Gamma}(E, E) \otimes \B^{\Gamma}(E, F) \ar[r, "\varepsilon_{E} \otimes 1"] &B^{\Gamma}(E, F)
							\end{tikzcd}\]
							\item For any $E \in \GL_{m}(k), F \in \GL_{n}(k)$, let $(\Gamma, \psi; \bar{g}, \bar{h})$ be an $(E, F)$-datum. There exists an algebra map in $\MM^{k\Gamma}$:
							\begin{align*}
								S^{\Gamma}_{E, F} \colon \B^{\Gamma}(E, F) &\longrightarrow \B^{\Gamma}(F, E)^{op, c}\\
								a &\longmapsto E^{-1}\tilde{a}^{t}\widetilde{F}
							\end{align*}
							such that the following diagrams commute
							\[\begin{tikzcd}[scale = 0.6]
								\B^{\Gamma}(E, E) \ar[d, "\Delta^{F}_{E, E}"]\ar[r, "\varepsilon_{E}"] &k \ar[r, "u"] &\B^{\Gamma}(E, F)\\
								\B^{\Gamma}(E, F) \otimes \B^{\Gamma}(F, E) \ar[rr, "1 \otimes S^{\Gamma}_{F, E}"] &&\B^{\Gamma}(E, F) \otimes \B^{\Gamma}(E, F) \ar[u, "m"]
							\end{tikzcd}\]
							\[\begin{tikzcd}[scale = 0.6]
								\B^{\Gamma}(E, E) \ar[d, "\Delta^{F}_{E, E}"]\ar[r, "\varepsilon_{E}"] &k \ar[r, "u"] &\B^{\Gamma}(F, E)\\
								\B^{\Gamma}(E, F) \otimes \B^{\Gamma}(F, E) \ar[rr, "S^{\Gamma}_{F, E} \otimes 1"] &&\B^{\Gamma}(F, E) \otimes \B^{\Gamma}(F, E) \ar[u, "m"]
							\end{tikzcd}\]
						\end{enumerate}
						\el
						\bpf
						This is a direct consequence of Proposition~\ref{Prop : B Gamma} and the transport of the $\MM^{k \Gamma}$-cogroupoid structures given in Theorem~\ref{Thm : Transmutation}.
						\epf
						\brq
						It follows that, for any $E \in \GL_n(k)$ together with an associated $E$-datum $\mathbf{\Gamma}$, the algebra $\B^{\Gamma}(E)$ is a Hopf algebra in the category $\MM^{k\Gamma, \psi}$.
						\erq
						\bdep \label{Prodef : Bpi}
						Let $\mathcal{G} = \bigsqcup_{n \geq 1} \GL_n(k)$ and let $\mathbf{\Gamma}$ be a $\mathcal{G}$-datum. The $\MM^{k\Gamma}$-cogroupoid, called the braided bilinear cogroupoid $\B^{\Gamma}$, is defined as follows:
						\begin{enumerate}
							\item $\ob(\B^{\Gamma}) = \mathcal{G}$;
							\item for $E, F \in \ob(\B^{\Gamma})$ the algebra $\B^{\Gamma}(E, F)$ is defined in Definition \ref{Def : Bpi}.
							\item the structural maps $\Delta^{\bullet}_{\bullet, \bullet}$, $\varepsilon_{\bullet}$ and $S^{\Gamma}_{\bullet, \bullet}$ are defined in Lemma \ref{Lem : structure B(E)}.
						\end{enumerate}
						\edep
						\bc \label{Cor: equi Bpi}
						Let $E \in \GL_{n}(k)$, $F \in \GL_{m}(k)$ be such that $\mathrm{tr}(E^{-1}E^{t}) = \mathrm{tr}(F^{-1}F^{t})$ and let $\mathbf{\Gamma} =\big(\Gamma, \psi; \bar{g}, \bar{h}\big)$ be an $(E, F)$-datum. Consider the subgroups $\Gamma_{0} = \langle g^{-1}_{i}g_{j} \rangle$ of $\Gamma$.
						Then there exist $k$-linear monoidal equivalences of categories
						\[
						(\MM^{k\Gamma})^{\B^{\Gamma}(E)} \cong^{\otimes} (\MM^{k\Gamma})^{\B^{\Gamma}(F)},
						\]
						\[
						\MM^{k\Gamma_{0} \# \B^{\Gamma}(E)} \cong^{\otimes} \MM^{k\Gamma_{0} \ltimes \B(E)}.
						\]
						\ec
						\bpf
						By our assumptions and Proposition--Definition~\ref{Prodef : Bpi}, we have a braided bilinear cogroupoid $\B^{\Gamma}$ with two objects $E$ and $F$, which, by Remark~\ref{remark: basis}, is a faithfully flat $k\Gamma$-cogroupoid whenever $\mathrm{tr}(E^{-1}E^{t}) = \mathrm{tr}(F^{-1}F^{t})$. The first monoidal equivalence then follows directly from Theorem~\ref{Thm: monoid func}, while the second equivalence is an immediate consequence of Corollary~\ref{Cor : equiboso}.
						\epf

						We now relate the bosonization of $\B^{\Gamma}(E)$ to some familiar Hopf algebras.
						Recall from \cite{MR3275027} that, for $A, B \in {\rm GL}_{n}(k)$, the algebra $\mathcal{G}(A, B)$ is defined as follows:
						\[\mathcal{G}(A, B) = k \langle (x_{ij})_{1 \leq i, j \leq n}, d \mid x^{t}Ax = Ad, xBx^{t} = Bd, dd^{-1} =1= d^{-1}d\rangle \]
						We also obtain the following results concerning bosonization:
						\bp\label{Prop : boso BpiE}
						Let $E \in \GL_{n}(k)$ and let $(\Gamma, \psi; \bar{g})$ be an $E$-datum. Let $g \in \Gamma$, we define the matrices $E'$ and $F'$ by
						\[
						E'_{ij} = \psi(g_{i}, g)\, E_{ij} \quad \text{and} \quad E''_{ij} = \psi(g_{i}^{-1}, g)\, E^{-1}_{ij},
						\]
						for $1 \leq i, j \leq n$. Then there exists a Hopf algebra morphism
						\begin{align*}
							\mathcal{G}(E', E'') &\longrightarrow k\Gamma \# \B^{\Gamma}(E)\\
							x_{ij} &\longmapsto gg_{i} \# a_{ij}\\
							d &\longmapsto g^{2} \# 1.
						\end{align*}
						\ep
						\bpf
						There exists a unique morphism of algebras
						\begin{align*}
							\phi \colon k \langle x_{ij}, d \rangle &\longrightarrow k\Gamma \# \B^{\Gamma}(E)\\
							x_{ij} &\longmapsto gg_{i} \# a_{ij}\\
							d &\longmapsto g^{2} \# 1.
						\end{align*}
						We have 
						\begin{align*}
							\phi \big(\sum_{k, \ell} x_{ki} E'_{k\ell} x_{\ell j}\big) &= \sum_{k, \ell} (gg_{k} \# a_{ki})E'_{k\ell} (gg_{\ell} \# a_{\ell j})\\
							&= \sum_{k, \ell} g^{2}g_{k}g_{\ell} \# a_{ki}E_{k\ell}a_{\ell j} \psi(g_{k}, g)\psi(g^{-1}_{k}g_{i}, gg_{\ell})\\
							&= \sum_{k, \ell} g^{2}g_{k}g_{\ell} \# a_{ki}E_{k\ell}a_{\ell j} \psi(g^{-1}_{k}, g_{\ell})\psi(g_{i}, gg_{\ell})\\
							&= \sum_{k, \ell} g^{2} \# a_{ki} \psi(g_{i}, g^{-1}_{k})E_{k\ell}\psi(g_{k}, g_{k})a_{\ell j} \psi(g_{i}, g) \quad \text{for $E_{k\ell} \neq 0$}\\
							&= (g^{2} \# 1)\big(1 \# \sum_{k, \ell} \tilde{a}_{ki} \tilde{E}_{k\ell} a_{\ell j}\big)\psi(g_{i}, g)\\
							&= E'_{ij}\phi(d).
						\end{align*}
						By a similar computation, we also obtain
						\[\phi \big(\sum_{k\ell} x_{ik}E''_{k \ell}x_{j \ell}\big) = F'_{ij}\phi(d).\]
						Hence, we obtain the desired algebra morphism. A straightforward verification shows that it is compatible with the Hopf algebra structures, which completes the proof.
						\epf
						Consider \[
						E_{q} =
						\begin{pmatrix}
							0 & 1 \\
							-q^{-1} & 0
						\end{pmatrix}, 
						\]
						it is well known that $\Oo = \B(E_{q})$. Now let $\Gamma$ be the infinite cyclic group $\mathbb{Z} = \langle z \rangle$, endowed with the bicharacter $\psi$ uniquely determined by $\psi(z, z) = \xi \in k^{*}$, and consider the $E_{q}$-datum $\big(\Z, \xi; \{z, z^{-1}\}\big)$. 
						One can readily check that $\B^{\Z}(E_{q}) = \mathcal{O}_{q \xi^{-2}, q \xi^{2}}(\mathrm{SL}_{2}(k))$. Under the assumptions of the above proposition, we have
						\[E'_{q} =
						\begin{pmatrix}
							0 & \xi \\
							-q^{-1}\xi^{-1} & 0
						\end{pmatrix} \quad \text{and} \quad E''_{q} =\begin{pmatrix}
							0 & \xi^{-1} \\
							-q^{-1}\xi & 0
						\end{pmatrix}.
						\]
						A straightforward computation shows that $\mathcal{G}(E'_{q}, E''_{q}) = \mathcal{O}_{q \xi^{-2}, q \xi^{2}}(\mathrm{GL}_{2}(k))$, which is defined in \cite{Tak90}.
						
						%
						%
						For all the results that follow, we work with $\mathbb{Z} = \langle z \rangle$, the infinite cyclic group endowed with the bicharacter $\psi \colon \mathbb{Z} \times \mathbb{Z} \to k$ uniquely determined by $\psi(z, z) = \xi$.
						\bc \label{Cor : equibosoBE}
						Let $E \in \GL_{n}(k)$, and consider the subset of integers
						\[
						\mathcal{I}_{E} := \big\{ \bar{d} = (d_{1}, \cdots, d_{n}) \mid \; d_{i} + d_{j} = 1\,  \text{whenever } E_{ij} \neq 0 \text{ or } E^{-1}_{ij} \neq 0; 1 \leq i, j \leq n \big\}.
						\] 
						For any $\bar{d} \in \mathcal{I}_{E}$, the triple $\big(\Z, \xi; z^{2\bar{d} -1}\big)$, with $z^{2\bar{d} -1} := (z^{2d_{1} -1}, \cdots, z^{2d_{n} -1})$, is an $E$-datum, and there exists a Hopf algebra isomorphism
						\begin{align*}
							\varphi \colon	\mathcal{G}(E', F') &\longrightarrow k2\mathbb{Z} \# \B^{\Z}(E), \\
							x_{ij} &\longmapsto z^{2d_{i}} \# a_{ij}, \\
							d &\longmapsto z^{2} \# 1.
						\end{align*}
						where $E'$ and $F'$ are the matrices defined by
						\[
						E'_{ij} = \xi^{2d_{i} - 1} E_{ij}, \quad F'_{ij} = \xi^{-2d_{i} + 1} E^{-1}_{ij}, \quad 1 \leq i, j \leq n, \ \bar{d} \in \mathcal{I}_{E}.
						\]
						\ec
						\bpf
						Applying Proposition~\ref{Prop : boso BpiE} with $g = z$ and $g_{i} = z^{2d_{i} - 1}$, there exists a unique Hopf algebra morphism $\varphi$ as announced. Define
						\begin{align*}
							\phi \colon	k2\Z \# \B^{\Z}(E) &\longrightarrow \mathcal{G}(E', F')\\
							1 \# a_{ij} &\longmapsto d^{-d_{i}}x_{ij}\\
							z^{2} \# 1 &\longmapsto d. 
						\end{align*}
						Note that it is straightforward to check that $\phi\big((1 \# a_{ij}).(z^{2} \# 1)\big) = \phi(1 \# a_{ij})\phi((z^{2} \# 1))$, so $\phi$ is a well-defined algebra morphism and is indeed the inverse of $\varphi$.
						\epf
						\brq \label{Rk: k2Z cogroup}
						In the case of the infinite cyclic group $\mathbb{Z} = \langle z \rangle$, consider the subsets of integers $\mathcal{I}_{E}$ for $E \in \mathcal{G} = \bigsqcup_{n \geq 1} \GL_{n}(k)$, as given in Corollary~\ref{Cor : equibosoBE}. For $E \in \GL_{m}(k)$, $F \in \GL_n(k)$ and for $\bar{d}^{E}= (d^{E}_{1}, \cdots, d^{E}_{m}) \in \mathcal{I}_{E}$, $\bar{d}^{F}= (d^{F}_{1}, \cdots, d^{F}_{n}) \in \mathcal{I}_{F}$, the map
						\begin{align*}
							\mathrm{ad}_{E, F} \colon \B(E, F) &\longrightarrow \B(E, F) \otimes k2\mathbb{Z}, \\
							a_{ij} &\longmapsto a_{ij} \otimes z^{2(d^{F}_{j} - d^{E}_{i})}
						\end{align*}
						defines a $k2\mathbb{Z}$-coaction on $\B(E, F)$. In this setting, we may also view $\B$ as a cogroupoid over $\mathcal{M}^{k2\mathbb{Z}, \varepsilon \otimes \varepsilon}$.
						\erq
						In light of this remark, we conclude this section with the following result:

						\bc \label{Cor : G(E, F)}
						Let $q \in k^{*}$. Let $E \in \GL_{n}(k)$ such that $\operatorname{tr}(E^{-1}E^{\mathrm{t}}) = -q - q^{-1}.$ Consider the set
						\[
						\mathcal{I}_{E} := \big\{ \bar{d} = (d_{1}, \cdots, d_{n}) \mid \; d_{i} + d_{j} = 1\,  \text{whenever } E_{ij} \neq 0 \text{ or } E^{-1}_{ij} \neq 0; 1 \leq i, j \leq n \big\},\] and the $E$-datum $(\Z, \xi; z^{2\bar{d}-1})$.
						Then we have a $k$-linear equivalence of monoidal categories
						\[\MM^{\mathcal{G}(E', F')} \cong^{\otimes} \MM^{\mathcal{O}_{q}({\rm GL}_2(k))},\]
						where $E'$ and $F'$ are the matrices defined by
						\[
						E'_{ij} = \xi^{2d_{i} - 1} E_{ij}, \quad F'_{ij} = \xi^{-2d_{i} + 1} E^{-1}_{ij}, \quad 1 \leq i, j \leq n, \ \bar{d} \in \mathcal{I}_{E}.
						\]
						\ec
						\bpf
						We have
						\begin{align*}
							\MM^{\mathcal{G}(E', F')} &\cong^{\otimes} \MM^{k2\Z \# \B^{\Z}(E)} \quad (\text{since Corollary \ref{Cor : equibosoBE},})\\
							&\cong^{\otimes} \MM^{k2\Z \ltimes  \B(E)} \quad (\text{by Proposition \ref{Prop : B Gamma} and Corollary \ref{Cor : equiboso}})\\
							&\cong^{\otimes} \MM^{k2\Z \ltimes  \mathcal{O}_{q}(\text{SL}_{2}(k))} \quad (\text{by Remark \ref{Rk: k2Z cogroup}})\\
							&\cong^{\otimes} \MM^{\mathcal{O}_{q}(\text{GL}_{2}(k))} \quad (\text{by Remark \ref{Rem : semidirect} for $n = 2$}).
						\end{align*}
						This establishes our desired equivalence.
				
						\epf
						We also observe that this corollary recovers part of C. Mrozinski's theorem \cite[Theorem 1.1]{MR3275027}.
						%

						\appendix
						\renewcommand{\thesection}{}
						\section*{Appendix : Proof of Proposition \ref{Prop : morphismGLn}}
						Let $q \in k^{*}$, and consider the datum $\big(\Gamma, \psi, (g_{1}, \cdots, g_{n})\big)$, where $\Gamma$ is an abelian group endowed with a bicharacter $\psi \colon \Gamma \times \Gamma \to k^{*}$, and $g_{1}, \dots, g_{n} \in \Gamma$ are elements satisfying $\prod_{i = 1}^{n} g_{i} = 1$. Let $\bar{p} =(p_{ij})_{1 \leq i, j \leq n} \in M_{n}(k)$ with $p_{ij} = \psi(g^{-1}_{i}g_{j}, g_{n})$. We conclude this paper by proving the existence of a Hopf algebra morphism $$\Opgl \longrightarrow  k\Gamma \#\mathcal{O}^{\Gamma}_{q}(\mathrm{SL}_{n}(k))$$ thereby providing the proof of Proposition \ref{Prop : morphismGLn}. We begin with the following proposition:
						
						\begin{lemma*}
						There exists a unique algebra morphism
							\begin{align*}
						 \mathcal{O}^{\bar{p}}_{q}(\mathrm{M}_{n}(k)) &\longrightarrow k\Gamma \#\mathcal{O}^{\Gamma}_{q}(\mathrm{SL}_{n}(k))\\
							x_{ij} &\longmapsto
							g^{-1}_{n}g_{i} \# a_{ij},
						\end{align*}
						\end{lemma*}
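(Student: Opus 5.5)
Since $\mathcal{O}^{\bar{p}}_{q}(\mathrm{M}_{n}(k))$ is presented by the generators $x_{ij}$ subject only to the four families of quadratic relations of Definition~\ref{Oqpr} (with $\bar r=\bar p$, and without the determinant relation), uniqueness is automatic. For existence, I would first define the algebra map on the free algebra $k\langle x_{ij}\rangle$ by $x_{ij}\mapsto X_{ij}:=g_n^{-1}g_i\# a_{ij}$, and then check that the images satisfy the four relations. The tool is the smash product rule $(h\#a)(k\#b)=hk\#(a\leftarrow k)b$. Here $H=k\Gamma$ is coquasitriangular via $\psi$, and by Remark~\ref{rk:YD-coquasi} the action is $a_{ij}\leftarrow k=\psi(g_i^{-1}g_j,k)\,a_{ij}$. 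This uses the coaction $a_{ij}\mapsto a_{ij}\otimes g_i^{-1}g_j$ from Proposition-Definition~\ref{def:Ogamma_SLn}. Consequently
\[
X_{ij}X_{kl}=\psi(g_i^{-1}g_j,\,g_n^{-1}g_k)\; g_n^{-2}g_ig_k\#a_{ij}a_{kl}.
\]
Each relation therefore reduces to a scalar identity between $\psi$-factors, combined with the corresponding relation among the $a$'s in $\mathcal{O}^{\Gamma}_{q}(\mathrm{SL}_{n}(k))$. The group-like parts agree automatically, because both monomials in each relation involve the same multiset of row indices.

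I would then go through the relations one at a time. For the row relation $x_{im}x_{ik}=qp_{mk}x_{ik}x_{im}$ with $k<m$, the smash product gives
\[
\psi(g_i^{-1}g_m,g_n^{-1}g_i)\,a_{im}a_{ik}=qp_{mk}\,\psi(g_i^{-1}g_k,g_n^{-1}g_i)\,a_{ik}a_{im}.
\]
Substituting $a_{im}a_{ik}=q\psi(g_m^{-1}g_k,g_i)a_{ik}a_{im}$ and $p_{mk}=\psi(g_m^{-1}g_k,g_n)$, bimultiplicativity shows that both sides equal $q\,\psi(g_m^{-1}g_k,\,g_n^{-1}g_i)\cdots$. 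I would do the column relation and the $k>m$ cross relation in the same way, using $p_{ij}p_{ji}=1$ and $p_{ii}=1$; the latter two are immediate since $p_{ij}=\psi(g_i^{-1}g_j,g_n)$. At each step I would expand every $\psi$ into factors $\psi(g_\alpha,g_\beta)$ and compare exponents.

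The main obstacle will be the inhomogeneous relation
\[
p_{ji}x_{jm}x_{ik}-p_{mk}x_{ik}x_{jm}=(q-q^{-1})x_{im}x_{jk}\qquad (i<j,\ k<m),
\]
because its three monomials each carry a different $\psi$-prefactor, and these must all be matched against the three coefficients $\psi(g_j^{-1}g_m,g_i)\psi(g_i,g_j)$, $\psi(g_k,g_j)$ and $\psi(g_m,g_j)$ in the defining relation of $\mathcal{O}^{\Gamma}_{q}(\mathrm{SL}_{n}(k))$. My plan is to divide the image relation by the common factor attached to $x_{ik}x_{jm}$ and show that the two remaining ratios coincide with those of the defining relation. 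Concretely, this means checking
\[
\frac{p_{ji}\,\psi(g_j^{-1}g_m,g_n^{-1}g_i)}{p_{mk}\,\psi(g_i^{-1}g_k,g_n^{-1}g_j)}
=\frac{\psi(g_j^{-1}g_m,g_i)\psi(g_i,g_j)}{\psi(g_k,g_j)}
\]
together with the analogous identity for the $x_{im}x_{jk}$ term. All $g_n$-dependence should cancel by bimultiplicativity, using the symmetric pairings $\psi(g_i,g_j)$ versus $\psi(g_j,g_i)$ together with the exponents; if it does not cancel, I would revisit the sign conventions for the action (left versus right $\psi$-slot). Note that the condition $\prod g_i=1$ is not needed here; it only matters later for the determinant in the full Proposition~\ref{Prop : morphismGLn}.
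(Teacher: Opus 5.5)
Your proposal is correct and follows the same route as the paper's proof: define the map on $k\langle x_{ij}\rangle$, compute products via $(h\#a)(k\#b)=hk\#(a\leftarrow k)b$ with $a_{ij}\leftarrow k=\psi(g_i^{-1}g_j,k)\,a_{ij}$, and reduce each of the four relations to a bimultiplicativity identity. The two ratio identities you set up for the inhomogeneous relation do hold as stated, so no change of convention is needed, and your uniform product formula also covers the row-$n$ cases that the paper treats separately, since $g_n^{-1}g_n=1$.
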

						\bpf
						Consider the morphism of algebras
						\begin{align*}
							f \colon k\langle x_{ij} \rangle &\longrightarrow k\Gamma \#\mathcal{O}^{\Gamma}_{q}(\mathrm{SL}_{n}(k))\\
						x_{ij} &\longmapsto
						g^{-1}_{n}g_{i} \# a_{ij}.
						\end{align*}
						We have
						\begin{enumerate}
							\item for $k < m$ and for any $i <n$,
							\begin{align*}
								f(x_{ik}x_{im}) &= (g^{-1}_{n}g_{i} \# a_{ik})(g^{-1}_{n}g_{i} \# a_{im})\\
								&= g^{-2}_{n}g^{2}_{i} \# a_{ik}a_{im} \psi(g^{-1}_{i}g_{k}, g^{-1}_{n}g_{i}).\\
								f(x_{im}x_{ik}) &= (g^{-1}_{n}g_{i} \# a_{im})(g^{-1}_{n}g_{i} \# a_{ik})\\
								&= g^{-2}_{n}g^{2}_{i} \# a_{im}a_{ik} \psi(g^{-1}_{i}g_{m}, g^{-1}_{n}g_{i}).\\
								&= g^{-2}_{n}g^{2}_{i} \# q \psi(g^{-1}_{m}g_{k}, g_{i})\psi(g^{-1}_{i}g_{m}, g^{-1}_{n}g_{i}) a_{ik}a_{im}.
							\end{align*}
							Hence, $f(x_{im}x_{ik}) = q \psi(g_{k}g^{-1}_{m}, g_{n})f(x_{ik}x_{im}) = qp_{mk}f(x_{ik}x_{im}).$
							And for $k <m$,
					\begin{align*}
						f(x_{nk}x_{nm}) &= (1 \# a_{nk})(1\# a_{nm}) = 1 \# a_{nk}a_{nm},\\
								f(x_{nm}x_{nk}) &= (1 \# a_{nm})(1 \# a_{nk})\\
								&= 1 \# a_{nm}a_{nk} \\
								&= 1 \# q \psi(g^{-1}_{m}g_{k}, g_{n})a_{nk}a_{nm}\\
								&= qp_{mk} f(x_{nk}x_{nm})
							\end{align*}
Thus, for $k < m$ and for any $i \leq n$, $f(x_{im}x_{ik}) = qp_{mk} f(x_{ik}x_{im})$.
							\item For $i < j < n$ and for any $ k \leq n$,
							\begin{align*}
								f(x_{ik}x_{jk}) &= (g^{-1}_{n}g_{i} \# a_{ik})(g^{-1}_{n}g_{j} \# a_{jk})\\
								&= g^{-2}_{n}g_{i}g_{j} \# a_{ik}a_{jk} \psi(g^{-1}_{i}g_{k}, g^{-1}_{n}g_{j}),\\
								f(x_{jk}x_{ik}) &= (g^{-1}_{n}g_{j} \# a_{jk})(g^{-1}_{n}g_{i} \# a_{ik})\\
								&= g^{-2}_{n}g_{i}g_{j} \# a_{jk}a_{ik} \psi(g^{-1}_{j}g_{k}, g^{-1}_{n}g_{i})\\
								&= g^{-2}_{n}g_{i}g_{j} \# q\psi(g^{-1}_{j}g_{k}, g^{-1}_{i}) \psi(g^{-1}_{i}g_{k}, g_{j}) \psi(g^{-1}_{j}g_{k}, g^{-1}_{n}g_{i})a_{ik}a_{jk} \\
								&= g^{-2}_{n}g_{i}g_{j} \# q\psi(g^{-1}_{i}g_{k}, g_{j}) \psi(g^{-1}_{j}g_{k}, g^{-1}_{n})a_{ik}a_{jk}.
							\end{align*}
							So 
								$f(x_{jk}x_{ik}) = q\psi(g_{j}g^{-1}_{i}, g_{n}) f(x_{ik}x_{jk}) = qp_{ij}f(x_{ik}x_{jk}).$
					
						We also have, for $i <n$, $j = n$ and for any $1 \leq k \leq n$,
							\begin{align*}
								f(x_{ik}x_{nk}) &= (g^{-1}_{n}g_{i} \# a_{ik})(1 \# a_{nk}) = g^{-1}_{n}g_{i} \# a_{ik}a_{nk}.\\
								f(x_{nk}x_{ik}) &= (1 \# a_{nk})(g^{-1}_{n}g_{i} \# a_{ik})\\
								&= g^{-1}_{n}g_{i} \# a_{nk}a_{ik} \psi(g^{-1}_{n}g_{k}, g^{-1}_{n}g_{i})\\
								&= g^{-1}_{n}g_{i} \# q\psi(g^{-1}_{n}g_{k}, g^{-1}_{i}) \psi(g^{-1}_{i}g_{k}, g_{n}) \psi(g^{-1}_{n}g_{k}, g^{-1}_{n}g_{i})a_{ik}a_{nk}\\
								&= g^{-1}_{n}g_{i} \# q\psi(g^{-1}_{i}g_{k}, g_{n}) \psi(g^{-1}_{n}g_{k}, g^{-1}_{n})a_{ik}a_{nk}.
							\end{align*}
							Thus \begin{align*}
								f(x_{nk}x_{ik}) &= g^{-1}_{n}g_{i} \# q \psi(g^{-1}_{i}g_{n}, g_{n})f(x_{ik}x_{nk}) = qp_{in}f(x_{ik}x_{nk}).
							\end{align*}
							and hence, for $i < j \leq n$ and for any $ k \leq n$, we obtain \[f(x_{jk}x_{ik}) = q\psi(g_{j}g^{-1}_{i}, g_{n}) f(x_{ik}x_{jk}) = qp_{ij}f(x_{ik}x_{jk}).\]
							\item For $i < j < n$ and $k > m$, 
							\begin{align*}
								f(x_{ik}x_{jm}) &= (g^{-1}_{n}g_{i} \# a_{ik})(g^{-1}_{n}g_{j} \# a_{jm})\\
								&= g^{-2}_{n}g_{i}g_{j} \# \psi(g^{-1}_{i}g_{k}, g^{-1}_{n}g_{j})a_{ik}a_{jm}.
							\end{align*}
								On the other hand,
								\begin{align*}
								f(x_{jm}x_{ik}) &= (g^{-1}_{n}g_{j} \# a_{jm})(g^{-1}_{n}g_{i} \# a_{ik})\\
								&= g^{-2}_{n}g_{i}g_{j} \# \psi(g^{-1}_{j}g_{m}, g^{-1}_{n}g_{i})a_{jm}a_{ik}\\
								&= g^{-2}_{n}g_{i}g_{j} \# \psi(g_{j}g^{-1}_{m}, g_{i}) \psi(g^{-1}_{i}g_{k}, g_{j})\psi(g^{-1}_{j}g_{m}, g^{-1}_{n}g_{i})a_{ik}a_{jm}\\
								&= g^{-2}_{n}g_{i}g_{j} \# \psi(g^{-1}_{i}g_{k}, g_{j})\psi(g^{-1}_{j}g_{m}, g^{-1}_{n})a_{ik}a_{jm}.
							\end{align*}
							Thus \[\psi(g^{-1}_{i}g_{k}, g_{n}) f(x_{ik}x_{jm}) = \psi(g^{-1}_{j}g_{m}, g_{n})f(x_{jm}x_{ik}),\]
						and	hence \[p_{ji}f(x_{jm}x_{ik}) = p_{mk}f(x_{ik}x_{jm}).\]
							For $i < n$ and $k > m$, we also have
							\begin{align*}
								f(x_{ik}x_{nm}) &= (g^{-1}_{n}g_{i} \# a_{ik})(1 \# a_{nm}) = g^{-1}_{n}g_{i} \# a_{ik}a_{nm},
							\end{align*}
							and
							\begin{align*}
								f(x_{nm}x_{ik}) &= (1 \# a_{nm})(g^{-1}_{n}g_{i} \# a_{ik})\\
								&=  g^{-1}_{n}g_{i} \# \psi(g^{-1}_{n}g_{m}, g^{-1}_{n}g_{i})a_{nm}a_{ik}\\
								&= g^{-1}_{n}g_{i} \# \psi(g_{n}g^{-1}_{m}, g_{i}) \psi(g^{-1}_{i}g_{k}, g_{n})\psi(g^{-1}_{n}g_{m}, g^{-1}_{n}g_{i})a_{ik}a_{nm}\\
								&= g^{-1}_{n}g_{i} \#  \psi(g^{-1}_{i}g_{k}, g_{n})\psi(g^{-1}_{n}g_{m}, g^{-1}_{n})a_{ik}a_{nm}
							\end{align*}
							Thus
						$p_{ni}f(x_{nm}x_{ik}) = p_{mk}f(x_{ik}x_{nm}).$
							Finally, \[p_{ji}f(x_{jm}x_{ik}) = p_{mk}f(x_{ik}x_{jm}) \quad \text{for $i < j \leq n$ and $k >m$}.\]
							\item For $i < j < n$ and $k < m$,
							\begin{align*}
								&f(x_{jm}x_{ik}) - f(x_{ik}x_{jm})= (g^{-1}_{n}g_{j} \# a_{jm})(g^{-1}_{n}g_{i} \# a_{ik}) - (g^{-1}_{n}g_{i} \# a_{ik})(g^{-1}_{n}g_{j} \# a_{jm}) \\
								&= g^{-2}_{n}g_{i}g_{j} \# \big(\psi(g^{-1}_{j}g_{m}, g^{-1}_{n}g_{i})a_{jm}a_{ik} - \psi(g^{-1}_{i}g_{k}, g^{-1}_{n}g_{j})a_{ik}a_{jm}\big).
							\end{align*}
							Therefore, we have
							\begin{align*}
								&p_{ji}f(x_{jm}x_{ik}) - p_{mk} f(x_{ik}x_{jm})\\
								&= g^{-2}_{n}g_{i}g_{j} \# \big(\psi(g^{-1}_{j}g_{i}, g_{n})\psi(g^{-1}_{j}g_{m}, g^{-1}_{n}g_{i})a_{jm}a_{ik} - \psi(g^{-1}_{m}g_{k}, g_{n})\psi(g^{-1}_{i}g_{k}, g^{-1}_{n}g_{j})a_{ik}a_{jm}\big)
							\end{align*}
							The factor in $\mathcal{O}^{\Gamma}_{q}(\mathrm{SL}_{n}(k))$ equals
							\begin{align*}
								&\psi(g_{i}, g_{n})\psi(g^{-1}_{j}, g_{i})\psi(g_{m}, g^{-1}_{n}g_{i})a_{jm}a_{ik} - \psi(g^{-1}_{m}, g_{n})\psi(g_{k}, g_{j})\psi(g^{-1}_{i}, g^{-1}_{n}g_{j})a_{ik}a_{jm}\\
								&= \psi(g^{-1}_{i}, g^{-1}_{n}g_{j})\psi(g_{m}, g^{-1}_{n})\Big(\psi(g^{-1}_{j}g_{m}, g_{i})\psi(g_{i}, g_{j})a_{jm}a_{ik} -\psi(g_{k}, g_{j})a_{ik}a_{jm}\Big)\\
								&= (q - q^{-1})\psi(g_{i}g^{-1}_{m}, g_{n})\psi(g^{-1}_{i}, g_{j}) \psi(g_{m}, g_{j})a_{im}a_{jk}\\
								&= (q - q^{-1})\psi(g^{-1}_{i}g_{m}, g^{-1}_{n}g_{j})a_{im}a_{jk}.
							\end{align*}
							Thus
							\begin{align*}
								p_{ji}f(x_{jm}x_{ik}) - p_{mk} f(x_{ik}x_{jm}) &= g^{-2}_{n}g_{i}g_{j} \# (q - q^{-1})\psi(g^{-1}_{i}g_{m}, g^{-1}_{n}g_{j})a_{im}a_{jk}\\ 
								&= (q - q^{-1})(g^{-1}_{n}g_{i} \# a_{im})(g^{-1}_{n}g_{j} \# a_{jk})\\
								&= (q - q^{-1})f(x_{im}x_{jk}).
							\end{align*}
							The case $i < n, j = n$ and $k <m$ is handled in exactly the same way.
								\end{enumerate}
	These computations establish the existence of a unique algebra morphism \[f \colon
			\mathcal{O}^{\bar{p}}_{q}(\mathrm{M}_{n}(k)) \longrightarrow k\Gamma \#\mathcal{O}^{\Gamma}_{q}(\mathrm{SL}_{n}(k))\] as desired.
							\epf
\begin{proposition*}
	There exists a Hopf algebra morphism
	\begin{align*}
 \bar{f} \colon \mathcal{O}^{\bar{p}}_{q}(\mathrm{GL}_{n}(k)) &\longrightarrow k\Gamma \#\mathcal{O}^{\Gamma}_{q}(\mathrm{SL}_{n}(k))\\
		x_{ij} &\longmapsto
		g^{-1}_{n}g_{i} \# a_{ij},\\
		D^{\bar{p}}_{q} &\longmapsto g_{n}^{-n} \# 1.
	\end{align*}
\end{proposition*}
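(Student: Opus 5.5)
Starting from the algebra morphism $f\colon \mathcal{O}^{\bar{p}}_{q}(\mathrm{M}_{n}(k)) \to k\Gamma \#\mathcal{O}^{\Gamma}_{q}(\mathrm{SL}_{n}(k))$ of the preceding Lemma, the plan is to extend it over the localisation at the quantum determinant and then check the Hopf compatibilities on generators.

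\textbf{Image of the determinant.} The extension exists once $f(D^{\bar{p}}_{q})$ is invertible. Write
\[D^{\bar{p}}_{q}=\sum_{\sigma}(-q)^{-\ell(\sigma)}c_\sigma\, x_{1\sigma(1)}\cdots x_{n\sigma(n)}\]
with the $\bar{p}$-dependent scalars $c_\sigma$ of the definition of $\Opgl$ (in the form given by Lemma~\ref{Lem : p,r}). Then:
\begin{itemize}
\item The ordered product $(g_n^{-1}g_1\#a_{1\sigma(1)})\cdots(g_n^{-1}g_n\#a_{n\sigma(n)})$ is computed inductively in the smash product, as in Lemma~\ref{lem: transmute det}.
\item Each step moves a group element $g_n^{-1}g_j$ past $a_{i\sigma(i)}$, whose $k\Gamma$-degree is $g_i^{-1}g_{\sigma(i)}$. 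This produces the scalar $\prod_{i<j}\psi(g_i^{-1}g_{\sigma(i)},g_n^{-1}g_j)$ and the group part $g_n^{-n}\prod_i g_i=g_n^{-n}$, using $\prod g_i=1$.
\item Split $\psi(g_i^{-1}g_{\sigma(i)},g_n^{-1}g_j)=\psi(g_i^{-1}g_{\sigma(i)},g_j)\,\psi(g_i^{-1}g_{\sigma(i)},g_n)^{-1}$. The first factors are exactly the coefficients in the determinant relation of $\mathcal{O}^{\Gamma}_{q}(\mathrm{SL}_{n}(k))$.
\item The second factors, combined with $c_\sigma$ (recall $p_{ij}=\psi(g_i^{-1}g_j,g_n)$), should reduce to a product $\prod_{i<j}\psi(\cdot,g_n)$ that is independent of $\sigma$. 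The reason is that $\prod_{i<j}\psi(g_i^{-1}g_{\sigma(i)},g_n)^{-1}$ telescopes to an expression in $\#\{j>i\}$ and the inversions of $\sigma$, which $c_\sigma$ compensates.
\end{itemize}
From this I expect $f(D^{\bar{p}}_q)=\lambda\, g_n^{-n}\#1$ for an explicit scalar $\lambda$, and with the stated normalisation $\lambda=1$. This combinatorial identity is the main obstacle. I would first verify it for $n=2$ and then argue by induction on the row index, exactly as in Lemma~\ref{lem: transmute det}.

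\textbf{Extension to $\mathcal{O}^{\bar{p}}_q(\mathrm{GL}_n(k))$.} With $f(D^{\bar{p}}_q)=g_n^{-n}\#1$ (invertible, inverse $g_n^{n}\#1$), the relations involving $(D^{\bar{p}}_q)^{-1}$ hold by sending it to $g_n^{n}\#1$. This matches Proposition~\ref{Prop : morphismGLn}, and the universal property of the presentation gives $\bar f$.

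\textbf{Coalgebra compatibility.} It suffices to check on generators, since $\bar f$ and $\Delta$ are algebra maps. By the bosonization formula,
\[\Delta(g_n^{-1}g_i\#a_{ij})=\sum_k (g_n^{-1}g_i\#a_{ik})\otimes(g_n^{-1}g_i\,g_i^{-1}g_k\#a_{kj}),\]
using the coaction $a_{ik}\mapsto a_{ik}\otimes g_i^{-1}g_k$ and the fact that $\Delta(a_{ij})=\sum_k a_{ik}\otimes a_{kj}$ in the transmuted algebra. The right-hand side equals $\sum_k \bar f(x_{ik})\otimes\bar f(x_{kj})$. The counit is checked directly, and $g_n^{\mp n}\#1$ is grouplike, consistent with $D^{\bar{p}}_q$ being grouplike. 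Compatibility with the antipode is then automatic for a bialgebra map between Hopf algebras.
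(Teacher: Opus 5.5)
Your outline is the same as the paper's proof. You extend the algebra map $f$ of the preceding lemma by checking the determinant relation, compute the ordered product in the smash product (scalar $\prod_{i<j}\psi(g_i^{-1}g_{\sigma(i)},g_n^{-1}g_j)$, group part $g_n^{-n}$), split off the factors $\psi(g_i^{-1}g_{\sigma(i)},g_n)^{-1}$, and check $\Delta$ and $\varepsilon$ on generators, with the antipode then automatic.

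The gap is that the only non-formal step is announced rather than proved. Moreover, $\lambda=1$ is not a normalisation you are free to impose. If $c_\sigma\prod_{i<j}\psi(g_i^{-1}g_{\sigma(i)},g_n)^{-1}$ depended on $\sigma$, then $f(D^{\bar p}_q)$ would not be a multiple of $g_n^{-n}\#1$ at all. If it were a constant different from $1$, the map in the statement would not exist. Since $c_\sigma=\prod_{i<j,\ \sigma(i)>\sigma(j)}p_{\sigma(j)\sigma(i)}$ with $p_{ij}=\psi(g_i^{-1}g_j,g_n)$, what you need is the identity in $\Gamma$
\[
\prod_{\substack{i<j\\ \sigma(i)>\sigma(j)}} g_{\sigma(i)}g_{\sigma(j)}^{-1}=\prod_{i=1}^{n-1}\bigl(g_i^{-1}g_{\sigma(i)}\bigr)^{n-i}.
\]
It gives $c_\sigma=\prod_{i<j}\psi(g_i^{-1}g_{\sigma(i)},g_n)$, hence exactly $\lambda=1$. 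This is the final lemma of the paper's appendix.

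The route you suggest for this identity, a row-by-row induction as in Lemma~\ref{lem: transmute det}, fails because the truncated identity is false. Take $n=3$ and $\sigma(1)=3,\ \sigma(2)=2,\ \sigma(3)=1$. The pairs with $j\le 2$ give $g_3g_2^{-1}$ on the left and $g_1^{-1}g_3$ on the right; only the full products agree, both being $g_1^{-2}g_3^{2}$. The identity genuinely uses that $\sigma$ is a bijection of all of $\{1,\dots,n\}$.

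The paper proves it by comparing exponents. Let $p=\sigma^{-1}(k)$ and let $L$ be the number of values smaller than $k$ placed before position $p$. The exponent of $g_k$ on the left is $\bigl((k-1)-L\bigr)-\bigl((p-1)-L\bigr)=k-\sigma^{-1}(k)$, and on the right it is $(n-\sigma^{-1}(k))-(n-k)$, which is the same. An induction on $\ell(\sigma)$ also works: passing from $\sigma$ to $\sigma\circ(i,i+1)$ with $\sigma(i)<\sigma(i+1)$ multiplies both sides by $g_{\sigma(i+1)}g_{\sigma(i)}^{-1}$. With this lemma supplied, the rest of your argument is correct and agrees with the paper's.
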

\bpf
Using the morphism $f$ in the preceding lemma, we can extend $f$ to an algebra morphism $\bar{f}$, as claimed. Indeed, $f$ also preserves the final defining relation of
$ \mathcal{O}^{\bar{p}}_{q}(\mathrm{GL}_{n}(k))$, namely,
							\[f\Big((D^{\bar{p}}_{q})^{-1}\sum_{\sigma \in S_{n}} \prod_{\substack{1 \leq i< j \leq n\\ \sigma(i) > \sigma(j)}} (-q^{-1}p_{\sigma(j) \sigma(i)}) x_{1\sigma(1)} x_{2\sigma(2)} \cdots  x_{n\sigma(n)} \Big) = 1 \# 1. \]
							To verify this, we compute:
							\begin{align*}
								&f\Big(\sum_{\sigma \in S_{n}} \Big(\prod_{\substack{1 \leq i< j \leq n\\ \sigma(i) > \sigma(j)}}-q^{-1}p_{\sigma(j) \sigma(i)}\Big)x_{1\sigma(1)} x_{2\sigma(2)} \cdots  x_{n\sigma(n)} \Big) \\
								&= \sum_{\sigma \in S_{n}} \Big(\prod_{\substack{1 \leq i< j \leq n\\ \sigma(i) > \sigma(j)}} -q^{-1}p_{\sigma(j) \sigma(i)}\Big)\big(g^{-1}_{n}g_{1} \# x_{1\sigma(1)}\big)\cdots \big(g^{-1}_{n}g_{n -1} \# x_{n -1\sigma(n -1)}\big)\big(1 \# x_{n\sigma(n)}\big).
							\end{align*}
							Applying the same argument as in Lemma~\ref{lem: transmute det}, we find that
							\begin{align*}
								&=\sum_{\sigma \in S_{n}} \Bigg(\prod_{\substack{1 \leq i< j \leq n\\ \sigma(i) > \sigma(j)}} (-q^{-1}p_{\sigma(j) \sigma(i)})\Bigg) \big(g^{-n}_{n} \# \prod_{1 \leq i < j \leq n -1} \psi(g^{-1}_{i}g_{\sigma(i)}, g^{-1}_{n}g_{j}) \prod^{n -1}_{i = 1} x_{i\sigma(i)}\big)\big(1 \# x_{n\sigma(n)}\big)\\
								&= \sum_{\sigma \in S_{n}} -q^{-\ell(\sigma)}\Bigg(\prod_{\substack{1 \leq i< j \leq n\\ \sigma(i) > \sigma(j)}} (p_{\sigma(j) \sigma(i)})\Bigg)\Bigg(\prod_{1 \leq i < j \leq n -1} \psi(g^{-1}_{i}g_{\sigma(i)}, g^{-1}_{n}g_{j}) \Bigg) \big(g^{-n}_{n} \# \prod^{n}_{i = 1} x_{i\sigma(i)}\big).
							\end{align*}
						All that remains is to look at the coefficient:
						\begin{align*}
							&\Bigg(\prod_{\substack{1 \leq i< j \leq n\\ \sigma(i) > \sigma(j)}} (p_{\sigma(j) \sigma(i)})\Bigg)\Bigg(\prod_{1 \leq i < j \leq n -1} \psi(g^{-1}_{i}g_{\sigma(i)}, g^{-1}_{n}g_{j}) \Bigg)\\
							&= \Bigg(\prod_{\substack{1 \leq i< j \leq n\\ \sigma(i) > \sigma(j)}} \psi(g_{\sigma(i)} g^{-1}_{\sigma(j)}, g_{n})\Bigg)\Bigg(\prod_{1 \leq i < j \leq n -1} \psi(g^{-1}_{i}g_{\sigma(i)}, g^{-1}_{n})\Bigg)\Bigg(\prod_{1 \leq i < j \leq n -1} \psi(g^{-1}_{i}g_{\sigma(i)}, g_{j})\Bigg)\\
							&= \Bigg(\prod_{\substack{1 \leq i< j \leq n\\ \sigma(i) > \sigma(j)}} \psi(g_{\sigma(i)} g^{-1}_{\sigma(j)}, g_{n})\Bigg)\Bigg(\prod_{1 \leq i <j \leq n} \psi(g^{-1}_{i}g_{\sigma(i)}, g^{-1}_{n})\Bigg)\Bigg(\prod_{1 \leq i < j \leq n} \psi(g^{-1}_{i}g_{\sigma(i)}, g_{j})\Bigg)\\
							&= \Bigg(\prod_{\substack{1 \leq i< j \leq n\\ \sigma(i) > \sigma(j)}} \psi(g_{\sigma(i)} g^{-1}_{\sigma(j)}, g_{n})\Bigg)\Bigg(\prod^{n -1}_{i = 1} \psi(g_{i}g^{-1}_{\sigma(i)}, g_{n})^{n - i}\Bigg)\Bigg(\prod_{1 \leq i < j \leq n} \psi(g^{-1}_{i}g_{\sigma(i)}, g_{j})\Bigg)\\
							&= \prod_{1 \leq i < j \leq n} \psi(g^{-1}_{i}g_{\sigma(i)}, g_{j}),
						\end{align*}
						(since the first product is indeed equal to $1$; the verification is given in the lemma immediately following this proposition).
						Finally, we get 
						\begin{align*}
							&f\Bigg[\sum_{\sigma \in S_{n}} \Bigg(\prod_{\substack{1 \leq i< j \leq n\\ \sigma(i) > \sigma(j)}} (-q^{-1}p_{\sigma(j) \sigma(i)})\Bigg)x_{1\sigma(1)} x_{2\sigma(2)} \cdots  x_{n\sigma(n)}\Bigg]\\
							&= \sum_{\sigma \in S_{n}} -q^{\ell(\sigma)}\prod_{1 \leq i < j \leq n} \psi(g^{-1}_{i}g_{\sigma(i)}, g_{j}) \big(g_{n}^{-n} \#  \prod^{n}_{i = 1} x_{i\sigma(i)}\big)\\
							&= (g_{n}^{-n} \# 1)\Big(1 \# \sum_{\sigma \in S_{n}} -q^{\ell(\sigma)}(\prod_{1 \leq i < j \leq n} \psi(g^{-1}_{i}g_{\sigma(i)}, g_{j}) ) \prod^{n}_{i = 1} x_{i\sigma(i)}\big)\Big)\\
							&= (g_{n}^{-n} \# 1)\\
							&= f(D^{\bar{p}}_{q}).
						\end{align*}
						Thus, we obtain an algebra morphism
						\[\bar{f} \colon \Opgl \longrightarrow  k\Gamma \#\mathcal{O}^{\Gamma}_{q}(\mathrm{SL}_{n}(k))\] as announced. This is also a morphism of coalgebras. Indeed, we have
						\begin{align*}
							(\bar{f} \otimes \bar{f}) \circ \Delta(x_{ij}) &= \sum^{n}_{k = 1} (g^{-1}_{n}g_{i} \# a_{ik}) \otimes (g^{-1}_{n}g_{k} \# a_{kj}).
						\end{align*}
						On the other hand,
						\begin{align*}
							\widetilde{\Delta} \circ \bar{(f)}(x_{ij}) &= \widetilde{\Delta} \big(g^{-1}_{n}g_{i} \# a_{ij}\big)\\
							&= \sum^{n}_{k = 1} (g^{-1}_{n}g_{i} \# a_{ik}) \otimes (g^{-1}_{n}g_{i}g_{i}^{-1}g_{k} \# a_{kj})\\
							&= \sum^{n}_{k = 1} (g^{-1}_{n}g_{i} \# a_{ik}) \otimes (g^{-1}_{n}g_{k} \# a_{kj}).
						\end{align*}
						where $\widetilde{\Delta}$ is the coproduct defined in Section $4$ on bosonization. Similarly, it is not difficult to verify that $\bar{f}$ is compatible with the remaining Hopf algebra structure maps, and we conclude the proof.
						\epf
						\begin{lemma*}\label{Lem : tricky}
							Let $\Gamma$ be an abelian group. Fix $n \geq 2$, elements $g_{1}, \dots, g_{n} \in \Gamma$ and a permutation $\sigma \in S_{n}$. Then
							\[\prod_{\substack{1 \leq i< j \leq n\\ \sigma(i) > \sigma(j)}} g_{\sigma(i)} g^{-1}_{\sigma(j)} = \prod^{n -1}_{i = 1}(g^{-1}_{i}g_{\sigma(i)})^{n - i}\]
						\end{lemma*}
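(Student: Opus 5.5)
Since $\Gamma$ is abelian, I will prove the identity by comparing exponents of each $g_t$ on both sides. Equivalently, I work in the free abelian group on symbols $e_1,\dots,e_n$ and use the homomorphism $e_t\mapsto g_t$. It then suffices to show that, for each $t\in\{1,\dots,n\}$, the exponent of $e_t$ is the same on both sides.

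On the left, the symbol $e_t$ with $t=\sigma(i)$ appears with exponent $+1$ for each $j>i$ such that $\sigma(j)<\sigma(i)$. With $t=\sigma(j)$ it appears with exponent $-1$ for each $i<j$ such that $\sigma(i)>\sigma(j)$. Writing $s=\sigma^{-1}(t)$, the left exponent is therefore
\[
L_t=\#\{j>s:\sigma(j)<t\}-\#\{i<s:\sigma(i)>t\}.
\]

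On the right, $e_t$ receives $-(n-t)$ from the factor $g_t^{-1}$ at index $i=t$; this term is absent when $t=n$, and the formula $-(n-t)=0$ covers that case. It also receives $+(n-s)$ from $g_{\sigma(s)}$ at index $i=s$; this is absent when $s=n$, and the same convention covers it. So the right exponent is
\[
R_t=(n-s)-(n-t)=t-s.
\]

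The main step is the counting identity $L_t=t-s$. Let
\[
a=\#\{i<s:\sigma(i)<t\},\qquad b=\#\{i<s:\sigma(i)>t\}.
\]
Since the positions $i<s$ carry values other than $t$, we have $a+b=s-1$. The values smaller than $t$ number $t-1$, and they are distributed between positions before $s$ and positions after $s$, so $\#\{j>s:\sigma(j)<t\}=t-1-a$. Hence
\[
L_t=(t-1-a)-b=t-1-(s-1)=t-s=R_t.
\]

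This settles each exponent. I expect the only delicate point to be the boundary cases $t=n$ or $s=n$, where the right-hand product stops at $n-1$; the convention above shows they contribute zero, consistently with the formula. Applying the homomorphism $e_t\mapsto g_t$ then yields the identity in $\Gamma$. No hypothesis such as $\prod g_i=1$ is needed.
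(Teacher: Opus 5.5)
Your proof is correct and follows the paper's argument essentially verbatim: you compare the exponent of each $g_t$ on both sides, and your count $a=\#\{i<s:\sigma(i)<t\}$ is exactly the paper's $\lvert L(k)\rvert$, used in the same two relations $a+b=s-1$ and $a+\#\{j>s:\sigma(j)<t\}=t-1$. Passing through the free abelian group on $e_1,\dots,e_n$ is a small improvement, since it makes ``the exponent of $g_t$'' well defined even when some of the $g_i$ coincide.
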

						\bpf
						Let \[P = \prod_{\substack{1 \leq i< j \leq n\\ \sigma(i) > \sigma(j)}}g_{\sigma(i)} g^{-1}_{\sigma(j)} \quad \text{and} \quad Q = \prod^{n -1}_{i = 1} (g^{-1}_{i}g_{\sigma(i)})^{n - i}\]
						For a fixed $k \in \{1, \dots, n\}$, let $p = \sigma^{-1}(k)$ denote the position of $k$ in $\sigma$. We study the exponent of $g_{k}$ that appears in $P$ and $Q$, denoted by $\lambda_{1}$ and by $\lambda_{2}$, respectively.
						
						In $P$, we study the cardinal of 
						\[E = \{(i, j) \mid i < j,\ \sigma(i) = k,\ \sigma(j) < k\},
						\]
						Or equivalently,
						\[E = \{\, j \in \llbracket 1, n\rrbracket \mid j > p, \sigma(j) < k \,\}.\]
						We have
						\begin{align*}
							\sigma(E) &= \{\sigma(j) \in \llbracket 1, n\rrbracket \mid j > p, \sigma(j) < k\}\\
							&= \{j' \in \llbracket 1, n\rrbracket \mid \sigma^{-1}(j') > p, j' < k\}.
						\end{align*}
						and \[\mid \sigma(E) \mid = \mid E \mid\]
						because $\sigma$ is bijective and $E$ is finite.
						
						Similarly, we denote by 
						\begin{align*}
							F &= \{(i, j) \mid i < j,\ \sigma(j) = k,\ \sigma(i) > k\}\\
							&= \{ i \in \llbracket 1, n \rrbracket \mid i < p,\ \sigma(i) > k\}.
						\end{align*}
						and
						\begin{align*}
							\mid F \mid = \mid \sigma(F) \mid &= \mid \{ \sigma(i) \in \llbracket 1, n \rrbracket \mid i < p,\ \sigma(i) > k\}\mid\\
							&= \mid \{ i' \in \llbracket 1, n \rrbracket \mid \sigma^{-1}(i') < p,\ i' > k\}\mid.
						\end{align*}
						We obtain 
						\[\lambda_{1} = \mid E \mid - \mid F \mid = \mid \sigma(E) \mid - \mid \sigma(F) \mid.\]
						Now, we denote by
						\[L(k) = \{j' \in \llbracket 1, n\rrbracket \mid \sigma^{-1}(j') < p, j' < k\},\]
						we have $\lvert L(k) \rvert + \lvert \sigma(E) \rvert = \lvert \{j' \in \llbracket 1, n\rrbracket \mid \sigma^{-1}(j') \neq p, j' < k\}\rvert$. But, for $i \in \llbracket 1, n\rrbracket$, $\sigma^{-1}(j') = p$ if and only of $ j' = k$. Thus
						\begin{equation}
							\lvert L(k) \rvert + \lvert \sigma(E) \rvert = \lvert \{j' \in \llbracket 1, n\rrbracket \mid j' < k\}\rvert = k - 1. \label{1}
						\end{equation}
						Also, we have 
						\begin{align*}
							L(k) \bigcup \sigma(F) = \{j' \in \llbracket 1, n\rrbracket \mid \sigma^{-1}(j') < p\}
						\end{align*}
						and $L(k) \bigcap \sigma(F) = \emptyset$. Thus
						\[\lvert L(k) \rvert + \lvert \sigma(F) \rvert = p - 1.\]
						Therefore, we obtain
						\begin{align*}
							\lambda_{1} = \lvert \sigma(E) \rvert - \lvert \sigma(F) \rvert &= \lvert \sigma(E) \rvert - (p - 1 - L(k))\\
							& = \lvert \sigma(E) \rvert - p + 1 + L(k) \\
							&= k - 1 - p + 1 \quad (\text{by \eqref{1}})\\
							&= k - \sigma^{-1}(k).
						\end{align*}
						In $Q$, the exponent of $g_{k}$ is 
						\begin{align*}
							\lambda_{2} &= -(n - k) \lvert \{i \in \llbracket 1, n \rrbracket \mid i = k\} \rvert + ( n - \sigma^{-1}(k))\lvert \{i \in \llbracket 1, n \rrbracket \mid \sigma(i) = k\} \rvert\\
							&= k - \sigma^{-1}(k)\\
							&= \lambda_{1}.
						\end{align*}
						Finally, for each fixed $k$, the exponent of $g_{k}$ in $P$ equals that in $Q$. Since this holds for any $k$, we obtain the result desired.
						\epf
				
						\bibliographystyle{plain}
						\bibliography{biblio.bib}
						
					\end{document}